\newcommand{\R}{\mathbb{R}} 
\newcommand{\N}{\mathbb{N}} 
\newcommand{\cD}{{\cal D}}
\newcommand{\cO}{{\cal O}}
\newcommand{\cX}{{\cal X}}
\newcommand{\eqdef}{\overset{\text{def}}{=}} 
\newcommand{\dotprod}[2]{\left\langle #1,#2\right\rangle} 
\newcommand{\norm}[1]{\lVert#1\rVert}      
\newcommand{\abs}[1]{\lvert#1\rvert}      
\newcommand{\lin}[1]{\left\langle #1\right\rangle} 
\DeclareMathOperator{\sign}{sign}     
\DeclareMathOperator{\argmin}{argmin}        
\DeclareMathOperator{\prox}{prox}       
\newcommand{\E}[1]{{\rm E}\left[#1\right] } 
\newcommand{\EE}[2]{{\rm E}_{#1}\left[#2\right] }
\newtheorem{lemma}{Lemma}
\newtheorem{corollary}{Corollary}
\newtheorem{definition}{Definition}
\newtheorem{remark}{Remark}
\newtheorem{assumption}{Assumption}
\newtheorem{theorem}{Theorem}
\newtheorem{example}{Example}
\newcommand{\removelatexerror}{\let\@latex@error\@gobble}
\icmltitlerunning{Stochastic Distributed Learning with Gradient Quantization and Variance Reduction}
\begin{document}

\twocolumn[
\icmltitle{Stochastic Distributed Learning with\\Gradient Quantization and Variance Reduction}



\icmlsetsymbol{equal}{*}

\begin{icmlauthorlist}
\icmlauthor{Samuel Horv\'{a}th}{kaust}
\icmlauthor{Dmitry Kovalev}{kaust}
\icmlauthor{Konstantin Mishchenko}{kaust}
\icmlauthor{Peter Richt\'{a}rik}{kaust,ed,mipt}
\icmlauthor{Sebastian U.\ Stich}{epfl}
\end{icmlauthorlist}

\icmlaffiliation{kaust}{King Abdullah University of Science and Technology, KSA}
\icmlaffiliation{ed}{University of Edinburgh, UK}
\icmlaffiliation{epfl}{\'{E}cole polytechnique f\'{e}d\'{e}rale de Lausanne, Switzerland}
\icmlaffiliation{mipt}{Moscow Institute of Physics and Technology, Russian Federation}

\icmlcorrespondingauthor{Peter Richt\'{a}rik}{peter.richtarik@kaust.edu.sa}

\icmlkeywords{distributed learning, quantization, variance reduction, DIANA}

\vskip 0.3in
]



\printAffiliationsAndNotice{}  



\begin{abstract}
We consider distributed optimization where the objective function is spread among different devices, each sending incremental model updates to a central server. To alleviate the communication bottleneck, recent work proposed various schemes to compress (e.g.\ quantize or sparsify) the gradients, thereby introducing additional variance $\omega \geq 1$ that might slow down convergence. For strongly convex functions with condition number $\kappa$ distributed among $n$ machines, we 
(i) give a scheme that converges in $\cO((\kappa + \kappa \frac{\omega}{n} + \omega)$
$\log (1/\epsilon))$ steps to a neighborhood of the optimal solution.
For objective functions with a finite-sum structure, each worker having less than $m$ components, we
(ii) present novel variance reduced schemes that converge in $\cO((\kappa + \kappa \frac{\omega}{n} + \omega + m)\log(1/\epsilon))$ steps to arbitrary accuracy $\epsilon > 0$. These are the first methods that achieve linear convergence for arbitrary quantized updates.
We also (iii) give analysis for the weakly convex and non-convex cases and
(iv) verify in experiments that our novel variance reduced schemes are more efficient than the baselines.
\end{abstract}

\section{Introduction}
\label{sec:intro}
The training of large scale machine learning models poses many challenges that stem from the mere size of the available training data. The \emph{data-parallel} paradigm focuses on distributing the data across different compute notes, which operate in parallel on the data. Formally, we consider optimization problems distributed across $n$ nodes of the form
\begin{align}
\min_{x \in \R^d} \left[ f(x) := \frac{1}{n} \sum_{i=1}^n f_i(x) \right] \,, \label{eq:probR}
\end{align}
where  $f_i \colon \R^d \to \R$ for $i=1,\dots,n$ are given as
\begin{align}
f_i(x) := \EE{\zeta \sim \cD_i}{F(x,\zeta)}\,,
\end{align}
with $F\colon \R^d \times \Omega \to \R$ being a general loss function. The distributions $\cD_1, \dots, \cD_n$ can be different on every node, which means the functions $f_1,\dots,f_n$ can have completely different minimizers. This frameworks covers stochastic optimization (when either $n=1$ or all $\cD_i$ are identical) and empirical risk minimization (for e.g.\ when the $\cD_i$'s are discrete with disjoint support). We denote by $x^\star$ an optimal solution of \eqref{eq:probR}, let $f^\star\eqdef f(x^\star)$.

\subsection{Quantization to Reduce Communication}
In typical computing architectures, communication is much slower than computation and the communication bottleneck between workers is a major limiting factor for many large scale applications (such as e.g.\ deep neural network training), as reported in e.g.~\cite{Seide2015:1bit,Alistarh2017:qsgd,Zhang2017:zipml,Lin2018:deep}. 
A possible remedy to tackle this issue are for instance approaches that focus on increasing the computation  to communication ratio, such as increased mini-batch sizes~\cite{Goyal2017:large}, defining local problems for each worker~\cite{Shamir2014:approxnewton,Reddi:2016aide} or reducing the communication frequency~\cite{Mann2009:parallelSGD,Zinkevich2010:parallelSGD,you2017imagenet,Stich2018:localsgd}.

A direction orthogonal to these approaches tries to reduce the size of the messages---typically gradient vectors---that are exchanged between the nodes~\cite{Seide2015:1bit,Strom2015:1bit,Alistarh2017:qsgd,Wen2017:terngrad,grishchenko2018asynchronous}.
These \emph{quantization} techniques rely on (lossy) compression of the gradient vectors. 
In the simplest form, these schemes limit the number of bits that are used to represent floating point numbers~\cite{Gupta:2015limited,Na2017:limitedprecision}, reducing the size of a $d$-dimensional (gradient) vector by a constant factor. Random dithering approaches attain up to $\cO(\sqrt{d})$ compression~\cite{Seide2015:1bit,Alistarh2017:qsgd,Wen2017:terngrad}. The most aggressive schemes reach $\cO(d)$ compression by only sending a constant number of bits per iteration~\cite{Suresh2017,RDME,Alistarh2018:topk,stich2018sparse}. An alternative approach is not to compute the gradient and subsequently compress it, but to update a subset of elements of the iterate $x$ using coordinate descent type updates~\cite{Hydra, Hydra2, mishchenko201999}.

Recently, \citet{Mishchenko2019:diana} proposed the first method that successfully applies the gradient quantization technique to the distributed optimization problem~\eqref{eq:probR} with a non-smooth regularizer.

\subsection{Contributions}

We now briefly outline the key contributions of our work:

\textbf{General compression.} We generalize the method presented in \cite{Mishchenko2019:diana} allowing for {\em arbitrary compression} (e.g., quantization and sparsification) operators. Our analysis is tight, i.e.\ we recover their convergence rates 
as a special case. Our more general approach allows to choose freely the compression operators that perform best on the available system resources and can thus offer gains in training time over the previous method that is tied to a single class of operators.

\textbf{Variance reduction.} We present several {\em variance reduced} extensions of our methods for  distributed training. In particular, and in contrast to  \cite{Mishchenko2019:diana}, our methods converge to the optimum and not merely to a neighborhood, without any loss in convergence rate. The quantized SVRG method of~\citet{Alistarh2017:qsgd} did not only rely on a specific compression scheme, but also on exact communication from time to time (epoch gradients), both restrictions been overcome here.

\textbf{Convex and non-convex problems.} We provide concise convergence analysis for our novel  schemes for the strongly-convex, the (weakly) convex and non-convex setting. Our analysis recovers the respective rates of earlier schemes without communication compression and shows that compression can provide huge benefits when communication is a bottleneck.

\textbf{Experiments.} We compare the novel variance reduced schemes with various baselines. In the experiments we leverage the flexibility of our approach---allowing to freely chose a quantization scheme with optimal parameters---and demonstrate on par performance with the baselines in terms of iterations, but considerable savings in terms of total communication cost.

\section{Related Work}
Gradient compression schemes have successfully been used in many implementations as a heuristic to reduce communication cost, such as for instance in 1BitSGD~\cite{Seide2015:1bit,Strom2015:1bit} that rounds the gradient components to either -1 or 1, and error feedback schemes aim at reducing quantization errors among iterations, for instance as in~\cite{Lin2018:deep}.
In the discussion below we focus in particular on schemes that enjoy theoretical convergence guarantees. 

\subsection{Quantization and Sparsification}
\label{sec:introducequant}
A class of very common quantization operators is based on random dithering~\cite{Goodall1951:randdithering,Roberts1962:randdithering} and can be described as the random operators $Q \colon \R^d \to \R^d$,
\begin{align}
Q(x) = \sign(x) \cdot \norm{x}_p \cdot \frac{1}{s}\cdot \left\lfloor s \frac{ \abs{x}}{\norm{x}_p} + \xi \right\rfloor \label{eq:Q}
\end{align}
for random variable $\xi \sim_{\rm u.a.r.} [0,1]^{d}$, parameter $p \geq 1$, and $s \in \N_+$, denoting the \emph{levels} of the rounding. Its unbiasedness property, $\EE{\xi}{Q(x)}=x$, $\forall x \in \R^d$, is the main catalyst of the theoretical analyses. 
The quantization~\eqref{eq:Q} was for instance used in QSGD for $p=2$~\cite{Alistarh2017:qsgd}, in TernGrad for $s=1$ and $p=\infty$~\cite{Wen2017:terngrad} or for general $p \geq 1$ and $s=1$  in DIANA~\cite{Mishchenko2019:diana}. 
For $p=2$ the expected sparsity is $\EE{\xi}{\norm{Q(x)}_0} = \cO(s(s+\sqrt{d}))$~\cite{Alistarh2017:qsgd} 
and encoding a nonzero coordinate of $Q(x)$ requires $\cO(\log(s))$ bits. 

Much sparser vectors can be obtained by random sparsification techniques that randomly mask the input vectors and only preserve a constant number of coordinates~\cite{Suresh2017, RDME,Wangni2018:sparsification,stich2018sparse}. 
Experimentally is has been shown that deterministic masking---for instance selecting the largest components in absolute value~\cite{Dryden2016:topk,Aji2017:topk}---can outperform the random techniques. 
However, as these schemes are biased, they resisted careful analysis until very recently~\cite{Alistarh2018:topk,stich2018sparse}. 
We will not further distinguish between sparsification and quantization approaches, and refer to both of these compression schemes them as `quantization' in the following.

Also schemes with error compensation techniques have recently been successfully vanquished, for instance for unbiased quantization on quadratic functions~\cite{Wu:2018error} and for unbiased and biased quantization on strongly convex functions~\cite{stich2018sparse}. 
These schemes suffer much less from large quantization errors, and can tolerate higher variance (both in practice an theory) than the methods above without error compensation.

\subsection{Quantization in Distributed Learning}
In centralized approaches, all nodes communicate with a central node (parameter sever) that coordinates the optimization process. An algorithm of specific interest to solve problem~\eqref{eq:probR} is mini-batch SGD~\cite{Dekel2012:minibatch, pegasos2}, a parallel version of stochastic gradient descent (SGD)~\cite{Robbins:1951ko,Nemirovski-Juditsky-Lan-Shapiro-2009}. In mini-batch SGD, full gradient vectors have to be communicated to the central node and hence it is natural to incorporate gradient compression to reduce the cost of the communication rounds.
\citet{Khirirat2018:distributed} study quantization in the deterministic setting, i.e.\ when the gradients $\nabla f_i(x)$ can be computed without noise, and show that parallel gradient descent with unbiased quantization converges to a neighborhood of the optimal solution on strongly convex functions. \citet{Mishchenko2019:diana} consider the stochastic setting as in~\eqref{eq:probR} and show convergence of SGD with unbiased quantization to a neighborhood of a stationary point for non-convex and strongly convex functions; the analysis in~\cite{Wu:2018error} only applies to quadratic functions. The method of~\citet{stich2018sparse} can also be parallelized as shown by~\citet{cordonnier2018:sparse} and converges to the exact solution on strongly convex functions.

Decentralized methods do not require communication with a centralized node. \citet{Tang2018:decentralized} consider unbiased quantization and show convergence to the neighborhood of a stationary point on non-convex functions under very rigid assumptions on the quantization, i.e.\ allowing only $\cO(1)$ compression, \citet{Koloskova2019:decentralized} relax those constraints but only consider strongly convex functions.

\subsection{Quantization and Variance Reduction}
Variance reduced methods~\cite{LeRoux:2012sag, Johnson:2013svrg, ShalevShwartz:2013sdca, Defazio2014:saga, QUARTZ, SDCA-dual-free, SDNA,  dfSDCA, ADASDCA, nguyen2017sarah, JacSketch, zhou2018direct} admit linear convergence on empirical risk minimization problems, surpassing the rate of vanilla SGD~\cite{Bottou2010:sgd}. \citet{Alistarh2017:qsgd} proposed a quantized version of SVRG~\cite{Johnson:2013svrg}, but the proposed scheme relies on broadcasting exact (unbiased) gradients every epoch.
This restriction has been overcome in~\cite{kuenstner2017:svrg} but only for high-precision quantization. Here we alleviate these restrictions and present quantization not only for SVRG but also for SAGA~\cite{Defazio2014:saga}. Our analysis also supports a version of SVRG whose epoch lengths are not fixed, but random, similar as e.g.\ in~\cite{Lei2017:singlepass,Hannah2018:span}. Our base version (denoted as L-SVRG) is slightly different and inspired by observations made in~\cite{Hofmann2015:saga,Raj2018:ksvrg} and following closely~\cite{Kovalev2019:svrg}.

\subsection{Orthogonal Approaches}
There are other approaches aiming to reduce the communication cost, such as
 increased mini-batch sizes~\cite{Goyal2017:large}, defining local problems for each worker~\cite{Shamir2014:approxnewton,cocoa, COCOA+, Reddi:2016aide, COCOA+journal}, reducing the communication frequency~\cite{Mann2009:parallelSGD,Zinkevich2010:parallelSGD,you2017imagenet,Stich2018:localsgd} or distributing along features~\cite{Hydra, Hydra2}. However, we are not considering such approaches here.

\section{General Definitions}
\begin{definition}[$\mu$-strong convexity]
	A differentiable function $f\colon \R^d \to \R$ is $\mu$\nobreakdash-strongly convex for $\mu > 0$, if for all $x,y \in \R^d$
	\begin{align}
		f(y) \geq f(x) + \dotprod{\nabla f(x)}{y - x} + \frac{\mu}{2} \norm{x - y}_2^2 \,. \label{def:strongconvex}
	\end{align}
\end{definition}

\begin{definition}[$L$-smoothness]
	A differentiable function $f \colon \R^d \to \R$ is $L$-smooth for $L > 0$, if for all $x,y \in \R^d$
	\begin{align}
		f(y) \leq f(x) + \dotprod{\nabla f(x)}{y - x} + \frac{L}{2} \norm{x - y}_2^2\,.
		\label{def:smoothness}
	\end{align}
\end{definition}
\begin{definition}
\label{def:prox}
The prox operator $\prox_{\gamma R} \colon \R^d \to \R^d$ is defined as
\begin{align*}
 \prox_{\gamma R}(x) = \argmin_{y \in \R^d} \left\{\gamma R(y) + \frac{1}{2}\norm{y-x}_2^2 \right\}\,,
\end{align*}
for $\gamma > 0$ and a closed  convex  regularizer $R \colon \R^d \to \R \cup \{\infty\}$.
\end{definition}

\section{Quantization Operators}
Our analysis depends on a general notion of quantization operators with bounded variance.
\begin{definition}[$\omega$-quantization]
\label{def:omegaquant}
A random operator $Q \colon \R^d \to \R^d$ with the properties
\begin{align}
  \EE{Q}{Q(x)}=x\,, &   &\EE{Q}{\norm{Q(x)}_2^2} \leq (\omega+1) \norm{x}_2^2\,, \label{def:Q}
\end{align}
for all $x \in \R^d$ is a $\omega$-quantization operator. Here $\EE{Q}{\cdot}$ denotes the expectation over the (internal) randomness of~$Q$.
\end{definition}
\begin{remark}
As $\E{ \norm{X-\E{X}}_2^2} = \E{\norm{X}_2^2} - \norm{\E{X}}_2^2$ for any random vector $X$, equation~\eqref{def:Q} implies 
\begin{align}
\EE{Q}{\norm{Q(x) - x}_2^2} \leq \omega \norm{x}_2^2\,. \label{def:omega}
\end{align}
\end{remark}
For instance, we see that $\omega = 0$ implies $Q(x)=x$. 
\begin{remark}
Besides the variance bound, Definition~\ref{def:omegaquant} does not impose any further restrictions on $Q(x)$. However, for all applications it is advisable to consider operators $Q$ that achieve a considerable compression, i.e.\ $Q(x)$ should be cheaper to encode than $x$.
\end{remark}
We will now give examples of a few $\omega$-quantization operators that also achieve compression, either by quantization techniques, or by enforcing sparsity (cf. Sec.~\ref{sec:introducequant}).
\begin{example}[random dithering]
The operator given in~\eqref{eq:Q} satisfies~\eqref{def:Q}
for $\omega(x) := 2 + \frac{\norm{x}_1 \norm{x}_p}{s\norm{x}_2^2}$ for every fixed $x \in \R^d$, and $\omega(x)$ is a function monotonically decreasing in $p$. Moreover,
\begin{align*}
\omega = \cO \left( \frac{d^{1/p} + d^{1/2}}{s} \right)
\end{align*}
for $p \geq 1$, $s \geq 1$ and all $x \in \R^d$.
\end{example}
For $p=2$ this bound was proven by~\citet{Alistarh2017:qsgd}. Here we generalize the analysis to arbitrary $p \geq 1$.
\begin{proof}
In view of~\eqref{eq:Q} we have
\begin{align*}
\EE{Q}{\norm{Q(x)}_2^2} =  \frac{\norm{x}_p^2}{s^2} \sum_{i=1}^d \left(\underbrace{\ell_i^2 (1-p_i) + (\ell_i+1)^2 p_i}_{\leq \ell_i^2 + (2\ell_i + 1)p_i} \right)
\end{align*}
for integers $\ell_i \leq s \abs{x_i}/\norm{x}_p \leq \ell_i+1$ and probabilities $p_i = s \abs{x_i}/\norm{x}_p - \ell_i \leq s\abs{x_i}/\norm{x}_p$. Therefore,
\begin{align*}
\EE{Q}{\norm{Q(x)}_2^2} &\leq \frac{\norm{x}_p^2}{s^2} \sum_{i=1}^d \left(\ell_i^2 + (2\ell_i + 1)\frac{s\abs{x_i}}{\norm{x}_p}\right) \\
& \leq \frac{\norm{x}_p^2}{s^2} \left(s^2\frac{\norm{x}_2^2}{\norm{x}_p^2} + 2s^2\frac{\norm{x}_2^2}{\norm{x}_p^2} + s\frac{\norm{x}_1}{\norm{x}_p} \right)
\end{align*}
as $\ell_i \leq s \abs{x_i}/\norm{x}_p $. This proves the bound on $\omega(w)$. By H\"{o}lder's inequality $\norm{x}_1 \leq d^{1/2} \norm{x}_2$ and $\norm{x}_p \leq d^{1/p - 1/2} \norm{x}_2$ for $1 \leq p \leq 2$ and for $p\geq 2$ the inequality $\norm{x}_p \leq \norm{x}_2$ imply the upper bound on $\omega$ for all $x \in \R^d$.
\end{proof}

\begin{example}[random sparsifiction]
The random sparsification operator $Q(x) = \frac{d}{r} \cdot \xi \otimes x$ for random variable $\xi \sim_{\rm u.a.r.} \{y \in  \{0,1\}^d \colon \norm{y}_0 = r\}$ and sparsity parameter $r \in \N_+$ is a $\omega = \frac{d}{r}-1$ quantization operator.
\end{example}
For a proof see e.g.~\citep[Lemma A.1]{stich2018sparse}.

\begin{example}[block quantization]
The vector $x \in \R^d$ is first split into $t$ blocks and then each block $v_i$ is quantized using random dithering with $p=2$, $s = 1$. If every block has the same size $d/t$, then this gives a quantization operator with $\omega = \sqrt{d/t} + 1$, otherwise $\omega = \max_{i \in[t]} \sqrt{|v_i|} + 1$. 
\end{example}
Block quantization was e.g.\ used in~\cite{Alistarh2017:qsgd} and is also implemented (in a similar fashion) in the CNTK toolkit~\cite{seide2016cntk}.
The proofs of the claimed bounds can be found in~\cite{Mishchenko2019:diana}.

\section{DIANA with Arbitrary Quantization}
We are now ready to present the first algorithm and its theoretical properties.  In this section we consider the regularized problem~\eqref{eq:probR}:
\begin{align}
\min_{x \in \R^d} \left[ f(x) + R(x) := \frac{1}{n} \sum_{i=1}^n f_i(x) + R(x) \right] \,, \label{eq:probRR}
\end{align}
where $R \colon \R^d \to \R \cup \{+\infty\}$ denotes a closed convex regularizer.

\subsection{DIANA} Algorithm~\ref{alg:improvedDIANA} is identical to the DIANA algorithm in~\cite{Mishchenko2019:diana}. However, we allow for {\em arbitrary $\omega$-quantization operators}, whereas \citet{Mishchenko2019:diana} consider random dithering quantization operators \eqref{eq:Q}   with $p\geq 1$ and $s=1$ only.

In Algorithm~\ref{alg:improvedDIANA}, each worker $i=1,\dots,n$ queries the oracle and computes an unbiased stochastic gradient $g_i^k$ in iteration $k$, i.e. $\E{g_i^k \mid x^k} = \nabla f_i(x^k)$. A na\"ive approach would be to directly send the quantized gradients, $Q(g_i^k)$, to the master node. However, this simple scheme does not only introduce a lot of noise (for instance, even at the optimal solution $x^\star \in \R^d$ the norms of the stochastic gradients do not vanish), but also does in general not converge for nontrivial regularizers $R \not \equiv 0$.
Instead, in Algorithm~\ref{alg:improvedDIANA} each worker maintains a \emph{state} $h_i^k \in \R^d$ and quantizes only the \emph{difference} $g_i^k - h_i^k$ instead. If (and this we show below) $h_i^k$ converges to $\nabla f_i(x^\star)$ for ($k \to \infty$), the variance of the quantization can be massively  reduced compared to the na\"ive scheme. Both the worker and the master node update $h_i^k$ based on the transmitted (quantized) vector $\hat{\Delta}_i^k \in \R^d$. Note that the quantization operator $Q$ should be chosen so that the transmission of $\hat{\Delta}_i^k$ requires significantly less bits than the transmission of the full  $d$-dimensional vector.

\subsection{Convergence of Algorithm~\ref{alg:improvedDIANA}} We make the following technical assumptions:
\begin{assumption}
\label{assumption:diana}
In problem~\eqref{eq:probRR} we assume each $f_i \colon \R^d \to \R$ to be $\mu$-strongly convex and $L$-smooth. We assume that each $g_i^k$ in~Algorithm~\ref{alg:improvedDIANA} has bounded variance
\begin{align}
 \E{\norm{g_i^k - \nabla f_i(x^k)}_2^2 } &\leq \sigma_i^2\,, & &\forall k \geq 0, i=1,\dots,n \label{def:sigmai}
\end{align}
for constants $\sigma_i \leq \infty$, $\sigma^2 := \frac{1}{n} \sum_{i=1}^n \sigma_i^2$.\end{assumption}

\begin{figure}[t]
\vspace{-7.8pt}
\begin{algorithm}[H]
\removelatexerror
\begin{algorithm2e}[H]
\DontPrintSemicolon
  \KwIn{learning rates $\alpha > 0$, and $\gamma > 0$,
  initial vectors $x^0$, $h_1^0, \dots, h_n^0$ and $h^0 = \frac{1}{n} \sum_{i=1}^n h_i^0$}
  \For{$k=0,1,\dots$}{
  broadcast $x^k$ to all workers\;  
  \For(in parallel \hfill $\triangleright$\ \textit{worker side}){ $i=1,\dots,n$ }{
   sample $g_i^k$ such that $\E{g_i^k \mid x^k} = \nabla f_i(x^k)$\;   
   $\Delta_i^k = g_i^k - h_i^k$\;
   $\hat{\Delta}_i^k = Q(\Delta_i^k)$\;
   $h_i^{k+1} = h_i^k + \alpha \hat{\Delta}_i^k$\;
   $\hat{g}_i^k = h_i^k + \hat{\Delta}_i^k$\;
   }
   $q^k = \tfrac{1}{n} \sum_{i=1}^n \hat{\Delta}_i^k$    \Comment*[r]{gather quantized updates} 
   $\hat{g}^k = \tfrac{1}{n} \sum_{i=1}^n \hat{g}_i^k$\;
   $x^{k+1} = \prox_{\gamma R}(x^k - \gamma \hat{g}^k)$\;
   $h^{k+1}= \tfrac{1}{n} \sum_{i=1}^n h_i^{k+1}$
  }
\end{algorithm2e}  
\caption{DIANA with arbitrary unbiased quantization}
\label{alg:improvedDIANA}
\end{algorithm}
\end{figure}

The main theorem of this section, presented next, establishes linear convergence of Algorithm~\ref{alg:improvedDIANA} with arbitrary $\omega$-quantization schemes.

\begin{theorem}
\label{thm:improvedDIANA}
Consider Algorithm~\ref{alg:improvedDIANA} with $\omega$-quantization $Q$ and stepsize $\alpha \leq \frac{1}{\omega+1}$.
Define the Lyapunov function 
\begin{align*}
 \Psi^k := \norm{x^k - x^\star}_2^2 + \frac{c \gamma^2}{n} \sum_{i=1}^n \norm{h_i^k - \nabla f_i(x^\star)}_2^2
\end{align*}
for $c \geq \frac{4 \omega}{\alpha n}$
and assume $\gamma \leq \frac{2}{(\mu + L)(1+\frac{2\omega}{n} + c\alpha)}$ and $\gamma \leq \frac{\alpha}{2\mu}$. Then under Assumption~\ref{assumption:diana}:
\begin{align}
 \E{\Psi^k} \leq  (1- \gamma \mu)^k \Psi^0 + \frac{2}{\mu(\mu+L)} \sigma^2 \,. \label{eq:h98gf98f}
\end{align}
\end{theorem}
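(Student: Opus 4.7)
The plan is a one-step Lyapunov contraction: I will derive $\EE{k}{\Psi^{k+1}}\leq(1-\gamma\mu)\Psi^k + \gamma^2 N$ for some additive noise $N=\cO(\sigma^2)$, and then unroll the geometric recursion. Let $\EE{k}{\cdot}$ denote conditional expectation given $x^k$ and $\{h_i^k\}_{i=1}^n$, and introduce the two bookkeeping averages $H^k := \tfrac{1}{n}\sum_i \norm{h_i^k-\nabla f_i(x^\star)}_2^2$ and $T^k := \tfrac{1}{n}\sum_i \norm{\nabla f_i(x^k)-\nabla f_i(x^\star)}_2^2$, so that $\Psi^k=\norm{x^k-x^\star}_2^2+c\gamma^2 H^k$. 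Both $H^k$ and $T^k$ will surface as intermediate terms in the two separate one-step bounds and will have to cancel when they are combined with weight $c\gamma^2$.

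For the iterate, I will apply firm non-expansiveness of $\prox_{\gamma R}$ together with the fixed-point identity $x^\star = \prox_{\gamma R}(x^\star-\gamma\nabla f(x^\star))$ (coming from $-\nabla f(x^\star)\in\partial R(x^\star)$) to obtain $\norm{x^{k+1}-x^\star}_2^2 \leq \norm{(x^k-x^\star)-\gamma(\hat g^k-\nabla f(x^\star))}_2^2$. Expanding, taking $\EE{k}{\cdot}$, and using $\EE{k}{\hat g^k}=\nabla f(x^k)$ together with the sharp $\mu$-strong-convex/$L$-smooth inequality $\langle \nabla f_i(x^k)-\nabla f_i(x^\star),x^k-x^\star\rangle \geq \tfrac{\mu L}{\mu+L}\norm{x^k-x^\star}_2^2 + \tfrac{1}{\mu+L}\norm{\nabla f_i(x^k)-\nabla f_i(x^\star)}_2^2$ averaged over workers, I will extract both the contraction factor $1-\tfrac{2\gamma\mu L}{\mu+L}$ on $\norm{x^k-x^\star}_2^2$ and a negative $-\tfrac{2\gamma}{\mu+L}T^k$ contribution. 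For the variance $\EE{k}{\norm{\hat g^k-\nabla f(x^\star)}_2^2}$ I will split $\hat g^k-\nabla f(x^k)$ into a quantization noise $\tfrac{1}{n}\sum_i(Q(\Delta_i^k)-\Delta_i^k)$ and a sampling noise $\tfrac{1}{n}\sum_i(g_i^k-\nabla f_i(x^k))$ (orthogonal in conditional expectation), use cross-worker independence of $Q$ and of $g_i^k$ to kill cross-worker second-moment terms, and then invoke $\norm{g_i^k-h_i^k}_2^2\leq 2\norm{h_i^k-\nabla f_i(x^\star)}_2^2+2\norm{\nabla f_i(x^k)-\nabla f_i(x^\star)}_2^2$ (plus $2\sigma_i^2$ in expectation) to produce a bound of the form $(1+\tfrac{2\omega}{n})T^k+\tfrac{2\omega}{n}H^k+\tfrac{(\omega+1)\sigma^2}{n}$.

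For the $h$-states, I will expand $h_i^{k+1}-\nabla f_i(x^\star)=(h_i^k-\nabla f_i(x^\star))+\alpha Q(g_i^k-h_i^k)$, take $\EE{Q}{\cdot}$ of the squared norm, and use the stepsize hypothesis $\alpha(\omega+1)\leq 1$ to replace $\alpha^2\EE{Q}{\norm{Q(g_i^k-h_i^k)}_2^2}$ by $\alpha\norm{g_i^k-h_i^k}_2^2$; the identity $2\langle a,b\rangle+\norm{b}_2^2=\norm{a+b}_2^2-\norm{a}_2^2$ then collapses the inequality to $(1-\alpha)\norm{h_i^k-\nabla f_i(x^\star)}_2^2 + \alpha\norm{g_i^k-\nabla f_i(x^\star)}_2^2$, and averaging over $i$ and over sampling noise yields $\EE{k}{H^{k+1}}\leq(1-\alpha)H^k+\alpha T^k+\alpha\sigma^2$. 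Adding $c\gamma^2$ times this to the iterate bound, the $T^k$-coefficient becomes $\gamma^2(1+\tfrac{2\omega}{n}+c\alpha)-\tfrac{2\gamma}{\mu+L}\leq 0$ by the first $\gamma$-hypothesis, while the $H^k$-coefficient $c\gamma^2(1-\alpha)+\tfrac{2\omega\gamma^2}{n}$ is at most $c\gamma^2(1-\gamma\mu)$ iff $c(\alpha-\gamma\mu)\geq\tfrac{2\omega}{n}$, which the second hypothesis $\gamma\leq\alpha/(2\mu)$ (giving $\alpha-\gamma\mu\geq\alpha/2$) reduces to exactly $c\geq 4\omega/(\alpha n)$. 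Since $\mu\leq L$ implies $1-\tfrac{2\gamma\mu L}{\mu+L}\leq 1-\gamma\mu$, the combined bound reads $\EE{k}{\Psi^{k+1}}\leq(1-\gamma\mu)\Psi^k+\gamma^2\sigma^2(\tfrac{\omega+1}{n}+c\alpha)$; unrolling and bounding the geometric sum by $\tfrac{1}{\gamma\mu}$, then using $\tfrac{\omega+1}{n}+c\alpha\leq 1+\tfrac{2\omega}{n}+c\alpha$ together with $\gamma(1+\tfrac{2\omega}{n}+c\alpha)\leq\tfrac{2}{\mu+L}$, converts the residual into the claimed $\tfrac{2\sigma^2}{\mu(\mu+L)}$.

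The main obstacle I expect is the three-way interplay between $T^k$, $H^k$, and the two stepsizes: $T^k$ and $H^k$ both appear on the variance side of the iterate recursion and must be cancelled respectively by the cocoercivity-derived $T^k$ term and by the $\alpha$-slack in the $H$-recursion, and it is this coupling that pins down both $c\geq 4\omega/(\alpha n)$ and the first $\gamma$-hypothesis. Replacing the sharpened strong-convex/smooth inequality by plain strong convexity would force $\gamma\leq 1/(L(1+\tfrac{2\omega}{n}+c\alpha))$ instead, costing a factor $2L/(\mu+L)$ in the ill-conditioned regime and breaking the stated noise constant.
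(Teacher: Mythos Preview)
Your proposal is correct and follows essentially the same approach as the paper's proof: the same variance bound on $\hat g^k$, the same $(1-\alpha)$-contraction on the $h$-states, combination via the Lyapunov function with weight $c\gamma^2$, and the same use of the stepsize hypotheses to make the $T^k$- and $H^k$-coefficients non-positive and $\leq c\gamma^2(1-\gamma\mu)$ respectively. The only cosmetic difference is that the paper absorbs the non-positive $T^k$ term back into $\norm{x^k-x^\star}_2^2$ via $T^k \geq \mu^2\norm{x^k-x^\star}_2^2$ before bounding by $1-\gamma\mu$, whereas you simply drop it and use $\tfrac{2\mu L}{\mu+L}\geq \mu$; both routes yield the same contraction factor.
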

\begin{corollary}
Let $c = \frac{4 \omega}{\alpha n}$, $\alpha = \frac{1}{\omega+1}$ and $\gamma = \min\left\{ \frac{2}{(\mu + L)(1+\frac{6\omega}{n})}, \frac{1}{2\mu(\omega +1)} \right\}$. Furthermore, define $\kappa = \frac{L+\mu}{2\mu}$. Then the conditions of Theorem~\ref{thm:improvedDIANA} are satisfied and the leading term in the iteration complexity bound is 
\[
\frac{1}{\gamma \mu} = \kappa + \kappa \frac{2\omega}{n} + 2(\omega+1)\,.
\]
\end{corollary}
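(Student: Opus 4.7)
The proof will be almost entirely an algebraic verification: plug the chosen $\alpha$, $c$, $\gamma$ into the conditions of Theorem~\ref{thm:improvedDIANA} and check that each holds, then read off the iteration complexity from $1/(\gamma\mu)$.

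First I would verify the three conditions. The choices $\alpha = \frac{1}{\omega+1}$ and $c = \frac{4\omega}{\alpha n}$ obviously satisfy $\alpha \leq \frac{1}{\omega+1}$ and $c \geq \frac{4\omega}{\alpha n}$ with equality, so the only nontrivial step is to check the two upper bounds on $\gamma$. With $c = \frac{4\omega}{\alpha n}$ we have $c \alpha = \frac{4\omega}{n}$, hence
\begin{align*}
1 + \tfrac{2\omega}{n} + c\alpha = 1 + \tfrac{6\omega}{n},
\end{align*}
which turns the first requirement $\gamma \leq \frac{2}{(\mu+L)(1+\frac{2\omega}{n}+c\alpha)}$ into $\gamma \leq \frac{2}{(\mu+L)(1+\frac{6\omega}{n})}$. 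The second requirement $\gamma \leq \frac{\alpha}{2\mu}$ becomes $\gamma \leq \frac{1}{2\mu(\omega+1)}$ after substituting $\alpha = \frac{1}{\omega+1}$. Both bounds are exactly the two terms inside the $\min$ defining $\gamma$, so the prescribed $\gamma$ satisfies both by construction.

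Next I would compute $\frac{1}{\gamma\mu}$. Since $\gamma$ is the minimum of two quantities, $\frac{1}{\gamma\mu}$ is the maximum of the two reciprocals divided by $\mu$:
\begin{align*}
\frac{1}{\gamma\mu} = \max\!\left\{ \frac{(\mu+L)(1+\tfrac{6\omega}{n})}{2\mu},\ 2(\omega+1) \right\}.
\end{align*}
Using the definition $\kappa = \frac{L+\mu}{2\mu}$, the first quantity equals $\kappa\bigl(1 + \tfrac{6\omega}{n}\bigr) = \kappa + \kappa\tfrac{6\omega}{n}$. Bounding the maximum by the sum yields $\frac{1}{\gamma\mu} \leq \kappa + \kappa\tfrac{6\omega}{n} + 2(\omega+1)$, which matches the claimed expression up to the constant in front of $\kappa\omega/n$ (this is the standard $\max\leq$ sum bound used to present iteration complexity in a clean $\cO(\cdot)$ form).

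There is no real obstacle here; the whole argument is a substitution and one application of $\max\{a,b\}\leq a+b$. The only thing that has to be checked carefully is the identity $1 + \frac{2\omega}{n} + c\alpha = 1 + \frac{6\omega}{n}$ so that the $\gamma$ chosen in the corollary really is admissible in Theorem~\ref{thm:improvedDIANA}. Once that is in place, the stated iteration complexity $\cO\!\bigl((\kappa + \kappa\tfrac{\omega}{n} + \omega)\log(1/\epsilon)\bigr)$ from the abstract follows immediately from~\eqref{eq:h98gf98f}: to drive the bias term $(1-\gamma\mu)^k \Psi^0$ below a given tolerance one needs $k = \Omega\!\bigl(\tfrac{1}{\gamma\mu}\log(1/\epsilon)\bigr)$ iterations, and the residual $\frac{2\sigma^2}{\mu(\mu+L)}$ is the neighborhood size referred to in the introduction.
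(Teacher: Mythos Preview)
Your verification is correct, and the paper does not provide its own proof of this corollary, so there is nothing to compare approaches against; the intended argument is exactly the substitution-and-check you carry out. Your observation about the constant is also right: with $c\alpha = 4\omega/n$ the first step-size bound becomes $\frac{2}{(\mu+L)(1+6\omega/n)}$, so $\frac{1}{\gamma\mu} = \max\{\kappa(1+6\omega/n),\,2(\omega+1)\}$, and the displayed expression in the corollary should read $\kappa + \kappa\frac{6\omega}{n} + 2(\omega+1)$ (or be written as a $\max$); the $2$ in the paper is a typo that is immaterial for the $\cO(\cdot)$ complexity claimed in the abstract.
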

\begin{remark}
For the special case of quantization~\eqref{eq:Q} in arbitrary $p$-norms and $s=1$, this result recovers~\cite{Mishchenko2019:diana} up to small differences in the constants. 
\end{remark}

\section{Variance Reduction for Quantized Updates}

\begin{figure}[th!]
\vspace{-7.8pt}
\begin{algorithm}[H]
\removelatexerror
\begin{algorithm2e}[H]
\DontPrintSemicolon
		\KwIn{learning rates $\alpha > 0$ and $\gamma > 0$, initial vectors $x^0, h_{1}^0, \dots, h_{n}^0$, $h^0 = \frac{1}{n}\sum_{i=1}^n h_i^0$}
		\For{$k = 0,1,\ldots$}{
			sample random \Comment*[r]{only for Variant 1}
			$
				u^k = \begin{cases}
					1,& \text{with probability } \frac{1}{m}\\
					0,& \text{with probability } 1 - \frac{1}{m}\\
				\end{cases}
			$\;
			broadcast $x^k$, $u^k$ to all workers\;
			\For(in parallel \hfill $\triangleright$\ \textit{worker side}){$i = 1, \ldots, n$}{
				pick random $j_i^k \sim_{\rm u.a.r.} [m]$\;
				$\mu_i^k = \frac{1}{m} \sum\limits_{j=1}^{m} \nabla f_{ij}(w_{ij}^k)$\label{ln:mu} \;
				$g_i^k = \nabla f_{ij_i^k}(x^k) - \nabla f_{ij_i^k}(w_{ij_i^k}^k) + \mu_i^k$\;
				$\hat{\Delta}_i^k = Q(g_i^k - h_i^k)$\;
				$h_i^{k+1} = h_i^k + \alpha \hat{\Delta}_i^k$\;
				\For{$j = 1, \ldots, m$}{
					\Comment*[l]{Variant 1 (L-SVRG): update epoch gradient if $u^k = 1$}
					$
					w_{ij}^{k+1} =
					\begin{cases}
						x^k, & \text{if } u^k = 1 \\
						w_{ij}^k, &\text{if } u^k = 0\\
					\end{cases}
					$\;
					\Comment*[l]{Variant 2 (SAGA): update gradient table}
					$
					w_{ij}^{k+1} =
					\begin{cases}
					x^k, & j = j_i^k\\
					w_{ij}^k, & j \neq j_i^k\\
					\end{cases}
					$
				}
			}
			$h^{k+1} \! = \! h^k \!+\! \frac{\alpha}{n} \displaystyle \sum_{i=1}^n \hat{\Delta}_i^k$ \Comment*[r]{gather quantized updates} 
			$g^k = \frac{1}{n}\sum\limits_{i=1}^{n} (\hat{\Delta}_i^k + h_i^k)$\;
			$x^{k+1} = x^k - \gamma g^k$\;
		}
\end{algorithm2e}  
\caption{VR-DIANA based on L-SVRG (Variant 1), SAGA (Variant 2)}
\label{alg:VR-DIANA}
\end{algorithm}
\end{figure}

We now move to the main contribution of this paper and present variance reduced methods with quantized gradient updates. In this section we assume that each component $f_i$ of $f$ in~\eqref{eq:probR} has finite-sum structure:
\begin{equation}
f_i(x) = \frac{1}{m}\sum\limits_{j=1}^{m} f_{ij}(x)\,, \label{eq:finsum}
\end{equation}
The assumption that the number of components $m$ is the same for all functions $f_i$ is made for simplicity only.\footnote{This may seem limiting, but if this was not the case our analysis would still hold, but instead of $m$, we would have $\max_{i \in [n]} m_i$ appearing in the rates, where $m_i$ is the number of functions on the $i$-th machine. This suggests that we would like to have the functions distributed equally on the machines.
}
As can seen from \eqref{eq:h98gf98f}, one of the main disadvantages of Algorithm~\ref{alg:improvedDIANA} is the fact that we can only guarantee linear convergence to a $ \frac{2}{\mu(\mu+L)}  \frac{\sigma^2}{n}$-neighborhood of the optimal solution. In particular, the size of the neighborhood depends on the size of $\sigma^2$, which  measures the average variance of the stochastic gradients $g_i^k$ across the workers $i\in [n]$. In contrast, variance reduced methods converge linearly for arbitrary accuracy $\epsilon > 0$.

\subsection{The main challenge}
Let us recall that the variance reduced method SVRG~\cite{Johnson:2013svrg} computes the full gradient $\nabla f(x)$ in every epoch. To achieve this in the distributed setting~\eqref{eq:probR}, each worker $i$ must compute the gradient $\nabla f_i(x)$ and send this vector (exactly) either to the master node or broadcast it to all other workers. It might be tempting to replace this expensive communication step with quantized gradients instead, i.e.\ to rely on the aggregate $y_Q := \frac{1}{n}\sum_{i=1}^n Q(\nabla f_i(x))$ instead. However, the error $\norm{y_Q-\nabla f(x)}$ can be arbitrarily large (e.g.\ it will even not vanish for $x=x^\star$) and this simple scheme \emph{does not} achieve variance reduction (cf. also the discussion in~\cite{kuenstner2017:svrg}). Our approach to tackle this problem is via the {\em quantization of gradient differences}. Similarly to the previous section, we propose that each worker maintains a state $h_i^k \in \R^d$, and only quantizes  gradient \emph{differences}; for instance update $h_i^{k+1} = h_i^k + Q(h_i^k - \nabla f_i(x))$ (we assume this for  ease of exposition, the actual scheme is slightly different). We can now set $h^k = \frac{1}{n}\sum_{i=1}^n h_i^k$, which turns out to be a much more robust estimator of $\nabla f(x)$. By means of proving that $\norm{h_i^k - \nabla f_i(x^\star)} \to 0$ for $(k \to \infty)$ we are able to derive the {\em first variance reduced method that only exchanges quantized gradient updates among workers.}

\subsection{Three new   algorithms}
We propose in total three variance reduced algorithms, which are derived from either SAGA (displayed in Algorithm~\ref{alg:VR-DIANA}, Variant 2), SVRG (Algorithm~\ref{alg:SVRG_DIANA} provided in the appendix) and L-SVRG (Algorithm~\ref{alg:VR-DIANA}, Variant 1), a variant of SVRG with random epoch length and
described in~\cite{Kovalev2019:svrg}.
We prove  global linear convergence in the strongly convex case and $\cO(1/k)$ convergence in convex and non-convex cases. Moreover, our analysis is very general in the sense that the original complexity results for all three algorithms can be obtained by setting $\omega = 1$ (no quantization).

\textbf{Comments on Algorithm~\ref{alg:VR-DIANA}.}
In analogy to Algorithm~\ref{alg:improvedDIANA}, each node maintains a state $h_i^k \in \R^d$ that aims to reduce the variance introduced by the quantized communication. In contrast to the variance reduced method in~\cite{Alistarh2017:qsgd} that required the communication of the uncompressed gradients for every iteration of the inner loop (SVRG based scheme), here we communicate only quantized vectors.

Each worker $i=1,\dots,n$ computes its stochastic gradient $g_i^k$ in iteration $k$ by the formula given by the specific variance reduction type (SVRG, SAGA, L-SVRG), such that $\E{g_i^k \mid x^k} = \nabla f_i(x^k)$. Subsequently, the DIANA scheme is applied. Each worker in  Algorithms~\ref{alg:VR-DIANA} and \ref{alg:SVRG_DIANA} maintains a \emph{state} $h_i^k \in \R^d$ and quantizes only the difference $g_i^k - h_i^k$. This quantized vector $\hat{\Delta}_i^k$ is then sent to the master node which in turn updates its local copy of $h_i^k$. Thus both the $i$-th worker and the master node have access to $h_i^k$ even though it has never been transmitted in full (but instead incrementally constructed from the $\hat{\Delta}_i^k$'s).

The algorithm based on SAGA maintains on each worker a table of gradients, $\nabla f_{ij}(w_{ij}^k)$ (so the computation of $\mu_i^k$ on line~\ref{ln:mu} can efficiently be implemented). The algorithms based on SAGA just need to store the epoch gradients $\frac{1}{m}\sum_{i=1}^n \nabla f_{ij}(w_ij^k)$ that needs only to be recomputed whenever the $w_{ij}^k$'s change. Which is either after a fixed number of steps in SVRG, or after a random number of steps as in L-SVRG. These aspects of the algorithm are not specific to quantization and we refer the readers to e.g.~\cite{Raj2018:ksvrg} for a more detailed exposition.

\begin{table*}[t]
\begin{center}
\begin{tabular}{|c||c|c|c|c|}
\hline
\bf \multirow{2}{*}{Algorithm} & \bf \multirow{2}{*}{$\omega$} & \bf Convergence rate & \bf Convergence rate & \bf Communication \\
&  & \bf strongly convex & \bf non-convex & \bf cost per iter. \\
\hline 
\hline
VR without   & \multirow{2}{*}{$1$} & \multirow{2}{*}{$\hat{\cO}\left(
	\kappa + m
	\right)$}  &  \multirow{2}{*}{$\cO\left(
	\frac{ m^{2/3}}{\varepsilon}
	\right)$}   &  \multirow{2}{*}{$\cO(dn)$}\\
quantization &   & & &\\
\hline
VR with random & \multirow{2}{*}{$\sqrt{d}$} & \multirow{2}{*}{$\hat{\cO}\left(
	\kappa + \kappa \frac{\sqrt{d}}{n} + m + \sqrt{d}
	\right)$}  &  \multirow{2}{*}{$\cO\left(
	\left(\frac{\sqrt{d}}{n}\right)^{1/2}\frac{m^{2/3}}{\epsilon} 
	\right)$}  &  \multirow{2}{*}{$\cO(n\sqrt{d})$}\\
 dithering ($p=2, s =1$) & & & &\\
\hline
VR with random & \multirow{2}{*}{$\frac{d}{r}$} &  \multirow{2}{*}{$\hat{\cO}\left(
	\kappa + \kappa \frac{d}{n} + m + d
	\right)$}  &  \multirow{2}{*}{$\cO\left(
	 \frac{d}{\sqrt{n}}\frac{m^{2/3}}{\epsilon}
	\right)$}  &  \multirow{2}{*}{$\cO(n)$} \\
 sparsification ($r= \text{const}$) & & & & \\
 \hline
VR with    &  \multirow{2}{*}{$n$} & \multirow{2}{*}{$\hat{\cO}\left(
	\kappa  + m + n
	\right)$}  &  \multirow{2}{*}{$\cO\left(
	\frac{m^{2/3}}{\epsilon}
	\right)$}  &  \multirow{2}{*}{$\cO(n^2)$}\\
 block quantization ($t = d/n^2$) & & & &\\
\hline
\end{tabular}
\end{center} 
\caption{This table compares variance reduced methods with different levels of quantization. The $\hat{\cO}$ notation omits $\log 1/\varepsilon$ factors and we assume $\omega \leq m$ ($\omega \leq m^{2/3}$ for non-convex case) for ease of presentation. For block quantization (last row), the convergence rate is identical to the method without quantization (first row) but the savings in communication is at least $d/n$, assuming $d\geq n$.
(in number of total coordinates, here we did for simplicity not consider further savings that the random dithering approach offers in terms of less bits per coordinate).
 This table shows that quantization is meaningful and can provide huge benefits, especially when communication is a bottleneck.}
\label{tab:algo-comparison}
\end{table*}

\section{Convergence of VR-DIANA (Algorithm~\ref{alg:VR-DIANA})}
We make the following technical assumptions (out of which only the first one is shared among all theorems in this section):
\begin{assumption}\label{assumption:VRdiana}
In problem~\eqref{eq:probR} assume the finite-sum structure~\eqref{eq:finsum} for each $f_i$. Further assume each function $f_{ij} \colon \R^d \to \R$ to be $L$-smooth.
\end{assumption}

\begin{assumption}\label{assumption:VRdianasc}
Assume each function $f_i \colon \R^d \to \R$ to be  $\mu$-strongly convex, $\mu > 0$ and each $f_{ij} \colon \R^d \to \R$ to be  convex.
\end{assumption}

\begin{assumption}\label{assumption:VRdianac}
Assume each function $f_{ij} \colon \R^d \to \R$ to be  convex.
\end{assumption}

We are now ready to proceed with the main theorems.

\subsection{Strongly convex case}

\begin{theorem}[Strongly convex case]\label{thm:VR-DIANA}
    Consider Algorithm~\ref{alg:VR-DIANA} with $\omega$-quantization $Q$,
	and step size $\alpha \leq \frac{1}{\omega + 1}$.
	For 
	$b = \frac{4(\omega+1)}{\alpha n^2}$,
	$c = \frac{16(\omega+1)}{\alpha n^2}$,
	$\gamma = \frac{1}{L\left(1 + 36(\omega+1)/n\right)}$,
	define the Lyapunov function
\begin{equation*}
	\psi^k = \norm{x^k - x^\star}_2^2 + b \gamma^2 H^k + c \gamma^2 D^k\,,
\end{equation*}
where
\begin{equation*}
	H^k = \sum\limits_{i=1}^{n} \norm{h_i^k - \nabla f_i(x^\star)}_2^2\,,
\end{equation*}
and
\begin{equation*}
	D^k =
	\sum\limits_{i=1}^{n} \sum\limits_{j=1}^{m}
		\norm{\nabla f_{ij}(w_{ij}^k) - \nabla f_{ij}(x^\star)}_2^2\,.
\end{equation*}
Then	
under Assumption~\ref{assumption:VRdiana} and \ref{assumption:VRdianasc}
	\begin{equation*}
		\E{\psi^{k+1}} \leq
		(1 - \rho)\psi^k,
	\end{equation*}
	where $\rho\eqdef \min\left\{
			\frac{\mu}{L\left(1 + 36\frac{\omega + 1}{n} \right)},
			 \frac{\alpha}{2},
			\frac{3}{8m}
		\right\}$ and the expectation is conditioned on the previous iterate.
\end{theorem}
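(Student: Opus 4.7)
The plan is to exhibit a one-step Lyapunov contraction by separately controlling the three terms of $\psi^k$, then gluing the bounds through the carefully chosen constants $b,c,\gamma$. I would first establish unbiasedness, $\E{g^k\mid x^k}=\nabla f(x^k)$, which follows from $Q$ being unbiased and from the SVRG/SAGA estimator being unbiased ($\E{g_i^k\mid x^k}=\nabla f_i(x^k)$). Then, using the update $x^{k+1}=x^k-\gamma g^k$, I would expand
\begin{equation*}
\E{\|x^{k+1}-x^\star\|_2^2} = \|x^k-x^\star\|_2^2 - 2\gamma\dotprod{\nabla f(x^k)}{x^k-x^\star} + \gamma^2 \E{\|g^k\|_2^2},
\end{equation*}
and invoke $\mu$-strong convexity of $f$ together with $L$-smoothness (via the co-coercivity/Bregman inequality $\dotprod{\nabla f(x^k)}{x^k-x^\star}\geq f(x^k)-f^\star+\tfrac{\mu}{2}\|x^k-x^\star\|_2^2$) to produce the contraction factor $(1-\gamma\mu)$ on the leading term.

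\textbf{Second moment of $g^k$.} The key quantitative step is to split $g^k-\nabla f(x^k)$ into the quantization noise and the variance of the VR estimator. Writing $\hat{g}_i^k:=\hat{\Delta}_i^k+h_i^k$, conditional on the $g_i^k$'s one has $\E_Q[\hat{g}_i^k]=g_i^k$ and, using Definition~\ref{def:omegaquant} and independence across workers,
\begin{equation*}
\E{\|g^k\|_2^2} \leq \E{\|\bar g^k\|_2^2} + \frac{\omega}{n^2}\sum_{i=1}^n \E{\|g_i^k-h_i^k\|_2^2},
\end{equation*}
where $\bar g^k:=\tfrac{1}{n}\sum_i g_i^k$. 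The first term I would bound by $2\|\nabla f(x^k)\|_2^2$ plus a multiple of $D^k/(nm)$ using the standard SVRG/SAGA variance bound, and then use $L$-smoothness and convexity of each $f_{ij}$ to dominate $\|\nabla f(x^k)\|_2^2$ by $2L(f(x^k)-f^\star)$. For the second term I would use the triangle-type inequality $\|g_i^k-h_i^k\|_2^2\leq 2\|g_i^k-\nabla f_i(x^\star)\|_2^2+2\|h_i^k-\nabla f_i(x^\star)\|_2^2$, producing $H^k$ and another copy of $D^k$.

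\textbf{Recursions for $H^{k+1}$ and $D^{k+1}$.} Using $h_i^{k+1}=h_i^k+\alpha\hat{\Delta}_i^k$ and $\alpha\leq 1/(\omega+1)$, the standard "DIANA lemma" gives
\begin{equation*}
\E{\|h_i^{k+1}-\nabla f_i(x^\star)\|_2^2} \leq (1-\alpha)\|h_i^k-\nabla f_i(x^\star)\|_2^2 + \alpha\,\E{\|g_i^k-\nabla f_i(x^\star)\|_2^2},
\end{equation*}
and summing over $i$ yields $\E{H^{k+1}}\leq (1-\alpha)H^k+\alpha\cdot(\text{term bounded by }D^k\text{ and }f(x^k)-f^\star)$. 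For $D^{k+1}$, both variants contract at rate $1-1/m$ in expectation: for SAGA because only one entry per worker is refreshed per step, and for L-SVRG because with probability $1/m$ the whole table is replaced by $\nabla f_{ij}(x^k)$. In the refresh case, $\sum_{ij}\|\nabla f_{ij}(x^k)-\nabla f_{ij}(x^\star)\|_2^2$ is upper bounded by $2Lnm(f(x^k)-f^\star)$ via $L$-smoothness and convexity of the $f_{ij}$'s (Assumption~\ref{assumption:VRdianasc}), so
\begin{equation*}
\E{D^{k+1}} \leq \bigl(1-\tfrac{1}{m}\bigr)D^k + \tfrac{2Ln}{1}\bigl(f(x^k)-f^\star\bigr).
\end{equation*}

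\textbf{Assembly.} Plugging the second-moment bound into the $\|x^{k+1}-x^\star\|_2^2$ recursion, then adding $b\gamma^2$ times the $H$-recursion and $c\gamma^2$ times the $D$-recursion, I collect coefficients of $H^k$, $D^k$, and $f(x^k)-f^\star$. The prescribed values $b=\tfrac{4(\omega+1)}{\alpha n^2}$, $c=\tfrac{16(\omega+1)}{\alpha n^2}$, and $\gamma=\tfrac{1}{L(1+36(\omega+1)/n)}$ are exactly what is needed to make the $f(x^k)-f^\star$ coefficient nonpositive and to make the coefficients of $H^k$ and $D^k$ at most $b\gamma^2(1-\alpha/2)$ and $c\gamma^2(1-3/(8m))$ respectively, while the $\|x^k-x^\star\|_2^2$ coefficient is $(1-\gamma\mu)$. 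Taking $\rho$ equal to the minimum of these three contraction rates then gives $\E{\psi^{k+1}}\leq(1-\rho)\psi^k$.

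\textbf{Main obstacle.} The delicate part is the simultaneous tuning of $b$, $c$, and $\gamma$: the quantization variance injects an $\omega/n$-sized term into the bound on $\E{\|g^k\|_2^2}$ that feeds both $H$ and $D$, and one must ensure that $b$ is large enough to absorb the $H^k$-coefficient coming from the quantization bound yet small enough that $b\gamma^2\alpha\cdot\|g_i^k-\nabla f_i(x^\star)\|^2$ (which re-injects $D^k$) is dominated by the $3/(8m)$-contraction in $c\gamma^2 D^k$; analogously for $c$. Getting all three inequalities to hold with the stated constants is the combinatorial heart of the argument, and any slack there would degrade the factor $36$ in the denominator of $\gamma$.
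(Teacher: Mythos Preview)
Your proposal is correct and follows essentially the same approach as the paper: the paper proves precisely the four lemmas you outline (unbiasedness, the strong-convexity expansion of $\E{\|x^{k+1}-x^\star\|_2^2}$, the $(1-\alpha)$-contraction for $H^{k+1}$ via the DIANA update, the $(1-1/m)$-contraction for $D^{k+1}$, and the second-moment bound $\E{\|g^k\|_2^2}\le 2L(f(x^k)-f^\star)(1+\tfrac{4\omega+2}{n})+\tfrac{2(\omega+1)}{mn^2}D^k+\tfrac{2\omega}{n^2}H^k$ via the $T_1+T_2$ split you describe) and then assembles them exactly as you indicate. The only minor sharpening to note is that the paper exploits independence across workers in the $\bar g^k$ term as well, getting a $1/n^2$ rather than $1/n$ in front of the VR-variance contribution; this is what makes the stated constant $36$ work.
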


\begin{corollary}
    Let $\alpha = \frac{1}{\omega + 1}$.
	To achieve precision $\E{\|x^k - x^\star\|_2^2} \leq \varepsilon \psi^0$ VR-DIANA needs
	$
	\cO\left(
	(\kappa + \kappa \frac{\omega}{n} + m + \omega)\log\frac{1}{\epsilon}
	\right)
	$
	iterations.
\end{corollary}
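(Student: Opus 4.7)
The plan is to derive the iteration complexity as a direct consequence of the per-step contraction established in Theorem~\ref{thm:VR-DIANA}, by substituting the prescribed value $\alpha = 1/(\omega+1)$ into the rate $\rho$ and carefully bounding the resulting maximum of reciprocals. First, I would iterate the conditional contraction $\E{\psi^{k+1}\mid x^k, \{h_i^k\}, \{w_{ij}^k\}} \leq (1-\rho)\psi^k$ using the tower property of conditional expectation, yielding the unconditional bound $\E{\psi^k} \leq (1-\rho)^k \psi^0$. Since the two extra nonnegative terms in the Lyapunov function satisfy $b\gamma^2 H^k \geq 0$ and $c\gamma^2 D^k \geq 0$, we immediately obtain $\E{\norm{x^k - x^\star}_2^2} \leq \E{\psi^k} \leq (1-\rho)^k \psi^0$, so it suffices to find $k$ for which $(1-\rho)^k \leq \varepsilon$.

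Next, I would simplify $\rho$ under the choice $\alpha = 1/(\omega+1)$. The three terms inside the $\min$ become
\begin{equation*}
\frac{\mu}{L(1 + 36(\omega+1)/n)},\qquad \frac{1}{2(\omega+1)},\qquad \frac{3}{8m},
\end{equation*}
so that, recalling $\kappa = L/\mu$ (or, more precisely, the condition number appearing in the statement),
\begin{equation*}
\frac{1}{\rho} \;=\; \max\!\left\{\kappa\!\left(1 + \tfrac{36(\omega+1)}{n}\right),\; 2(\omega+1),\; \tfrac{8m}{3}\right\}.
\end{equation*}
Using $\max\{a,b,c\} \leq a+b+c$ for nonnegative reals, this quantity is bounded by a constant multiple of $\kappa + \kappa\omega/n + \omega + m$.

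Finally, I would translate the contraction bound into an iteration count via the standard inequality $1-\rho \leq e^{-\rho}$, valid for $\rho \in [0,1]$ (which holds here since each of the three terms defining $\rho$ is at most $1$ for the relevant parameter regimes). Then $(1-\rho)^k \leq e^{-k\rho} \leq \varepsilon$ is ensured by choosing $k \geq \rho^{-1}\log(1/\varepsilon)$. Combining with the bound on $1/\rho$ above gives the advertised complexity
\begin{equation*}
k \;=\; \cO\!\left(\bigl(\kappa + \kappa\tfrac{\omega}{n} + m + \omega\bigr)\log\tfrac{1}{\varepsilon}\right).
\end{equation*}

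This is essentially a bookkeeping argument once Theorem~\ref{thm:VR-DIANA} is in hand, so there is no genuine technical obstacle; the only subtle point worth checking is that Theorem~\ref{thm:VR-DIANA}'s step size $\gamma = 1/(L(1+36(\omega+1)/n))$ indeed gives the first term of $\rho$ equal to $\gamma\mu$ (so that the Lyapunov contraction factor in the $\norm{x-x^\star}^2$ coordinate is consistent), and that the $\max$-versus-sum conversion of $1/\rho$ preserves exactly the four additive terms claimed in the corollary. No condition on $\omega$ relative to $m$ is needed for this corollary.
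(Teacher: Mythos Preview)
Your proposal is correct and follows exactly the standard derivation the paper has in mind; the corollary is stated in the paper without an explicit proof, and your argument---iterating the contraction of Theorem~\ref{thm:VR-DIANA} via the tower property, dropping the nonnegative $H^k$ and $D^k$ terms, substituting $\alpha=1/(\omega+1)$, and converting $\min$ to $\max$ of reciprocals---is precisely the intended route. The only minor point is that here $\kappa = L/\mu$, which you handle correctly.
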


\begin{remark}
We would like to mention that even we do not consider the regularized problem in the second part, using our analysis, one can easily extend our result to non-smooth regularizer just by exploiting non-expansiveness of the proximal operator.
\end{remark}

Recall that variance reduced methods (such as SAGA or SVRG) converge at rate $\hat{\cO}(\kappa + m)$ in this setting (cf. Table~\ref{tab:algo-comparison}). The additional variance of the quantization operator enters the convergence rate in two ways: firstly, (i), as an additive component. However, in large scale machine learning applications the number of data samples $m$ on each machine is expected to be huge. Thus this additive component affects the rate only mildly.
Secondly, (ii), and more severely, as a multiplicative component $\frac{\kappa \omega}{n}$. However, we see that this factor is discounted by $n$, the number of workers. Thus by choosing a quantization operator with $\omega = \cO(n)$, this term can be controlled. In summary, by choosing a quantization operator with $\omega = \cO(\min\{n,m\})$, the rate of VR-DIANA becomes identical to the convergence rate of the vanilla variance reduced schemes without quantization. This shows the importance of algorithms that support arbitrary quantization schemes and that do not depend on a specific quantization scheme.

\subsection{Convex case}

Let us now look at the convergence under (standard) convexity assumption, that is, $\mu = 0$. Then by taking output to be some iterate $x^k$ with uniform probability instead of the last iterate, one gets the following convergence rate.

\begin{theorem}[Convex case]\label{thm:VR-DIANAweak}
Let Assumptions~\ref{assumption:VRdiana} and~\ref{assumption:VRdianac} hold, then a randomly chosen iterate  $x^a$ of Algorithm~\ref{alg:VR-DIANA}, i.e.\  $x^a \sim_{u.a.r.} \{x^0, x^1,\dots, x^{k-1}\}$ satisfies
\begin{equation*}
\E{f(x^a) - f^\star} \leq \frac{\psi_0}{2k\left(
		\gamma - 
		L\gamma^2
		\left[
			1 + \frac{36(\omega+1)}{n}	 		
		\right]
	\right)}, 
\end{equation*} 
where $k$ denotes the number of iterations.
\end{theorem}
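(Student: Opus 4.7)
\textbf{Proof plan for Theorem~\ref{thm:VR-DIANAweak}.}

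The plan is to recycle the one-step descent recursion developed for the strongly convex case (Theorem~\ref{thm:VR-DIANA}), but to isolate the functional suboptimality gap $f(x^k)-f^\star$ as the ``descent currency'' rather than absorbing it into a $\mu$-contraction. First I would reuse the same Lyapunov function $\psi^k=\|x^k-x^\star\|_2^2+b\gamma^2 H^k+c\gamma^2 D^k$ with the same constants $b=\tfrac{4(\omega+1)}{\alpha n^2}$ and $c=\tfrac{16(\omega+1)}{\alpha n^2}$, noting that these constants were chosen to guarantee clean recursions for $H^{k+1}$ and $D^{k+1}$ that do not rely on strong convexity of $f$ itself (only on convexity of each $f_{ij}$, which is Assumption~\ref{assumption:VRdianac}).

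The central step is the one-iteration inequality
\begin{equation*}
\E\!\left[\|x^{k+1}-x^\star\|_2^2\right]\le \|x^k-x^\star\|_2^2-2\gamma\,\langle \nabla f(x^k),x^k-x^\star\rangle+\gamma^2\,\E\!\left[\|g^k\|_2^2\right],
\end{equation*}
obtained by expanding the square of the gradient step $x^{k+1}=x^k-\gamma g^k$ together with the unbiasedness $\E[g^k\mid x^k]=\nabla f(x^k)$. Convexity of $f$ then gives $\langle\nabla f(x^k),x^k-x^\star\rangle\ge f(x^k)-f^\star$, which is the only place where $\mu=0$ changes the analysis relative to Theorem~\ref{thm:VR-DIANA}. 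For $\E[\|g^k\|_2^2]$ I would invoke the same quantization/variance-reduction bound used in the strongly convex proof, which decomposes into a term linear in $f(x^k)-f^\star$ (via $L$-smoothness of the $f_{ij}$'s, Assumption~\ref{assumption:VRdiana}) plus terms of the form $(\omega+1)H^k/n$ and $(\omega+1)D^k/(nm)$ that are already accounted for by $b\gamma^2 H^k$ and $c\gamma^2 D^k$ after the updates.

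Combining this with the contraction inequalities for $H^{k+1}$ and $D^{k+1}$ already established in the strongly convex proof, the extra $H^k$ and $D^k$ contributions cancel because $b,c$ were tuned for exactly this purpose, and what remains is
\begin{equation*}
\E[\psi^{k+1}]\le \psi^k-2\!\left(\gamma-L\gamma^2\!\left[1+\tfrac{36(\omega+1)}{n}\right]\right)\!\bigl(f(x^k)-f^\star\bigr).
\end{equation*}
The final step is to telescope this inequality from $0$ to $k-1$, drop the nonnegative $\E[\psi^k]$ on the left, and divide by $k$; since $x^a$ is drawn uniformly from $\{x^0,\dots,x^{k-1}\}$, Jensen's inequality (or just linearity over the uniform average) gives $\E[f(x^a)-f^\star]=\tfrac1k\sum_{i=0}^{k-1}\E[f(x^i)-f^\star]$, yielding the stated bound.

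The main obstacle I anticipate is verifying that the coefficients $b$ and $c$ chosen in Theorem~\ref{thm:VR-DIANA} still absorb the $H^k$ and $D^k$ cross-terms \emph{without} using any $\mu>0$ slack: the strongly convex proof effectively trades off between a $-\mu\gamma\|x^k-x^\star\|_2^2$ term and residual $H^k,D^k$ contributions, and one must check that under $\mu=0$ the same choices of $b,c$ still leave a clean, nonnegative coefficient in front of $f(x^k)-f^\star$, which is precisely $\gamma-L\gamma^2[1+36(\omega+1)/n]$. Once this bookkeeping is confirmed, the telescoping and uniform averaging are routine.
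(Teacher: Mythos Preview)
Your proposal is correct and follows essentially the same route as the paper: take the one-step Lyapunov inequality \eqref{conv_VR} derived in the strongly convex proof, set $\mu=0$ (using plain convexity for the inner-product term), verify that the chosen $b,c$ keep the $H^k$ and $D^k$ coefficients at most $1$, and telescope/average. Your anticipated ``obstacle'' is a non-issue: the constants $b,c$ from Theorem~\ref{thm:VR-DIANA} already force the $H^k$ coefficient to $1-\alpha/2$ and the $D^k$ coefficient to $1-3/(8m)$ \emph{without} any help from a $\mu>0$ term, so nothing is lost when $\mu=0$. One small caution for the arithmetic: the value $c=\tfrac{16(\omega+1)}{\alpha n^2}$ you quote is a typo in the theorem statement; the proof of Theorem~\ref{thm:VR-DIANA} actually uses $c=\tfrac{16(\omega+1)}{n^2}$, and it is this value that produces exactly the constant $36(\omega+1)/n$ in the bound.
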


\begin{corollary}
    Let $\gamma = \frac{1}{2L\sqrt{m}\left(1+36\frac{\omega + 1}{n}\right)}$, $b = \frac{2(\omega+1)}{\alpha n^2}$, $c= \frac{6(\omega+1)}{n^2}$ and $\alpha = \frac{1}{\omega + 1}$.
	To achieve precision $\E{f(x^a) - f^\star} \leq\varepsilon$ VR-DIANA needs
	$
	\cO\left(
	\frac{\left(1+\frac{\omega}{n}\right)\sqrt{m} + \frac{\omega}{\sqrt{m}}}{\epsilon}
	\right)
	$
	iterations.
\end{corollary}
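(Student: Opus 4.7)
The plan is to mirror the proof of Theorem~\ref{thm:VR-DIANA} but, since strong convexity is no longer available to collapse a $(f(x^k) - f^\star)$ term into a $\|x^k - x^\star\|^2$ term, I would keep $(f(x^k)-f^\star)$ explicit on the right-hand side of the one-step Lyapunov recursion and then telescope. Concretely, for the same type of Lyapunov function
\[
\psi^k = \|x^k - x^\star\|_2^2 + b\gamma^2 H^k + c\gamma^2 D^k
\]
with $b,c>0$ to be tuned (the corollary suggests $b=2(\omega+1)/(\alpha n^2)$ and $c=6(\omega+1)/n^2$), the goal is the inequality
\[
\E{\psi^{k+1}\mid \mathcal{F}^k}\ \leq\ \psi^k\ -\ 2\bigl(\gamma - L\gamma^2[1+36(\omega+1)/n]\bigr)(f(x^k)-f^\star).
\]
Taking total expectation, telescoping $t=0,\dots,k-1$, dropping $\E{\psi^k}\geq 0$ on the left, and using that $x^a$ is drawn uniformly from $\{x^0,\ldots,x^{k-1}\}$ (so $\E{f(x^a)-f^\star}=\tfrac{1}{k}\sum_{t=0}^{k-1}\E{f(x^t)-f^\star}$) yields the claimed bound with $\psi_0$ in the numerator.

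For the one-step inequality, I would combine three ingredients. First, expand
\[
\E{\|x^{k+1}-x^\star\|_2^2 \mid \mathcal{F}^k} = \|x^k-x^\star\|_2^2 - 2\gamma\dotprod{\nabla f(x^k)}{x^k-x^\star} + \gamma^2\E{\|g^k\|_2^2\mid\mathcal{F}^k},
\]
use convexity of $f$ to get $\dotprod{\nabla f(x^k)}{x^k-x^\star}\geq f(x^k)-f^\star$, and bound $\E{\|g^k\|_2^2}$ using: (i) unbiasedness of $Q$, giving a quantization-variance contribution $\leq (\omega/n^2)\sum_i\|g_i^k-h_i^k\|_2^2$ that splits into $H^k$ plus $\|g_i^k-\nabla f_i(x^k)\|_2^2$ pieces; (ii) the SVRG/SAGA identity that controls $\mathbb{E}\|g_i^k-\nabla f_i(x^k)\|_2^2$ via $D^k$; and (iii) the convex $L$-smooth bound $\|\nabla f_{ij}(x)-\nabla f_{ij}(x^\star)\|_2^2 \leq 2L\bigl(f_{ij}(x)-f_{ij}(x^\star)-\dotprod{\nabla f_{ij}(x^\star)}{x-x^\star}\bigr)$, which after averaging over $i,j$ produces a multiple of $(f(x^k)-f^\star)$. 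Second, exactly as in the proof of Theorem~\ref{thm:VR-DIANA}, derive a contraction $\E{H^{k+1}\mid\mathcal{F}^k}\leq (1-\alpha)H^k + \alpha\cdot(\text{terms bounded via the same gradient splits})$. Third, treat $D^{k+1}$: both variants yield $\E{D^{k+1}\mid\mathcal{F}^k}=(1-\tfrac{1}{m})D^k + \tfrac{1}{m}\sum_{i,j}\|\nabla f_{ij}(x^k)-\nabla f_{ij}(x^\star)\|_2^2$, and the second sum is again bounded by a multiple of $nmL(f(x^k)-f^\star)$ by smoothness+convexity.

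The main obstacle will be the bookkeeping: I need to choose $b,c$ so that when I add $b\gamma^2\E{H^{k+1}}$ and $c\gamma^2\E{D^{k+1}}$ to the iterate-distance bound, all residual multiples of $H^k$ and $D^k$ on the right are dominated by the $-\alpha b\gamma^2 H^k$ and $-\tfrac{1}{m}c\gamma^2 D^k$ coming from their contractions, leaving only a clean $(f(x^k)-f^\star)$ term with coefficient $-2(\gamma - L\gamma^2[1+36(\omega+1)/n])$. This balancing uses $\alpha\leq 1/(\omega+1)$ critically and is the same calculation that forced the factor $36(\omega+1)/n$ in Theorem~\ref{thm:VR-DIANA}; the only real difference in the convex case is that we do not need to convert the surviving $(f(x^k)-f^\star)$ into $\|x^k-x^\star\|_2^2$ via a strong-convexity inequality, which is precisely why $b$ and $c$ can be taken slightly smaller than in the strongly convex theorem. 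Once this choice is verified, the telescoping step is immediate.
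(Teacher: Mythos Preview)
Your proposal is correct and follows essentially the same route as the paper. The paper proves Theorem~\ref{thm:VR-DIANAweak} by taking the one-step inequality~\eqref{conv_VR} already derived in the proof of Theorem~\ref{thm:VR-DIANA}, setting $\mu=0$, and telescoping exactly as you describe; the corollary then follows by substituting the stated $\alpha,b,c,\gamma$ (your observation that $b,c$ can be taken smaller than in the strongly convex case because one only needs the $H^k$ and $D^k$ coefficients to be nonpositive, not strictly contracting, is precisely why the corollary uses $b=\tfrac{2(\omega+1)}{\alpha n^2}$ and $c=\tfrac{6(\omega+1)}{n^2}$ instead of the $4$ and $16$ constants from Theorem~\ref{thm:VR-DIANA}). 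The only piece you leave implicit is the final arithmetic: with the given $\gamma$ one has $\gamma-L\gamma^2[1+36(\omega+1)/n]=\gamma(1-\tfrac{1}{2\sqrt{m}})\geq \gamma/2$, so the complexity is governed by $\psi^0/\gamma$, and the $\omega/\sqrt{m}$ term in the rate arises from the $b\gamma H^0$ contribution to $\psi^0/\gamma$ (recall $H^0$ is a sum over $n$ workers).
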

Here we see that along the quantization operator is chosen to satisfy $\omega = \cO(\min\{m,n\})$ the convergence rate is not worsened compared to a scheme without quantization.

\subsection{Non-convex case}

Finally,  convergence guarantee in the non-convex case is provided by the following theorem.

\begin{theorem}\label{thm:VR-DIANAnc}
Let Assumption~\ref{assumption:VRdiana} hold. Moreover, let   $\gamma = \frac{1}{10L\left( 1 + \frac{\omega}{n} \right)^{1/2} (m^{2/3} + \omega + 1)}$ and $\alpha = \frac{1}{\omega+1}$, then  a randomly chosen iterate $x^a \sim_{u.a.r.} \{x^0, x^1,\dots, x^{k-1}\}$ of Algorithm~\ref{alg:VR-DIANA} satisfies
\begin{align*}
&\E{\norm{\nabla f(x^a)}_2^2} \leq \\ 
&\frac{40(f(x^0) - f^\star)L\left(1 +  \frac{\omega}{n} \right)^{1/2}(m^{2/3} + \omega + 1)}{k}, 
\end{align*}
where $k$ denotes the number of iterations.
\end{theorem}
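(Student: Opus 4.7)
The plan is to combine the smoothness descent lemma with carefully designed recursions for the quantization state $h_i^k$ and for the variance-reduction memory, and then package everything into a single Lyapunov function of the form
\begin{equation*}
\Phi^k \eqdef f(x^k) - f^\star + A\gamma^2 H^k + B\gamma^2 D^k,
\end{equation*}
where $H^k \eqdef \sum_{i=1}^n \norm{h_i^k - \nabla f_i(x^k)}_2^2$ and $D^k \eqdef \sum_{i,j}\norm{\nabla f_{ij}(w_{ij}^k) - \nabla f_{ij}(x^k)}_2^2$. I initialize $h_i^0$ and $w_{ij}^0$ so that $H^0 = D^0 = 0$ and hence $\Phi^0 = f(x^0) - f^\star$. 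Note that because $x^\star$ is unavailable in the non-convex setting, the reference points inside $H^k$ and $D^k$ must move with $x^k$; this is the main difference with the strongly convex analysis and the source of most of the technical work.

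First I would apply $L$-smoothness to bound $f(x^{k+1}) \le f(x^k) - \gamma\lin{\nabla f(x^k), g^k} + \tfrac{L\gamma^2}{2}\norm{g^k}_2^2$, take conditional expectation, and use the fact that both the SVRG/SAGA estimator and the quantization are unbiased to get $\E{\lin{\nabla f(x^k), g^k}} = \norm{\nabla f(x^k)}_2^2$. Next I would decompose $\E\norm{g^k}_2^2 = \norm{\nabla f(x^k)}_2^2 + \mathrm{Var}(g^k)$, where $\mathrm{Var}(g^k)$ is split by the tower property into the quantization variance (bounded by \eqref{def:omega} as $\tfrac{\omega}{n^2}\sum_i \norm{g_i^k - h_i^k}_2^2$) and the residual variance of the SVRG/SAGA estimator (bounded under $L$-smoothness by $\tfrac{1}{nm} D^k$). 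Splitting $\norm{g_i^k - h_i^k}_2^2 \le 2\norm{g_i^k - \nabla f_i(x^k)}_2^2 + 2\norm{\nabla f_i(x^k) - h_i^k}_2^2$ produces a clean $H^k$ term plus a piece that is absorbed into $D^k/(nm)$.

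Then I would derive one-step recursions for $H^k$ and $D^k$. For $H^k$, use $h_i^{k+1} = h_i^k + \alpha\hat{\Delta}_i^k$ with $\E{\hat{\Delta}_i^k} = g_i^k - h_i^k$ and $\E\norm{\hat{\Delta}_i^k}_2^2 \le (\omega+1)\norm{g_i^k - h_i^k}_2^2$, and then expand $\norm{h_i^{k+1} - \nabla f_i(x^{k+1})}_2^2$ via $(1+\beta)$/$(1+1/\beta)$ with $\beta = \Theta(\alpha)$, so that the cross term $\norm{\nabla f_i(x^{k+1}) - \nabla f_i(x^k)}_2^2$ is controlled by $L^2\gamma^2\norm{g^k}_2^2$. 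For $D^k$, both variants shrink the old table by a factor $1 - 1/m$ on average and introduce an additional $\norm{x^k - w_{ij}^k}_2^2$-style increment; the L-SVRG case replaces all coordinates simultaneously with probability $1/m$, while SAGA replaces a single random index, but the resulting recursion in expectation is essentially the same, namely $\E D^{k+1} \le (1 - 1/m) D^k + \Theta(1)\cdot$(gradient-variance terms). Combining these recursions with the descent inequality, the constants $A \asymp \omega/(\alpha n^2)$ and $B \asymp 1/(nm)$ are chosen so that the variance terms produced in the $f$-descent are absorbed by the geometric decrease in the $H$- and $D$-recursions, leaving a clean $-\tfrac{\gamma}{2}\norm{\nabla f(x^k)}_2^2$ on the right-hand side. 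Telescoping $\E\Phi^{k+1} \le \Phi^k - \tfrac{\gamma}{2}\norm{\nabla f(x^k)}_2^2$ from $t=0$ to $k-1$, dividing by $k$, and using that $x^a$ is drawn uniformly at random yields the stated bound.

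The main obstacle is the bookkeeping in step~(4): the moving references in $H^k$ and $D^k$ create cross terms proportional to $L^2\gamma^2\norm{g^k}_2^2$ that must be re-absorbed into the $\tfrac{L\gamma^2}{2}\norm{g^k}_2^2$ from descent; this, together with the $\omega$-amplification of the quantization term and the standard non-convex SVRG/SAGA dependence on $m^{2/3}$, is exactly what forces the step size to scale as $\gamma = \Theta\!\bigl(L^{-1}(1+\omega/n)^{-1/2}(m^{2/3}+\omega+1)^{-1}\bigr)$. Once the step size is pinned down, the final rate follows mechanically.
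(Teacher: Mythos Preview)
Your outline differs from the paper's proof in two structural ways, and one of them is a genuine gap.

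\textbf{Different Lyapunov ingredients.} The paper tracks iterate distances
\[
W^k \;=\; \tfrac{1}{mn}\sum_{i,j}\norm{x^k - w_{ij}^k}_2^2
\]
rather than gradient differences $D^k = \sum_{i,j}\norm{\nabla f_{ij}(w_{ij}^k)-\nabla f_{ij}(x^k)}_2^2$. With iterate distances the cross term in the $W^{k+1}$ recursion is $-2\gamma\langle \nabla f(x^k),\,x^k - w_{ij}^k\rangle$, and a Young inequality with a \emph{free} parameter $p$ turns this into $\gamma p\,W^k + \tfrac{\gamma}{p}\norm{\nabla f(x^k)}_2^2$. The additive piece is a pure $\norm{\nabla f(x^k)}_2^2$, not $\E\norm{g^k}_2^2$; this decoupling is what lets $p$ be optimised later. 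Your $D^k$ with moving reference $x^k$ does not enjoy this: the recursion picks up $(1+1/\beta)L^2\gamma^2\E\norm{g^k}_2^2$ with $\beta=\Theta(1/m)$, so the additive term already carries a factor of $m$, and you are back in a circular estimate.

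\textbf{Fixed versus time-varying coefficients (the gap).} The paper does \emph{not} use a Lyapunov function with fixed weights. It sets
\[
R^k \;=\; f(x^k) + c^k W^k + d^k F^k,
\]
with $(c^k,d^k)$ defined by a \emph{backward} recursion $y^k = A y^{k+1} + b$ (Lemma~\ref{lem:sequence}), where $A$ and $b$ depend on $\gamma,\alpha,p$. The $m^{2/3}$ rate is extracted precisely by tuning $p$ and bounding the growth of $(c^k,d^k)$ via the eigenvalues of $A$; this is the Reddi--Hefny--Sra--P{\'o}czos--Smola mechanism for non-convex SVRG. With fixed weights $A,B$ your telescoping argument yields $\E\Phi^{k+1}\le\Phi^k - c\gamma\norm{\nabla f(x^k)}_2^2$ only after the step size has absorbed all variance terms, and the natural outcome of that balance is $\gamma = \Theta(1/(Lm))$, hence an $O(m/\epsilon)$ complexity, not $O(m^{2/3}/\epsilon)$. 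Your final sentence (``the standard non-convex SVRG/SAGA dependence on $m^{2/3}$ \dots\ follows mechanically'') hides exactly the nontrivial step. To match the theorem's constants you need either the backward-recursion / time-varying-coefficient machinery or an explanation of how fixed weights can produce $m^{2/3}$.
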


\begin{corollary}
	To achieve precision $\E{\norm{\nabla f(x^a)}_2^2} \leq\varepsilon$ VR-DIANA needs
	$
	\cO\left(
	\left(1 + \frac{\omega}{n} \right)^{1/2} \frac{ m^{2/3} + \omega}{\varepsilon}
	\right)
	$
	iterations.
\end{corollary}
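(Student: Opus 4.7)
The plan is to follow the standard descent-plus-Lyapunov template for non-convex variance-reduced SGD, adapted to the DIANA tracking mechanism. Starting from the $L$-smoothness descent inequality for $f$ (whose smoothness constant inherits from that of all $f_{ij}$), and using the unbiasedness $\E{g^k \mid x^k} = \nabla f(x^k)$ of the VR-DIANA estimator, I would get
\begin{align*}
\E{f(x^{k+1})} \leq f(x^k) - \gamma \norm{\nabla f(x^k)}_2^2 + \tfrac{L\gamma^2}{2}\bigl(\norm{\nabla f(x^k)}_2^2 + \E{\norm{g^k - \nabla f(x^k)}_2^2}\bigr).
\end{align*}
Exploiting that the quantizations are independent across workers and using the tower property, the variance splits as
\begin{align*}
\E{\norm{g^k - \nabla f(x^k)}_2^2} \leq \tfrac{\omega}{n^2}\sum_i \E{\norm{g_i^k - h_i^k}_2^2} + \tfrac{1}{n^2}\sum_i \E{\norm{g_i^k - \nabla f_i(x^k)}_2^2},
\end{align*}
with the first term being the $\omega$-quantization variance and the second the intrinsic variance of the L-SVRG/SAGA estimator, bounded by $\tfrac{L^2}{nm}\sum_{i,j}\norm{w_{ij}^k - x^k}_2^2$ through $L$-smoothness of $f_{ij}$.

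Next I would introduce two auxiliary sequences to close the recursion: $H^k = \sum_i \norm{h_i^k - \nabla f_i(x^k)}_2^2$ tracking the local memories, and $D^k = \sum_{i,j}\norm{w_{ij}^k - x^k}_2^2$ tracking the VR anchors. For $H^k$, the standard DIANA identity combined with $\alpha \leq 1/(\omega+1)$ gives $\E{\norm{h_i^{k+1} - \nabla f_i(x^k)}_2^2 \mid x^k, g_i^k} \leq (1-\alpha)\norm{h_i^k - \nabla f_i(x^k)}_2^2 + \alpha\norm{g_i^k - \nabla f_i(x^k)}_2^2$; re-centering from $\nabla f_i(x^k)$ to $\nabla f_i(x^{k+1})$ via Young's inequality with a parameter $\beta$ and using $\norm{\nabla f_i(x^{k+1})-\nabla f_i(x^k)}_2^2 \leq L^2\gamma^2\norm{g^k}_2^2$ introduces a controlled additive term. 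For $D^k$, both the L-SVRG (with probability $1/m$ per iteration) and SAGA (one coordinate of the table per worker) updates yield a recurrence of the form $\E{D^{k+1}} \leq (1-1/m)D^k + m\norm{x^{k+1}-x^k}_2^2 = (1-1/m)D^k + m\gamma^2\norm{g^k}_2^2$, which couples back to the gradient norm.

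I would then build the Lyapunov function
\begin{align*}
\Phi^k = f(x^k) - f^\star + A\gamma^2 H^k + B\gamma^2 D^k,
\end{align*}
with constants $A, B$ tuned so that after summing the three recurrences and using the stated $\gamma$, all cross terms involving $H^k$, $D^k$, and $\E{\norm{g^k}_2^2}$ are absorbed and what remains is $\E{\Phi^{k+1}} \leq \Phi^k - c\gamma\norm{\nabla f(x^k)}_2^2$ with $c \geq 1/4$. Telescoping from $0$ to $k-1$ with the initialization $h_i^0 = \nabla f_i(x^0)$ and $w_{ij}^0 = x^0$ (so $H^0 = D^0 = 0$ and $\Phi^0 = f(x^0) - f^\star$), and averaging via the uniform random output $x^a$, yields the desired bound once the given $\gamma$ is substituted.

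The main obstacle will be the joint balancing of three competing noise sources: the descent term $\tfrac{L\gamma^2}{2}\norm{\nabla f(x^k)}_2^2$ (requiring $\gamma = \cO(1/L)$), the quantization feedback that couples $\E{\norm{g^k}_2^2}$ back into $H^k$ through the re-centering Young's inequality, and the VR noise of order $L^2 D^k/(nm)$. The factor $(1+\omega/n)^{1/2}$ in $\gamma$ should emerge as the minimizer of the Young parameter $\beta$ used when re-centering $H^k$ from $x^k$ to $x^{k+1}$, equating two conflicting contributions; the factor $m^{2/3}$ arises from the standard non-convex VR balance between the epoch contraction $1/m$ and the step size, propagated through the $D^k$ recurrence. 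Getting the coefficients $A$, $B$ simultaneously right so that both the $H^k$-contraction $\alpha = 1/(\omega+1)$ and the $D^k$-contraction $1/m$ dominate the feedback from $\E{\norm{g^k}_2^2}$ will be the core algebraic challenge.
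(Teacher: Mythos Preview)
Your proposal follows the right template and identifies the correct auxiliary quantities---the tracking errors $H^k$ and the anchor distances $D^k$, which the paper denotes $F^k$ and $W^k$---but it diverges from the paper in one structural choice: you use \emph{constant} Lyapunov weights $A$, $B$, whereas the paper employs \emph{time-varying} weights $c^k, d^k$ defined by a backward recursion $y^t = A y^{t+1} + b$ with terminal condition $y^T = 0$ (in the style of Reddi et al.\ for non-convex SVRG). The paper then bounds $\max_t c^t$ and $\max_t d^t$ via a Schur-decomposition argument on the $2\times 2$ recursion matrix and reads off a uniform lower bound on $\Gamma^k$. Your route is conceptually cleaner and avoids that matrix lemma; the paper's makes the three-way balancing automatic---one never has to guess $A$, $B$, only verify the recursion stays bounded.

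Two technical points in your sketch need correction before it can close. First, your $D^k$ recurrence $\E{D^{k+1}} \leq (1-\tfrac{1}{m})D^k + m\gamma^2\norm{g^k}_2^2$ is not what either variant delivers: a direct Young bound on $\norm{x^{k+1}-w_{ij}^k}_2^2$ destroys the $1/m$ contraction. The paper instead expands the cross term $2\langle x^{k+1}-x^k, x^k-w_{ij}^k\rangle$ and applies Young \emph{there} with a free parameter $p$, obtaining $(1-\tfrac{1}{m}+\gamma p)W^k + \tfrac{\gamma}{p}\norm{\nabla f(x^k)}_2^2 + \E{\norm{x^{k+1}-x^k}_2^2}$. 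This $p$ is exactly what, when optimized jointly with $\gamma$, produces the $m^{2/3}$ scaling. Second, the factor $(1+\omega/n)^{1/2}$ does not arise from the Young parameter in the re-centering of $H^k$ as you suggest---the paper fixes that parameter to $\tau=\alpha/2$. Rather, $(1+\omega/n)^{1/2}$ comes from the choice of $p$ in the $W^k$ recurrence, balanced against the $\tfrac{\omega+1}{n}L^2W^k$ contribution to $\E{\norm{g^k}_2^2}$. With these two fixes your constant-weight scheme can be pushed through, but as written the recurrences would not land on the stated $\gamma$.
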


As long as $\omega\le m^{2/3}$, the iteration complexity above is $\cO(\omega^{1/2})$ assuming the other terms are fixed. At the same time, the communication complexity is proportional to the number of nonzeros, which for random dithering and random sparsification decreases as  $\cO(1/\omega)$. Therefore, one can trade-off iteration and communication complexities by using quantization. Some of these trade-offs are mentioned in Table~\ref{tab:algo-comparison} above.
\section{Experiments}
\begin{figure*}[t]
\centering
\subfigure[Real-sim, \texttt{$\lambda_2 = 6\cdot 10^{-5}$}]{\includegraphics[width=0.35\textwidth]{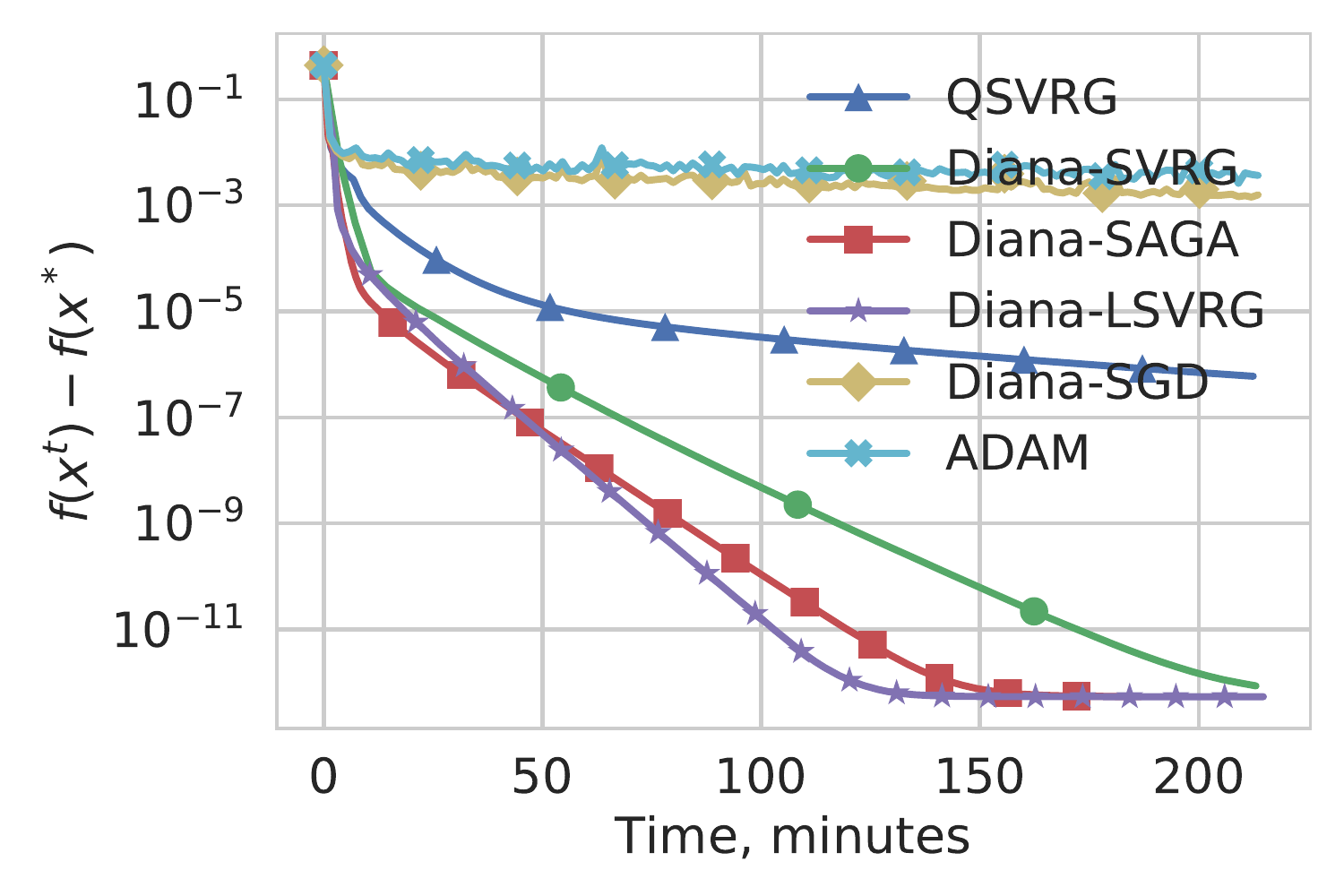}}
\subfigure[Real-sim, \texttt{$\lambda_2 = 6\cdot 10^{-5}$}]{\includegraphics[width=0.35\textwidth]{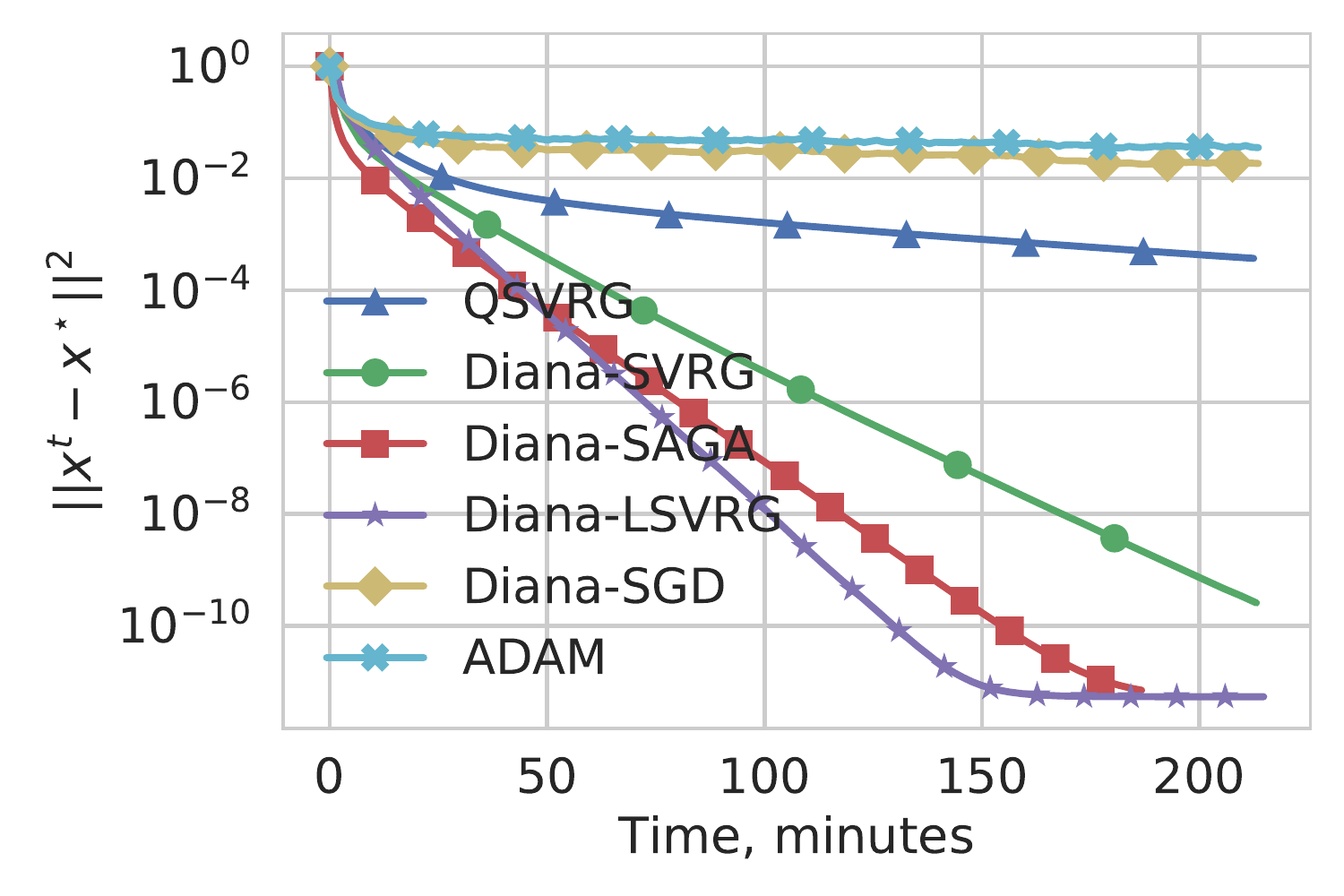}}
\caption{Comparison of VR-DIANA, Diana-SGD, QSVRG and TernGrad-Adam with $n=12$ workers on real-sim dataset, whose size is 72309 and dimension $d=20598$. We plot functional suboptimality on the left and distance from the optimum on the right. $\ell_{\infty}$ dithering is used for every method except for QSVRG, which uses $\ell_2$ dithering. We chose small value of $\lambda_2$ for this dataset to give bigger advantage to sublinear rates of Diana-SGD and TernGrad-ADAM, however, they are still much small than linear rates of variance reduced methods.}
\label{fig:with_adam}
\end{figure*}

\begin{figure*}[t!]
\centering
\subfigure[\texttt{SAGA}]
{\includegraphics[trim={3mm 0 3mm 0},clip,width=0.325\textwidth]{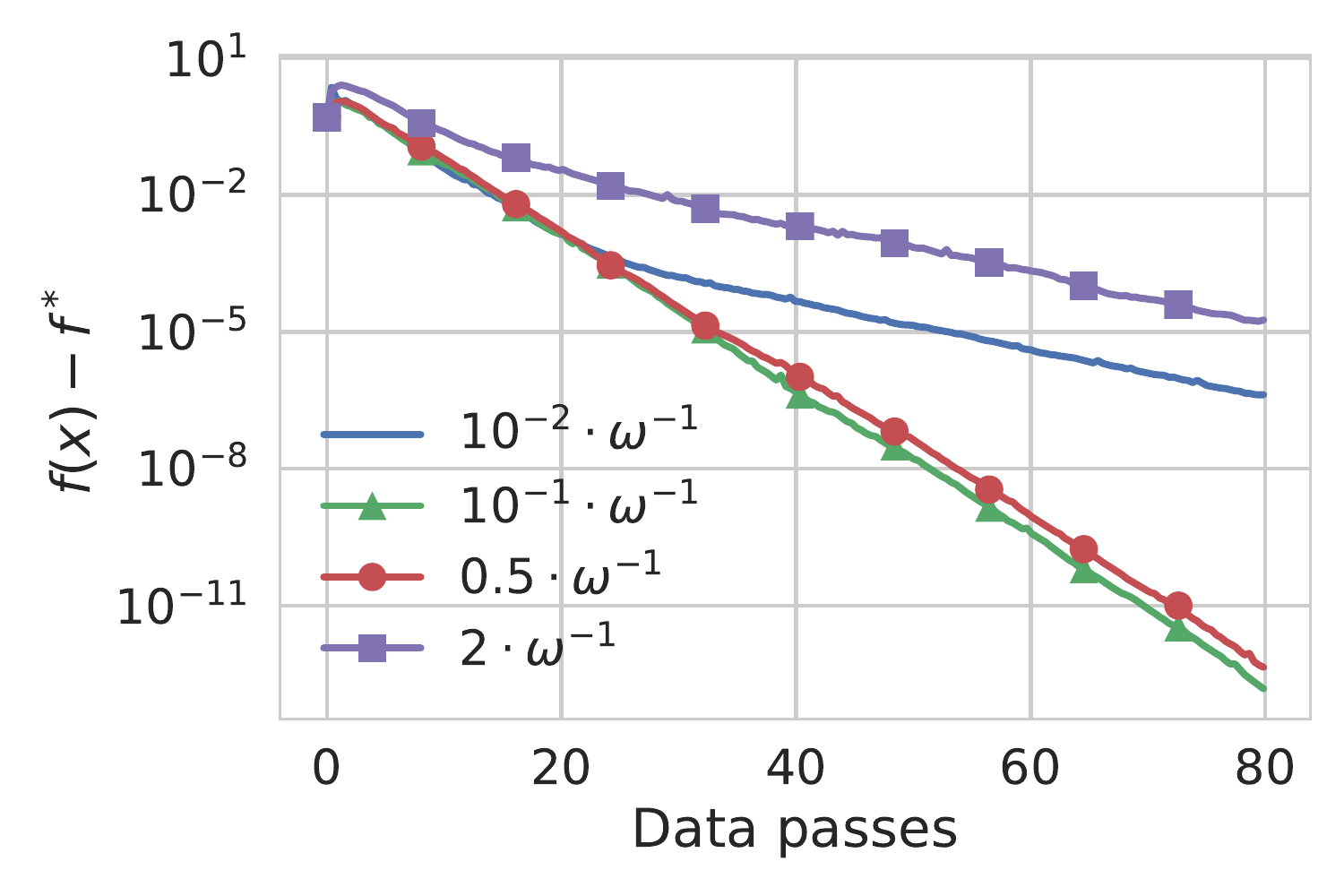}}
\subfigure[\texttt{SVRG}]
{\includegraphics[trim={3mm 0 3mm 0},clip,width=0.325\textwidth]{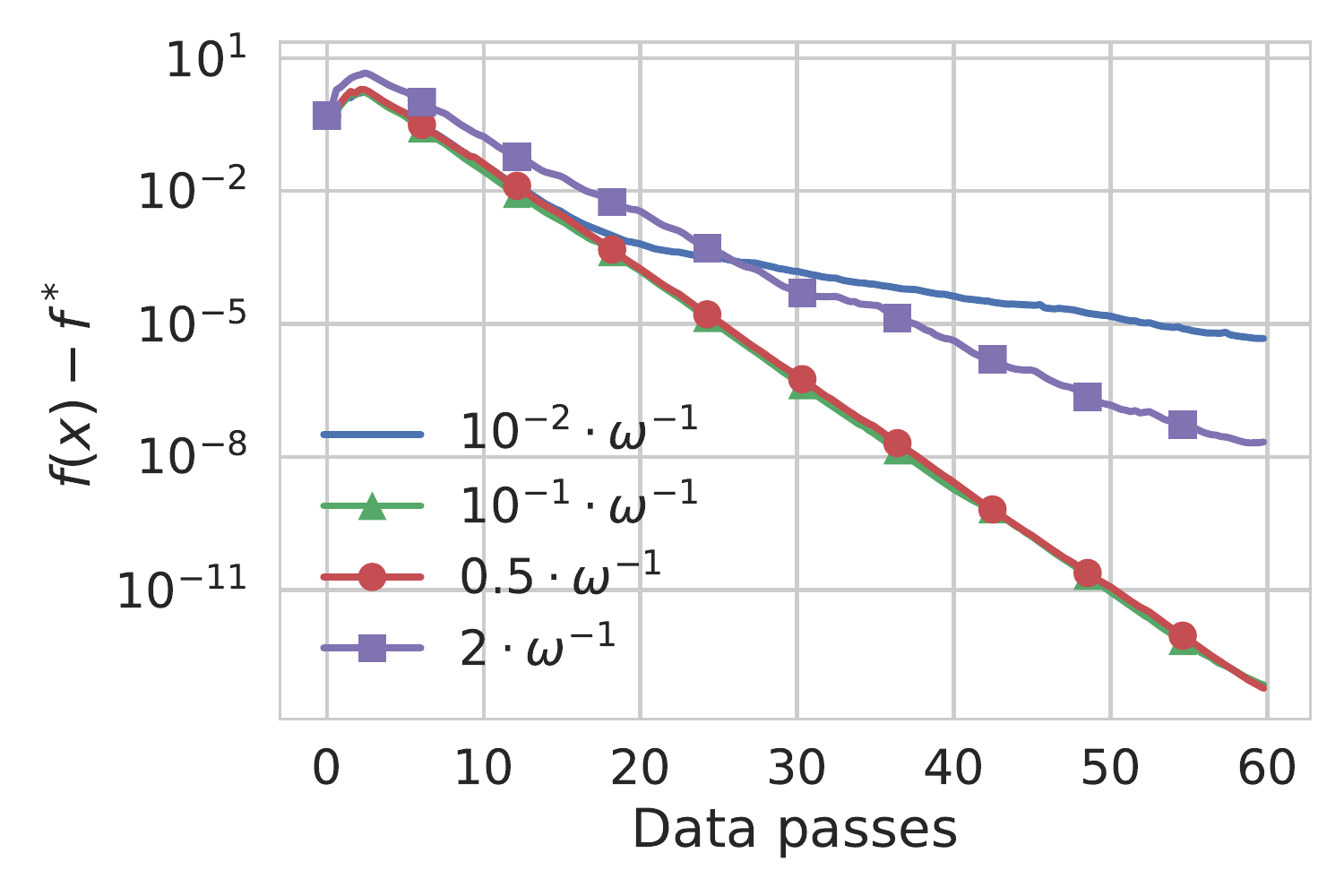}}
\subfigure[\texttt{L-SVRG}]
{\includegraphics[trim={3mm 0 3mm 0},clip,width=0.325\textwidth]{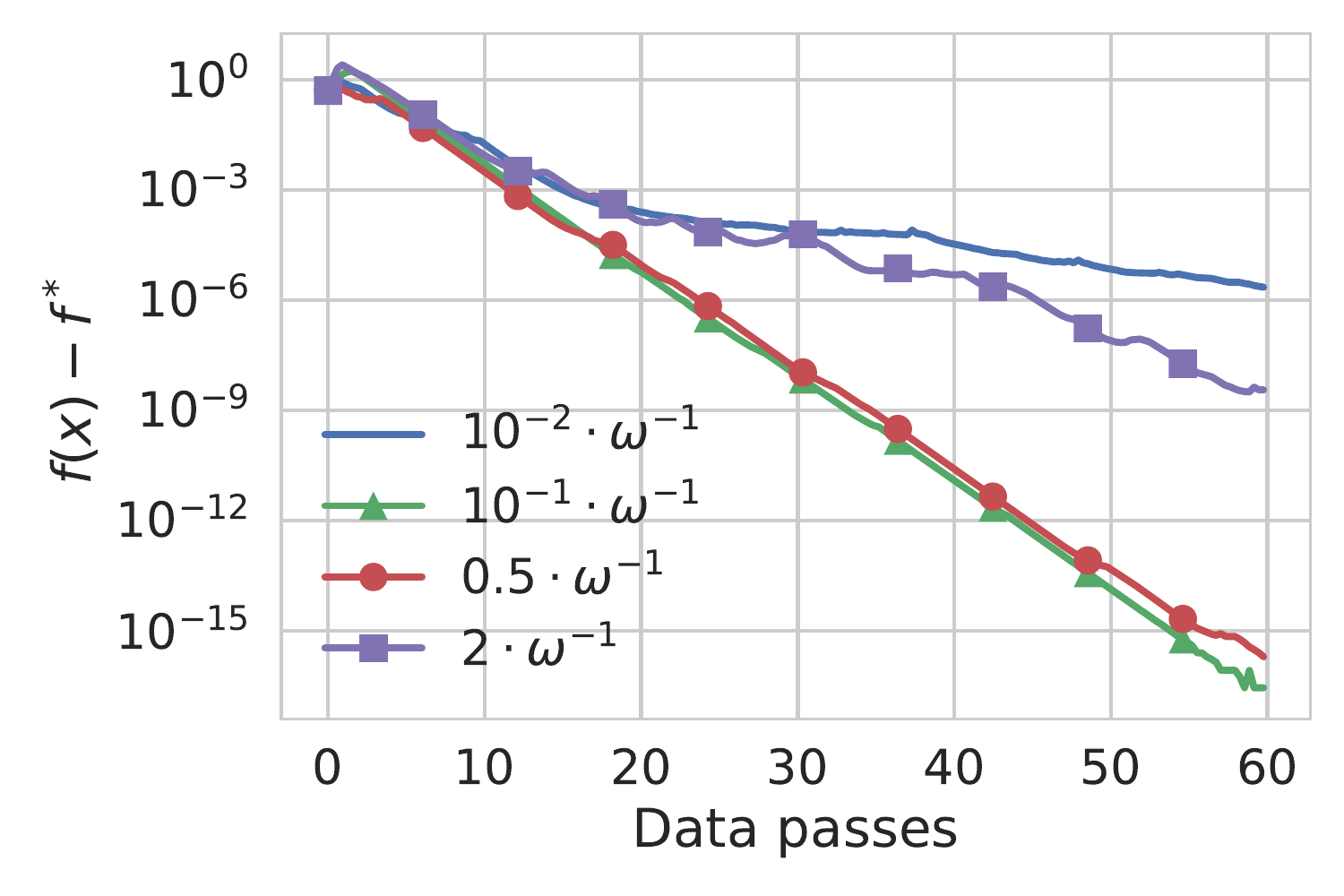}}
\caption{Comparison of VR methods with different parameter $\alpha$ for solving the Gisette dataset with block size 2000, $\ell_2$-penalty $\lambda_2=2\cdot 10^{-1}$, and $\ell_2$ random dithering.\label{fig:alpha_comparison}}
\end{figure*}

\begin{figure*}[t]
\centering
\subfigure[\texttt{Mushrooms,$\lambda_2 = 6\cdot 10^{-4}$}]{
\includegraphics[width=0.24\textwidth]{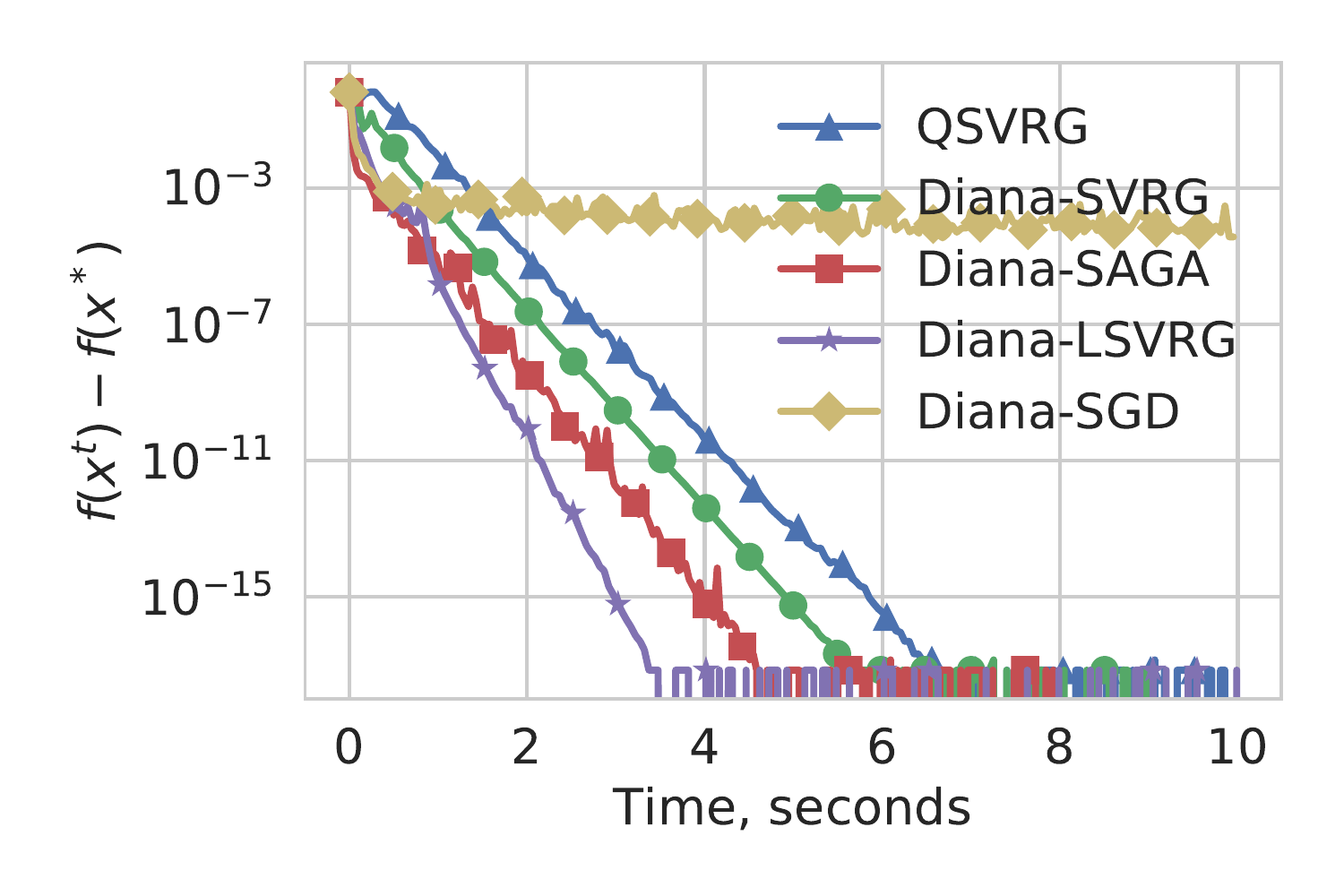}}
\hfill
\subfigure[\texttt{Mushrooms,$\lambda_2 = 6\cdot 10^{-5}$}]{\includegraphics[width=0.24\textwidth]{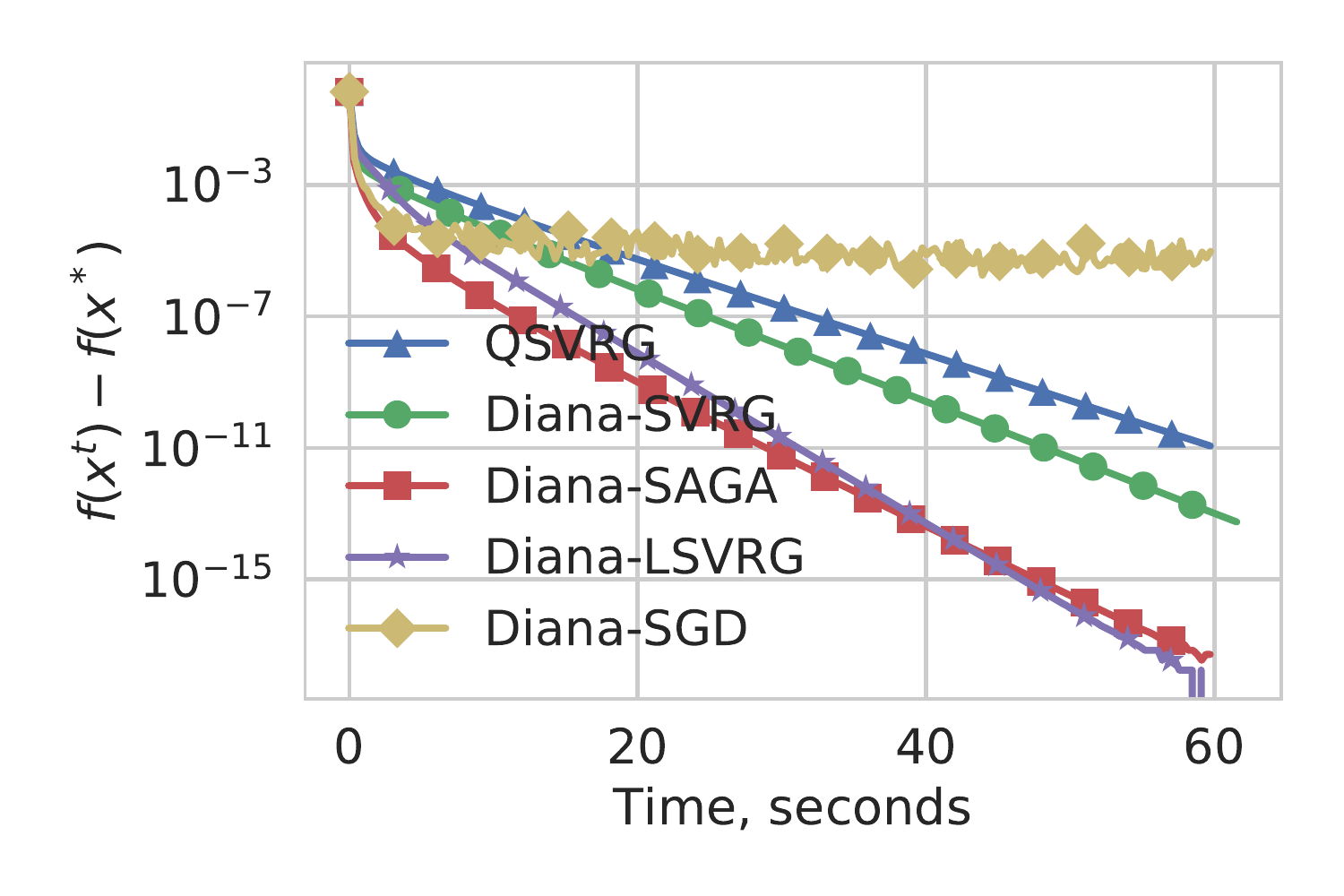}}
\hfill
\subfigure[\texttt{a5a,$\lambda_2 = 5\cdot 10^{-4}$}]{\includegraphics[width=0.24\textwidth]{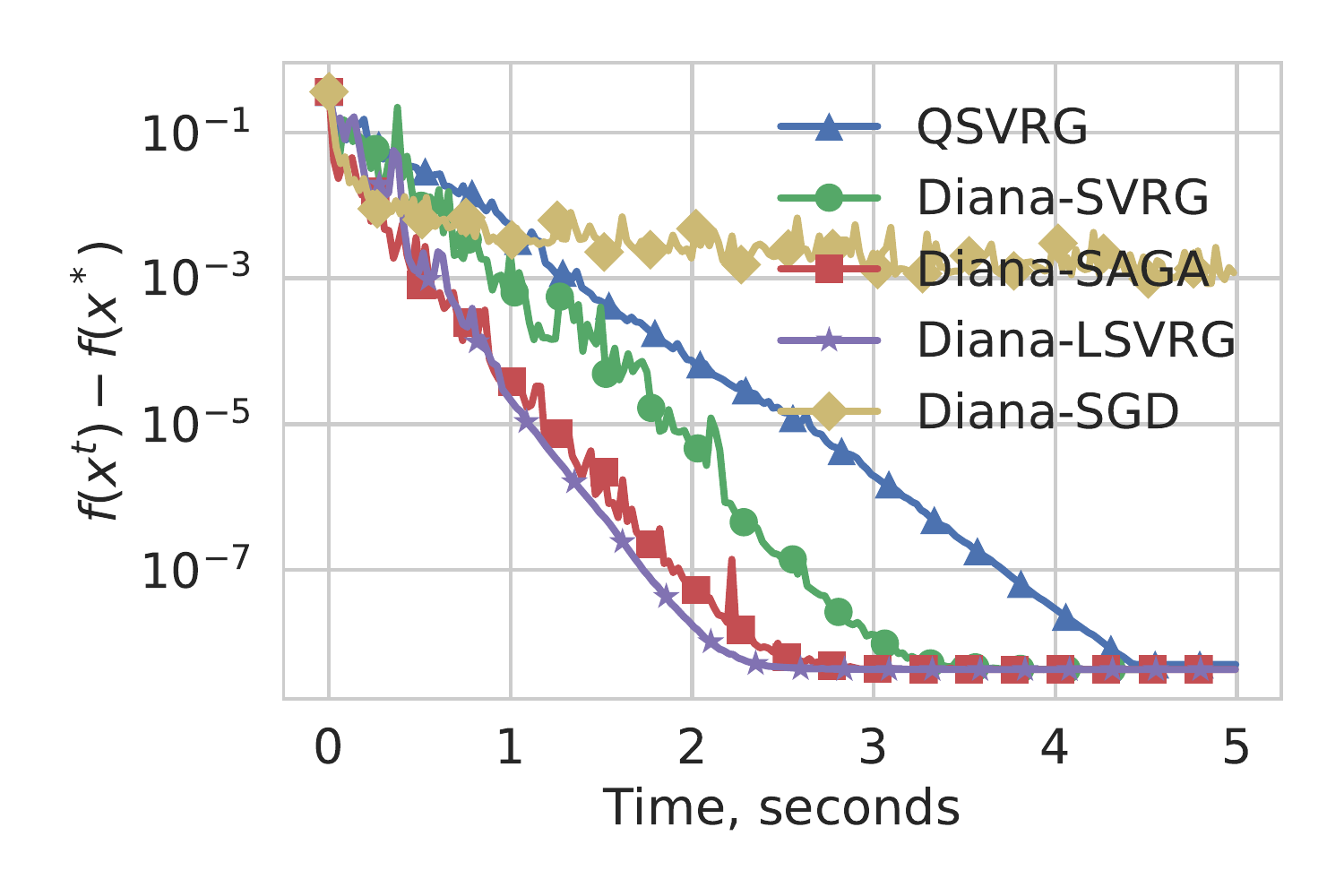}}
\hfill
\subfigure[\texttt{a5a,$\lambda_2 = 5\cdot 10^{-5}$}]{\includegraphics[width=0.24\textwidth]{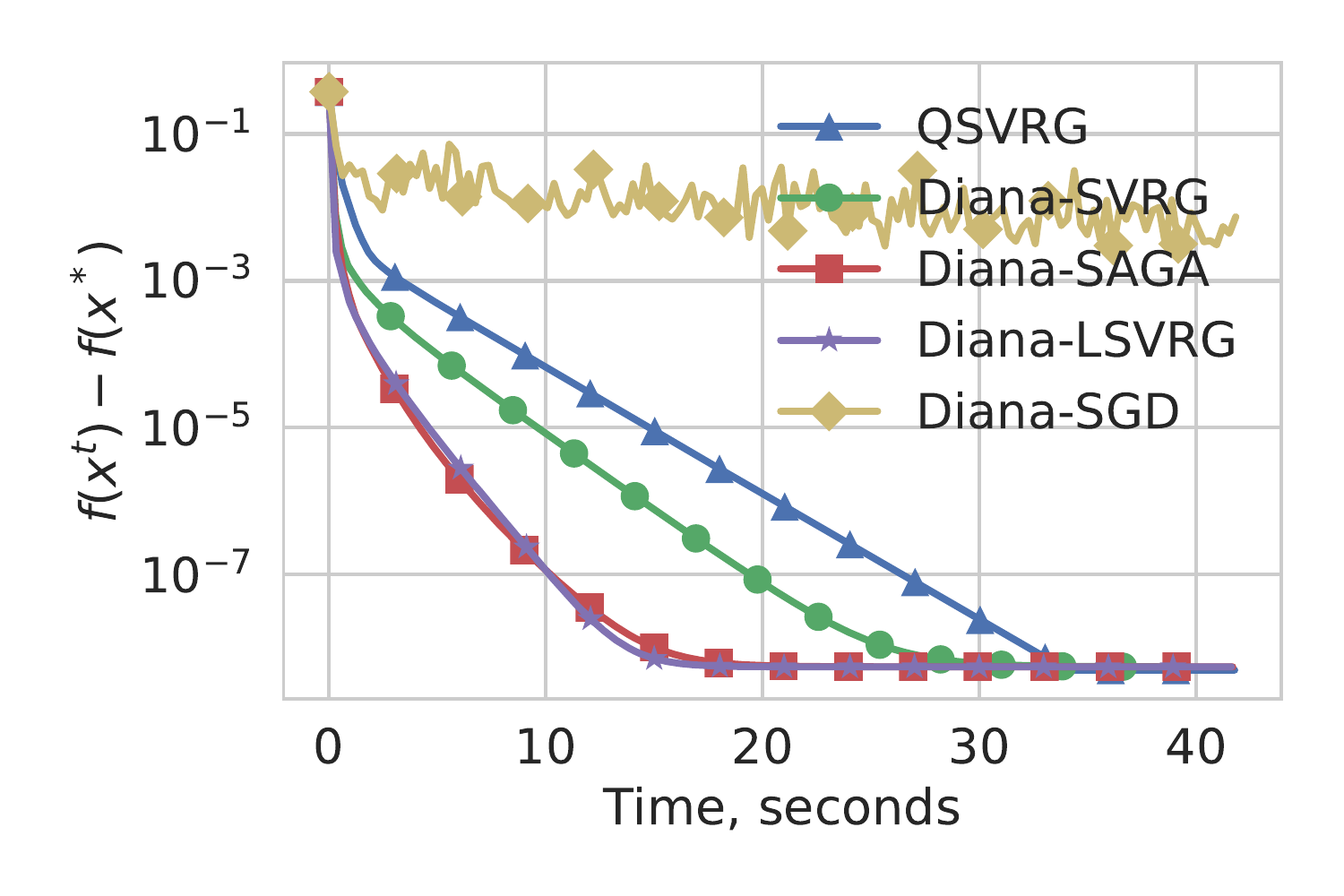}}\\
\subfigure[\texttt{Mushrooms,$\lambda_2 = 6\cdot 10^{-4}$}]{\includegraphics[width=0.24\textwidth]{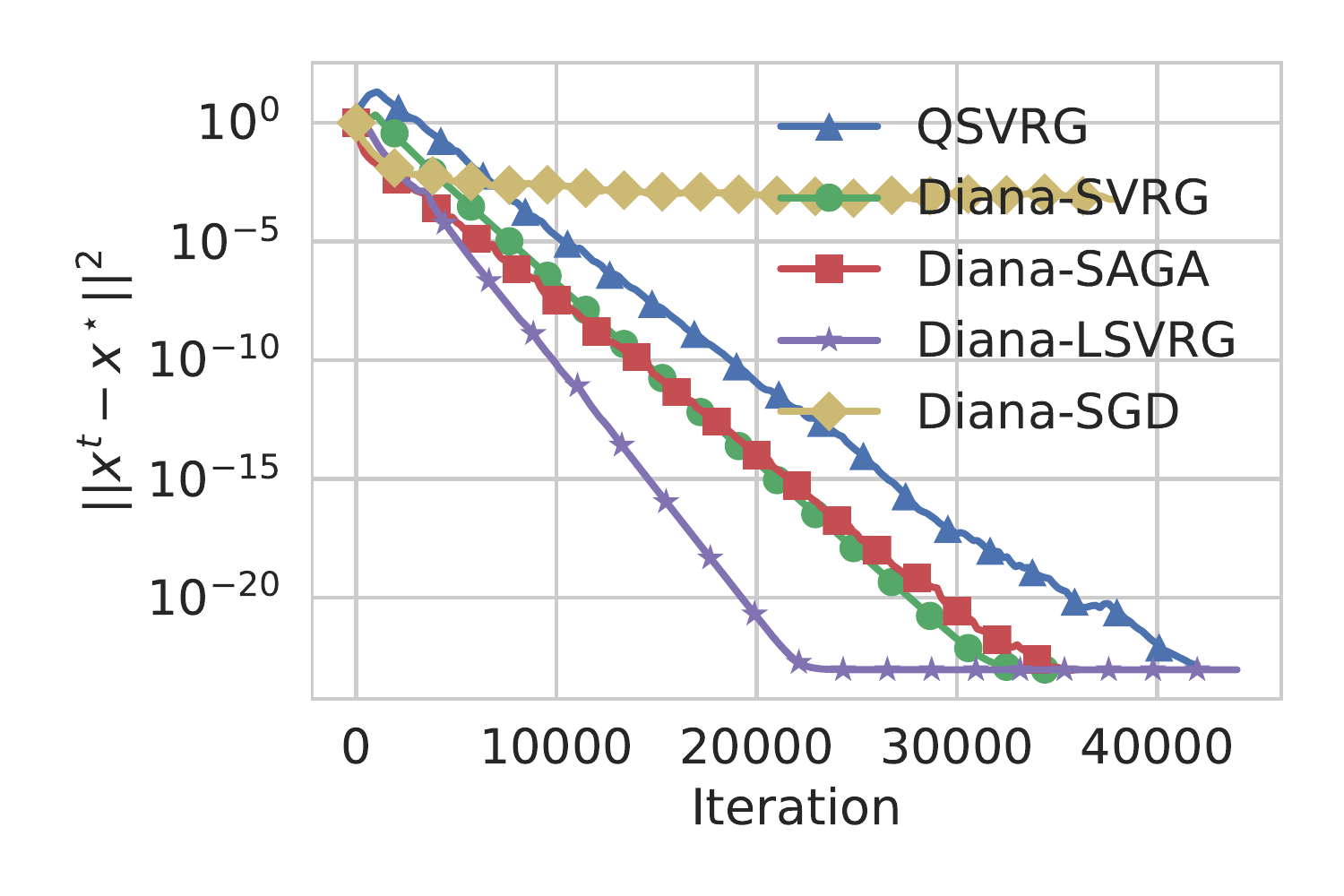}}
\hfill
\subfigure[\texttt{Mushrooms,$\lambda_2 = 6\cdot 10^{-5}$}]{\includegraphics[width=0.24\textwidth]{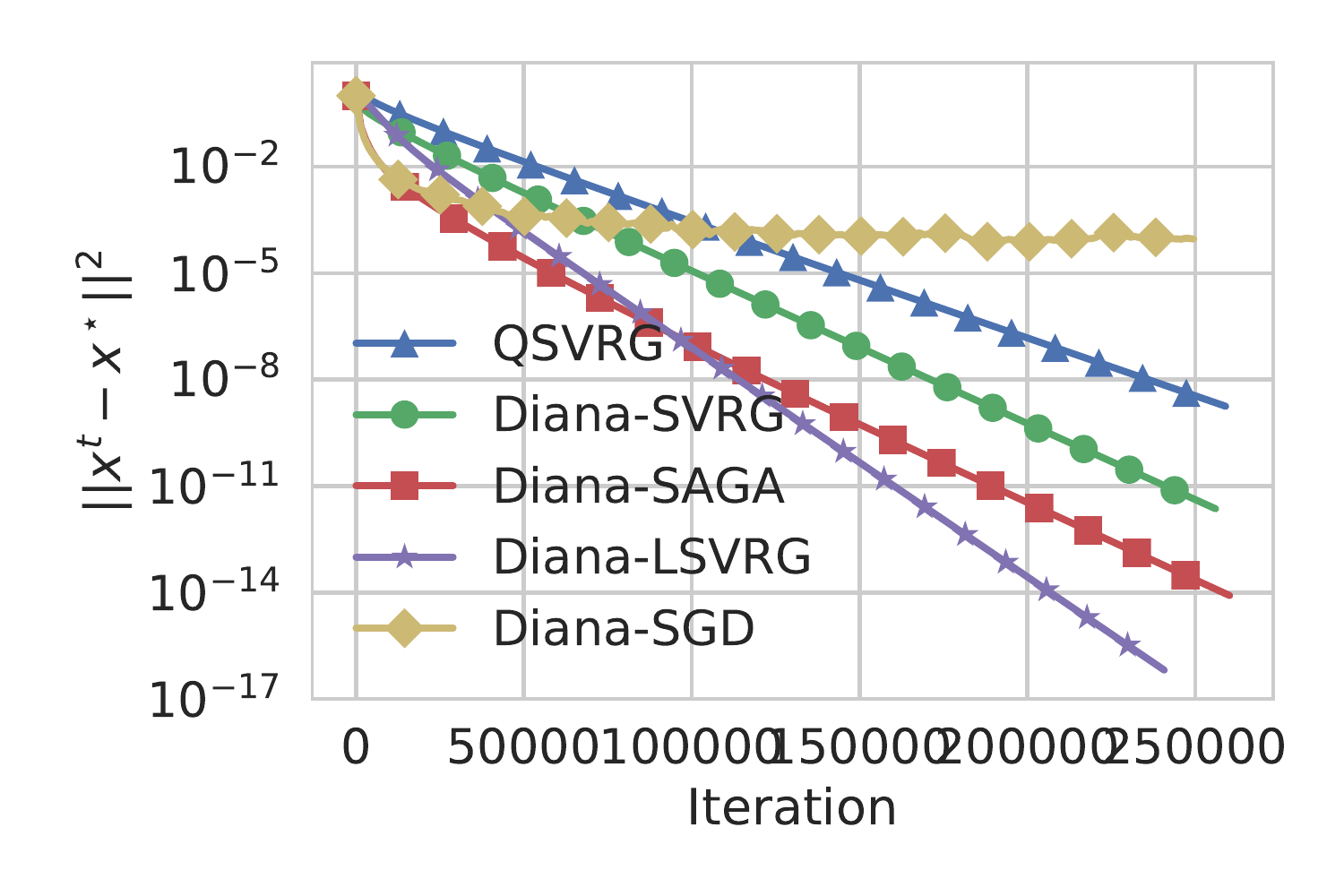}}
\hfill
\subfigure[\texttt{a5a,$\lambda_2 = 5\cdot 10^{-4}$}]{\includegraphics[width=0.24\textwidth]{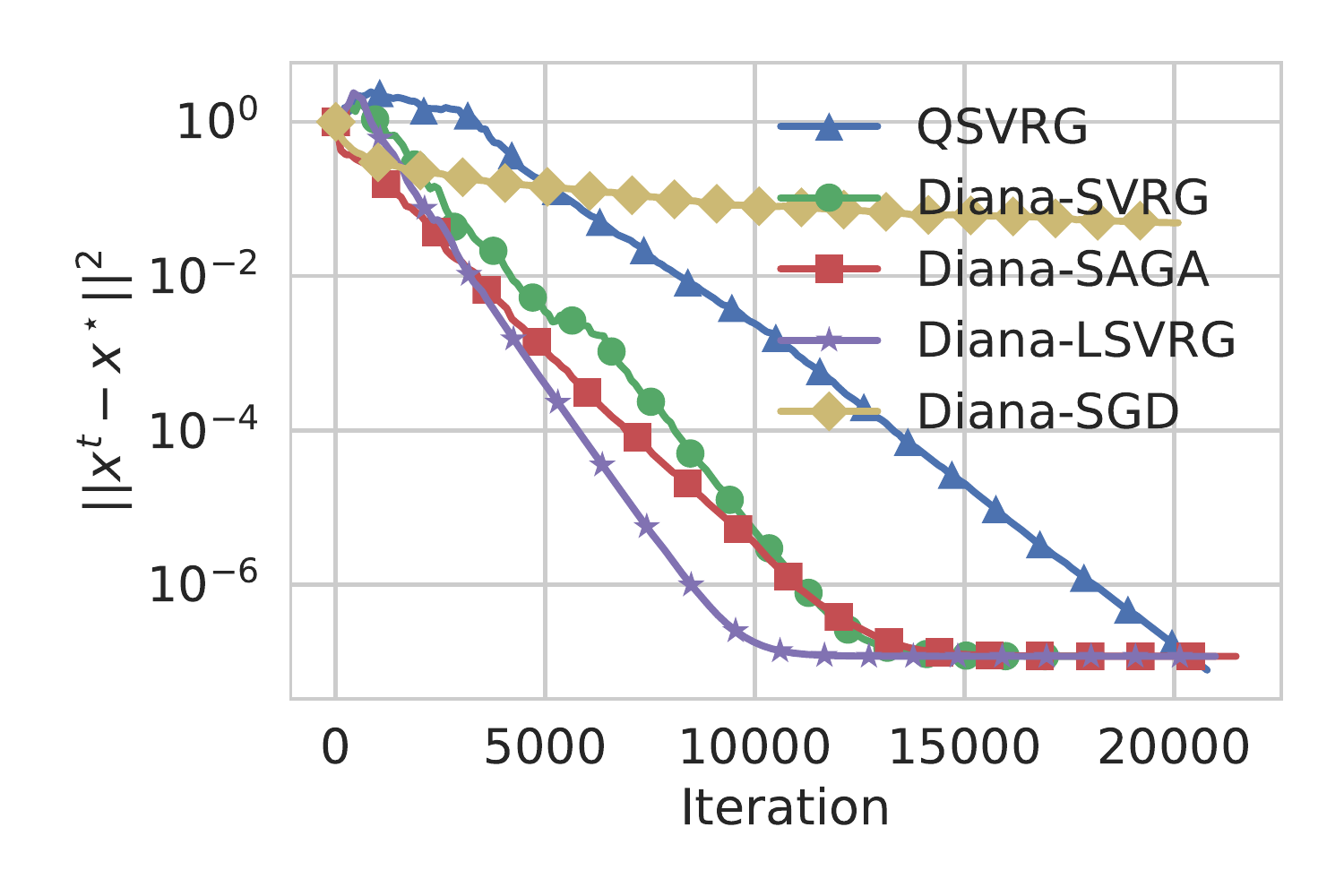}}
\hfill
\subfigure[\texttt{a5a,$\lambda_2 = 5\cdot 10^{-5}$}]{\includegraphics[width=0.24\textwidth]{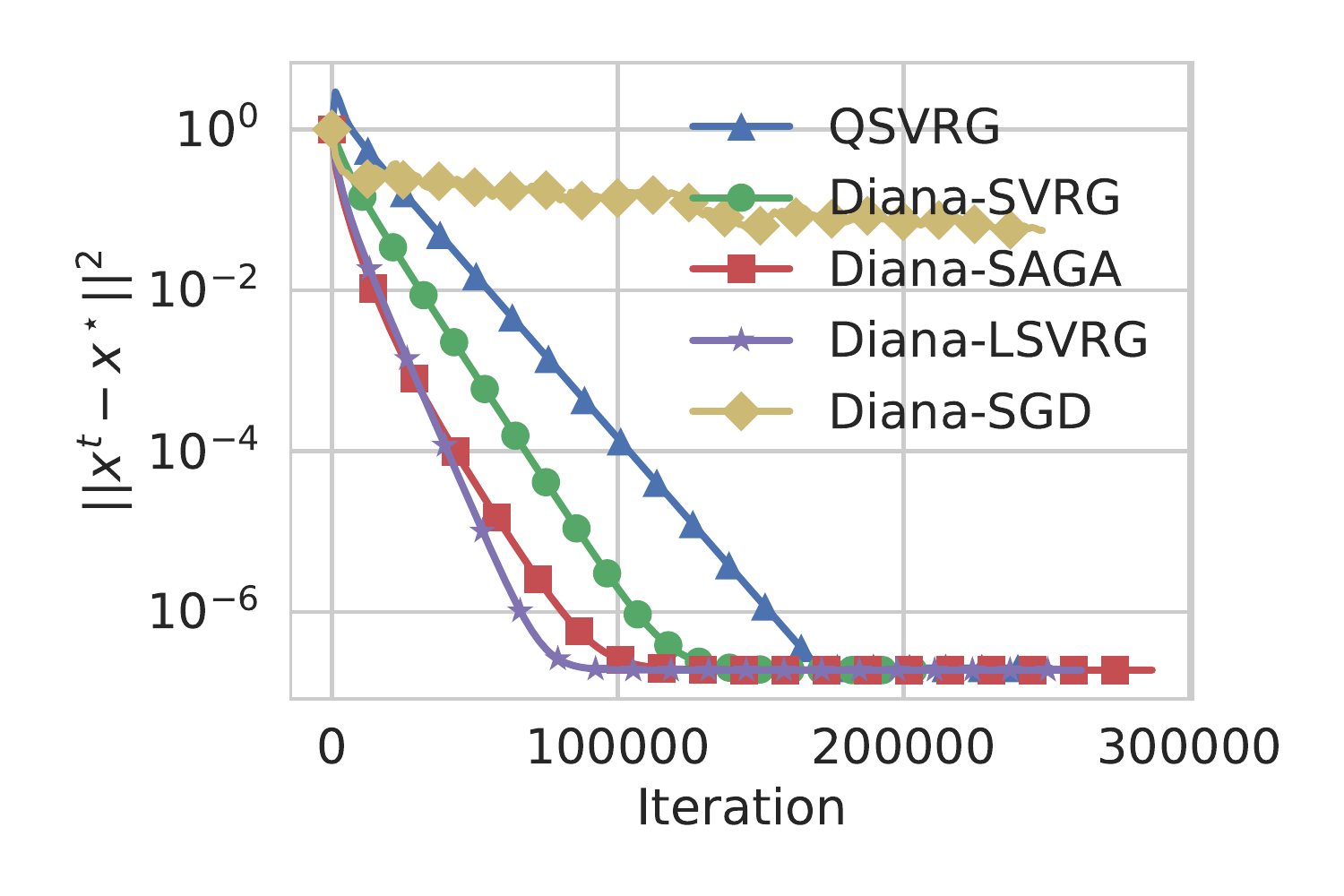}}
\caption{Comparison of VR-DIANA and Diana-SGD against QSVRG \cite{Alistarh2017:qsgd} on mushrooms (the first two columns) and a5a datasets (the last two columns). Plots in the first row show functional suboptimality over time and in the second row are the distances to the solution over iterations.}
\label{fig:qsgd}
\end{figure*}

We illustrate the performance of the considered methods on standard logistic regression problems for binary classification. We regularize the loss with $\ell_2$-penalty, so the full objective is $ \log(1 + \exp(-b_{ij} A_{ij}^\top x)) + \frac{\lambda_2}{2}\|x\|_2^2$, where $A_{ij}, b_{ij}$ are data points and $\lambda_2$ is the regularization parameter. This problem is attractive because some of the methods to which we want to compare do not have convergence guarantees for non-convex or non-smooth objective. $\lambda_2$ is set to be of order $1/(nm)$. The datasets used are from the LIBSVM library~\cite{CC01a}.

We implement all methods in Python using MPI4PY~\cite{dalcin2011parallel} for collective communication operations.
We run all experiments on a machine with 24 Intel(R) Xeon(R) Gold 6146 CPU @ 3.20GHz cores. The cores are connected to two sockets, 12 cores to each of them, and we run workers and the parameter server on different cores. The communication is done either using 32- or 64-bit numbers, so the convergence is correspondingly up to precision $10^{-8}$ or $10^{-16}$ in different experiments.

To have a trade-off between communication and iteration complexities, we use in most experiments the block quantization scheme with block sizes equal $n^2$, and in one additional experiments we explore what changes under different block sizes.
We analyze the effect of changing the parameter $\alpha$ over different values in Figure~\ref{fig:alpha_comparison} and find out that it is very easy to tune, and in most cases the speed of convergence is roughly the same unless $\alpha$ is set too close to 1 or to 0. For instance, we see almost no difference between choosing any element of $\{10^{-2}, 10^{-3}, 5\cdot 10^{-4}\}$ when $\omega^{-1} = 2\cdot 10^{-2}$, although for tiny values the convergence becomes much slower. For more detailed consideration of block sizes and choices of $\alpha$ on a larger dataset see Figure~\ref{fig:blocks_and_alphas}.

We also provide a comparison between most relevant quantization method: QSVRG~\cite{Alistarh2017:qsgd}, TernGrad-Adam and Diana~\cite{Mishchenko2019:diana} in Figure~\ref{fig:qsgd}. Since TernGrad-Adam was the slowest in our experiments, we provide it only in Figure~\ref{fig:with_adam}.

\section{Conclusion}
In this work we analyzed various distributed algorithms that support quantized communication between worker nodes. Our analysis is general, that is, not bound to a specific quantization scheme. This fact is especially interesting as we have showed that by choosing the quantization operator (respectively the introduced noise) in the right way, we obtain communication efficient schemes that converge as fast as their communication intensive counterparts. We develop the first variance reduced methods that support quantized communication and derive concise convergence rates for the strongly-convex, the convex and the non-convex setting.

\subsection*{Acknowledgments}

The authors would like to thank Xun Qian for the careful checking of the proofs and for  spotting several typos in the analysis. 


\small
\bibliography{papers}
\bibliographystyle{icml2019}

\normalsize

\appendix
\onecolumn

\section{Extra Experiments}
\begin{figure}[H]
	\subfigure[\texttt{$\ell_{\infty}$, $\lambda_2 = 2\cdot 10^{-1}$}]
	{\includegraphics[scale=0.22]{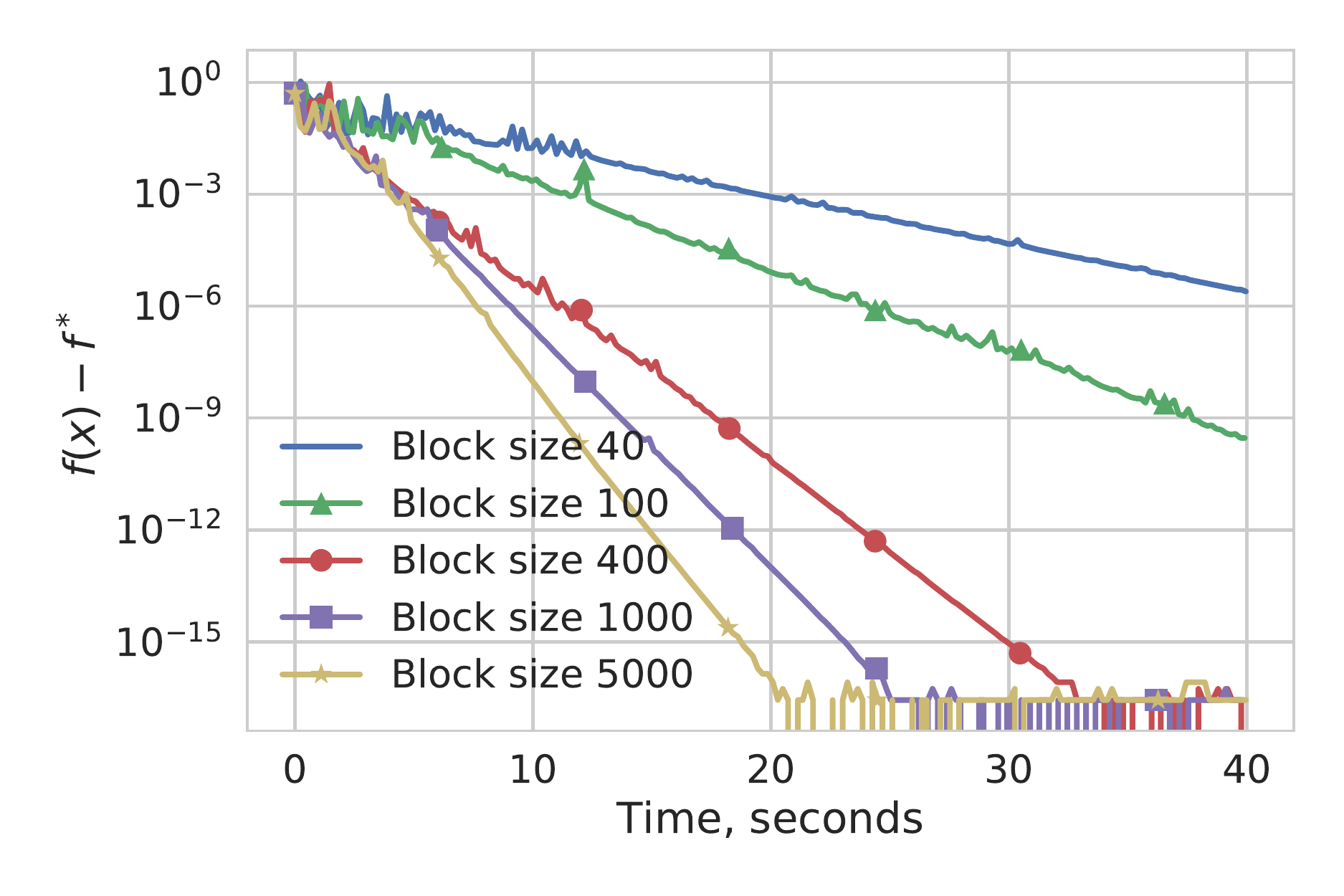}}
	\subfigure[\texttt{$\ell_{\infty}$, $\lambda_2 = 2\cdot 10^{-2}$}]
	{\includegraphics[scale=0.22]{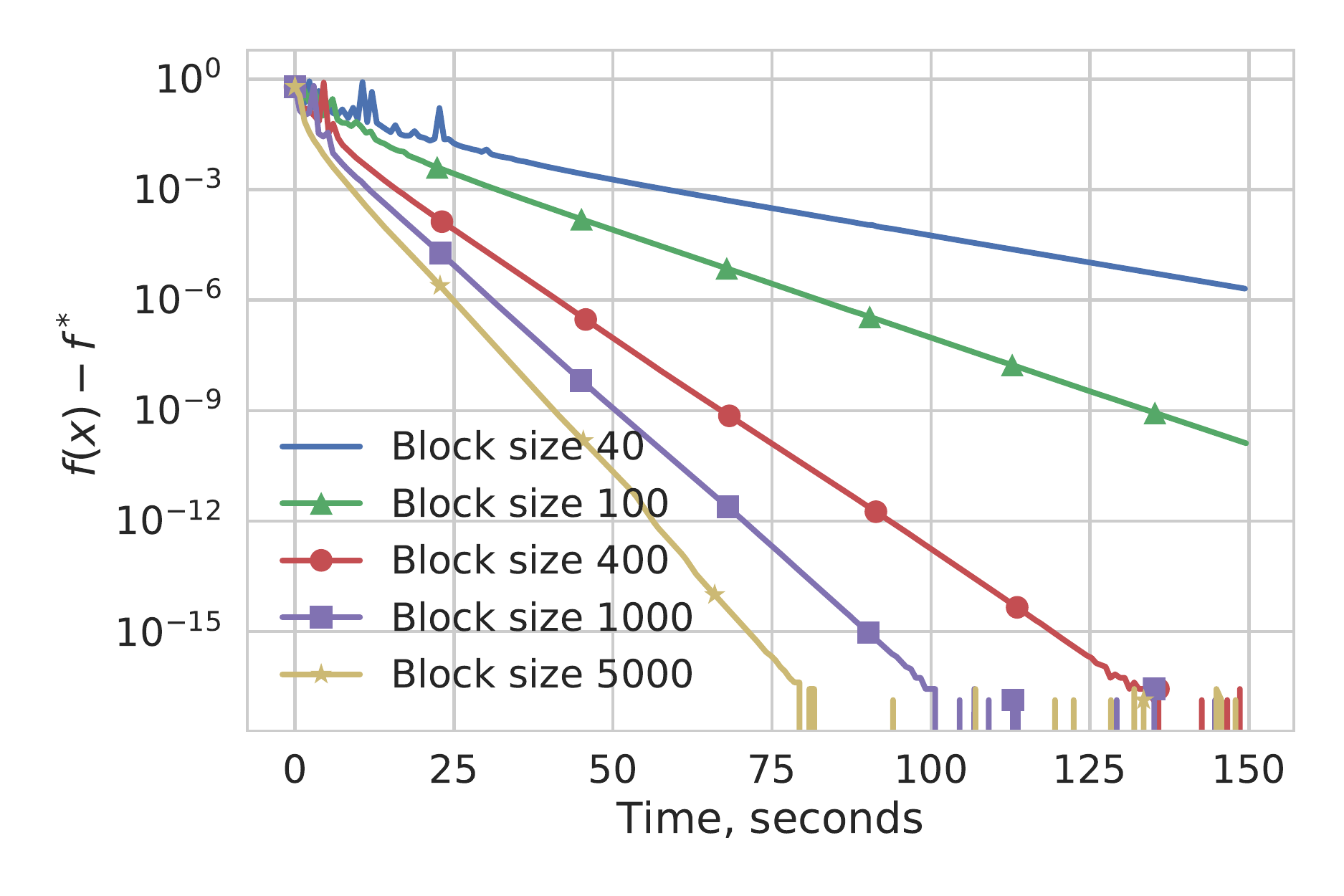}}
	\subfigure[\texttt{$\ell_{\infty}$, $\lambda_2 = 2\cdot 10^{-1}$}]
	{\includegraphics[scale=0.22]{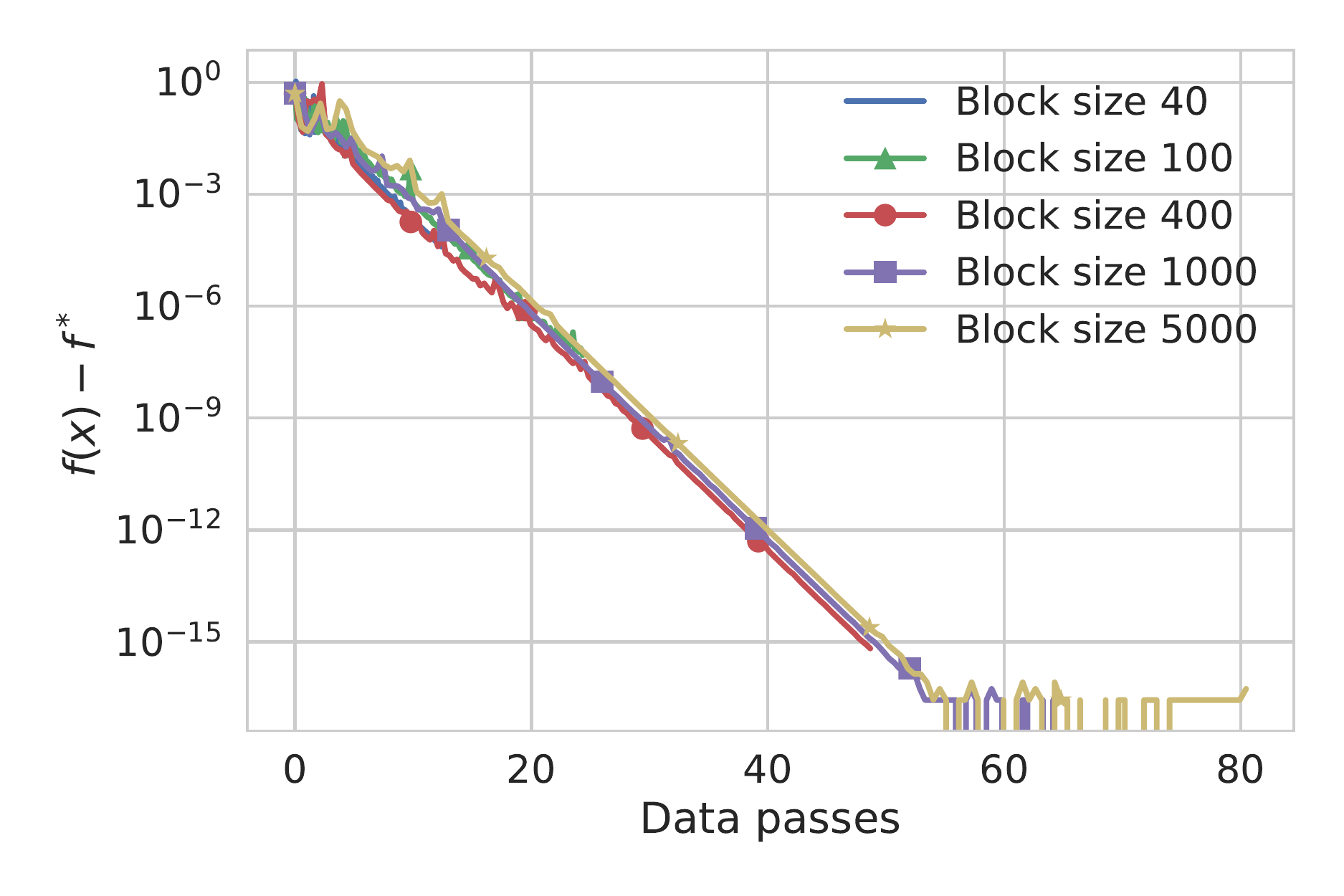}}
	\subfigure[\texttt{$\ell_{\infty}$, $\lambda_2 = 2\cdot 10^{-2}$}]
	{\includegraphics[scale=0.22]{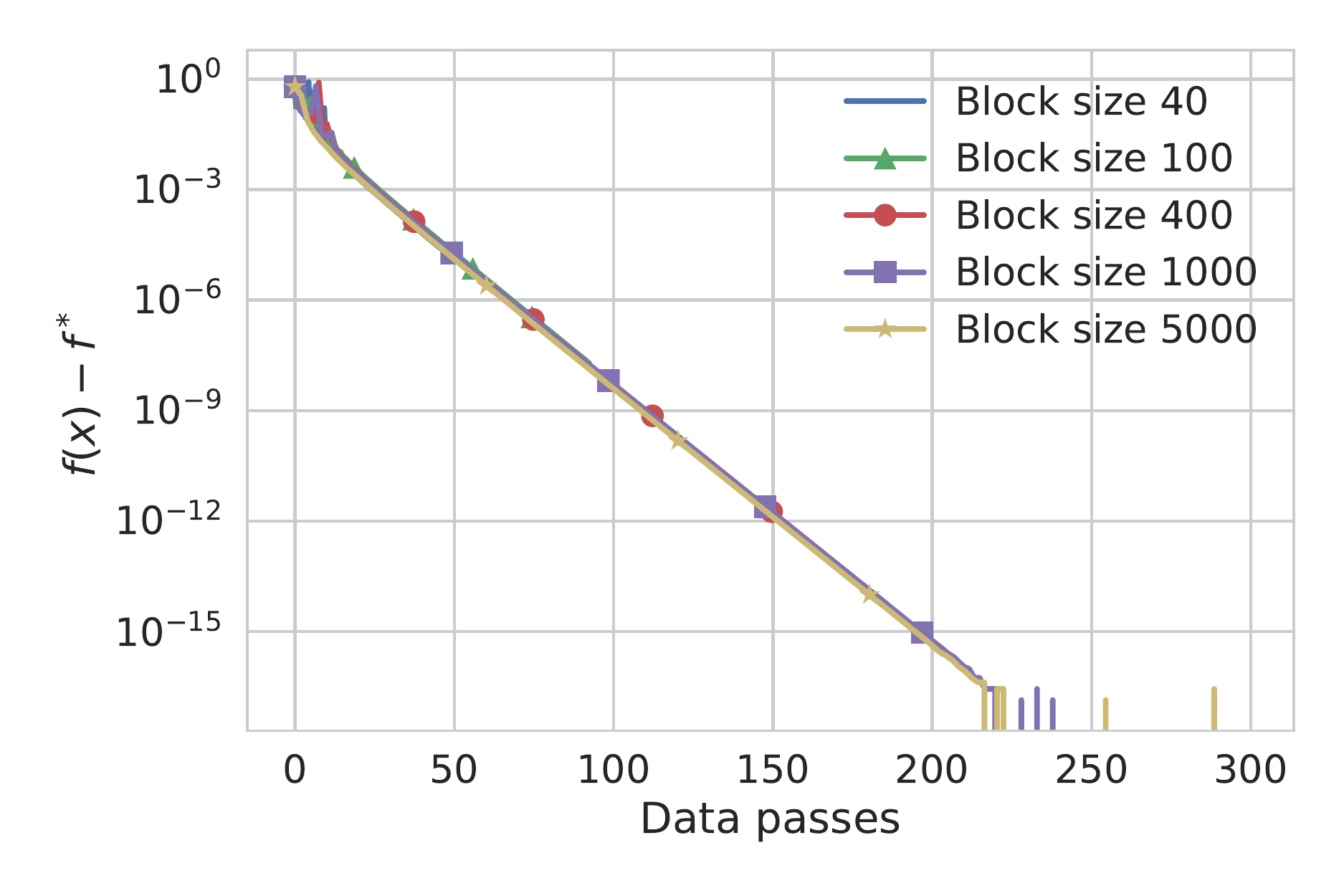}}\\
	\subfigure[\texttt{$\ell_{2}$, $\lambda_2 = 2\cdot 10^{-1}$}]
	{\includegraphics[scale=0.22]{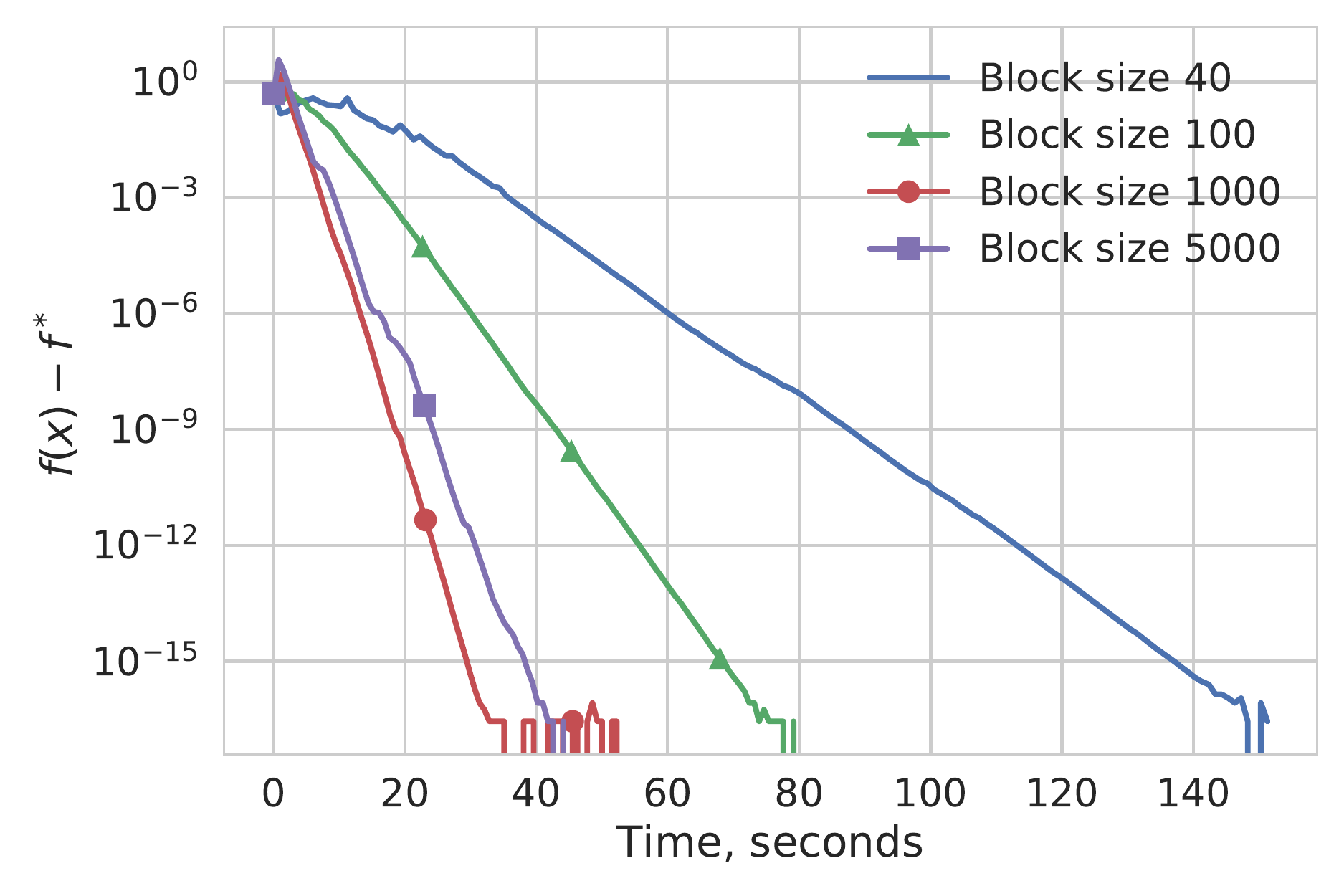}}
	\subfigure[\texttt{$\ell_{2}$, $\lambda_2 = 2\cdot 10^{-2}$}]
	{\includegraphics[scale=0.22]{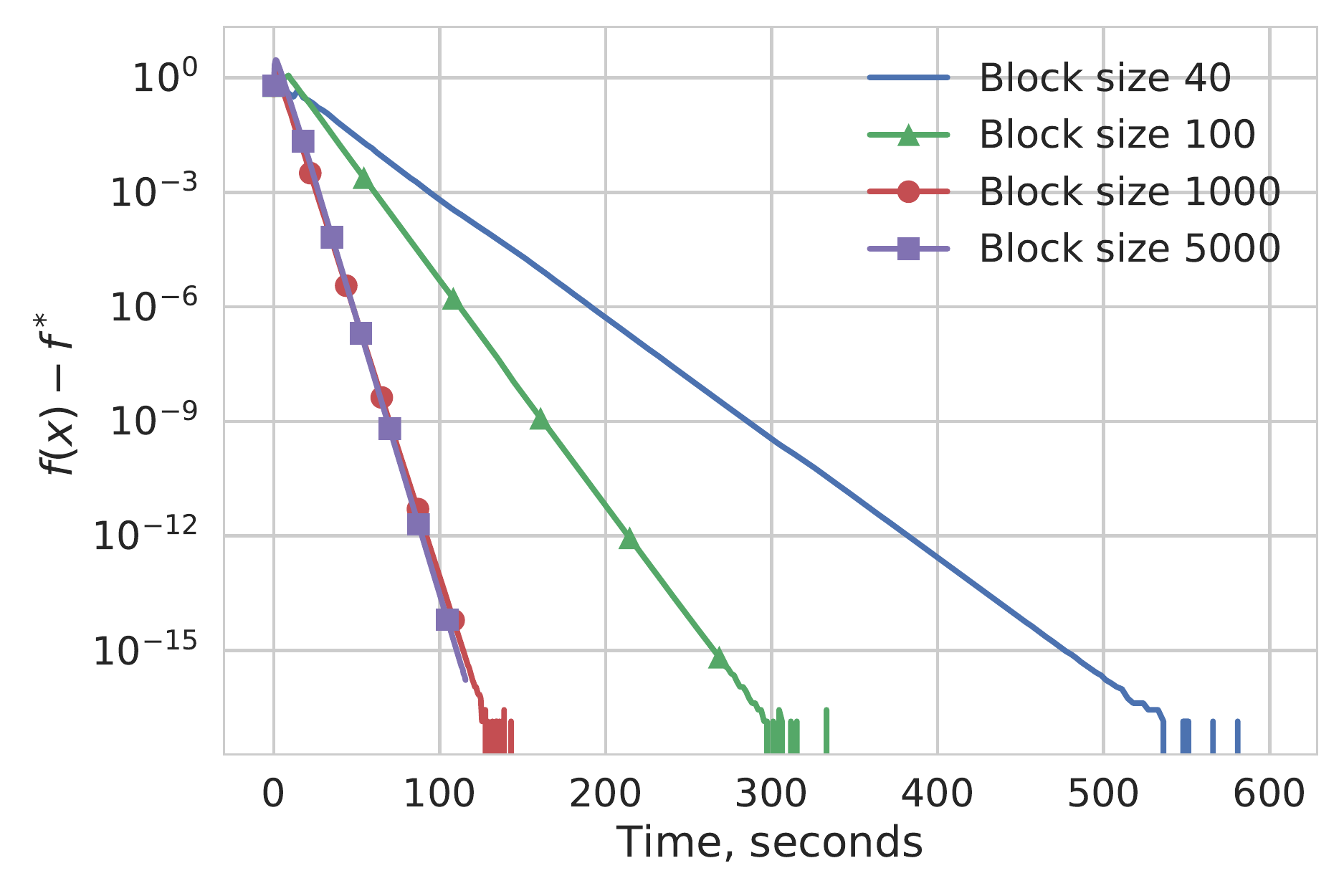}}
	\subfigure[\texttt{$\ell_{2}$, $\lambda_2 = 2\cdot 10^{-1}$}]
	{\includegraphics[scale=0.22]{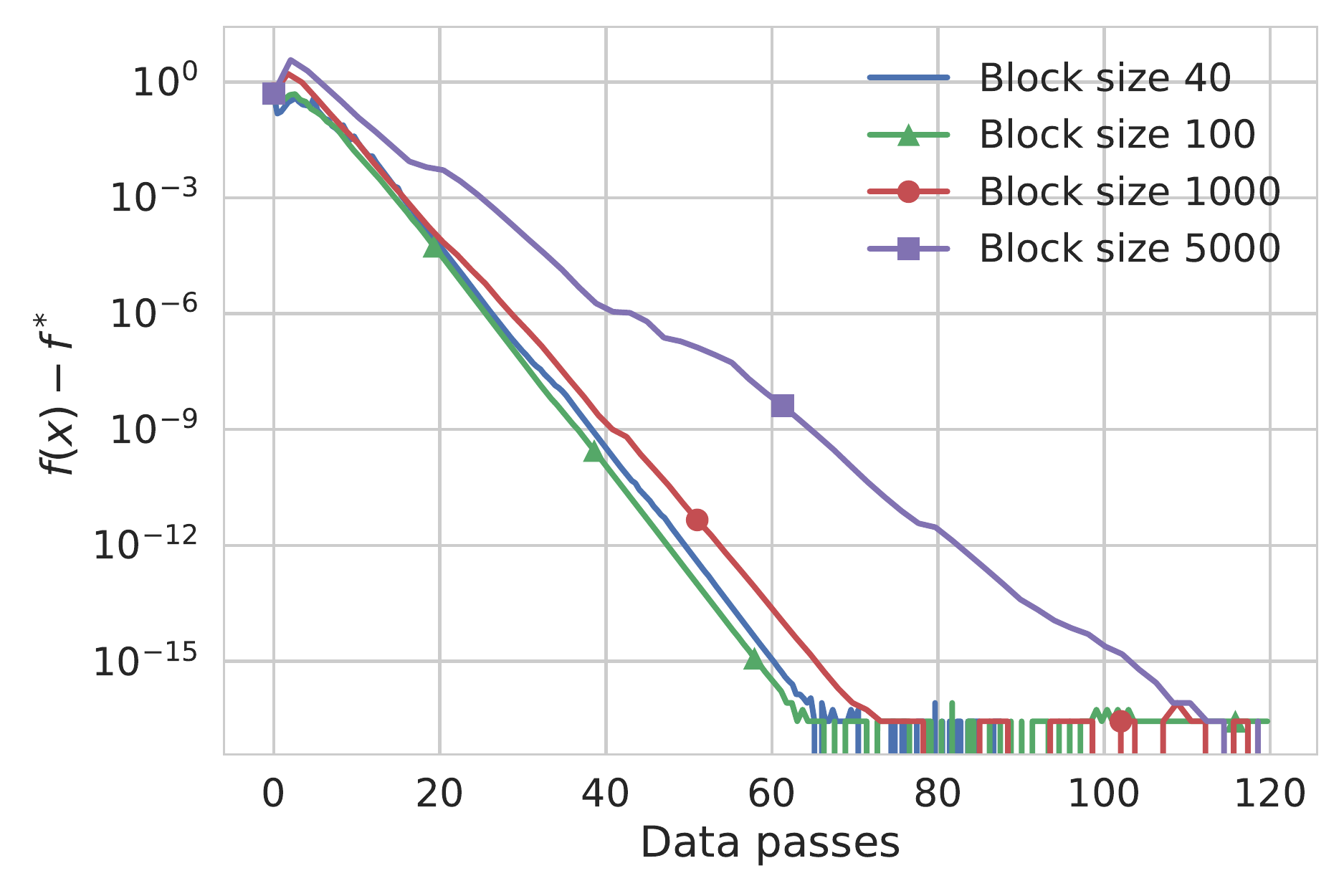}}
	\subfigure[\texttt{$\ell_{2}$, $\lambda_2 = 2\cdot 10^{-2}$}]
	{\includegraphics[scale=0.22]{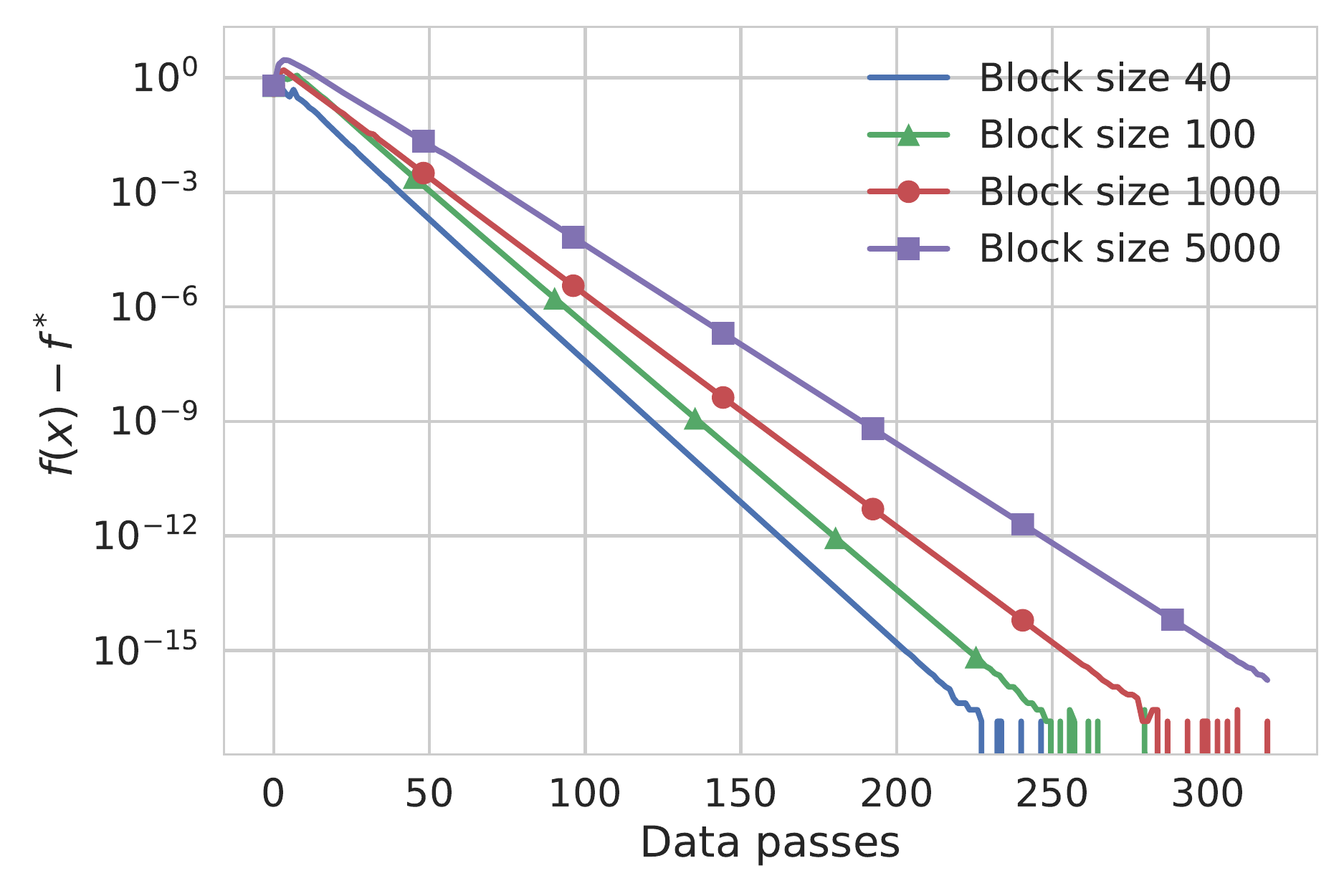}}
	\caption{Experiments with Diana-SVRG and different block sizes applied to the Gisette dataset ($d=5000$) with $n=20$ workers. In the first two columns we show convergence over time and in the last two we show convergence over epochs. We used 1-bit random dithering with $\ell_{\infty}$ (first row) and $\ell_2$ norm and found out that even quantization with full vector quantization often does not slow down iteration complexity, but helps significantly with communication time. At the same time, $\ell_2$ dithering is noisier and yields a more significant impact of the block sizes on the iteration complexity. For each line in the plots, an optimal stepsize was found and we found out that larger block sizes require slightly smaller steps in case of $\ell_2$ random dithering. In all cases, we chose $\alpha$ to be $\frac{1}{2\omega}$, where $\omega$ was computed using the block size.}
\end{figure}
\begin{figure}[H]
\begin{center}
	\subfigure[\texttt{SVRG}]
	{\includegraphics[scale=0.37]{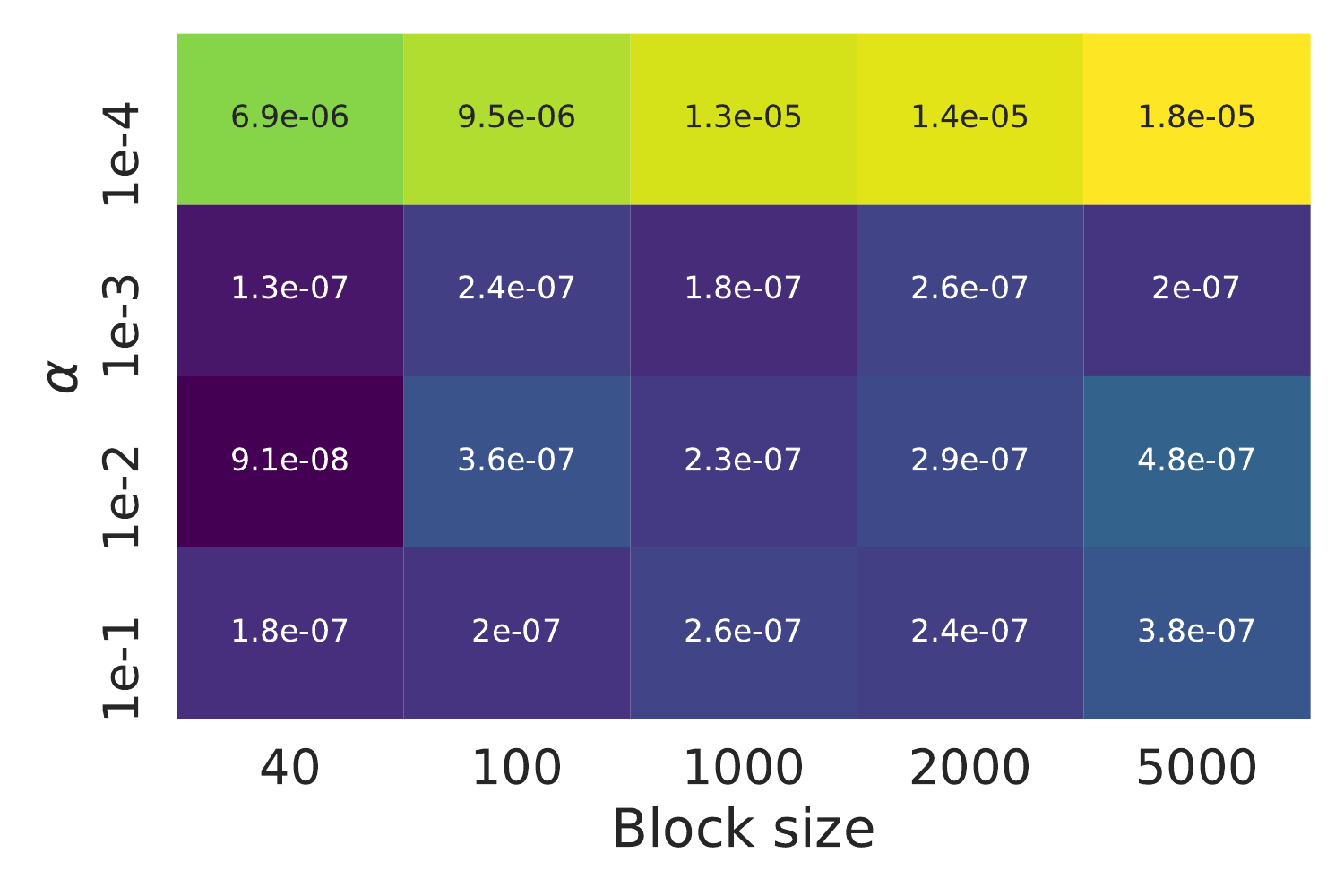}}
	\subfigure[\texttt{SAGA}]
	{\includegraphics[scale=0.37]{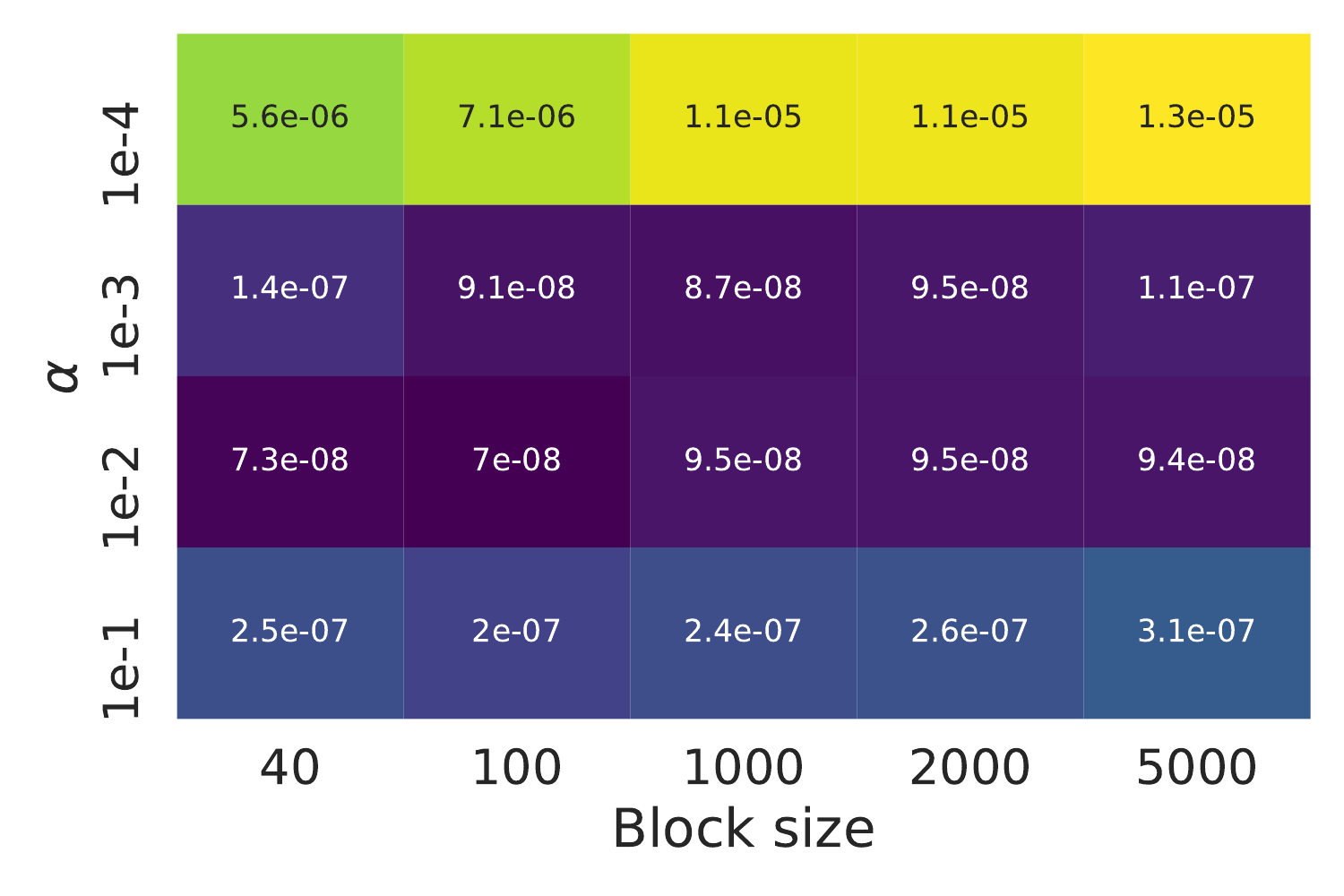}}
	\subfigure[\texttt{L-SVRG}]
	{\includegraphics[scale=0.37]{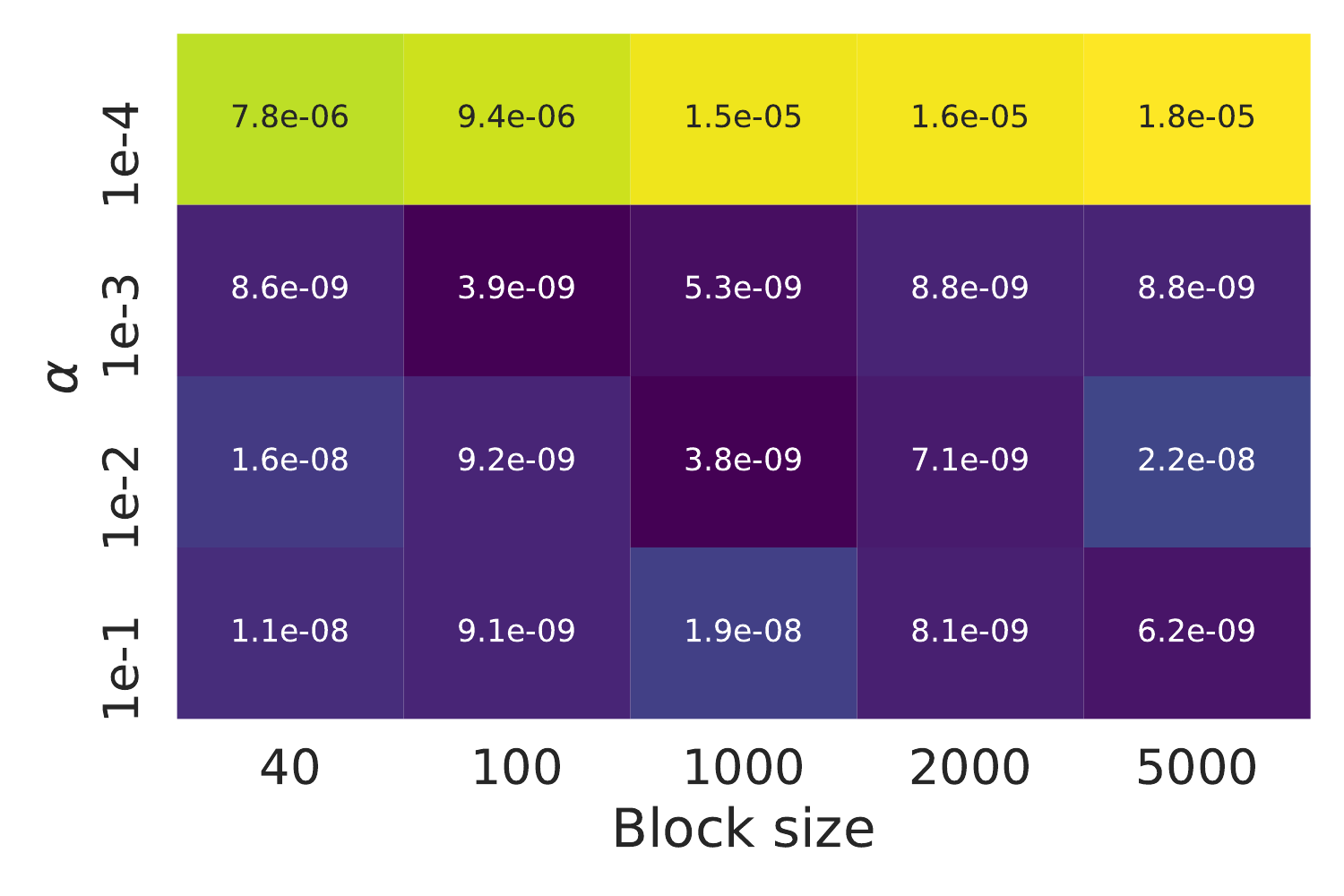}}
	\caption{\label{fig:blocks_and_alphas}Functional gap after 20 epochs over the Gisette dataset ($d=5000$) with Diana-VR and different combinations of block sizes and $\alpha$ with $n=20$ workers. The same (optimal) stepsize is used in all cases with $\ell_{\infty}$ random  dithering. For each cell, we average the results over 3 runs with the same parameters as the final accuracy is random. The best iteration performance is achieved using small blocks as expected, and the value of $\alpha$ does not matter much unless chosen too far from $\frac{1}{\omega + 1}$. The values of $\frac{1}{\omega + 1}$ for the columns from the left to the right approximately are $\{0.14,  0.09, 0.03, 0.02, 0.01\}$.}
\end{center}
\end{figure}
\newpage
\section{Notation Table}
To enhance the readers convenience when navigating to the extensive appendix, we here reiterate our notation:

\begin{table}[H]
\begin{center}

{
\footnotesize
\begin{tabular}{|c|l|c|}
\hline
\multicolumn{3}{|c|}{{\bf General} }\\
\hline
$\E{.}, \EE{Q}{.}$ & Expectation, Expectation over Quantization & \\
$\mu$ & Strong convexity constant & \eqref{def:strongconvex} \\
$L$ & Smoothness constant & \eqref{def:smoothness} \\
$\kappa$ & condition number of problem $l/\mu$ & \\
$d$ & Dimension of $x$ in $f(x)$ & \\
$n$ & Number of function in finite sum & \\
$f$ & Objective to be minimized over set $\R^d$ & \eqref{eq:probR}, \eqref{eq:probRR} \\
$Q(x)$ & Quantization operator & \eqref{def:omega} \\
$\omega$ & Quantization parameter & \eqref{def:omega}\\
$\prox$ & Proximal operator & \\
$x^\star$ & Global minimizer of $f$ & \\
$f^\star$ & function value in optimum $x^\star$ & \\
$R$ & Regularizer & \eqref{eq:probRR} \\

\hline
 \multicolumn{3}{|c|}{{\bf General Diana}}\\
 \hline
$ \sigma_i$'s & constants, upper bound on variance & \eqref{def:sigmai} \\
$\alpha, \gamma$ & parameters/ step sizes & Alg.~\ref{alg:improvedDIANA} \\
$\Psi^k$ & Lyapunov function & Thm.~\ref{thm:improvedDIANA} \\
$c$ & parameter of Lyapunov function & Thm.~\ref{thm:improvedDIANA}  \\
 \hline

 \multicolumn{3}{|c|}{{\bf VR-Diana}}\\
 \hline
$\alpha, \gamma$ & Parameters/ step sizes & Alg.~\ref{alg:VR-DIANA} \\
$m$ & number of functions on each node &  \eqref{eq:finsum} \\
$\psi^k$ & Lyapunov function for strongly convex case and convex case  & Thm.~\ref{thm:VR-DIANA} \\
$H^k, D^k$ & Elements of  Lyapunov function $\psi^k$& Thm.~\ref{thm:VR-DIANA}\\ 
$b,c $ & Parameters of Lyapunov function $\psi^k$ & Thm.~\ref{thm:VR-DIANA} \\
 $R^k$ & Lyapunov function for non-convex case  & Thm.~\ref{thm:VR-DIANA-non-convex} \\
$F^k, W^k$ & Elements of  Lyapunov function $R^k$& Thm.~\ref{thm:VR-DIANA-non-convex}\\ 
$c^k,d^k $ & Parameters of Lyapunov function $\psi^k$ & Thm.~\ref{thm:VR-DIANA-non-convex} \\

 \hline
 
 \multicolumn{3}{|c|}{{\bf SVRG-Diana}}\\
 \hline
$\alpha, \gamma$ & Parameters/ step sizes & Alg.~\ref{alg:SVRG_DIANA} \\
$m$ & number of functions on each node &  \eqref{eq:finsum} \\
$l$ & Outer loop length for Algorithm~\ref{alg:SVRG_DIANA} & Alg.~\ref{alg:SVRG_DIANA}\\
$\{p_r\}_{r=0}^{l-1}$ & coefficients for the reference point $z_s$  & Alg.~\ref{alg:SVRG_DIANA}\\
$\psi^k$ & Lyapunov function for strongly convex case and convex case  & Thm.~\ref{thm:SVRG-DIANA} \\
$H^k$ & Element of  Lyapunov function $\psi^k$& \eqref{def:SVRG_psi^k}\\ 
$b $ & Parameter of Lyapunov function $\psi^k$ & \eqref{def:SVRG_H^k} \\
$R^k$ & Lyapunov function for non-convex case  & Thm.~\ref{thm:SVRG-DIANA-non-convex} \\
$F^k, W^k$ & Elements of  Lyapunov function $R^k$& Thm.~\ref{thm:SVRG-DIANA-non-convex}\\ 
 \hline

\end{tabular}
}

\end{center}
\caption{Summary of frequently used notation.}
\label{tbl:notation}
\end{table}

\newpage

\section{Basic Identities and Inequalities}
For random variable $X$ and any $y \in \R^d$, the variance can be decomposed as
\begin{align}
\E{\norm{X-\E{X}}_2^2} = \E{\norm{X-y}_2^2} - \E{\norm{\E{X}-y}_2^2}\,.
\label{eq:variance}
\end{align}

For any vectors $a_1, a_2, \dots, a_k \in \R^d$, we have as a consequence of Jensen's inequality:
\begin{align}
\left\|\sum_{i=1}^k a_i\right\|_2^2 \leq k \sum_{i=1}^k \norm{a_i}_2^2  \,. \label{eq:sum}
\end{align}

For any independent random variables $X_1,X_2, \cdots, X_n \in \R^d$ we have
\begin{align}
\E{\norm{\frac{1}{n}\sum_{i = 1}^n \left(X_i -\E{X_i}\right)}_2^2} = \frac{1}{n^2}\sum_{i = 1}^n\E{\norm{ X_i -\E{X_i}}_2^2}
\label{eq:indep}
\end{align}

For a $L$-smooth and $\mu$-strongly convex function $f \colon \R^d \to \R$ we have
\begin{align}
 \lin{\nabla f(x)-\nabla f(y),x-y} \geq \frac{\mu L}{\mu + L} \norm{x-y}_2^2 + \frac{1}{\mu+L} \norm{\nabla f(x) - \nabla f(y)}_2^2\,, & &\forall x,y \in \R^d\,. \label{eq:coerc}
\end{align}

For $\mu$-strongly convex function $f \colon \R^d \to \R$ we have
\begin{align}
\lin{\nabla f(x)-\nabla f(y),x-y} \geq \mu \norm{x-y}_2^2\,, & &\forall x,y \in \R^d\,. \label{eq:coerc2}
\end{align}

The prox operator of a closed convex function is non-expansive. That is, for $\gamma > 0$,
\begin{align}
 \norm{\prox_{\gamma R}(x) - \prox_{\gamma R}(y)} &\leq \norm{x-y}\,, & &\forall x,y \in \R^d\,.
\end{align}

Throughout the whole appendix we use conditional expectation $\E{\cX| x^k, h_i^k}$ for DIANA and $\E{\cX|  x^k, h_i^k, w_{ij}^k}$ for VR-DIANA and $\E{\cX|  x^k, h_i^k, z^s}$ for SVRG-DIANA, but for simplicity, we will denote these expectations as $\E{\cX}$. If $\E{\cX}$ refers to unconditional expectation, it is directly mentioned.

\section{DIANA with general quantization operators, Proof of~\Cref{thm:improvedDIANA}}

\begin{lemma}
\label{lemma:boundg}
For all iterations $k \geq 0$ of Algorithm~\ref{alg:improvedDIANA} it holds
\begin{align}
 \EE{Q}{\hat{g}^k} &= g^k := \frac{1}{n}\sum_{i=1}^{n}g_i^k\,, &
 \EE{Q}{\norm{\hat{g}^k-g^k}_2^2} &\leq \frac{\omega}{n^2} \sum_{i=1}^{n}\norm{\Delta_i^k}_2^2\,, & 
 \E{g^k} &= \nabla f(x^k)\,.
 \label{eq:explain0}
\end{align}
Furthermore, for $h^\star = \nabla f(x^\star)$, $h_i^\star := \nabla f_i(x^\star)$
\begin{align}
 \E{\norm{\hat{g}^k-h^\star}_2^2} &\leq \left(1 + \frac{2\omega}{n} \right)  \frac{1}{n} \sum_{i=1}^{n} \E{\norm{\nabla f_i(x^k) - h_i^\star}_2^2} + \left(1 + \omega\right)\frac{\sigma^2}{n}  + \frac{2\omega}{n^2} \sum_{i=1}^n \E{\norm{h_i^\star - h_i^k}_2^2}\,. \label{eq:explain2}
\end{align}
\end{lemma}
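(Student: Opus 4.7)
\textbf{Proof sketch for Lemma~\ref{lemma:boundg}.} The first three identities are direct consequences of the definitions plus the unbiasedness of $Q$ and of the stochastic gradients. Writing $\hat{g}^k = \tfrac{1}{n}\sum_i(h_i^k + Q(\Delta_i^k))$ with $\Delta_i^k = g_i^k - h_i^k$, the $Q$-conditional expectation gives $\EE{Q}{\hat{g}^k} = \tfrac{1}{n}\sum_i(h_i^k + \Delta_i^k) = g^k$. Using that the quantization noise $Q(\Delta_i^k) - \Delta_i^k$ is zero-mean and independent across workers, the cross terms vanish in
\begin{align*}
\EE{Q}{\norm{\hat{g}^k - g^k}_2^2} = \tfrac{1}{n^2}\sum_{i=1}^n \EE{Q}{\norm{Q(\Delta_i^k) - \Delta_i^k}_2^2} \leq \tfrac{\omega}{n^2}\sum_{i=1}^n \norm{\Delta_i^k}_2^2,
\end{align*}
by~\eqref{def:omega}. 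Finally, $\E{g^k} = \tfrac{1}{n}\sum_i \E{g_i^k} = \tfrac{1}{n}\sum_i \nabla f_i(x^k) = \nabla f(x^k)$.

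The bound~\eqref{eq:explain2} is obtained by a two-step variance decomposition. First, conditioning on everything except the quantization randomness and using $\EE{Q}{\hat{g}^k} = g^k$, the identity~\eqref{eq:variance} with $y = h^\star$ yields
\begin{align*}
\E{\norm{\hat{g}^k - h^\star}_2^2} = \E{\norm{\hat{g}^k - g^k}_2^2} + \E{\norm{g^k - h^\star}_2^2}.
\end{align*}
For the second term, I would further split using $\E{g^k} = \nabla f(x^k)$ and the cross-worker independence of $g_i^k$: applying~\eqref{eq:indep} to $g^k - \nabla f(x^k)$ and Jensen's inequality to $\nabla f(x^k) - h^\star = \tfrac{1}{n}\sum_i(\nabla f_i(x^k) - h_i^\star)$ gives
\begin{align*}
\E{\norm{g^k - h^\star}_2^2} \leq \tfrac{\sigma^2}{n} + \tfrac{1}{n}\sum_{i=1}^n \E{\norm{\nabla f_i(x^k) - h_i^\star}_2^2}.
\end{align*}

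For the first term, the bound from part two reduces matters to controlling $\E{\norm{\Delta_i^k}_2^2}$. I would write $\Delta_i^k = (g_i^k - \nabla f_i(x^k)) + (\nabla f_i(x^k) - h_i^k)$; since the first summand is zero-mean conditionally on $x^k, h_i^k$, its cross term with the second disappears, leaving $\E{\norm{\Delta_i^k}_2^2} \leq \sigma_i^2 + \norm{\nabla f_i(x^k) - h_i^k}_2^2$. Then I further split $\nabla f_i(x^k) - h_i^k = (\nabla f_i(x^k) - h_i^\star) + (h_i^\star - h_i^k)$ and apply~\eqref{eq:sum} with $k=2$ to get a factor $2$ on each piece. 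Combining all terms and collecting coefficients yields exactly~\eqref{eq:explain2}. The only mildly subtle step is being careful about \emph{what} is conditioned on when invoking each variance decomposition and independence claim—everything else is a routine unfolding of definitions plus Jensen.
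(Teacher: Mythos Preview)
Your proof is correct and follows essentially the same route as the paper: the same two-step variance decomposition $\E{\norm{\hat g^k-h^\star}_2^2}=\E{\norm{\hat g^k-g^k}_2^2}+\E{\norm{g^k-h^\star}_2^2}$, the same Jensen bound on $\norm{\nabla f(x^k)-h^\star}_2^2$, and the same handling of $\E{\norm{\Delta_i^k}_2^2}$ via the zero-mean split followed by~\eqref{eq:sum} on $\nabla f_i(x^k)-h_i^k$. The only cosmetic difference is that the paper invokes~\eqref{eq:variance} twice in succession where you phrase the second split via~\eqref{eq:indep}, but the computation is identical.
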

\begin{proof}
The first equation in~\eqref{eq:explain0} follows from the unbiasedness of the quantization operator.
By the contraction property~\eqref{def:omega} we have
\begin{align*}
 \EE{Q}{\norm{\hat{g}_i^k - g_i^k}_2^2} \leq \omega \norm{\Delta_i^k}_2^2\,,
\end{align*}
for every $i=1,\dots, n$ and the second relation in~\eqref{eq:explain0} follows from independence of $\hat{g}_1^k,\dots,\hat{g}_n^k$.
The last equality in~\eqref{eq:explain0} follows from the assumption that each $g_i^k$ is an unbiased estimate of $\nabla f_i(x^k)$.

By applying two times the identity 
$\E{\norm{X-y}_2^2} = \E{\norm{X-\E{X}}_2^2} + \E{\norm{\E{X}-y}_2^2}$ for random variable $X$ and $y \in \R^d$, 
we get
\begin{align*}
\E{\norm{\hat{g}^k-h^\star}_2^2} 
&\stackrel{\eqref{eq:variance}}{=} \E{\norm{\hat{g}^k - g^k}_2^2} + \E{\norm{g^k - h^\star}_2^2} \\
&\stackrel{\eqref{eq:variance}}{=} \E{\norm{\hat{g}^k - g^k}_2^2} + \E{\norm{g^k - \nabla f(x^k)}_2^2} + \E{\norm{\nabla f(x^k) - h^\star}_2^2}
\end{align*}
and thus
\begin{align}
 \E{\norm{\hat{g}^k-h^\star}_2^2} &\stackrel{\eqref{def:sigmai}+\eqref{eq:explain0}}{\leq} \frac{\omega}{n^2} \sum_{i=1}^{n} \E{ \norm{\Delta_i^k}_2^2} + \frac{\sigma^2}{n} +  \E{\norm{\nabla f(x^k) - h^\star}_2^2}\,. \label{eq:5435}
\end{align}
Note that
\begin{align*}
\norm{\nabla f(x^k) - h^\star}_2^2 \leq \frac{1}{n} \sum_{i=1}^n \norm{\nabla f_i(x^k) - h_i^\star}_2^2\,,
\end{align*}
by Jensen's inequality. Further,
\begin{align*}
\E{ \norm{\Delta_i^k}_2^2 } &= \E{ \norm{g_i^k - h_i^k }_2^2} \stackrel{\eqref{eq:variance}}{=} \E{ \norm{\nabla f_i(x^k) - h_i^k}_2^2 } + \E{\norm{\nabla f_i(x^k) - g_i^k}_2^2} \\
 &\stackrel{\eqref{def:sigmai}}{\leq} \E{\norm{\nabla f_i(x^k) - h_i^k}_2^2} + \sigma_i^2 \\
 &\stackrel{\eqref{eq:sum}}{\leq} 2 \E{\norm{\nabla f_i(x^k) - h_i^\star}_2^2} +2 \E{\norm{h_i^\star - h_i^k}_2^2} + \sigma_i^2.
\end{align*} 
By summing up these bounds and plugging the result into~\eqref{eq:5435}, equation~\eqref{eq:explain2} follows.
\end{proof}

\begin{lemma}
Let $\alpha (\omega + 1) \leq 1$. For $i=1,\dots,n$, we can upper bound the second moment of $h_i^{k+1}$ as
\begin{align}
 \EE{Q}{\norm{h_{i}^{k+1} - h_i^\star}_2^2} \leq (1-\alpha) \norm{h_{i}^{k} - h_i^\star}_2^2 +\alpha\norm{\nabla f_i(x^k)- h_i^\star}_2^2 + \alpha \sigma^2_i \,.
 \label{eq:hdecrement}
\end{align}
\end{lemma}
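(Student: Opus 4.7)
The plan is to expand the recursion for $h_i^{k+1}$, apply the unbiasedness and variance bound of $Q$, invoke the step-size condition $\alpha(\omega+1)\le 1$ to absorb an $\alpha^2$ term into an $\alpha$ term, and finally reorganize using a standard inner-product identity. The expectation on the left-hand side is taken jointly over the quantization $Q$ and the stochastic gradient $g_i^k$ (conditionally on $x^k$ and $h_i^k$), even though the lemma is written with the shorthand $\EE{Q}{\cdot}$; the $\sigma_i^2$ term on the right arises exactly when passing to the expectation over $g_i^k$.

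First, I would write
\begin{equation*}
h_i^{k+1} - h_i^\star = (h_i^k - h_i^\star) + \alpha\, Q(g_i^k - h_i^k),
\end{equation*}
square it, and take expectation over $Q$ only (treating $g_i^k, h_i^k$ as fixed). Using $\EE{Q}{Q(g_i^k - h_i^k)} = g_i^k - h_i^k$ together with $\EE{Q}{\norm{Q(g_i^k - h_i^k)}_2^2} \le (\omega+1)\norm{g_i^k - h_i^k}_2^2$ from Definition~\ref{def:omegaquant}, this yields
\begin{equation*}
\EE{Q}{\norm{h_i^{k+1}-h_i^\star}_2^2} \le \norm{h_i^k - h_i^\star}_2^2 + 2\alpha\lin{h_i^k - h_i^\star,\, g_i^k - h_i^k} + \alpha^2(\omega+1)\norm{g_i^k - h_i^k}_2^2.
\end{equation*}

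Next, since $\alpha(\omega+1)\le 1$, I would replace $\alpha^2(\omega+1)$ by $\alpha$ and then combine the cross term with the last term via the polarization identity $\norm{a}_2^2 + 2\lin{b,a} = \norm{a+b}_2^2 - \norm{b}_2^2$ applied to $a = g_i^k - h_i^k$ and $b = h_i^k - h_i^\star$. Since $a+b = g_i^k - h_i^\star$, the two terms collapse to $\alpha\bigl(\norm{g_i^k - h_i^\star}_2^2 - \norm{h_i^k - h_i^\star}_2^2\bigr)$, leaving
\begin{equation*}
\EE{Q}{\norm{h_i^{k+1}-h_i^\star}_2^2} \le (1-\alpha)\norm{h_i^k - h_i^\star}_2^2 + \alpha\norm{g_i^k - h_i^\star}_2^2.
\end{equation*}

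Finally, I would take expectation over $g_i^k$ and use the variance decomposition~\eqref{eq:variance} with $y = h_i^\star$ together with Assumption~\ref{assumption:diana}, giving $\E{\norm{g_i^k - h_i^\star}_2^2} \le \sigma_i^2 + \norm{\nabla f_i(x^k) - h_i^\star}_2^2$, which plugged in yields exactly~\eqref{eq:hdecrement}. There is no substantial obstacle here; the only delicate point is remembering to use the step-size condition $\alpha(\omega+1)\le 1$ to trade an $\alpha^2$ factor for a clean $\alpha$ factor, which is what makes the contraction coefficient $(1-\alpha)$ appear.
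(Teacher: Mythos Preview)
Your proof is correct and follows essentially the same approach as the paper: expand the square, apply the quantization moment bounds, use $\alpha(\omega+1)\le 1$ to reduce $\alpha^2(\omega+1)$ to $\alpha$, and then collapse the cross term plus the quadratic term into $\alpha\bigl(\norm{g_i^k-h_i^\star}_2^2-\norm{h_i^k-h_i^\star}_2^2\bigr)$ before invoking the variance bound on $g_i^k$. Your explicit remark that the final step requires an additional expectation over $g_i^k$ (despite the $\EE{Q}{\cdot}$ shorthand) is exactly how the paper handles it as well.
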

\begin{proof}
Since $h_i^{k+1} = h_i^k + \alpha \hat{\Delta}_i^k$ we can decompose
\begin{align*}
 \EE{Q}{\norm{h_{i}^{k+1} - h_i^\star}_2^2} &= 
 \EE{Q}{\norm{\alpha \hat{\Delta}_i^k + (h_i^k - h_i^\star)}_2^2} = \norm{h_{i}^{k} - h_i^\star}_2^2 + 2 \EE{Q}{\lin{\alpha \hat{\Delta}_i^k,h_i^k - h_i^\star}} + \EE{Q}{\norm{\alpha \hat{\Delta}_i^k}_2^2} \\
&\stackrel{\eqref{def:Q}}{\leq} \norm{h_{i}^{k} - h_i^\star}_2^2  +  2\lin{\alpha \Delta_i^k,h_i^k - h_i^\star} + \alpha^2 (\omega+1) \norm{\Delta_i^k}_2^2.
\end{align*}
Let plug in the bound $(\omega+1) \alpha \leq 1$ and continue the derivation:
\begin{align*}
 \EE{Q}{\norm{h_{i}^{k+1} - h_i^\star}_2^2} &\leq \norm{h_{i}^{k} - h_i^\star}_2^2  +  2\lin{\alpha \Delta_i^k,h_i^k - h_i^\star} + \alpha \norm{\Delta_i^k}_2^2 \\
 &=\norm{h_{i}^{k} - h_i^\star}_2^2 + \alpha \lin{g_i^k - h_i^k,g_i^k + h_i^k - 2 h_i^\star} \\
 &=\norm{h_{i}^{k} - h_i^\star}_2^2 + \alpha\norm{g_i^k- h_i^\star}_2^2 - \alpha\norm{h_i^k - h_i^\star}_2^2 \\
&\leq (1-\alpha) \norm{h_{i}^{k} - h_i^\star}_2^2 +  \alpha\norm{g_i^k- h_i^\star}_2^2\qedhere.
\end{align*}
The second term can be further upper-bounded by $\norm{\nabla f_i(x^k)- h_i^\star}_2^2 + \sigma^2_i$, where we use \eqref{eq:variance}, which concludes the proof.
\end{proof}

\begin{proof}[Proof of Theorem~\ref{thm:improvedDIANA}]
If $x^\star$ is a solution of~\eqref{eq:probR}, then $x^\star = \prox_{\gamma R}(x^\star - \gamma h^\star)$ (for $\gamma > 0$. Using this identity together with the non-expansiveness of the prox operator we can bound the first term of the Lyapunov function:
\begin{align*}
 \E{\norm{x^{k+1}- x^\star}_2^2} 
 &= \E{\norm{\prox_{\gamma R}(x^k - \gamma \hat{g}^k) - \prox_{\gamma R}(x^\star - \gamma h^\star)}_2^2} \\
 &\leq \E{\norm{x^k - \gamma \hat{g}^k - (x^\star - \gamma h^\star))}_2^2} \\
 &=\E{\norm{x^k - x^\star}_2^2} - 2 \gamma \E{\lin{\hat{g}^k - h^\star, x^k - x^\star}} + \gamma^2 \E{\norm{\hat{g}^k - h^\star}_2^2} \\
 &= \E{\norm{x^k - x^\star}_2^2} - 2 \gamma \lin{\nabla f(x^k) - h^\star, x^k - x^\star} + \gamma^2 \E{\norm{\hat{g}^k - h^\star}_2^2}. 
\end{align*}
It is high time to use strong convexity of each component $f_i$:
\begin{align*}
 \E{ \lin{\nabla f(x^k) - h^\star, x^k - x^\star}} &=
 \frac{1}{n} \sum_{i=1}^n  \E{ \lin{\nabla f_i(x^k) - h_i^\star, x^k - x^\star}} \\
 &\stackrel{\eqref{eq:coerc}}{\geq} \frac{1}{n}\sum_{i=1}^n \left( \frac{\mu L}{\mu +L} \E{\norm{x^k - x^\star}_2^2} + \frac{1}{\mu + L}  \E{\norm{\nabla f_i(x^k) - h_i^\star}_2^2} \right) \\
 &= \frac{\mu L}{\mu +L} \E{\norm{x^k - x^\star}_2^2} + \frac{1}{\mu + L} \frac{1}{n} \sum_{i=1}^n \E{\norm{\nabla f_i(x^k) - h_i^\star}_2^2} \,.
\end{align*}
Hence,
\begin{align*}
\E{\norm{x^{k+1}- x^\star}_2^2}  \leq \left(1- \frac{2 \gamma \mu L}{\mu +L}  \right)  \E{\norm{x^k - x^\star}_2^2} - \frac{2\gamma}{\mu + L} \frac{1}{n} \sum_{i=1}^n \E{\norm{\nabla f_i(x^k) - h_i^\star}_2^2}  + \gamma^2 \E{\norm{\hat{g}^k - h^\star}_2^2} 
\end{align*}
and by Lemma~\ref{lemma:boundg}:
\begin{align}
\begin{split}
\E{\norm{x^{k+1}- x^\star}_2^2}  &\stackrel{\eqref{eq:explain2}}{\leq} \left(1- \frac{2 \gamma \mu L}{\mu +L}  \right)  \E{\norm{x^k - x^\star}_2^2} + \left(\gamma^2 \left(1+ \frac{2 \omega}{n}\right) -  \frac{2\gamma}{\mu + L} \right) \frac{1}{n} \sum_{i=1}^n \E{\norm{\nabla f_i(x^k) - h_i^\star}_2^2}  \\ &\qquad +\gamma^2 \left(1+\omega\right) \frac{\sigma^2}{n} + \left( \gamma^2 \frac{2\omega}{n}\right) \frac{1}{n} \sum_{i=1}^n \E{\norm{h_i^k - h_i^\star}_2^2}\,.
\end{split}
\label{eq:6943}
\end{align}
Now let us consider the Lyapunov function:
\begin{align}
\begin{split}
 \E{\Psi^{k+1}} & \stackrel{\eqref{eq:hdecrement}+\eqref{eq:6943}}{\leq} \left(1- \frac{2 \gamma \mu L}{\mu +L}  \right)  \E{\norm{x^k - x^\star}_2^2} + \gamma^2\left(1+ \omega\right) \frac{\sigma^2}{n} \\
 &\qquad +   \left(\gamma^2 \left(1+ \frac{2\omega}{n} +c\alpha\right) -  \frac{2\gamma}{\mu + L} \right) \frac{1}{n} \sum_{i=1}^n \E{\norm{\nabla f_i(x^k) - h_i^\star}_2^2} \\
 &\qquad + \gamma^2  \left(\frac{2\omega}{n} + (1-\alpha) c \right) \frac{1}{n} \sum_{i=1}^n \norm{h_i^k - h_i^\star}_2^2 + \gamma^2c\alpha \sigma^2 \,.
 \end{split}
 \label{eq:2945}
\end{align}
In view of the assumption on $\gamma$ we have 
$\gamma^2 \left(1+ \frac{2\omega}{n} +c\alpha\right) -  \frac{2\gamma}{\mu + L} \leq 0$.
Since each $f_i$ is $\mu$-strongly convex, we have
$ \mu \norm{x^k - x^\star}_2^2 \stackrel{\eqref{eq:coerc2}}{\leq} \lin{\nabla f_i(x^k)-h_i^\star,x^k-x^\star}$
and thus $\mu^2 \norm{x^k - x^\star}_2^2 \leq \norm{\nabla f_i(x^k)-h_i^\star}_2^2$ with Cauchy-Schwarz. 
Using these observations we can absorb the third term in~\eqref{eq:2945} in the first one:
\begin{align*}
\begin{split}
 \E{\Psi^{k+1}} & \leq \left(1- 2\gamma \mu + \mu^2 \gamma^2 \left(1+ \frac{2\omega}{n} +c\alpha\right) \right)  \E{\norm{x^k - x^\star}_2^2} + \gamma^2 \left(c\alpha+ \frac{\omega+1}{n}\right) \sigma^2 \\
 &\qquad +  \gamma^2\left(\frac{2\omega}{n} + (1-\alpha)c\right) \frac{1}{n} \sum_{i=1}^n \norm{h_i^k - h_i^\star}_2^2\,.
 \end{split}
 \end{align*}
 By the first assumption on $\gamma$ it follows $\left(1- 2\gamma \mu + \mu^2 \gamma^2 \left(1+ \frac{2\omega}{n} +c\alpha\right) \right) \leq( 1-\gamma \mu)$. 
By the assumption on $c$ we have $\left(\frac{2\omega}{n} + (1-\alpha)c\right) \leq \left(1- \frac{\alpha}{2}\right)c$. 
 An the second assumption on $\gamma$ implies $\left(1-\frac{\alpha}{2}\right)\leq(1-\gamma\mu)$. Thus
\begin{align*}
\E{\Psi^{k+1}} \leq (1-\gamma\mu) \Psi^k + \gamma^2 \left(1+ \frac{\omega}{n}\right)\frac{\sigma^2}{n}\,.
\end{align*}
Unrolling the recurrence and the estimate $\sum_{\ell=0}^{k-1} (1-\gamma \mu)^k \leq \frac{1}{\mu \gamma}$ for all $k \geq 1$ leads to
\begin{align*}
 \E{\Psi^k} \leq (1- \gamma \mu)^k \Psi^0 + \frac{\gamma}{\mu}  \left(c\alpha+ \frac{\omega+1}{n}\right) \sigma^2 \leq  (1- \gamma \mu)^k \Psi^0 + \frac{2}{\mu(\mu+L)} \sigma^2\,,
\end{align*}
by the first assumption on $\gamma$.
\end{proof}

\section{Variance Reduced Diana---L-SVRG method and SAGA proof}
\label{sec:vrdiana}

\begin{lemma}
For all iterates $k \leq 0$ of Algorithm~\ref{alg:VR-DIANA}, it holds that $g_i^k$ is an unbiased estimate of the local gradient $\nabla f_i(x^k)$
	\begin{equation*}
		\E{g_i^k} = \nabla f_i(x^k)
	\end{equation*}
	and $g^k$ is that of the full gradient $\nabla f(x^k)$:
	\begin{equation*}
	\E{g^k} = \nabla f(x^k).
	\end{equation*}
\end{lemma}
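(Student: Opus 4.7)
The plan is to establish both equalities by a direct computation, using the tower rule of conditional expectation together with two sources of unbiasedness: the uniform random choice of $j_i^k \in [m]$, and the unbiasedness of the quantization operator $Q$ from Definition~\ref{def:omegaquant}. Throughout I would condition on $x^k$, $\{h_i^k\}_{i=1}^n$, and the memory table $\{w_{ij}^k\}_{i,j}$, so that $\mu_i^k = \frac{1}{m}\sum_{j=1}^m \nabla f_{ij}(w_{ij}^k)$ is deterministic at this conditioning level and the only remaining randomness in an iteration comes from the indices $j_i^k$ and the internal randomness of $Q$, which are independent across workers.

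First I would prove that $\E[g_i^k] = \nabla f_i(x^k)$ worker by worker. Since $j_i^k$ is drawn uniformly from $[m]$, averaging over $j_i^k$ gives
\begin{equation*}
\E_{j_i^k}\!\left[\nabla f_{ij_i^k}(x^k) - \nabla f_{ij_i^k}(w_{ij_i^k}^k)\right]
= \frac{1}{m}\sum_{j=1}^m \nabla f_{ij}(x^k) - \frac{1}{m}\sum_{j=1}^m \nabla f_{ij}(w_{ij}^k) = \nabla f_i(x^k) - \mu_i^k,
\end{equation*}
so that $\E[g_i^k] = \nabla f_i(x^k) - \mu_i^k + \mu_i^k = \nabla f_i(x^k)$. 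This is precisely the control-variate identity familiar from SVRG/SAGA; note that it is identical for both Variant~1 (L-SVRG) and Variant~2 (SAGA), since these variants differ only in how $\{w_{ij}^{k+1}\}$ is generated from $\{w_{ij}^k\}$, not in the form of $g_i^k$.

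Next I would handle $g^k = \frac{1}{n}\sum_{i=1}^n(\hat{\Delta}_i^k + h_i^k)$. By the update rule $\hat{\Delta}_i^k = Q(g_i^k - h_i^k)$ and the unbiasedness property~\eqref{def:Q}, conditioning additionally on $g_i^k$ gives $\EE{Q}{\hat{\Delta}_i^k} = g_i^k - h_i^k$ and hence $\EE{Q}{\hat{\Delta}_i^k + h_i^k} = g_i^k$. Applying the tower property together with the first claim yields $\E[\hat{\Delta}_i^k + h_i^k] = \E[g_i^k] = \nabla f_i(x^k)$, and averaging over $i \in [n]$ produces
\begin{equation*}
\E[g^k] = \frac{1}{n}\sum_{i=1}^n \nabla f_i(x^k) = \nabla f(x^k).
\end{equation*}

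There is no real obstacle here; the lemma is essentially a bookkeeping statement. The only thing to be careful about is the order of conditioning: $h_i^k$ must be treated as deterministic at the chosen conditioning level (which it is, since it was formed in previous iterations), and the randomness of $Q$ must be peeled off \emph{before} averaging over $j_i^k$, so that the two sources of unbiasedness combine cleanly via the tower rule.
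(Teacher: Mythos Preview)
Your proposal is correct and follows essentially the same approach as the paper: first use uniform sampling of $j_i^k$ to show $\E{g_i^k}=\nabla f_i(x^k)$, then use unbiasedness of $Q$ (the property $\EE{Q}{Q(x)}=x$) and linearity to conclude $\E{g^k}=\nabla f(x^k)$. The paper's proof is terser and does not spell out the tower-rule and conditioning considerations, but the logical skeleton is identical.
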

\begin{proof}
	It is a straightforward consequence of how we define sampling:
	\begin{align*}
		\E{g_i^k}
		=
		\E{\nabla f_{ij_i^k}(x^k) - \nabla f_{ij_i^k}(w_{ik_i^k}^k) + \mu_i^k}
		=
		\nabla f_i(x^k) - \mu_i^k + \mu_i^k
		=
		\nabla f_i(x^k).
	\end{align*}
	Similarly,
	\begin{align*}
		\E{g^k}
		=
		\frac{1}{n}
		\sum\limits_{i=1}^{n}
			\E{Q(g_i^k - h_i^k) + h_i^k}
		=
		\frac{1}{n}
		\sum\limits_{i=1}^{n}
			\E{g_i^k - h_i^k + h_i^k}
		=
		\frac{1}{n}
		\sum\limits_{i=1}^{n}
			\nabla f_i(x^k)
		=
		\nabla f(x^k).
	\end{align*}
\end{proof}

\subsection{Strongly convex case}
\begin{lemma}
We can upper bound the second moment of $x^k $ in the following way
	\begin{equation}
	\E{\norm{x^{k+1} - x^\star}_2^2} \leq \norm{x^k - x^\star}_2^2(1 - \mu \gamma) + 2\gamma(f^\star - f(x^k)) + \gamma^2 \E{\norm{g^k}_2^2}.
\label{lem:up_x_k_VR}	
	\end{equation}
\end{lemma}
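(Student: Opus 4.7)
The plan is to expand the squared distance after the SGD-style step $x^{k+1} = x^k - \gamma g^k$, take the conditional expectation using the previously established unbiasedness $\E{g^k} = \nabla f(x^k)$, and then apply strong convexity of $f$ to convert the inner product term into a descent term plus a contraction factor.

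First I would write
\begin{equation*}
\norm{x^{k+1} - x^\star}_2^2 = \norm{x^k - x^\star}_2^2 - 2\gamma \dotprod{g^k}{x^k - x^\star} + \gamma^2 \norm{g^k}_2^2,
\end{equation*}
and take the conditional expectation (given $x^k, h_i^k, w_{ij}^k$). Since the randomness in $g^k$ comes only from the indices $j_i^k$ and the quantization $Q$, the vectors $x^k$ and $x^\star$ are treated as deterministic here, and by the unbiasedness lemma just proved, $\E{g^k} = \nabla f(x^k)$. This collapses the cross term to $-2\gamma \dotprod{\nabla f(x^k)}{x^k - x^\star}$.

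Next I would invoke $\mu$-strong convexity of $f = \frac{1}{n}\sum_i f_i$ (which inherits strong convexity from each $f_i$ under Assumption~\ref{assumption:VRdianasc}). Applied at $y = x^\star$ and $x = x^k$, inequality~\eqref{def:strongconvex} yields
\begin{equation*}
\dotprod{\nabla f(x^k)}{x^k - x^\star} \geq f(x^k) - f^\star + \frac{\mu}{2}\norm{x^k - x^\star}_2^2.
\end{equation*}
Substituting this bound and rearranging produces the claimed inequality, with the $-\gamma\mu \norm{x^k-x^\star}_2^2$ term combining with the leading $\norm{x^k-x^\star}_2^2$ into the $(1-\mu\gamma)$ contraction factor, and $-2\gamma(f(x^k)-f^\star)$ becoming $2\gamma(f^\star - f(x^k))$.

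There is no real obstacle here: the statement does not attempt to bound $\E{\norm{g^k}_2^2}$, which is the genuinely delicate quantity (involving both the SAGA/L-SVRG variance and the additional quantization noise $\omega$, and which will be handled in a subsequent lemma). The role of this lemma is just to isolate that second-moment term so that it can be attacked separately. The only thing to double check is that the conditioning is compatible with the unbiasedness identity, which it is because the randomness of $j_i^k$ and $Q$ is independent of $x^k, h_i^k, w_{ij}^k$.
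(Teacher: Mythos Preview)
Your proposal is correct and follows essentially the same approach as the paper: expand the square, use unbiasedness of $g^k$ to replace the cross term by $-2\gamma\dotprod{\nabla f(x^k)}{x^k-x^\star}$, and then apply $\mu$-strong convexity~\eqref{def:strongconvex} at $x=x^k$, $y=x^\star$ to obtain the contraction and descent terms. The paper's proof is line-for-line the same argument.
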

\begin{proof}
	\begin{align*}
	\E{\norm{x^{k+1} - x^\star}_2^2} &=
	\E{
		\norm{x^{k} - x^\star}_2^2 + 2\gamma\dotprod{g^k}{x^\star - x^k} + \gamma^2 \norm{g^k}_2^2
	}\\
	&=
	\norm{x^{k} - x^\star}_2^2 + 2\gamma\dotprod{\nabla f(x^k)}{x^\star - x^k} + \gamma^2 \E{\norm{g^k}_2^2}\\
	&\overset{\eqref{def:strongconvex}}{\leq}
	\norm{x^{k} - x^\star}_2^2
	+
	2\gamma
	\left(
	f^\star - f(x^k) - \frac{\mu}{2}\norm{x^k - x^\star}_2^2
	\right)
	+
	\gamma^2 \E{\norm{g^k}_2^2}\\
	&=
	\norm{x^k - x^\star}_2^2(1 - \mu \gamma) + 2\gamma(f^\star - f(x^k)) + \gamma^2 \E{\norm{g^k}_2^2},
	\end{align*}
	where the first equation follows from the definition of $x^{k+1}$ in Algorithm~\ref{alg:VR-DIANA}.
\end{proof}

\begin{lemma} Let $\alpha(\omega+1) \leq 1$. We can upper bound $H^{k+1}$ in the following way 
	\begin{equation}
		\E{H^{k+1}}
		\leq
		(1-\alpha)H^k + \frac{2\alpha}{m} D^k + 8\alpha Ln \left( f(x^k) - f^\star \right),
		\label{lem:up_H_VR}
	\end{equation}
	where 
	\begin{equation}
		H^{k}
		\eqdef
		\sum\limits_{i=1}^{n}
			\norm{h_i^{k} - \nabla f_i(x^\star)}_2^2
		\label{lem:def_H_VR}
	\end{equation}
	and
	\begin{equation}
		D^{k}
		\eqdef
		\sum\limits_{i=1}^{n}
			\sum\limits_{j=1}^{m}
				\norm{\nabla f_{ij}(w_{ij}^{k}) - \nabla f_{ij}(x^\star)}_2^2.
		\label{lem:def_D_VR}
	\end{equation}
\end{lemma}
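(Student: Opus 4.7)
The plan is to reuse the DIANA-style contraction for $h_i^{k+1}$ (as in inequality~\eqref{eq:hdecrement}) and then bound the resulting variance-like term $\E\|g_i^k-\nabla f_i(x^\star)\|_2^2$ using the SAGA/L-SVRG structure of $g_i^k$. Write $h_i^\star := \nabla f_i(x^\star)$.

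\textbf{Step 1: DIANA contraction on $h_i$.} First I would take expectation over the quantization $Q$ only. Since $h_i^{k+1} = h_i^k + \alpha\hat\Delta_i^k$ with $\hat\Delta_i^k = Q(g_i^k-h_i^k)$, expanding the square and using $\E_Q[\hat\Delta_i^k]=g_i^k-h_i^k$ together with $\E_Q\|\hat\Delta_i^k\|_2^2\le(\omega+1)\|g_i^k-h_i^k\|_2^2$ gives
\begin{equation*}
\E_Q\|h_i^{k+1}-h_i^\star\|_2^2
\;\le\;\|h_i^k-h_i^\star\|_2^2+2\alpha\langle g_i^k-h_i^k,\,h_i^k-h_i^\star\rangle+\alpha^2(\omega+1)\|g_i^k-h_i^k\|_2^2.
\end{equation*}
The assumption $\alpha(\omega+1)\le 1$ allows me to replace $\alpha^2(\omega+1)$ with $\alpha$. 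Then the identity $2\langle a,b\rangle+\|a\|_2^2=\|a+b\|_2^2-\|b\|_2^2$ (with $a=g_i^k-h_i^k$, $b=h_i^k-h_i^\star$) collapses the right-hand side to $(1-\alpha)\|h_i^k-h_i^\star\|_2^2+\alpha\|g_i^k-h_i^\star\|_2^2$, exactly mirroring the argument behind~\eqref{eq:hdecrement}.

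\textbf{Step 2: bounding $\E\|g_i^k-h_i^\star\|_2^2$ using the finite-sum structure.} Now I would take expectation over the uniform sample $j_i^k\in[m]$ and exploit the control variate. Split
\begin{equation*}
g_i^k-h_i^\star=\underbrace{\bigl(\nabla f_{ij_i^k}(x^k)-\nabla f_{ij_i^k}(x^\star)\bigr)}_{A_i}+\underbrace{\bigl(\nabla f_{ij_i^k}(x^\star)-\nabla f_{ij_i^k}(w_{ij_i^k}^k)+\mu_i^k-h_i^\star\bigr)}_{B_i}.
\end{equation*}
Because $h_i^\star=\frac{1}{m}\sum_j\nabla f_{ij}(x^\star)$ and $\mu_i^k=\frac{1}{m}\sum_j\nabla f_{ij}(w_{ij}^k)$, the term $B_i$ is mean-zero under $j_i^k$. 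Applying~\eqref{eq:sum} with $k=2$ yields $\E_j\|g_i^k-h_i^\star\|_2^2\le 2\E_j\|A_i\|_2^2+2\E_j\|B_i\|_2^2$. For the stochastic-correction term, variance-bounded-by-second-moment gives $\E_j\|B_i\|_2^2\le\frac{1}{m}\sum_{j=1}^m\|\nabla f_{ij}(w_{ij}^k)-\nabla f_{ij}(x^\star)\|_2^2$, whose sum over $i$ is precisely $\frac{1}{m}D^k$. For $\E_j\|A_i\|_2^2$ I would invoke the standard co-coercivity consequence of convexity and $L$-smoothness of each $f_{ij}$, namely $\|\nabla f_{ij}(x^k)-\nabla f_{ij}(x^\star)\|_2^2\le 2L\bigl(f_{ij}(x^k)-f_{ij}(x^\star)-\langle\nabla f_{ij}(x^\star),x^k-x^\star\rangle\bigr)$; averaging over $j$ and then summing over $i$, the linear terms telescope via $\sum_i\nabla f_i(x^\star)=n\nabla f(x^\star)=0$, leaving $\sum_i\E_j\|A_i\|_2^2\le 2Ln\bigl(f(x^k)-f^\star\bigr)$.

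\textbf{Step 3: assembly.} Combining these two estimates gives $\sum_i\E\|g_i^k-h_i^\star\|_2^2\le 4Ln(f(x^k)-f^\star)+\frac{2}{m}D^k$, which is stronger than, and hence implies, the claimed $8Ln(f(x^k)-f^\star)+\frac{2}{m}D^k$. Summing the contraction from Step~1 over $i$ and plugging in yields exactly~\eqref{lem:up_H_VR}. The main obstacle is choosing the decomposition $g_i^k-h_i^\star=A_i+B_i$: a naive split such as $(\nabla f_{ij}(x^k)-\nabla f_{ij}(w_{ij}^k))+(\mu_i^k-h_i^\star)$ would force an extra triangle inequality that inflates the $D^k$-coefficient to $\frac{6}{m}$, whereas anchoring both halves at $\nabla f_{ij}(x^\star)$ keeps the $B_i$ term mean-zero and preserves the tight $\frac{2}{m}$ factor while producing the clean $f(x^k)-f^\star$ bound on the $A_i$ side.
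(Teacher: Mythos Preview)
Your proof is correct and in fact slightly sharper than the paper's, obtaining $4\alpha Ln(f(x^k)-f^\star)$ where the paper gets $8\alpha Ln(f(x^k)-f^\star)$; as you note, this implies the stated lemma. Step~1 is identical to the paper's argument. The difference lies in Step~2: the paper splits $g_i^k-h_i^\star$ through $\nabla f_i(x^k)$, namely
\[
\|g_i^k-h_i^\star\|_2^2\le 2\|g_i^k-\nabla f_i(x^k)\|_2^2+2\|\nabla f_i(x^k)-h_i^\star\|_2^2,
\]
then bounds the variance term by the second moment of $\nabla f_{ij}(x^k)-\nabla f_{ij}(w_{ij}^k)$ and applies a \emph{second} triangle inequality through $x^\star$. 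This double splitting produces two separate co-coercivity contributions (one from $\|\nabla f_{ij}(x^k)-\nabla f_{ij}(x^\star)\|_2^2$ and one from $\|\nabla f_i(x^k)-\nabla f_i(x^\star)\|_2^2$), each worth $4\alpha Ln(f(x^k)-f^\star)$. Your decomposition $g_i^k-h_i^\star=A_i+B_i$ anchors both halves at $\nabla f_{ij}(x^\star)$ from the outset, so $B_i$ is mean-zero and directly delivers the $\tfrac{1}{m}D^k$ term, while only $A_i$ feeds the co-coercivity bound---hence the single factor of $4$. The paper's route is more mechanical (repeatedly insert the nearest ``natural'' midpoint), whereas yours requires spotting in advance that $h_i^\star=\tfrac{1}{m}\sum_j\nabla f_{ij}(x^\star)$ makes the anchored $B_i$ centered; the payoff is a cleaner constant.
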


\begin{proof}
	\begin{align*}
		\E{H^{k+1}}
		&=
		\E{\sum\limits_{i=1}^{n}
			\norm{h_i^{k+1} - \nabla f_i(x^\star)}_2^2} \\
		&=
		\sum\limits_{i=1}^{n}
			\norm{h_i^k - \nabla f_i(x^\star)}_2^2\\
		&\qquad+
		\sum\limits_{i=1}^{n}
			\E{
				2\dotprod{\alpha Q(g_i^k - h_i^k)}{h_i^k - \nabla f_i(x^\star)}
				+
				\alpha^2\norm{Q(g_i^k - h_i^k)}_2^2
			}\\
		&\leq
		H^k
		+
		\sum\limits_{i=1}^{n}\E{
			2\alpha \dotprod{g_i^k - h_i^k}{h_i^k - \nabla f_i(x^\star)}
			+
			\alpha \left(\alpha \cdot (\omega+1)\right) \norm{g_i^k - h_i^k}_2^2}\\
		&\leq
		H^k
		+
		\E{\sum\limits_{i=1}^{n}
			\alpha \dotprod{g_i^k - h_i^k}{g_i^k + h_i^k - 2\nabla f_i(x^\star)}}\\
		&=
		H^k
		+
		\E{\sum\limits_{i=1}^{n}
			\alpha 
			\left(
				\norm{g_i^k - \nabla f_i(x^\star)}_2^2
				-
				\norm{h_i^k - \nabla f_i(x^\star)}_2^2
			\right)}\\
		&=
		H^k(1-\alpha)
		+
		\E{\sum\limits_{i=1}^{n}
			\alpha 
			\left(
				\norm{g_i^k - \nabla f_i(x^\star)}_2^2
			\right)}\\
		&\overset{\eqref{eq:sum}}{\leq}
		H^k(1-\alpha)
		+
		\sum\limits_{i=1}^{n}
		\left(
			2\alpha \E{		
				\norm{g_i^k - \nabla f_i(x^k)}_2^2
			} 
			+ 2\alpha\norm{\nabla f_i(x^k) - \nabla f_i(x^\star)}_2^2
			\right)\\
		&\overset{\text{Alg.}~\ref{alg:VR-DIANA}}{=}
		H^k(1-\alpha)
		+
		\sum\limits_{i=1}^{n}\E{
			2\alpha 
				\norm{\nabla f_{ij_i^k}(x^k) - \nabla f_{ij_i^k}(w_{ij_i^k}^k) - \E{\nabla f_{ij_i^k}			(x^k) - \nabla f_{ij_i^k}(w_{ij_i^k}^k)}}_2^2
			} \\
			&\quad +
			2\alpha
			\sum\limits_{i=1}^{n}
			\norm{\nabla f_i(x^k) - \nabla f_i(x^\star)}_2^2
			\\
		&\overset{\eqref{eq:variance}}{\leq}
		H^k(1-\alpha)
		+
		\sum\limits_{i=1}^{n}\left(
		\E{
			2\alpha 
				\norm{\nabla f_{ij_i^k}(x^k) - \nabla f_{ij_i^k}(w_{ij_i^k}^k) }_2^2
			}
			+
			2\alpha\norm{\nabla f_i(x^k) - \nabla f_i(x^\star)}_2^2
			\right)\\
		&\overset{\eqref{eq:sum}}{\leq}
		H^k(1-\alpha)
		+
		\frac{2\alpha}{m}
		\sum\limits_{i=1}^{n}
			\sum\limits_{j=1}^{m}
			\left( \norm{\nabla f_{ij}(x^k) - \nabla f_{ij}(x^\star)}_2^2 +
				\norm{\nabla f_{ij}(w_{ij}^k) - \nabla f_{ij}(x^\star)}_2^2\right)\\
		&\quad +
			2\alpha
			\sum\limits_{i=1}^{n}
			\norm{\nabla f_i(x^k) - \nabla f_i(x^\star)}_2^2
			\\
		&\overset{\eqref{def:smoothness}}{\leq}
		H^k(1-\alpha)
		+
		\frac{2\alpha}{m}
		\sum\limits_{i=1}^{n}
			\sum\limits_{j=1}^{m}
				\norm{\nabla f_{ij}(w_{ij}^k) - \nabla f_{ij}(x^\star)}_2^2
		+
		8\alpha Ln \left( f(x^k) - f^\star \right) \\
		&=
		H^k(1-\alpha) + \frac{2\alpha}{m} D^k + 8\alpha Ln \left( f(x^k) - f^\star \right) ,
	\end{align*}
	where the second equality uses definition of $h_i^{k+1}$ in Algorithm~\ref{alg:VR-DIANA}  and the first inequality follows from $\alpha(\omega+1) \leq 1$.
\end{proof}

\begin{lemma} We can upper bound $D^{k+1}$ in the following way
	\begin{equation}
		\E{D^{k+1}}
		\leq
		D^k \left(1 - \frac{1}{m}\right)
		+
		2Ln(f(x^k) - f^\star).
		\label{lem:up_D_VR}
	\end{equation}
\end{lemma}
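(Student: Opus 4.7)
The plan is to handle the two variants of Algorithm~\ref{alg:VR-DIANA} separately since they induce different random updates of the table $\{w_{ij}^k\}$, and then observe that both yield the same bound. In both cases I will condition on the iterate $x^k$ and the table $\{w_{ij}^k\}_{ij}$, compute $\E{D^{k+1}}$ exactly, and then close the proof by invoking the standard smoothness--convexity inequality to relate the resulting gradient sum to the function suboptimality $f(x^k)-f^\star$.

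For Variant~1 (L-SVRG), the binary variable $u^k$ equals $1$ with probability $1/m$, in which case $w_{ij}^{k+1}=x^k$ for every $(i,j)$, and equals $0$ otherwise, leaving $w_{ij}^{k+1}=w_{ij}^k$. Taking expectation immediately gives
\begin{equation*}
\E{D^{k+1}} = \frac{1}{m}\sum_{i=1}^n\sum_{j=1}^m \norm{\nabla f_{ij}(x^k)-\nabla f_{ij}(x^\star)}_2^2 + \left(1-\frac{1}{m}\right)D^k.
\end{equation*}
For Variant~2 (SAGA), each worker samples $j_i^k$ uniformly from $[m]$ and updates only the coordinate $w_{ij_i^k}^{k+1}=x^k$. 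For a single worker $i$, taking expectation over $j_i^k$ yields
\begin{equation*}
\E{\sum_{j=1}^m \norm{\nabla f_{ij}(w_{ij}^{k+1})-\nabla f_{ij}(x^\star)}_2^2}
= \left(1-\frac{1}{m}\right)\sum_{j=1}^m \norm{\nabla f_{ij}(w_{ij}^k)-\nabla f_{ij}(x^\star)}_2^2 + \frac{1}{m}\sum_{j=1}^m \norm{\nabla f_{ij}(x^k)-\nabla f_{ij}(x^\star)}_2^2,
\end{equation*}
and summing over $i$ yields the same identity as in Variant~1.

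To finish, I will bound the ``fresh'' term uniformly in both variants. Since each $f_{ij}$ is $L$-smooth and convex (Assumption~\ref{assumption:VRdiana} together with Assumption~\ref{assumption:VRdianasc} or Assumption~\ref{assumption:VRdianac}), the standard co-coercivity inequality gives
\begin{equation*}
\norm{\nabla f_{ij}(x^k)-\nabla f_{ij}(x^\star)}_2^2 \leq 2L\bigl(f_{ij}(x^k)-f_{ij}(x^\star)-\lin{\nabla f_{ij}(x^\star)}{x^k-x^\star}\bigr).
\end{equation*}
Summing over $i,j$, dividing by $mn$, and using $\nabla f(x^\star)=0$ (Algorithm~\ref{alg:VR-DIANA} has no proximal step, so $x^\star$ is an unconstrained minimizer of $f$) makes the inner-product term vanish, leaving
\begin{equation*}
\sum_{i=1}^n\sum_{j=1}^m \norm{\nabla f_{ij}(x^k)-\nabla f_{ij}(x^\star)}_2^2 \leq 2Lmn\bigl(f(x^k)-f^\star\bigr).
\end{equation*}
Combining this with the identity above and multiplying by $1/m$ gives the claimed bound.

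The calculation is essentially mechanical; the only mildly delicate point is treating both variants in a unified way, which boils down to observing that both refresh a single entry of the table in expectation at rate $1/m$. The cancellation of the cross term via $\nabla f(x^\star)=0$ is the only place where the algorithmic setup (no regularizer in VR-DIANA) enters.
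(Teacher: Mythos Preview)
Your proof is correct and follows essentially the same route as the paper's: compute $\E{D^{k+1}}$ exactly using the update rule for $w_{ij}^{k+1}$, then bound the fresh-gradient sum via the standard convex $L$-smoothness inequality and $\nabla f(x^\star)=0$. The paper's version is terser (it does not separate the two variants and cites only the smoothness definition for the last step), while you spell out the co-coercivity argument and the cancellation of the inner product explicitly; both arrive at the identical bound.
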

\begin{proof}
	\begin{align*}
		\E{D^{k+1}}
		&=
		\sum\limits_{i=1}^{n}
			\sum\limits_{j=1}^{m}
				\E{\norm{\nabla f_{ij}(w_{ij}^{k+1}) - \nabla f_{ij}(x^\star)}_2^2}\\
		&=
		\sum\limits_{i=1}^{n}
			\sum\limits_{j=1}^{m}
				\left[
					\left(
						1 - \frac{1}{m}
					\right)
					\norm{\nabla f_{ij}(w_{ij}^k) - \nabla f_{ij}(x^\star)}_2^2
					+
					\frac{1}{m}
					\norm{\nabla f_{ij}(x^k) - \nabla f_{ij}(x^\star)}_2^2			
				\right]\\
		&\overset{\eqref{def:smoothness}}{\leq}
		D^k
		\left(
			1 - \frac{1}{m}
		\right)
		+
		2Ln(f(x^k) - f^\star),
	\end{align*}
	where the second equality uses definition of $w_{ij}^{k+1}$ in Algorithm~\ref{alg:VR-DIANA}.
\end{proof}

\begin{lemma}
We can upper bound the second moment of the $g^k$ in the following way
	\begin{equation}
		\E{\norm{g^k}_2^2}
		\leq
		2L(f(x^k) - f^\star)\left(1 + \frac{4\omega+2}{n}\right)
		+
		\frac{2\omega}{mn^2}D^k
		+
		\frac{2(\omega+1)}{n^2} H^k.
		\label{lem:up_g_VR}
	\end{equation}
\end{lemma}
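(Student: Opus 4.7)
My plan is to apply the standard variance decomposition $\E{\|X\|_2^2}=\|\E{X}\|_2^2+\E{\|X-\E{X}\|_2^2}$ to $X=g^k$, then handle each piece. Since $\E{g^k}=\nabla f(x^k)$ (already established) and $\nabla f(x^\star)=0$, the mean contribution is controlled by $L$-smoothness plus convexity of $f$, yielding $\|\nabla f(x^k)\|_2^2\le 2L(f(x^k)-f^\star)$. The bulk of the work is the variance $\E{\|g^k-\nabla f(x^k)\|_2^2}$.

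For the variance, I would split $g^k-\nabla f(x^k)=\frac{1}{n}\sum_{i=1}^n (Y_i-\E{Y_i})$ where $Y_i=Q(g_i^k-h_i^k)+h_i^k$. The workers act with independent randomness conditional on $x^k$ and $\{h_i^k\}$, so $\E{\|g^k-\nabla f(x^k)\|_2^2}=\frac{1}{n^2}\sum_{i}\E{\|Y_i-\E{Y_i}\|_2^2}$. Within a single worker I would decompose further: write $Y_i-\E{Y_i}=U_i+V_i$ with $U_i=Q(g_i^k-h_i^k)-(g_i^k-h_i^k)$ (the quantization error) and $V_i=g_i^k-\nabla f_i(x^k)$ (the variance-reduction error). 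Because $\E{U_i\mid g_i^k,h_i^k}=0$ by unbiasedness of $Q$ and $V_i$ is a function of $g_i^k$, the cross term vanishes, so $\E{\|Y_i-\E{Y_i}\|_2^2}=\E{\|U_i\|_2^2}+\E{\|V_i\|_2^2}$.

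I would then bound each piece using already-developed tools. For $U_i$: by~\eqref{def:omega}, $\E{\|U_i\|_2^2}\le\omega\E{\|g_i^k-h_i^k\|_2^2}$, and another variance decomposition (again with $\E{g_i^k}=\nabla f_i(x^k)$) yields $\E{\|g_i^k-h_i^k\|_2^2}=\E{\|V_i\|_2^2}+\|\nabla f_i(x^k)-h_i^k\|_2^2$. The deterministic term I would expand via $\|a-b\|_2^2\le 2\|a-c\|_2^2+2\|b-c\|_2^2$ with $c=\nabla f_i(x^\star)$, producing $H^k$ (after summing over $i$) and a residual $\sum_i\|\nabla f_i(x^k)-\nabla f_i(x^\star)\|_2^2$ that smoothness+convexity of $f_i$ bounds by $2Ln(f(x^k)-f^\star)$. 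For $\E{\|V_i\|_2^2}$, exactly as in the SAGA/L-SVRG analysis, I would use $\text{Var}\le $ second moment to get $\E{\|V_i\|_2^2}\le \frac{1}{m}\sum_j\|\nabla f_{ij}(x^k)-\nabla f_{ij}(w_{ij}^k)\|_2^2$, and then apply $\|a-b\|_2^2\le 2\|a-c\|_2^2+2\|b-c\|_2^2$ with $c=\nabla f_{ij}(x^\star)$. Summing over $i,j$, one piece becomes $D^k$ and the other is $\sum_{i,j}\|\nabla f_{ij}(x^k)-\nabla f_{ij}(x^\star)\|_2^2\le 2Lnm(f(x^k)-f^\star)$ by smoothness+convexity of each $f_{ij}$.

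Combining all contributions and collecting the $f(x^k)-f^\star$, $D^k$, and $H^k$ terms gives the claimed inequality. The main subtlety is the bookkeeping of the two independence arguments: (i) independence across workers to convert the squared norm of a sum into a sum of squared norms, and (ii) the conditional independence of quantization noise and VR noise within a worker, which is what makes the cross term vanish and keeps the coefficient of $D^k$ proportional to $\omega$ rather than $\omega+1$. The rest is essentially a mechanical combination of previously used building blocks (smoothness+convexity of $f$, $f_i$, and $f_{ij}$, Young's inequality, and the variance bound on $Q$).
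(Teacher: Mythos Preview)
Your approach is correct and essentially the same as the paper's: both arguments rest on a two-level variance decomposition (quantization noise $U_i$ and variance-reduction noise $V_i$), the independence across workers to turn the squared sum into a sum of squares, and the same smoothness-plus-convexity bounds on the $f$, $f_i$, $f_{ij}$ pieces. The only cosmetic difference is ordering---the paper first conditions on $Q$ (its $T_1,T_2$ split) and then takes the remaining expectation, whereas you take the full expectation at once and then separate $U_i+V_i$; the resulting bounds coincide.

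One small bookkeeping remark: carrying your own computation through, the $V_i$ term contributes a $1$ and the $U_i$ term contributes an $\omega$ to the $D^k$ coefficient, so you will end up with $\tfrac{2(\omega+1)}{mn^2}D^k$ and $\tfrac{2\omega}{n^2}H^k$---exactly what the paper's proof derives and what is used downstream in the proof of Theorem~\ref{thm:VR-DIANA}. (The coefficients of $D^k$ and $H^k$ in the displayed statement appear to be swapped; your closing sentence about the $D^k$ coefficient being ``$\omega$ rather than $\omega+1$'' is therefore slightly off, but this does not affect the validity of your argument.)
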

\begin{proof}
	\begin{align*}
		\EE{Q}{\norm{g^k}_2^2} &\overset{\eqref{eq:variance}}{=}
		\underbrace{
			\norm{\EE{Q}{g^k}}_2^2
		}_{T_1}
		+
		\underbrace{
			\EE{Q}{\norm{g^k - \EE{Q}{g^k}}_2^2}
		}_{T_2}.
	\end{align*}
	We proceed with upper bounding terms $T_1$ and $T_2$ separately. For $T_1$ we can use definition of $g^k$ in order to obtain 
	\begin{align*}
		T_1
		&= 
		\norm{
			\frac{1}{n}
			\sum\limits_{i=1}^{n}
				\EE{Q}{Q(g_i^k - h_i^k) + h_i^k}
		}_2^2
		=
		\norm{
			\frac{1}{n}
			\sum\limits_{i=1}^{n}
				g_i^k
		}_2^2
	\end{align*}
	and 
	\begin{align*}
		T_2
		&=
		\EE{Q}{
			\norm{
				\frac{1}{n}
				\sum\limits_{i=1}^{n}
					Q(g_i^k - h_i^k) - (g_i^k - h_i^k)
			}_2^2
		}\\
		&\overset{\eqref{eq:indep}}{=}
		\frac{1}{n^2}
		\sum\limits_{i=1}^{n}
			\EE{Q}{
				\norm{Q(g_i^k - h_i^k) - (g_i^k - h_i^k)}_2^2
			} \\ 
		&\overset{\eqref{def:Q}}{\leq}
		\frac{\omega}{n^2}
		\sum\limits_{i=1}^{n}
			\norm{g_i^k - h_i^k}_2^2.
	\end{align*}
	Let us calculate full expectations conditioned on previous iteration:
	\begin{align*}
		\E{T_2}
		&=
		\frac{\omega}{n^2}
		\sum\limits_{i=1}^{n}
			\E{\norm{g_i^k - h_i^k}_2^2}
		=
		\frac{\omega}{n^2}
		\sum\limits_{i=1}^{n}
		    \left(
			\norm{\E{g_i^k - h_i^k}}_2^2
			+
			\E{\norm{g_i^k - h_i^k - \E{g_i^k - h_i^k}}_2^2}
			\right)\\
		&=
		\frac{\omega}{n^2}
		\sum\limits_{i=1}^{n}
		    \left(
			\norm{\nabla f_i(x^k) - h_i^k}_2^2
			+
			\E{\norm{g_i^k - \nabla f_i(x^k)}_2^2}
			\right)\\
		&=
		\frac{\omega}{n^2}
		\sum\limits_{i=1}^{n}
		    \left(
			\norm{\nabla f_i(x^k) - h_i^k}_2^2
			+
			\E{\norm{\nabla f_{ij_i^k}(x^k) - \nabla f_{ij_i^k}(w_{ij_i^k}^k) - \E{\nabla f_{ij_i^k}(x^k) - \nabla f_{ij_i^k}(w_{ij_i^k}^k)}}_2^2}
			\right)\\
		&\overset{\eqref{eq:variance}}{\leq}
		\frac{\omega}{n^2}
		\sum\limits_{i=1}^{n}
		     \left(
			 \norm{\nabla f_i(x^k) - h_i^k}_2^2
			 +
			 \E{\norm{\nabla f_{ij_i^k}(x^k) - \nabla f_{ij_i^k}(w_{ij_i^k}^k)}_2^2}
			 \right)\\
		&\overset{\eqref{eq:sum}}{\leq}
		\frac{2\omega}{n^2}
		\sum\limits_{i=1}^{n}
		    \left(
			\norm{h_i^k - \nabla f_i(x^\star)}_2^2
			+
			\norm{\nabla f_i(x^k) - \nabla f_i(x^\star)}_2^2
			\right)\\
		&\qquad+
		\frac{2\omega}{mn^2}
		\sum\limits_{i=1}^{n}
			\sum\limits_{j=1}^{m}
			    \left(
				\norm{\nabla f_{ij}(w_{ij}^k) - \nabla f_{ij}(x^\star)}_2^2
				+
				\norm{\nabla f_{ij}(x^k) - \nabla f_{ij}(x^\star)}_2^2
				\right)\\
		&\overset{\eqref{def:smoothness}}{\leq}
		\frac{2\omega}{n^2} H^k
		+
		\frac{2\omega}{mn^2} D^k
		+
		\frac{8L\omega}{n} (f(x^k) - f^\star)
	\end{align*}
	The other term follows in a similar way:
	\begin{align*}
		\E{T_1}
		&=
		\E{\norm{
			\frac{1}{n}
			\sum\limits_{i=1}^{n}
				g_i^k
		}_2^2}
		=
		\norm{
			\frac{1}{n}
			\sum\limits_{i=1}^{n}
			\E{g_i^k}
		}_2^2
		+
		\E{\norm{
				\frac{1}{n}
				\sum\limits_{i=1}^{n}
				\left( g_i^k - \E{g_i^k}\right)
			}_2^2}\\
		&=
		\norm{\nabla f(x^k)}_2^2
		+
		\frac{1}{n^2}
		\sum\limits_{i=1}^{n}
			\E{\norm{g_i^k - \nabla f_i(x^k)}_2^2}\\
		&\overset{\eqref{def:smoothness}}{\leq}
		2L(f(x^k) - f^\star)
		+
		\frac{1}{n^2}
		\sum\limits_{i=1}^{n}
			\E{\norm{\nabla f_{ij_i^k}(x^k) - \nabla f_{ij_i^k}(w_{ij_i^k}^k) - \E{\nabla f_{ij_i^k}(x^k) - \nabla f_{ij_i^k}(w_{ij_i^k}^k)}}_2^2}\\
		&\overset{\eqref{eq:variance}}{\leq}
		2L(f(x^k) - f^\star)
		+
		\frac{1}{n^2}
		\sum\limits_{i=1}^{n}
			\E{\norm{\nabla f_{ij_i^k}(x^k) - \nabla f_{ij_i^k}(w_{ij_i^k}^k)}_2^2}\\
		&\overset{\text{Alg.}~\ref{alg:VR-DIANA}}{=}
		2L(f(x^k) - f^\star)
		+
		\frac{1}{mn^2}
		\sum\limits_{i=1}^{n}
			\sum\limits_{j=1}^{m}
				\norm{\nabla f_{ij}(x^k) - \nabla f_{ij}(w_{ij}^k)}_2^2\\
		&\overset{\eqref{eq:sum}}{\leq}
		2L(f(x^k) - f^\star)
		+
		\frac{2}{mn^2}
		\sum\limits_{i=1}^{n}
			\sum\limits_{j=1}^{m}
			    \left(
				\norm{\nabla f_{ij}(w_{ij}^k) - \nabla f_{ij}(x^\star)}_2^2
				+
				\norm{\nabla f_{ij}(x^k) - \nabla f_{ij}(x^\star)}_2^2
				\right)\\
		&\overset{\eqref{def:smoothness}}{\leq}
		(f(x^k) - f^\star)
		\left(2L + \frac{4L}{n}\right)
		+
		\frac{2}{mn^2} D^k.
	\end{align*}
	Now, summing $\E{T_1}$ and $\E{T_2}$ we get
	\begin{align*}
		\E{\norm{g^k}_2^2}
		&=
		\E{T_1 + T_2}
		\leq
		(f(x^k) - f^\star)
		\left(2L + \frac{4L}{n}\right)
		+
		\frac{2}{mn^2} D^k\\
		&\quad +
		\frac{2\omega}{n^2} H^k
		+
		\frac{2\omega}{mn^2} D^k
		+
		\frac{8L\omega}{n} (f(x^k) - f^\star)\\
		&\leq
		(f(x^k) - f^\star)
		\left(
			2L + \frac{4L}{n} + \frac{8L\omega}{n}
		\right)
		+
		\frac{2\omega}{n^2} H^k
		+
		\frac{2(\omega+1)}{mn^2} D^k,
	\end{align*}
	which concludes the proof.
\end{proof}


\begin{proof}[Proof of \cref{thm:VR-DIANA}]
Combining all lemmas together we may finalize proof. By the definition of Lyapunov function we have
	\begin{align}
		\E{\psi^{k+1}}
		&=
		\E{\norm{x^{k+1} - x^\star}_2^2 + b \gamma^2 H^{k+1} + c \gamma^2 D^{k+1}} \notag \\
		&\overset{\eqref{lem:up_x_k_VR}}{\leq}
		\norm{x^k - x^\star}_2^2(1 - \mu \gamma) + 2\gamma(f^\star - f(x^k)) + \gamma^2 \E{\norm{g^k}_2^2}
		+
		\E{
			b \gamma^2 H^{k+1} + c \gamma^2 D^{k+1}
		}\notag \\
		&\overset{\eqref{lem:up_H_VR}+\eqref{lem:up_D_VR} + \eqref{lem:up_g_VR}}{\leq}
		\norm{x^k - x^\star}_2^2(1 - \mu \gamma) + 2\gamma(f^\star - f(x^k))\notag \\
		&\quad +
		\gamma^2
		\left(
			2L(f(x^k) - f^\star)\left(1 + \frac{4\omega+2}{n}\right)
			+
			\frac{2(\omega+1)}{mn^2}D^k
			+
			\frac{2\omega}{n^2} H^k
		\right)\notag \\
		&\quad +
		b \gamma^2
		\left(
			H^k(1 - \alpha) + \frac{2\alpha}{m} D^k + 8\alpha Ln \left( f(x^k) - f^\star \right)
		\right)
		+
		c \gamma^2
		\left(
			D^k \left(1 - \frac{1}{m}\right)
			+
			2Ln(f(x^k) - f^\star)
		\right)\notag \\
		&=
		\norm{x^k - x^\star}_2^2(1 - \mu \gamma)
		+
		b \gamma^2 H^k
		\left(
			1 - \alpha + \frac{2 \omega}{b n^2}
		\right)
		+
		c \gamma^2 D^k
		\left(
			1 - \frac{1}{m}
			+
			\frac{2b \alpha}{c m}
			+
			\frac{2(\omega+1)}{mn^2 c}
		\right)\notag \\
		& \quad +
		(f^\star - f(x^k))
		\left(
		 	2\gamma - 
		 	2L\gamma^2
		 	\left[
		 		1 + \frac{4\omega+2}{n} + c n + 4b\alpha n	 		
		 	\right]
		\right). \label{conv_VR}
	\end{align}
	Now, choosing
	$b = \frac{4 (\omega+1)}{\alpha n^2}$
	and
	$c = \frac{16 (\omega+1)}{ n^2}$
	, we get
	\begin{align*}
	\E{\psi^{k+1}}
	&\leq
	\norm{x^k - x^\star}_2^2(1 - \mu \gamma)
	+
	b \gamma^2 H^k
	\left(
		1 - \frac{\alpha}{2}
	\right)
	+
	c \gamma^2 D^k
	\left(
		1 - \frac{3}{8m}
	\right)\\
	&+
	(f^\star - f(x^k))
	\left(
		2\gamma - 
		2L\gamma^2
		\left[
			1 + \frac{36(\omega+1)}{n}	 		
		\right]
	\right).
	\end{align*}
	Setting $\gamma = \frac{1}{L\left(1 + 36(\omega+1)/n\right)}$ gives
	\begin{align*}
		E{\psi^{k+1}}
		&\leq
		\norm{x^k - x^\star}_2^2
		\left(
			1  - \frac{\mu}{L\left(1 + 36(\omega+1)/n\right)}
		\right)
		+
		b \gamma^2 H^k
		\left(
			1 - \frac{\alpha}{2}
		\right)
		+
		c \gamma^2 D^k
		\left(
			1 - \frac{3}{8m}
		\right),
	\end{align*}
	which concludes the proof.
\end{proof}

\subsection{Convex case}

\begin{proof}[Proof of~\Cref{thm:VR-DIANAweak}]
Using \eqref{conv_VR} ($\mu = 0$) assuming that $\alpha \geq \frac{2w}{b n^2}$ and
 $1 \geq \frac{2b \alpha}{c} + \frac{2w}{c n^2}$, we obtain
\begin{align*}
&\E{\norm{x^{k+1} - x^\star}_2^2 + b \gamma^2 H^{k+1} + c \gamma^2 D^{k+1}}  \notag\\
		&\quad \leq	
		\norm{x^{k} - x^\star}_2^2 + b \gamma^2 H^{k} + c \gamma^2 D^{k}
		+
		(f^\star - f(x^k))
	\left(
		2\gamma - 
		2L\gamma^2
		\left[
			1 + \frac{28\omega}{n}	 		
		\right]
	\right),
\end{align*}
which implies
\begin{align*}
(f(x^k) - f^\star)
	\left(
		2\gamma - 
		2L\gamma^2
		\left[
			1 + \frac{128\omega}{n}	 		
		\right]
	\right) =\psi^k - \E{\psi^{k+1}},
\end{align*}
which after removing conditional expectation can be summed over all iterations $1,2, \ldots, k$ and one has
\begin{align*}
\E{f(x^a) - f^\star} \leq \frac{\psi_0}{2k\left(
		\gamma - 
		L\gamma^2
		\left[
			1 + \frac{36(\omega+1)}{n}	 		
		\right]
	\right)},
\end{align*}
where index $a$ is uniformly at random picked from ${1,2,\cdots, k}$ ,which concludes the proof.
\end{proof}

\subsection{Non-convex case}

\begin{theorem}\label{thm:VR-DIANA-non-convex}
    Consider Algorithm~\ref{alg:VR-DIANA} with $\omega$-quantization $Q$,
	and stepsize $\alpha \leq \frac{1}{\omega + 1}$.
Choose any $p>0$ (which will appear in sequence $c^k$ below) to consider the following Lyapunov function
\begin{equation*}
	R^k =f(x^k) + c^k W^k + d^k F^k,
\end{equation*}
where
\begin{align*}
	W^k &= \frac{1}{mn}\sum\limits_{i=1}^{n}\sum\limits_{j=1}^{m} \norm{x^k - w_{ij}^k}_2^2\,\\
	F^k &=
	 \frac{1}{n}\sum\limits_{i=1}^{n} 
		\norm{\nabla f_{i}(x^k) - h_i^k}_2^2\,,
\end{align*}
and
\begin{align*}
	c^k &=
	 c^{k+1}\left( 1 - \frac{1}{m} + \gamma p + \frac{\omega+1}{n} L^2 \gamma^2 \right) + d^{k+1} \left(\alpha L^2 +  \left(1+\frac{2}{\alpha}\right)\frac{\omega+1}{n} L^4 \gamma^2\right)+  \frac{\omega+1}{n} \frac{\gamma^2L^3}{2} \,,\\
	d^k &=
	   d^{k+1}\left( 1 - \frac{\alpha}{2} +  \left(1+\frac{2}{\alpha}\right)\frac{\omega}{n}L^2 \gamma^2\right) + c^{k+1}\frac{\omega}{n} \gamma^2 + \frac{\omega}{n} \frac{\gamma^2L}{2}\,.
\end{align*}
Then	
under Assumption~\ref{assumption:VRdiana}
	\begin{equation*}
		\E{R^{k+1}} \leq
		R^k - \Gamma^k \norm{\nabla f(x^k)}^2_2\, ,
	\end{equation*}
	where 
	\begin{equation*}
		 \Gamma^k = 
		\gamma - \frac{\gamma^2L}{2}  - c^{k+1} \left(\gamma^2 + \frac{\gamma}{p}\right) - d^{k+1} \left(1+\frac{2}{\alpha}\right) L^2 \gamma^2\, .
	\end{equation*}
Taking $x^a \sim_{u.a.r.} \{x^0, x^1,\dots, x^(k-1)\}$ of Algorithm~\ref{alg:VR-DIANA} one obtains
	\begin{equation}
	\label{up:delta}
		\E{\norm{\nabla f(x^a)}^2_2} \leq
		\frac{R^0 - R^k}{k\Delta},
	\end{equation}
where $\Delta = \min_{t \in [k]} \Gamma^t > 0$ .
\end{theorem}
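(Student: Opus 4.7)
My plan is to establish the per-step descent inequality $\E{R^{k+1}} \leq R^k - \Gamma^k \norm{\nabla f(x^k)}_2^2$ by controlling each of the three components of $R^k$ separately, and then to telescope. The first ingredient is $L$-smoothness of $f$ combined with unbiasedness $\E{g^k}=\nabla f(x^k)$, which yields the standard SGD-style bound
\begin{equation*}
\E{f(x^{k+1})} \leq f(x^k) - \gamma \norm{\nabla f(x^k)}_2^2 + \frac{L\gamma^2}{2}\E{\norm{g^k}_2^2}.
\end{equation*}
The crucial second ingredient is a variance bound
\begin{equation*}
\E{\norm{g^k}_2^2} \leq \norm{\nabla f(x^k)}_2^2 + \frac{\omega}{n} F^k + \frac{(\omega+1)L^2}{n} W^k,
\end{equation*}
which follows by writing $g^k=\frac{1}{n}\sum_i \hat{g}_i^k$, invoking independence of the quantizations across workers together with the $\omega$-variance property of $Q$ (as in Lemma~\ref{lemma:boundg}), and bounding the SAGA/L-SVRG stochastic variance $\E{\norm{g_i^k - \nabla f_i(x^k)}_2^2} \leq \frac{L^2}{m}\sum_j \norm{x^k - w_{ij}^k}_2^2$ via $L$-smoothness of each $f_{ij}$ and the fact that variance is dominated by second moment.

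Next I would derive two auxiliary recursions. For $W^{k+1}$, both variants of Algorithm~\ref{alg:VR-DIANA} share the marginal property that $w_{ij}^{k+1}=x^k$ with probability $\tfrac{1}{m}$ and otherwise $w_{ij}^{k+1}=w_{ij}^k$, so Young's inequality with parameter $p\gamma$ applied to $\norm{x^{k+1}-w_{ij}^k}_2^2$ together with $\norm{x^{k+1}-x^k}_2^2 = \gamma^2\norm{g^k}_2^2$ gives
\begin{equation*}
\E{W^{k+1}} \leq \left(1 - \tfrac{1}{m} + p\gamma\right) W^k + \left(\gamma^2 + \tfrac{\gamma}{p}\right) \E{\norm{g^k}_2^2}.
\end{equation*}
For $F^{k+1}$, I would adapt the DIANA identity from~\eqref{eq:hdecrement}, this time anchored at $\nabla f_i(x^k)$ rather than $\nabla f_i(x^\star)$, to obtain
\begin{equation*}
\EE{Q}{\norm{h_i^{k+1} - \nabla f_i(x^k)}_2^2} \leq (1-\alpha)\norm{h_i^k - \nabla f_i(x^k)}_2^2 + \alpha\E{\norm{g_i^k - \nabla f_i(x^k)}_2^2}
\end{equation*}
whenever $\alpha(\omega+1)\leq 1$, and then apply Young's inequality with parameter $\alpha/2$ together with $L$-smoothness of $f_i$ to convert the anchor from $\nabla f_i(x^k)$ to $\nabla f_i(x^{k+1})$, yielding
\begin{equation*}
\E{F^{k+1}} \leq \left(1 - \tfrac{\alpha}{2}\right) F^k + \alpha L^2 W^k + \left(1 + \tfrac{2}{\alpha}\right) L^2 \gamma^2 \E{\norm{g^k}_2^2}.
\end{equation*}

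Finally, plugging these three bounds into $\E{R^{k+1}}=\E{f(x^{k+1})}+c^{k+1}\E{W^{k+1}}+d^{k+1}\E{F^{k+1}}$, substituting the variance bound for each occurrence of $\E{\norm{g^k}_2^2}$, and collecting the coefficients of $\norm{\nabla f(x^k)}_2^2$, $W^k$, and $F^k$, one reads off precisely the stated backward recursions: the $W^k$ coefficient is exactly $c^k$, the $F^k$ coefficient is exactly $d^k$, and the coefficient of $\norm{\nabla f(x^k)}_2^2$ is $-\Gamma^k$. Taking unconditional expectations, telescoping the resulting inequality from $0$ to $k-1$, using $\Gamma^t \geq \Delta > 0$, and averaging over the uniform index $a$ then yields~\eqref{up:delta}. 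The main obstacle is the tightly coupled bookkeeping: because the variance of $g^k$ injects $W^k$- and $F^k$-contributions into all three recursions, the backward-defined sequences $c^k$ and $d^k$ feed into themselves and into each other, and the step-size prescription in the corollary---$\gamma=\Theta(1/(L(1+\omega/n)^{1/2}(m^{2/3}+\omega+1)))$---must be calibrated precisely so that these cross-terms stay dominated by the $-\gamma\norm{\nabla f(x^k)}_2^2$ coming from $f$-descent, ensuring $\Delta>0$ uniformly in $k$.
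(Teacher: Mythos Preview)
Your overall strategy matches the paper's almost exactly: $L$-smoothness for the $f$-descent, the same variance bound on $g^k$, the same $F^{k+1}$ recursion, and then collecting coefficients to read off the backward definitions of $c^k$, $d^k$, and $\Gamma^k$. There is, however, one concrete discrepancy that prevents your bookkeeping from landing on the \emph{stated} recursions.

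The issue is your $W^{k+1}$ estimate. You apply Young's inequality directly to the random vector $\norm{x^{k+1}-w_{ij}^k}_2^2$, obtaining
\[
\E{W^{k+1}} \leq \Bigl(1-\tfrac{1}{m}+p\gamma\Bigr)W^k + \Bigl(\gamma^2 + \tfrac{\gamma}{p}\Bigr)\E{\norm{g^k}_2^2}.
\]
The paper instead first expands the square, takes the conditional expectation of the cross-term so that $\E{x^{k+1}-x^k}=-\gamma\nabla f(x^k)$ appears, and only then applies Young to the \emph{deterministic} inner product $\langle -\gamma\nabla f(x^k), x^k-w_{ij}^k\rangle$. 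This yields the sharper
\[
\E{W^{k+1}} \leq \gamma^2\E{\norm{g^k}_2^2} + \Bigl(1-\tfrac{1}{m}+p\gamma\Bigr)W^k + \tfrac{\gamma}{p}\norm{\nabla f(x^k)}_2^2,
\]
where the $\tfrac{\gamma}{p}$ prefactor now multiplies $\norm{\nabla f(x^k)}_2^2$ rather than $\E{\norm{g^k}_2^2}$. This distinction does not affect $\Gamma^k$ (both routes give the same coefficient on $\norm{\nabla f(x^k)}_2^2$), but it \emph{does} affect the coefficients of $W^k$ and $F^k$: with your version, substituting the variance bound into $c^{k+1}\tfrac{\gamma}{p}\E{\norm{g^k}_2^2}$ injects extra terms $c^{k+1}\tfrac{\gamma}{p}\cdot\tfrac{\omega+1}{n}L^2$ into the $W^k$-coefficient and $c^{k+1}\tfrac{\gamma}{p}\cdot\tfrac{\omega}{n}$ into the $F^k$-coefficient, so you do \emph{not} read off precisely the stated $c^k$ and $d^k$. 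Your plan therefore proves a closely related but different theorem with slightly larger backward sequences; to recover the exact statement you must handle the cross-term in $W^{k+1}$ the paper's way.
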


\begin{lemma}  We can upper bound $W^{k+1}$ in the following way 
	\begin{equation}
		\E{W^{k+1}}
		\leq
		\E{\norm{x^{k+1} - x^k}_2^2} + \left(1 - \frac{1}{m} + \gamma p\right)W^k +\frac{\gamma}{p}\norm{\nabla f(x^k)}_2^2.
		\label{lem:up_W_VR_nc}
	\end{equation}
	
\end{lemma}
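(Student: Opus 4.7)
My plan is to expand $\norm{x^{k+1} - w_{ij}^{k+1}}_2^2$ via the identity $\norm{a+b}_2^2 = \norm{a}_2^2 + 2\dotprod{a}{b} + \norm{b}_2^2$ with $a = x^{k+1}-x^k$ and $b = x^k - w_{ij}^{k+1}$, and then exploit the binary nature of the table update. Define $\xi_{ij} \eqdef \mathbb{1}[w_{ij}^{k+1} = x^k]$; by inspection of Algorithm~\ref{alg:VR-DIANA}, both variants satisfy $\E{\xi_{ij}} = 1/m$. Since $\xi_{ij}$ is $\{0,1\}$-valued, $x^k - w_{ij}^{k+1} = (1-\xi_{ij})(x^k - w_{ij}^k)$ and $(1-\xi_{ij})^2 = 1-\xi_{ij}$, so taking conditional expectation yields
$$
\E{\norm{x^{k+1}-w_{ij}^{k+1}}_2^2} = \E{\norm{x^{k+1}-x^k}_2^2} + (1-\tfrac{1}{m})\norm{x^k-w_{ij}^k}_2^2 + 2\E{(1-\xi_{ij})\dotprod{x^{k+1}-x^k}{x^k - w_{ij}^k}},
$$
which reduces the task to bounding the cross term.

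For Variant~1 (L-SVRG), $\xi_{ij}$ is a function of $u^k$ only and is therefore independent of the minibatch sampling and of the quantization; the expectation factors, and substituting $\E{x^{k+1}-x^k} = -\gamma \nabla f(x^k)$ reduces the cross term to $-2\gamma(1-1/m)\dotprod{\nabla f(x^k)}{x^k-w_{ij}^k}$. Young's inequality with parameter $\gamma p$, namely
$$
-2\dotprod{\gamma \nabla f(x^k)}{x^k - w_{ij}^k} \;\le\; \gamma p \norm{x^k - w_{ij}^k}_2^2 + \frac{\gamma}{p}\norm{\nabla f(x^k)}_2^2,
$$
combined with $1-1/m \le 1$, upper bounds the cross term by $\gamma p\norm{x^k-w_{ij}^k}_2^2 + (\gamma/p)\norm{\nabla f(x^k)}_2^2$. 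For Variant~2 (SAGA), $\xi_{ij}$ couples to $x^{k+1}$ through $j_i^k$; marginalizing the quantization first (using unbiasedness of $Q$) and then the sampling, a direct calculation shows that $\E{(1-\xi_{ij})(x^{k+1}-x^k)}$ equals $-\gamma(1-1/m)\nabla f(x^k)$ up to a correction of order $1/(nm)$, which can be absorbed into the Young coefficients without changing the form of the bound.

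Averaging over $i,j$ and using $\frac{1}{mn}\sum_{i,j}\norm{x^k - w_{ij}^k}_2^2 = W^k$ then collects everything into the form stated in the lemma. The one delicate step will be the SAGA cross term: because $j_i^k$ simultaneously drives the table update $\xi_{ij}$ and appears inside $g_i^k$, a naive factorization is invalid, and the small coupling between these two sources of randomness must be identified and absorbed before the bound matches the L-SVRG case.
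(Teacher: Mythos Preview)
Your argument for Variant~1 (L-SVRG) is essentially the paper's own proof: both decompose $\norm{x^{k+1}-w_{ij}^{k+1}}_2^2$ around $x^k$, use that the table entry is reset to $x^k$ with probability $1/m$ independently of all other randomness, substitute $\E{x^{k+1}-x^k}=-\gamma\nabla f(x^k)$ in the cross term, and finish with Young's inequality $2\dotprod{a}{b}\le p^{-1}\norm{a}_2^2+p\norm{b}_2^2$. The paper in fact writes the factored expectation
\[
\E{\norm{x^{k+1}-w_{ij}^{k+1}}_2^2}=\tfrac{1}{m}\E{\norm{x^{k+1}-x^k}_2^2}+\tfrac{m-1}{m}\E{\norm{x^{k+1}-w_{ij}^k}_2^2}
\]
directly, i.e.\ it treats the update indicator as independent of $x^{k+1}$ and does not separately address the SAGA coupling you raise.

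Where your proposal has a genuine gap is Variant~2. You correctly observe that for SAGA the indicator $\xi_{ij}=\mathbb{1}[j_i^k=j]$ is coupled to $x^{k+1}$ through $g_i^k$, but you then only \emph{assert} that the resulting correction ``can be absorbed into the Young coefficients without changing the form of the bound.'' Concretely, after marginalizing the quantization and the samples $j_{i'}^k$ for $i'\neq i$, the residual cross term (beyond the L-SVRG piece) is
\[
-\frac{2\gamma}{nm}\Bigl\langle\,\nabla f_i(x^k)-\mu_i^k-\bigl(\nabla f_{ij}(x^k)-\nabla f_{ij}(w_{ij}^k)\bigr),\ x^k-w_{ij}^k\Bigr\rangle.
\]
Bounding this via Cauchy--Schwarz and $L$-smoothness produces a contribution to the $W^k$ coefficient of order $\gamma L/n$ (not $\gamma p$) together with mixed terms coupling different $w_{ij'}^k$; it is not automatic that these fit into the slots $\gamma p\,W^k$ and $(\gamma/p)\norm{\nabla f(x^k)}_2^2$ of the stated inequality for an arbitrary $p>0$. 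To make the SAGA case rigorous you must either carry out this computation explicitly and show the extra terms are dominated, or restrict the lemma (as written) to Variant~1.
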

\begin{proof}
	\begin{align*}
		\E{W^{k+1}}
		&=
		\E{\frac{1}{mn} \sum\limits_{i=1}^{n}\sum\limits_{j=1}^{m} \norm{x^{k+1} - w_{ij}^{k+1}}_2^2}\\
		&=
		\frac{1}{mn} \sum\limits_{i=1}^{n}\sum\limits_{j=1}^{m}\left( \frac{1}{m}\E{\norm{x^{k+1} - x^k}_2^2} + \frac{m-1}{m}\E{\norm{x^{k+1} - w_{ij}^{k}}_2^2} \right)\\
		&=
		\frac{1}{m}\E{\norm{x^{k+1} - x^k}_2^2} + \frac{1}{mn} \sum\limits_{i=1}^{n}\sum\limits_{j=1}^{m} \frac{m-1}{m}\left(\E{\norm{x^{k+1} -x^k + x^k - w_{ij}^{k}}_2^2} \right)
		\\
		&=
		\frac{1}{m}\E{\norm{x^{k+1} - x^k}_2^2} + \frac{1}{mn} \sum\limits_{i=1}^{n}\sum\limits_{j=1}^{m} \frac{m-1}{m}\E{\norm{x^{k+1} -x^k}_2^2} + \norm{x^k - w_{ij}^{k}}_2^2 + 2\dotprod{\E{x^{k+1} -x^k}}{x^k - w_{ij}^{k}}
		\\
		&\leq
		\E{\norm{x^{k+1} - x^k}_2^2} + \frac{1}{mn} \sum\limits_{i=1}^{n}\sum\limits_{j=1}^{m} \frac{m-1}{m} \norm{x^k - w_{ij}^{k}}_2^2 + 2\gamma\left(\frac{1}{2p}\norm{\nabla f(x^k)}_2^2 + \frac{p}{2}\norm{x^k - w_{ij}^{k}}_2^2\right)
		\\
		&=
		\E{\norm{x^{k+1} - x^k}_2^2} + \left(1 - \frac{1}{m} + \gamma p\right)W^k +\frac{\gamma}{p}\norm{\nabla f(x^k)}_2^2,
	\end{align*}
	where the second equality uses the update of $w_{ij}^{k+1}$ in Algorithm~\ref{alg:VR-DIANA} and the inequality uses Cauchy-Schwarz and Young inequalities.
\end{proof}

\begin{lemma} We can upper bound quantity 
$\frac{1}{n}
\sum\limits_{i=1}^{n}
\E{\norm{g_i^k - h_i^k}_2^2}$
in the following way
	\begin{equation}
		\frac{1}{n}
		\sum\limits_{i=1}^{n}
		\E{\norm{g_i^k - h_i^k}_2^2}
		\leq 
		F^k + L^2 W^k.
		\label{lem:up_g-h_VR_nc}
	\end{equation}
\end{lemma}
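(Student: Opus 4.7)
The plan is to apply the variance decomposition $\E{\|X-y\|_2^2} = \|\E{X}-y\|_2^2 + \E{\|X-\E{X}\|_2^2}$ to each term $\|g_i^k - h_i^k\|_2^2$, with $y = h_i^k$ (non-random given conditioning) and $X = g_i^k$. Using unbiasedness $\E{g_i^k} = \nabla f_i(x^k)$ (already proved earlier in the section), this immediately splits the quantity into
\begin{equation*}
\E{\|g_i^k - h_i^k\|_2^2} = \|\nabla f_i(x^k) - h_i^k\|_2^2 + \E{\|g_i^k - \nabla f_i(x^k)\|_2^2}\,,
\end{equation*}
so that after averaging over $i$ the first contribution is exactly $F^k$. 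The remaining task is to bound the variance contribution by $L^2 W^k$.

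For the variance term, I would write $g_i^k - \nabla f_i(x^k) = (\nabla f_{ij_i^k}(x^k) - \nabla f_{ij_i^k}(w_{ij_i^k}^k)) - \E{\nabla f_{ij_i^k}(x^k) - \nabla f_{ij_i^k}(w_{ij_i^k}^k)}$ (since $\mu_i^k$ equals that conditional expectation). Then the variance is at most the uncentered second moment, namely
\begin{equation*}
\E{\|g_i^k - \nabla f_i(x^k)\|_2^2} \leq \E{\|\nabla f_{ij_i^k}(x^k) - \nabla f_{ij_i^k}(w_{ij_i^k}^k)\|_2^2} = \frac{1}{m}\sum_{j=1}^m \|\nabla f_{ij}(x^k) - \nabla f_{ij}(w_{ij}^k)\|_2^2\,,
\end{equation*}
using that $j_i^k$ is sampled uniformly from $[m]$. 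Now $L$-smoothness of each $f_{ij}$ (Assumption~\ref{assumption:VRdiana}) yields $\|\nabla f_{ij}(x^k) - \nabla f_{ij}(w_{ij}^k)\|_2^2 \leq L^2 \|x^k - w_{ij}^k\|_2^2$.

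Combining the two pieces, averaging over $i \in [n]$, and recognizing the definitions of $F^k$ and $W^k$ gives exactly $F^k + L^2 W^k$. There is no real obstacle here; the only subtlety is correctly identifying $\mu_i^k$ as the conditional expectation of $\nabla f_{ij_i^k}(w_{ij_i^k}^k)$ under the uniform sampling of $j_i^k$, which justifies passing from variance to uncentered second moment.
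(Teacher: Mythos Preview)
Your proof is correct and follows essentially the same approach as the paper: variance decomposition of $\E{\|g_i^k-h_i^k\|_2^2}$ into $\|\nabla f_i(x^k)-h_i^k\|_2^2$ plus the variance of $g_i^k$, then bounding the variance by the uncentered second moment $\frac{1}{m}\sum_j \|\nabla f_{ij}(x^k)-\nabla f_{ij}(w_{ij}^k)\|_2^2$ and applying $L$-smoothness of each $f_{ij}$ to obtain $L^2 W^k$.
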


\begin{proof}
\begin{align*}
\frac{1}{n}
		\sum\limits_{i=1}^{n}
		\E{\norm{g_i^k - h_i^k}_2^2}
		&=
		\frac{1}{n}
		\sum\limits_{i=1}^{n}
		    \left(
			\norm{\E{g_i^k - h_i^k}}_2^2
			+
			\E{\norm{g_i^k - h_i^k - \E{g_i^k - h_i^k}}_2^2}
			\right)\\
		&=
		\frac{1}{n}
		\sum\limits_{i=1}^{n}
		    \left(
			\norm{\nabla f_i(x^k) - h_i^k}_2^2
			+
			\E{\norm{g_i^k - \nabla f_i(x^k)}_2^2}
			\right)\\
		&=
		F^k +
		\frac{1}{n}
		\sum\limits_{i=1}^{n}
			\E{\norm{\nabla f_{ij_i^k}(x^k) - \nabla f_{ij_i^k}(w_{ij_i^k}^k) - \E{\nabla f_{ij_i^k}(x^k) - \nabla f_{ij_i^k}(w_{ij_i^k}^k)}}_2^2}
		\\
		&\overset{\eqref{eq:variance}}{\leq}
		F^k +
		\frac{1}{n}
		\sum\limits_{i=1}^{n}
			 \E{\norm{\nabla f_{ij_i^k}(x^k) - \nabla f_{ij_i^k}(w_{ij_i^k}^k)}_2^2}
			 \\
		&=
		F^k +
		\frac{1}{n}
		\sum\limits_{i=1}^{n}
		\frac{1}{m}
		\sum\limits_{i=1}^{m}
			 \E{\norm{\nabla f_{ij}(x^k) - \nabla f_{ij}(w_{ij}^k)}_2^2}
			 \\
		&\overset{\eqref{def:smoothness}}{\leq}
		F^k + L^2 W^k.
	\end{align*}
\end{proof}
Equipped with this lemma, we are ready to prove a recurrence inequality for $F^k$:
\begin{lemma} Let $\alpha (\omega + 1)\leq 1$. We can upper bound $F^{k+1}$ in the following way
	\begin{equation}
		\E{F^{k+1}}
		\leq
		\left(1+\frac{2}{\alpha}\right)L^2\E{\norm{x^{k+1} - x^k}_2^2} + 
	\left(1-\frac{\alpha}{2}\right) F^k + \alpha L^2 W^k
		\label{lem:up_F_VR_nc}
	\end{equation}
\end{lemma}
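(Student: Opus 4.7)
The plan is to split the argument of the squared norm via
\[
\nabla f_i(x^{k+1}) - h_i^{k+1} = \bigl(\nabla f_i(x^{k+1}) - \nabla f_i(x^k)\bigr) + \bigl(\nabla f_i(x^k) - h_i^{k+1}\bigr),
\]
so that the first piece is a pure ``gradient drift'' term (to be absorbed into the $L^2\E{\|x^{k+1}-x^k\|_2^2}$ contribution through $L$-smoothness) and the second piece is a ``state update'' term with $\nabla f_i(x^k)$ now playing the role of the frozen target. This second piece can then be treated by exactly the algebraic manipulation used in the proof of~\eqref{lem:up_H_VR}, only with $\nabla f_i(x^k)$ substituted for $\nabla f_i(x^\star)$.

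I would first apply Young's inequality in the form $\|a+b\|_2^2 \leq (1+\tfrac{2}{\alpha})\|a\|_2^2 + (1+\tfrac{\alpha}{2})\|b\|_2^2$, which after using $\|\nabla f_i(x^{k+1})-\nabla f_i(x^k)\|_2^2 \leq L^2\|x^{k+1}-x^k\|_2^2$ reduces the problem to bounding $\E{\|\nabla f_i(x^k) - h_i^{k+1}\|_2^2}$. For this, I would substitute $h_i^{k+1} = h_i^k + \alpha Q(g_i^k - h_i^k)$, expand the square, take $\EE{Q}{\cdot}$ using both properties in Definition~\ref{def:omegaquant}, and invoke $\alpha(\omega+1)\leq 1$ to collapse $\alpha^2(\omega+1)\|g_i^k - h_i^k\|_2^2$ into $\alpha\|g_i^k - h_i^k\|_2^2$. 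The identity $\langle u-v,u+v-2w\rangle = \|u-w\|_2^2 - \|v-w\|_2^2$ (with $u = g_i^k$, $v = h_i^k$, $w = \nabla f_i(x^k)$) then produces the clean split
\[
\EE{Q}{\|\nabla f_i(x^k) - h_i^{k+1}\|_2^2} \leq (1-\alpha)\|\nabla f_i(x^k) - h_i^k\|_2^2 + \alpha\,\|g_i^k - \nabla f_i(x^k)\|_2^2.
\]
Taking expectation over the sampling of $j_i^k$, I would use $\E{g_i^k}=\nabla f_i(x^k)$, the variance identity~\eqref{eq:variance} (to drop the mean shift), and $L$-smoothness of each $f_{ij}$ to obtain $\E{\|g_i^k - \nabla f_i(x^k)\|_2^2} \leq \tfrac{L^2}{m}\sum_{j=1}^m \|x^k - w_{ij}^k\|_2^2$, exactly as done in the intermediate step of~\eqref{lem:up_g-h_VR_nc}.

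Averaging over $i \in [n]$ then gives $\tfrac{1}{n}\sum_i \E{\|\nabla f_i(x^k)-h_i^{k+1}\|_2^2} \leq (1-\alpha)F^k + \alpha L^2 W^k$. Plugging this into the Young bound from the first step and using $(1+\tfrac{\alpha}{2})(1-\alpha) \leq 1 - \tfrac{\alpha}{2}$ (valid since $\alpha \leq 1$) yields the claimed inequality. The delicate point I expect to be the crux is Step~2: the quantization variance introduces an $\alpha^2(\omega+1)\|g_i^k - h_i^k\|_2^2$ correction that must collapse to the clean $(1-\alpha)/\alpha$ split, which is where the stepsize constraint $\alpha(\omega+1)\leq 1$ is used in an essential way. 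Equally important is the choice to shift the reference point in the ``state update'' piece to $\nabla f_i(x^k)$ rather than to $h_i^k$; the latter choice would produce an extra $F^k$ contribution from $\|g_i^k - h_i^k\|_2^2$ that would spoil the $(1-\alpha/2)$ contraction.
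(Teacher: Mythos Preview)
Your plan is sound and very close to the paper's argument, but there is one small slip in the final accounting. After Young's inequality you obtain
\[
\E{F^{k+1}} \leq \Bigl(1+\tfrac{2}{\alpha}\Bigr)L^2\,\E{\norm{x^{k+1}-x^k}_2^2} + \Bigl(1+\tfrac{\alpha}{2}\Bigr)\Bigl[(1-\alpha)F^k + \alpha L^2 W^k\Bigr].
\]
You correctly note that $(1+\tfrac{\alpha}{2})(1-\alpha)\le 1-\tfrac{\alpha}{2}$, but you do not address the $W^k$ term, which now carries the coefficient $(1+\tfrac{\alpha}{2})\alpha L^2$ rather than the claimed $\alpha L^2$. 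So your route proves the lemma only with this slightly larger constant in front of $W^k$; that would be harmless for the downstream Lyapunov analysis, but it is not the inequality as stated.

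The paper avoids this extra factor by a different placement of Young's inequality. Instead of first isolating $\nabla f_i(x^k)-h_i^{k+1}$ and then inflating it by $(1+\tfrac{\alpha}{2})$, the paper expands $h_i^{k+1}=h_i^k+\alpha Q(g_i^k-h_i^k)$ directly inside the full norm, applies Young's (with parameter $\tau=\alpha/2$) only to the cross term between the drift $\nabla f_i(x^{k+1})-\nabla f_i(x^k)$ and $\nabla f_i(x^k)-h_i^k$, and leaves the $\alpha^2\norm{Q(g_i^k-h_i^k)}_2^2$ term untouched by Young's. After $\alpha^2(\omega+1)\le\alpha$ this produces $\alpha\cdot\tfrac{1}{n}\sum_i\E{\norm{g_i^k-h_i^k}_2^2}$, which is then controlled by the auxiliary bound~\eqref{lem:up_g-h_VR_nc}, $\tfrac{1}{n}\sum_i\E{\norm{g_i^k-h_i^k}_2^2}\le F^k+L^2W^k$. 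The $W^k$ contribution therefore enters with coefficient exactly $\alpha L^2$. In short: both proofs use the same decomposition and the same quantization algebra; the paper just applies Young's one level deeper so that the $W^k$ term never gets multiplied by the Young factor.
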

\begin{proof}
	\begin{align*}
		\E{F^{k+1}}
		&=
		\frac{1}{n}
		\sum\limits_{i=1}^{n}
				\E{\norm{\nabla f_{i}(x^{k+1}) - h_i^{k+1}}_2^2}\\
		&=
		\frac{1}{n}
		\sum\limits_{i=1}^{n}
				\E{\norm{\nabla f_{i}(x^{k+1}) - \nabla f_{i}(x^k) + \nabla f_{i}(x^k) - h_i^k - \alpha Q\left( g_i^k - h_i^k\right)}_2^2}\\
		& = 
		\frac{1}{n}
		\sum\limits_{i=1}^{n}
		\left(
				 \E{\norm{\nabla f_{i}(x^{k+1}) - \nabla f_{i}(x^k)}_2^2} + \E{\norm{\nabla f_{i}(x^k) - h_i^k - \alpha Q\left( g_i^k - h_i^k\right)}_2^2}
		\right)\\
		& \quad +  (1-\alpha)\frac{1}{n}
		\sum\limits_{i=1}^{n}
		\dotprod{\nabla f_{i}(x^{k+1}) - \nabla f_{i}(x^k)}{ \nabla f_{i}(x^k) - h_i^k } \\
		&\overset{\eqref{def:smoothness}}{\leq}
		\frac{1}{n}
		\sum\limits_{i=1}^{n}
		\left(
				 \left(1 + \frac{1-\alpha}{\tau}\right)L^2\E{\norm{x^{k+1} - x^k}_2^2} + 
		 (1+(1-\alpha)\tau)\norm{\nabla f_{i}(x^k) - h_i^k}_2^2 +  \alpha^2\E{\norm{ Q\left( g_i^k - h_i^k\right)}_2^2}\right) \\
		&\quad - 2 \frac{\alpha}{n} \sum\limits_{i=1}^{n}\dotprod{\nabla f_{i}(x^k) - h_i^k}{\E{Q\left( g_i^k - h_i^k\right)}}
		\\
	&\overset{\eqref{def:omega}}{\leq}
	 \left(1 + \frac{1-\alpha}{\tau}\right)L^2\E{\norm{x^{k+1} - x^k}_2^2} + 
	\frac{1}{n}
		\sum\limits_{i=1}^{n}
		\left( (1+ (1-\alpha)\tau)
		\norm{\nabla f_{i}(x^k) - h_i^k}_2^2 + \alpha^2(\omega+1)\E{\norm{g_i^k - h_i^k}_2^2}\right) \\
		&\quad - 2 \frac{\alpha}{n}\sum\limits_{i=1}^{n}\dotprod{\nabla f_{i}(x^k) - h_i^k}{\nabla f_{i}(x^k) - h_i^k}\\
	&\leq
	 \left(1 + \frac{1-\alpha}{\tau}\right)L^2\E{\norm{x^{k+1} - x^k}_2^2} + 
	(1+(1-\alpha)\tau-2\alpha) F^k
	 + \alpha\frac{1}{n}
		\sum\limits_{i=1}^{n}\E{\norm{g_i^k - h_i^k}_2^2} 
		\\	
	&\overset{\eqref{lem:up_g-h_VR_nc}}{\leq}
	 \left(1 + \frac{1-\alpha}{\tau}\right)L^2\E{\norm{x^{k+1} - x^k}_2^2} + 
	(1+\tau -\alpha) F^k + \alpha L^2 W^k,
	\end{align*}
	where the second equality uses definition of $h_i^{k+1}$ in Algorithm~\ref{alg:VR-DIANA} and the first inequality follows from Cauchy inequality and holds for any $\tau > 0$. 
	
	Taking $\tau = \alpha/2$, we obtain desired inequality.
\end{proof}

\begin{lemma}
We can upper bound the second moment of the $g^k$ in the following way
	\begin{equation}
		\E{\norm{g^k}_2^2}
		\leq
		\frac{\omega}{n}F^k + \frac{\omega+1}{n} L^2 W^k + \norm{\nabla f(x^k)}_2^2
		.
		\label{lem:up_g_VR_nc}
	\end{equation}
\end{lemma}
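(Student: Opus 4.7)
The plan mirrors the variance decomposition used in the strongly convex case (Lemma \eqref{lem:up_g_VR}), but simplifies because in the non-convex bound we keep the term $\norm{\nabla f(x^k)}_2^2$ intact rather than invoking smoothness to replace it with $2L(f(x^k)-f^\star)$.

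First I would decompose $\E{\norm{g^k}_2^2}$ by conditioning on the sampling randomness (the $j_i^k$'s) so that only the quantization randomness remains. Writing $\bar g^k := \tfrac{1}{n}\sum_{i=1}^n g_i^k$, the identity \eqref{eq:variance} applied to $Q$ gives
\begin{equation*}
\EE{Q}{\norm{g^k}_2^2} = \norm{\bar g^k}_2^2 + \EE{Q}{\bigl\lVert g^k - \bar g^k\bigr\rVert_2^2},
\end{equation*}
since $\EE{Q}{g^k} = \bar g^k$ by unbiasedness of $Q$.

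For the quantization variance term, the workers' quantizations are mutually independent conditional on the $g_i^k$'s, so by \eqref{eq:indep} and Definition~\ref{def:omegaquant} (specifically~\eqref{def:omega}),
\begin{equation*}
\EE{Q}{\norm{g^k - \bar g^k}_2^2}
= \frac{1}{n^2}\sum_{i=1}^n \EE{Q}{\bigl\lVert Q(g_i^k - h_i^k) - (g_i^k - h_i^k)\bigr\rVert_2^2}
\leq \frac{\omega}{n^2}\sum_{i=1}^n \norm{g_i^k - h_i^k}_2^2.
\end{equation*}
Taking full expectation and invoking \eqref{lem:up_g-h_VR_nc} bounds this contribution by $\tfrac{\omega}{n}(F^k + L^2 W^k)$.

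For the $\norm{\bar g^k}_2^2$ term, I would again apply \eqref{eq:variance}, this time over the sampling randomness: using $\E{\bar g^k} = \nabla f(x^k)$ and independence of $g_1^k,\dots,g_n^k$ across workers,
\begin{equation*}
\E{\norm{\bar g^k}_2^2} = \norm{\nabla f(x^k)}_2^2 + \frac{1}{n^2}\sum_{i=1}^n \E{\norm{g_i^k - \nabla f_i(x^k)}_2^2}.
\end{equation*}
The individual variances are exactly the quantities bounded inside the proof of \eqref{lem:up_g-h_VR_nc}: using $\E{g_i^k} = \nabla f_i(x^k)$, the variance-bounded-by-second-moment step, and $L$-smoothness of each $f_{ij}$, one obtains $\E{\norm{g_i^k - \nabla f_i(x^k)}_2^2} \leq \tfrac{L^2}{m}\sum_{j=1}^m \norm{x^k - w_{ij}^k}_2^2$. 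Summing over $i$ yields $\tfrac{1}{n^2}\sum_i \E{\norm{g_i^k - \nabla f_i(x^k)}_2^2} \leq \tfrac{L^2 W^k}{n}$.

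Adding the two pieces gives $\E{\norm{g^k}_2^2} \leq \norm{\nabla f(x^k)}_2^2 + \tfrac{L^2 W^k}{n} + \tfrac{\omega}{n}F^k + \tfrac{\omega L^2 W^k}{n}$, which collapses to the claimed inequality. No step is genuinely hard here; the only care needed is to keep the two sources of randomness (sampling and quantization) separate so that independence is applied correctly within each layer, and to avoid double-counting when merging the intermediate bounds from \eqref{lem:up_g-h_VR_nc} with the sampling-variance estimate.
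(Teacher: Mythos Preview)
Your proposal is correct and follows essentially the same route as the paper: the two-layer variance decomposition (first over $Q$, then over the sampling indices $j_i^k$), the use of independence across workers at each layer, the bound on the quantization variance via \eqref{lem:up_g-h_VR_nc}, and the $L$-smoothness bound on the per-worker sampling variance are all identical to the paper's argument.
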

\begin{proof}
	\begin{align*}
		\EE{Q}{\norm{g^k}_2^2} &\overset{\eqref{eq:variance}}{=}
		\underbrace{
			\norm{\EE{Q}{g^k}}_2^2
		}_{T_1}
		+
		\underbrace{
			\EE{Q}{\norm{g^k - \EE{Q}{g^k}}_2^2}
		}_{T_2}.
	\end{align*}
	We can use the definition of $g^k$ in order to obtain 
	\begin{align*}
		T_1
		&= 
		\norm{
			\frac{1}{n}
			\sum\limits_{i=1}^{n}
				\EE{Q}{Q(g_i^k - h_i^k) + h_i^k}
		}_2^2
		=
		\norm{
			\frac{1}{n}
			\sum\limits_{i=1}^{n}
				g_i^k
		}_2^2
	\end{align*}
	and 
	\begin{align*}
		T_2
		&=
		\EE{Q}{
			\norm{
				\frac{1}{n}
				\sum\limits_{i=1}^{n}
					Q(g_i^k - h_i^k) - (g_i^k - h_i^k)
			}_2^2
		}\\
		&\overset{\eqref{eq:indep}}{=}
		\frac{1}{n^2}
		\sum\limits_{i=1}^{n}
			\EE{Q}{
				\norm{Q(g_i^k - h_i^k) - (g_i^k - h_i^k)}_2^2
			} \\ 
		&\overset{\eqref{def:omega}}{\leq}
		\frac{\omega}{n^2}
		\sum\limits_{i=1}^{n}
			\norm{g_i^k - h_i^k}_2^2.
	\end{align*}
	Now we calculate full expectations conditioned on previous iteration:
	\begin{align*}
		\E{T_2}
		&=
		\frac{\omega}{n^2}
		\sum\limits_{i=1}^{n}
			\E{\norm{g_i^k - h_i^k}_2^2} \\
		&\overset{\eqref{lem:up_g-h_VR_nc}}{\leq}
		\frac{\omega}{n}F^k + \frac{\omega}{n}L^2 W^k.
	\end{align*}
	As for $T_1$, we have
	\begin{align*}
		\E{T_1}
		&=
		\E{\norm{
			\frac{1}{n}
			\sum\limits_{i=1}^{n}
				g_i^k
		}_2^2}
		=
		\norm{
			\frac{1}{n}
			\sum\limits_{i=1}^{n}
			\E{g_i^k}
		}_2^2
		+
		\E{\norm{
				\frac{1}{n}
				\sum\limits_{i=1}^{n}
				g_i^k - \E{g_i^k}
			}_2^2}\\
		&=
		\norm{\nabla f(x^k)}_2^2
		+
		\frac{1}{n^2}
		\sum\limits_{i=1}^{n}
			\E{\norm{g_i^k - \nabla f_i(x^k)}_2^2}\\
		&=
		\norm{\nabla f(x^k)}_2^2
		+
		\frac{1}{n^2}
		\sum\limits_{i=1}^{n}
			\E{\norm{\nabla f_{ij_i^k}(x^k) - \nabla f_{ij_i^k}(w_{ij_i^k}^k) - \E{\nabla f_{ij_i^k}(x^k) - \nabla f_{ij_i^k}(w_{ij_i^k}^k)}}_2^2}\\
		&\overset{\eqref{eq:variance}}{\leq}
		\norm{\nabla f(x^k)}_2^2
		+
		\frac{1}{n^2}
		\sum\limits_{i=1}^{n}
			\E{\norm{\nabla f_{ij_i^k}(x^k) - \nabla f_{ij_i^k}(w_{ij_i^k}^k)}_2^2}\\
		&\overset{\text{Alg.}~\ref{alg:VR-DIANA}}{=}
		\norm{\nabla f(x^k)}_2^2
		+
		\frac{1}{mn^2}
		\sum\limits_{i=1}^{n}
			\sum\limits_{j=1}^{m}
				\norm{\nabla f_{ij}(x^k) - \nabla f_{ij}(w_{ij}^k)}_2^2\\
		&\overset{\eqref{def:smoothness}}{\leq}
		\norm{\nabla f(x^k)}_2^2
		+
		\frac{1}{n}L^2 W^k.
	\end{align*}
	Now, summing $\E{T_1}$ and $\E{T_2}$ we get
	\begin{align*}
		\E{\norm{g^k}_2^2}
		&=
		\E{T_1 + T_2}
		\leq
		\frac{\omega}{n}F^k + \frac{\omega+1}{n} L^2 W^k + \norm{\nabla f(x^k)}_2^2,
	\end{align*}
	which concludes the proof.
\end{proof}

\begin{proof}[Proof of \cref{thm:VR-DIANA-non-convex}]
Using $L$-smoothness one gets
	\begin{align}
		\E{f(x^{k+1})} &\leq f(x^k) + \dotprod{\nabla f(x^k)}{\E{x^{k+1}-x^k}} + \frac{L}{2}\E{\norm{x^{k+1} - x^k}_2^2} \notag \\
		& = f(x^k) - \gamma\norm{\nabla f(x^k)}_2^2 + \frac{L\gamma^2}{2}\E{\norm{g^k}_2^2},
		\label{lem:nc_smooth}
	\end{align}
where we use the definition of $x^{k+1}$ in Algorithm~\ref{alg:VR-DIANA}.

By combining definition of $\E{R^{k+1}}$ with \eqref{lem:up_W_VR_nc}, \eqref{lem:up_F_VR_nc} and \eqref{lem:nc_smooth} one obtains
\begin{align*}
\E{R^{k+1}} &\leq  f(x^k) + \dotprod{\nabla f(x^k)}{\E{x^{k+1}-x^k}} + \frac{L}{2}\E{\norm{x^{k+1} - x^k}_2^2} \\
&\quad + c^{k+1}\left( \E{\norm{x^{k+1} - x^k}_2^2} + \left(1 - \frac{1}{m} + \gamma p\right)W^k +\frac{\gamma}{p}\norm{\nabla f(x^k)}_2^2\right) \\
&\quad + d^{k+1}\left(  \left(1+\frac{2}{\alpha}\right)L^2\E{\norm{x^{k+1} - x^k}_2^2} + \left(1-\frac{\alpha}{2}\right) F^k +\alpha L^2 W^k\right) \\
&=
f(x^k) - \gamma\norm{\nabla f(x^k)}_2^2 + \left(\frac{\gamma^2L}{2} + c^{k+1} \gamma^2 + d^{k+1}  \left(1+\frac{2}{\alpha}\right)L^2 \gamma^2\right)\E{\norm{g^k}_2^2} \\
&\quad + c^{k+1}\left( \left(1 - \frac{1}{m} + \gamma p\right)W^k +\frac{\gamma}{p}\norm{\nabla f(x^k)}_2^2\right) \\
&\quad + d^{k+1}\left(\left(1-\frac{\alpha}{2}\right)F^k + \left(1+\frac{2}{\alpha}\right)\alpha L^2 W^k\right) \\
&\overset{\eqref{lem:up_g_VR_nc}}{\leq}
f(x^k) - \left(\gamma - \frac{\gamma^2L}{2}  - c^{k+1} \gamma^2 - d^{k+1} \left(1+\frac{2}{\alpha}\right) L^2 \gamma^2 - c^{k+1}\frac{\gamma}{p}\right)\norm{\nabla f(x^k)}_2^2\\ 
&\quad + \left( c^{k+1}\left( 1 - \frac{1}{m} + \gamma p\right) + d^{k+1}\left(1+\frac{2}{\alpha}\right) \alpha L^2 +  \frac{\omega+1}{n}  L^2\left(\frac{\gamma^2L}{2}  + c^{k+1} \gamma^2 + d^{k+1}  \left(1+\frac{2}{\alpha}\right)L^2 \gamma^2\right) \right) W^k\\
&\quad + \left( d^{k+1}\left(1-\frac{\alpha}{2}\right) + \frac{\omega}{n} \left(\frac{\gamma^2L}{2}  + c^{k+1} \gamma^2 + d^{k+1}  \left(1+\frac{2}{\alpha}\right)L^2 \gamma^2\right) \right) F^k\\
&=
f(x^k) - \left(\gamma - \frac{\gamma^2L}{2}  - c^{k+1} \left(\gamma^2 + \frac{\gamma}{p}\right) - d^{k+1} \left(1+\frac{2}{\alpha}\right) L^2 \gamma^2\right)\norm{\nabla f(x^k)}_2^2\\ 
&\quad + \left( c^{k+1}\left( 1 - \frac{1}{m} + \gamma p + \frac{\omega+1}{n} L^2 \gamma^2 \right) + d^{k+1} \left(\alpha L^2 +  \left(1+\frac{2}{\alpha}\right)\frac{\omega+1}{n} L^4 \gamma^2\right)+  \frac{\omega+1}{n} \frac{\gamma^2L^3}{2} \right) W^k\\
&\quad + \left( d^{k+1}\left( 1 - \frac{\alpha}{2} +  \left(1+\frac{2}{\alpha}\right)\frac{\omega}{n}L^2 \gamma^2\right) + c^{k+1}\frac{\omega}{n} \gamma^2 + \frac{\omega}{n} \frac{\gamma^2L}{2} \right) F^k \\
&= R^k - \Gamma^k \norm{\nabla f(x^k)}_2^2.
\end{align*}
Applying the full expectation and telescoping the equation, one gets desired inequality.
\end{proof}
We can proceed to the proof of Theorem~\ref{thm:VR-DIANAnc}.
\begin{proof}[Proof of \cref{thm:VR-DIANAnc}]
Recursion for $c^t, d^t$ can be written in a form
\begin{align*}
y^t = Ay^{t+1} + b,
\end{align*}
where 
\begin{align*}
A &= \begin{bmatrix}
 1 - \frac{1}{m} + \gamma p + \frac{\omega+1}{n} L^2 \gamma^2 &\alpha L^2 +  \left(1+\frac{2}{\alpha}\right)\frac{\omega+1}{n} L^4 \gamma^2\\
\frac{\omega}{n} \gamma^2  & 1 - \frac{\alpha}{2} +  \left(1+\frac{2}{\alpha}\right)\frac{\omega}{n}L^2 \gamma^2
\end{bmatrix} , \\
y^t &=  \begin{bmatrix}
 c^t \\
 d^t
\end{bmatrix} , \\
b &=  \begin{bmatrix}
 \frac{\omega+1}{n} \frac{\gamma^2L^3}{2} \\
 \frac{\omega}{n} \frac{\gamma^2L}{2}
\end{bmatrix} .
\end{align*}
Recall that we once used Young's inequality with an arbitrary parameter $p$, so we can now specify it. Choosing $p =  \frac{L\left(1+ \frac{\omega}{n} \right)^{1/2}}{ (m^{2/3} + \omega + 1)^{1/2}}$, $\gamma = \frac{1}{10L\left( 1 + \frac{\omega}{n} \right)^{1/2} (m^{2/3} + \omega + 1)}$, and $\alpha = \frac{1}{\omega+1}$, where  $c^k = d^k = 0$ we can upper bound each element of matrix $A$ and construct its upper bound $\hat{A}$ and a corresponding vector $\hat b$, where 
\begin{align*}
\hat{A} &= \begin{bmatrix}
 1 - \frac{89}{100m}  & L^2 \frac{103}{100(\omega+1)} \\
 \frac{1}{100L^2m} & 1 -  \frac{47}{100(\omega+1)}
 \end{bmatrix},\qquad
 \hat{b} = \left(1+\frac{\omega}{n}\right)\frac{\gamma^2 L}{2}\begin{bmatrix}
 L^2 \\
 1
 \end{bmatrix}.
\end{align*}
Due to the structure of $\hat{A}$ and $\hat{b}$ we can work with matrices 
\begin{align*}
\tilde{A} &= \begin{bmatrix}
 1 - \frac{89}{100m}  & \frac{103}{100(\omega+1)} \\
 \frac{1}{100m} & 1 -  \frac{47}{100(\omega+1)}
 \end{bmatrix},\qquad
 \tilde{b} = \left(1+\frac{\omega}{n}\right)\frac{\gamma^2 L}{2}\begin{bmatrix}
 1 \\
 1
 \end{bmatrix},
\end{align*}
where it holds $\tilde{A}^k \tilde{b} = (y_1, y_2)^\top \implies \hat{A}^k \hat{b} = (L^2y_1, y_2)^\top$, thus we can work with $\tilde{A}$ which is independent of $L$.
In the sense of Lemma~\ref{lem:sequence}, we have that eigenvalues of $\tilde{A}$ are less than $1-\frac{1}{3\max\{m, \omega + 1\}}, 1-\frac{1}{6\min\{m, \omega + 1\}}$, respectively, and $\abs{x} \leq \frac{2}{\min\{m, \omega + 1\}}$, thus
\begin{align*}
c^k &\leq 90\max\{m, \omega+1\}\left(1+\frac{\omega}{n}\right) \frac{\gamma^2L^3}{2} \\
&=  90\max\{m, \omega+1\}\left(1+\frac{\omega}{n}\right) \frac{L^3}{200\left(1 + \frac{\omega}{n}\right) L^2 (m^{2/3} + \omega + 1)^2} \\
&\leq \frac{L}{2(m^{2/3} + \omega + 1)^{1/2}}.
\end{align*}
By the same reasoning
\begin{align*}
d^k \leq \frac{1}{2L(m^{2/3} + \omega + 1)^{1/2}}\, .
\end{align*}
This implies 
\begin{align*}
\Gamma^k & = \gamma - \frac{\gamma^2L}{2}  - c^{k+1} \left(\gamma^2 + \frac{\gamma}{p}\right) - d^{k+1}  \left(1 + \frac{2}{\alpha} \right) L^2 \gamma^2 \\
& \geq \gamma - \frac{\gamma^2L}{2}  -   \frac{L}{2(m^{2/3} + \omega + 1)^{1/2}}\left(\left(2 + \frac{2}{\alpha} \right)\gamma^2 + \frac{\gamma}{p}\right) \\
&\geq  \frac{1}{10L\left( 1 + \frac{\omega}{n} \right)^{1/2} (m^{2/3} + \omega + 1)} -  \frac{1}{200L \left(1 + \frac{\omega}{n}\right)(m^{2/3} + \omega + 1)^2 } \\
& \quad  -  \frac{1}{2L(m^{2/3} + \omega + 1)^{1/2}}\left(\frac{4(\omega + 1)}{100\left(1 + \frac{\omega}{n} \right) (m^{2/3} + \omega + 1)^2} + \frac{1}{10\left(1 + \frac{\omega}{n} \right) (m^{2/3} + \omega + 1)^{1/2}}\right) \\
& \geq \frac{1}{40L\left(1 +  \frac{\omega}{n} \right)^{1/2}(m^{2/3} + \omega + 1)},
\end{align*}
which guarantees $\Delta \geq \frac{1}{40L\left(1 +  \frac{\omega}{n} \right)^{1/2}(m^{2/3} + \omega + 1)}$. Plugging the lower bound on $\Delta$ into \eqref{up:delta} one completes the proof.
\end{proof}

\section{Variance Reduced Diana - SVRG proof}

\begin{figure}[t]
\begin{algorithm}[H]
	\removelatexerror
	\begin{algorithm2e}[H]
	\DontPrintSemicolon
		\KwIn{$p \geq 1$, learning rates $\alpha, \gamma>0$, initial vectors $x_0, h_{1}^0, \ldots, h_{n}^0 \in \R^d$, $p_0, p_1, \cdots, p_{l-1} \in \R$}
		$s = 0$\\
		$x^0 = x_0$\\
		$z^0 = x^0$ \\
		\For{$k = 1,2,\ldots$}{
			Broadcast $x^k$ to all workers\\
			\For(in parallel){$i = 1, \ldots, n$}{
				\uIf{$k \equiv 0 \mod l$}{
    					$s = s+1$ \\
    					$z^s =\sum_{r=0}^{l-1} p_r x^{(s-1)l+r}$\\
 				 }
				Pick random $j_i^k \in [m]$ uniformly\\
				$g_i^k = \nabla f_{ij_i^k}(x^k) - \nabla f_{ij_i^k}(z^s)  + \nabla f_{i}(z^s)$ \\ 
				$\hat{\Delta}_i^k = Q( g_i^k - h_{i}^k)$\\
				$h_{i}^{k+1} = h_{i}^k + \alpha \hat{\Delta}_i^k$\\
			}
			$g^k = \frac{1}{n}\sum\limits_{i=1}^{n} (\hat{\Delta}_i^k + h_i^k)$\\
			$x^{k+1} = x^k - \gamma g^k$\\
		}
	\end{algorithm2e}	
	\caption{SVRG-DIANA}
	\label{alg:SVRG_DIANA}
	\end{algorithm}
\end{figure}

\begin{lemma}
For all iterates $k \geq 0$ of Algorithm~\ref{alg:SVRG_DIANA}, it holds
	\begin{equation*}
		\E{g^k} = \nabla f(x^k)
	\end{equation*}
\end{lemma}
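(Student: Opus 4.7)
The plan is to mirror closely the proof of the analogous unbiasedness lemma already given for VR\nobreakdash-DIANA (Algorithm~\ref{alg:VR-DIANA}), exploiting two independent sources of randomness: the uniform sample $j_i^k \sim_{\rm u.a.r.} [m]$ drawn at each worker, and the internal randomness of the quantization operator $Q$.

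First I would show the intermediate identity $\E{g_i^k} = \nabla f_i(x^k)$ for every worker $i$. Conditioning on $x^k$ and $z^s$, the only randomness in $g_i^k = \nabla f_{ij_i^k}(x^k) - \nabla f_{ij_i^k}(z^s) + \nabla f_i(z^s)$ comes from $j_i^k$. Since $j_i^k$ is uniform on $[m]$, taking expectation yields $\frac{1}{m}\sum_{j=1}^m \nabla f_{ij}(x^k) - \frac{1}{m}\sum_{j=1}^m \nabla f_{ij}(z^s) + \nabla f_i(z^s) = \nabla f_i(x^k) - \nabla f_i(z^s) + \nabla f_i(z^s) = \nabla f_i(x^k)$, using the finite-sum structure~\eqref{eq:finsum}.

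Next I would pass through the quantization layer. By the unbiasedness property~\eqref{def:Q} in Definition~\ref{def:omegaquant}, $\EE{Q}{\hat{\Delta}_i^k \mid g_i^k, h_i^k} = g_i^k - h_i^k$, so by the tower property
\begin{equation*}
\E{\hat{\Delta}_i^k + h_i^k} = \E{g_i^k - h_i^k + h_i^k} = \E{g_i^k} = \nabla f_i(x^k).
\end{equation*}
Averaging over the $n$ workers and using linearity of expectation gives
\begin{equation*}
\E{g^k} = \frac{1}{n}\sum_{i=1}^n \E{\hat{\Delta}_i^k + h_i^k} = \frac{1}{n}\sum_{i=1}^n \nabla f_i(x^k) = \nabla f(x^k),
\end{equation*}
which is the desired identity.

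There is essentially no obstacle here; the statement is a straightforward consequence of (i) the unbiasedness of the SVRG-type control-variate estimator $g_i^k$ and (ii) the unbiasedness of $Q$. The only point requiring mild care is making explicit that the conditioning is on $(x^k, h_i^k, z^s)$ (as in the notational convention declared at the top of the appendix), so that the factor $\nabla f_i(z^s)$ is deterministic in the inner expectation and the cancellation in step one is valid.
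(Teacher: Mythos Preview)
Your proposal is correct and follows essentially the same approach as the paper's own proof: use the unbiasedness of $Q$ to reduce $\E{\hat{\Delta}_i^k + h_i^k}$ to $\E{g_i^k}$, then use the unbiasedness of the SVRG control-variate estimator to get $\nabla f_i(x^k)$, and average. In fact you are more explicit than the paper, which skips the verification that $\E{g_i^k}=\nabla f_i(x^k)$ and jumps directly to the conclusion.
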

\begin{proof}
	\begin{align*}
		\E{g^k}
		&=
		\frac{1}{n}\sum\limits_{i=1}^n \E{Q\left( g_i^k - h_{i}^k\right) + h_i^k}
		=
		\frac{1}{n}\sum\limits_{i=1}^n \E{ g_i^k- h_{i}^k + h_i^k}\\
		&=
		\frac{1}{n}\sum\limits_{i=1}^n \E{ g_i^k}\\
		&=
		\frac{1}{n}\sum\limits_{i=1}^n \nabla f_i(x^k) = \nabla f(x^k),
	\end{align*}
	where the first inequality follows from definition of $g^k$ in Algorithm~\ref{alg:SVRG_DIANA}.
\end{proof}

\subsection{Strongly convex case}

To prove the convergence of \cref{alg:SVRG_DIANA} we consider Lyapunov function of the following form:
\begin{equation}
\label{def:SVRG_psi^k}
	\psi^s = \left( f(z^s) - f^\star\right) + \bar{b} \gamma^2 \bar{H}^s,
\end{equation}
where
\begin{equation}
\label{def:SVRG_H^k}
	H^k \eqdef \sum\limits_{i=1}^{n} \norm{h_{i}^k - \nabla f_{i}(x^\star)}_2^2
\end{equation}
and $\bar{H}^s = H^{ls}$.

The following theorem establishes linear convergence rate of \cref{alg:SVRG_DIANA}.

\begin{theorem}\label{thm:SVRG-DIANA}
	Under Assumptions~\ref{assumption:VRdiana} and \ref{assumption:VRdianasc} For $\alpha \leq \frac{1}{\omega + 1}$, the following inequality holds:
	\begin{equation}
	\E{\psi^{s+1}} \leq \psi^s \max
	\left\{
	\frac{(1-\theta)^l}{1-(1-\theta)^l}\frac{2\theta + (1-(1-\theta)^l)c\mu}{\mu (2\gamma-c)}
	,
	\left(1
	-
	\theta\right)^l
	\right\},
	\end{equation}
	where $c = \frac{6L\omega}{n} \gamma^2
		 + 
		\left(2L+\frac{4L}{n}\right)\gamma^2
		 +
		 4b \gamma^2 L\alpha n$, $\theta = \min\{\mu\gamma, \alpha
		-
		\frac{3\omega}{n^2b}\}$, $p_r = \frac{(1-\theta)^{l-1-r}}{\sum_{t=0}^{l-1}(1-\theta)^{l-1-t}}$ for $r = 0,1,\dots, l-1$, and $b = \bar{b}l(2\gamma-c)$.
\end{theorem}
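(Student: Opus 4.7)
The proof structure mirrors the analysis of VR-DIANA (Theorem~\ref{thm:VR-DIANA}): first derive three per-iteration lemmas valid within an epoch, combine them into a single-step Lyapunov recursion, and finally telescope across an epoch of length $l$. Since the SVRG reference point $z^s$ is fixed throughout the inner iterations $k = sl, \ldots, (s+1)l-1$, the within-epoch analysis treats $z^s$ as deterministic and the stochasticity of $g_i^k$ comes only from the index sampling $j_i^k$ and the quantizer $Q$.

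The per-iteration lemmas I would establish are: (i) the standard descent identity
\begin{align*}
\E{\|x^{k+1} - x^\star\|_2^2} \leq (1-\mu\gamma)\|x^k - x^\star\|_2^2 - 2\gamma(f(x^k) - f^\star) + \gamma^2 \E{\|g^k\|_2^2},
\end{align*}
obtained by expanding the square, using unbiasedness $\E{g^k} = \nabla f(x^k)$, and strong convexity; (ii) a bound on $\E{\|g^k\|_2^2}$ split into the deterministic piece $\|\nabla f(x^k)\|_2^2 \leq 2L(f(x^k)-f^\star)$ and the quantization variance $\tfrac{\omega}{n^2}\sum_i \E{\|g_i^k - h_i^k\|_2^2}$, with the SVRG variance $\E{\|g_i^k - \nabla f_i(x^k)\|_2^2}$ itself bounded by $L$-multiples of $(f(x^k)-f^\star)$ and $(f(z^s)-f^\star)$ using smoothness and convexity of the $f_{ij}$; and (iii) an upper bound on $\E{H^{k+1}}$ of the form $(1-\alpha)H^k$ plus additive terms in $(f(x^k)-f^\star)$ and $(f(z^s)-f^\star)$, obtained exactly as in \eqref{lem:up_H_VR} using $\alpha(\omega+1) \leq 1$. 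Combining these into $T^k := \|x^k - x^\star\|_2^2 + b\gamma^2 H^k$ and balancing $b$ so that the quantization-induced coupling is absorbed into the contraction, one arrives at the single-iteration recursion
\begin{align*}
\E{T^{k+1}} \leq (1-\theta)\,T^k - (2\gamma - c)(f(x^k)-f^\star) + c\,(f(z^s)-f^\star),
\end{align*}
with $\theta = \min\{\mu\gamma,\ \alpha - 3\omega/(n^2 b)\}$ and $c$ exactly as defined in the theorem.

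I would then telescope this recursion across the epoch by multiplying its instance at iteration $sl+r$ by $(1-\theta)^{l-1-r}$ and summing for $r = 0,\ldots,l-1$, obtaining
\begin{align*}
\E{T^{(s+1)l}} \leq (1-\theta)^l T^{sl} - (2\gamma - c)\sum_{r=0}^{l-1}(1-\theta)^{l-1-r}\E{f(x^{sl+r}) - f^\star} + c\,S\,(f(z^s)-f^\star),
\end{align*}
where $S := \sum_{r=0}^{l-1}(1-\theta)^{l-1-r} = \tfrac{1-(1-\theta)^l}{\theta}$. The choice $p_r = (1-\theta)^{l-1-r}/S$ is precisely what makes $z^{s+1} = \sum_r p_r x^{sl+r}$ a convex combination whose weights match the coefficients of $f(x^{sl+r}) - f^\star$; by Jensen the sum dominates $S(f(z^{s+1}) - f^\star)$. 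Dropping the nonnegative $\|x^{(s+1)l}-x^\star\|_2^2$ from the left and rescaling by $S(2\gamma - c)$ (which, together with the stated normalization $b = \bar b\,l\,(2\gamma - c)$, rewrites the $H$-terms in units of $\bar b\gamma^2$) leaves an inequality of the form
\begin{align*}
\E{\psi^{s+1}} \leq (1-\theta)^l\,\bar b\gamma^2\bar H^s + \frac{(1-\theta)^l}{S(2\gamma - c)}\|x^{sl}-x^\star\|_2^2 + \frac{c}{2\gamma - c}(f(z^s) - f^\star).
\end{align*}

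The main obstacle is the final step: converting the residual $\|x^{sl}-x^\star\|_2^2$ into quantities appearing in $\psi^s = (f(z^s)-f^\star) + \bar b\gamma^2 \bar H^s$, since $x^{sl}$ is the trailing iterate of the previous epoch rather than $z^s$ itself. I would apply strong convexity in the form $\|x^{sl}-x^\star\|_2^2 \leq \tfrac{2}{\mu}(f(x^{sl}) - f^\star)$ and then regroup, producing two candidate per-epoch contraction rates: the factor $(1-\theta)^l$ governing the $\bar H^s$ component, and the larger factor $\tfrac{(1-\theta)^l}{1-(1-\theta)^l}\cdot\tfrac{2\theta + (1-(1-\theta)^l)c\mu}{\mu(2\gamma-c)}$ governing the $(f(z^s)-f^\star)$ component after the strong-convexity conversion combines the leading $\tfrac{2(1-\theta)^l}{\mu S(2\gamma-c)}$ coefficient with the $\tfrac{c}{2\gamma-c}$ term. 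Taking the maximum of the two rates yields the stated bound. The delicate bookkeeping here--in particular verifying that the numerator $2\theta + (1-(1-\theta)^l)c\mu$ emerges precisely by multiplying through by $S\mu(2\gamma - c)$ and collecting--is the most error-prone part of the argument.
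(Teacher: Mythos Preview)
Your approach matches the paper's almost exactly: the same three per-iteration lemmas (descent, bound on $\E{\|g^k\|_2^2}$, contraction of $H^k$), the same one-step recursion
\[
\E{T^{k+1}} \leq (1-\theta)T^k - (2\gamma - c)(f(x^k)-f^\star) + c\,(f(z^s)-f^\star),
\]
the same weighted telescoping with weights $(1-\theta)^{l-1-r}$ chosen to coincide with $p_r$, and Jensen to extract $f(z^{s+1})$ on the left.

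The divergence is only at the closing step, and there your proposal does not quite land. After telescoping, the paper places $(1-\theta)^l\|z^s - x^\star\|_2^2$ on the right side and applies strong convexity to $z^s$, giving $\tfrac{2}{\mu}(1-\theta)^l(f(z^s)-f^\star)$, which is already a constituent of $\psi^s$; combined with the $cS(f(z^s)-f^\star)$ term and divided through by $S(2\gamma-c)$, this yields the stated coefficient $\tfrac{(1-\theta)^l}{1-(1-\theta)^l}\cdot\tfrac{2\theta + (1-(1-\theta)^l)c\mu}{\mu(2\gamma-c)}$. Your version instead carries $(1-\theta)^l\|x^{sl}-x^\star\|_2^2$ and converts it to $\tfrac{2}{\mu}(f(x^{sl})-f^\star)$. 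That quantity is \emph{not} part of $\psi^s$, so the ``regroup and take the max'' step does not close: you cannot bound a multiple of $f(x^{sl})-f^\star$ by the same multiple of $f(z^s)-f^\star$. The paper's derivation implicitly identifies the epoch-initial iterate with the reference point (i.e.\ $x^{sl}=z^s$, as in the SVRG ``Option~II'' reset and consistent with the stated initialization $z^0=x^0$); once you adopt that identification your obstacle disappears and your computed coefficient is exactly correct.
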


\begin{corollary}
	Taking $\alpha = \frac{1}{\omega + 1}$, $b = 6\frac{\omega}{n^2 \alpha}$, $\gamma = \frac{1}{10L\left( 2+ \frac{4}{n} + 30\frac{\omega}{n} \right)}$, and $l = \frac{2}{\theta}$
 SVRG-DIANA needs
	$
		O\left(
			\left( \kappa + \kappa\frac{\omega}{n} + \omega + m\right) \log \frac{1}{\epsilon}
		\right)
	$
	iterations
	to achieve precision $\E{\psi^s} \leq \varepsilon \psi^0$.
\end{corollary}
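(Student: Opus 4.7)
The plan is to plug the specified parameters into Theorem~\ref{thm:SVRG-DIANA}, verify that the resulting per-epoch contraction factor is an absolute constant strictly less than $1$, and then convert the $O(\log(1/\varepsilon))$ epoch count into a component-gradient count by adding to each epoch's $l$ inner steps the $m$ evaluations needed to form the reference gradient $\nabla f_i(z^s)$.

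First I will simplify $\theta$. Choosing $b = 6\omega/(n^2\alpha)$ gives $3\omega/(n^2 b) = \alpha/2$, so the second argument in $\theta$ collapses to $\alpha/2$ and $\theta = \min\{\mu\gamma,\,\alpha/2\}$. Substituting $\alpha = 1/(\omega+1)$ and $\gamma = 1/(10L(2 + 4/n + 30\omega/n))$ then yields
\[
\frac{1}{\theta} \;=\; \max\!\left\{10\kappa\!\left(2 + \tfrac{4}{n} + \tfrac{30\omega}{n}\right),\; 2(\omega+1)\right\} \;=\; O\!\left(\kappa + \kappa\tfrac{\omega}{n} + \omega\right).
\]
Evaluating $c$ is equally mechanical: substituting $4b\alpha n = 24\omega/n$ into its definition collapses it to $c = \gamma^2 L(2 + 4/n + 30\omega/n) = \gamma/10$, so that $2\gamma - c = 19\gamma/10$ and the denominator inside the first branch of the theorem's $\max$ is strictly positive.

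The key step is to check that the $\max$ appearing in Theorem~\ref{thm:SVRG-DIANA} is an absolute constant $<1$. The second branch is immediate: with $l = 2/\theta$, $(1-\theta)^l \leq e^{-\theta l} = e^{-2}$. For the first branch I will use the bounds $\theta \leq \mu\gamma$ and $c\mu = \mu\gamma/10$ to estimate its numerator by $2\theta + c\mu \leq 2\mu\gamma + \mu\gamma/10 = \tfrac{21}{10}\mu\gamma$; dividing by $\mu(2\gamma - c) = \tfrac{19}{10}\mu\gamma$ and multiplying by $\tfrac{e^{-2}}{1-e^{-2}}$ gives $\tfrac{21}{19}\cdot\tfrac{e^{-2}}{1-e^{-2}} < \tfrac14$. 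Hence $\E{\psi^{s+1}} \leq \rho\,\psi^s$ for a universal constant $\rho<1$, and $s = \lceil \log(1/\varepsilon)/\log(1/\rho)\rceil = O(\log(1/\varepsilon))$ epochs suffice to reach $\E{\psi^s}\leq\varepsilon\psi^0$.

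It remains to count component-gradient evaluations. Each epoch consists of $l = 2/\theta = O(\kappa + \kappa\omega/n + \omega)$ inner iterations plus a single full local-gradient sweep of cost $m$ per worker; summing gives a total of
\[
s\,(l + m) \;=\; O\!\left((\kappa + \kappa\tfrac{\omega}{n} + \omega + m)\log(1/\varepsilon)\right),
\]
matching the advertised rate. The only non-routine piece of the argument is the first-branch estimate; the subtlety is that $\theta$ can take either value inside the $\min$, but since $\theta \leq \mu\gamma$ holds in both cases, the bound $2\theta + c\mu \leq \tfrac{21}{10}\mu\gamma$ goes through uniformly.
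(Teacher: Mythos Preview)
Your proof is correct. The paper states this corollary without proof, and your argument---substituting the given parameters into Theorem~\ref{thm:SVRG-DIANA}, verifying that $c=\gamma/10$ and $\theta=\min\{\mu\gamma,\alpha/2\}$, and then showing both branches of the $\max$ are bounded by an absolute constant below $1$---is exactly the intended derivation. Your interpretation of ``iterations'' as component-gradient evaluations (so that each epoch costs $l+m$) is the standard convention for SVRG-type methods and is what is needed for the $+m$ term in the stated complexity to appear; without it the per-epoch cost is only $l=2/\theta=O(\kappa+\kappa\omega/n+\omega)$ and the claimed bound would still hold but with a redundant $+m$.
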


\begin{lemma}
We can upper bound the second moment of the $g^k$ in the following way
	\begin{equation}
		\E{\norm{g^k}_2^2} \leq \left(2L + \frac{4L}{n} + \frac{6L\omega}{n} \right) (f(x^k) - f^\star  + f(z^s) - f^\star )
		+
		\frac{3\omega}{n^2} H^k,
		\label{lem:def_g_SVRG}
	\end{equation}
\end{lemma}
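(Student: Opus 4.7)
The plan is to follow the template used for the L-SVRG/SAGA counterpart (Lemma~\ref{lem:up_g_VR}), but with SVRG-specific variance calculations. I would first decompose the conditional second moment with respect to the quantization randomness:
\begin{equation*}
\EE{Q}{\norm{g^k}_2^2} = \underbrace{\norm{\EE{Q}{g^k}}_2^2}_{T_1} + \underbrace{\EE{Q}{\norm{g^k - \EE{Q}{g^k}}_2^2}}_{T_2}.
\end{equation*}
Since $Q$ is unbiased, $T_1 = \norm{\tfrac{1}{n}\sum_i g_i^k}_2^2$, and independence of the quantization across workers together with $\EE{Q}{\norm{Q(x) - x}_2^2} \leq \omega \norm{x}_2^2$ yields $T_2 \leq \tfrac{\omega}{n^2}\sum_i \norm{g_i^k - h_i^k}_2^2$, exactly as in \eqref{lem:up_g_VR}.

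Next I take expectation over the sampling $j_i^k$. For $T_1$, independence across workers gives $\E{T_1} = \norm{\nabla f(x^k)}_2^2 + \tfrac{1}{n^2}\sum_i \E{\norm{g_i^k - \nabla f_i(x^k)}_2^2}$; the first summand is bounded by $2L(f(x^k)-f^\star)$ via $L$-smoothness. For the SVRG variance I would use $\E{\norm{g_i^k - \nabla f_i(x^k)}_2^2} \leq \E{\norm{\nabla f_{ij_i^k}(x^k) - \nabla f_{ij_i^k}(z^s)}_2^2}$, then $\norm{a-b}_2^2 \leq 2\norm{a-c}_2^2 + 2\norm{b-c}_2^2$ with $c = \nabla f_{ij_i^k}(x^\star)$, followed by the standard smooth+convex bound $\norm{\nabla f_{ij}(x) - \nabla f_{ij}(x^\star)}_2^2 \leq 2L\bigl(f_{ij}(x) - f_{ij}(x^\star) - \dotprod{\nabla f_{ij}(x^\star)}{x - x^\star}\bigr)$. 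Averaging over $i,j$ and using $\sum_{ij}\nabla f_{ij}(x^\star) = nm\,\nabla f(x^\star) = 0$ collapses the linear terms and delivers $\tfrac{1}{n}\sum_i \E{\norm{g_i^k - \nabla f_i(x^k)}_2^2} \leq 4L\bigl(f(x^k) - f^\star + f(z^s) - f^\star\bigr)$.

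For $T_2$, the variance decomposition with center $\nabla f_i(x^k)$ gives $\E{\norm{g_i^k - h_i^k}_2^2} = \norm{\nabla f_i(x^k) - h_i^k}_2^2 + \E{\norm{g_i^k - \nabla f_i(x^k)}_2^2}$; the second summand was already bounded in the previous step. The first summand I would split as $\nabla f_i(x^k) - h_i^k = (\nabla f_i(x^k) - \nabla f_i(x^\star)) + (\nabla f_i(x^\star) - h_i^k)$ and control via Young's inequality $\norm{a+b}_2^2 \leq (1+\beta)\norm{a}_2^2 + (1 + 1/\beta)\norm{b}_2^2$, where $\beta$ is a free parameter; after summing over $i$, the first piece becomes a multiple of $f(x^k)-f^\star$ (again via smooth+convex), and the second piece becomes a multiple of $H^k$ by definition~\eqref{lem:def_H_VR}.

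Combining $\E{T_1} + \E{T_2}$, and at the end inflating any pure $f(x^k)-f^\star$ term to $f(x^k)-f^\star + f(z^s)-f^\star$ (using $f(z^s) \geq f^\star$) to get the symmetric form, produces a bound of the shape claimed. The main obstacle will be calibrating $\beta$ so that the coefficients land at exactly $\bigl(2L + \tfrac{4L}{n} + \tfrac{6L\omega}{n}\bigr)$ and $\tfrac{3\omega}{n^2}$: increasing $\beta$ shrinks the $H^k$ coefficient but inflates the $f$-gap coefficient, and a judicious choice (together with the symmetric inflation mentioned above) is needed to match the stated constants. All other steps are routine manipulations that parallel the corresponding derivation for the SAGA/L-SVRG variant.
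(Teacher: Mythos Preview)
Your treatment of $T_1$ is exactly what the paper does. The divergence is in $T_2$. The paper does \emph{not} first pass through the variance decomposition $\E{\norm{g_i^k - h_i^k}_2^2} = \norm{\nabla f_i(x^k) - h_i^k}_2^2 + \E{\norm{g_i^k - \nabla f_i(x^k)}_2^2}$. Instead it writes
\[
g_i^k - h_i^k \;=\; \bigl(\nabla f_{ij_i^k}(x^k) - \nabla f_{ij_i^k}(x^\star)\bigr) \;-\; \bigl(\nabla f_{ij_i^k}(z^s) - \nabla f_{ij_i^k}(x^\star) - \nabla f_i(z^s) + \nabla f_i(x^\star)\bigr) \;-\; \bigl(h_i^k - \nabla f_i(x^\star)\bigr)
\]
and applies the crude three-term bound $\norm{a+b+c}_2^2 \le 3(\norm{a}_2^2+\norm{b}_2^2+\norm{c}_2^2)$ directly. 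This produces the factor $3$ in front of $H^k$ and, after the usual smooth-convex estimates on the first two pieces, the coefficient $\tfrac{6L\omega}{n}$ on the $f$-gap simultaneously.

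Your variance-then-Young route is sound but cannot hit the stated constants for any $\beta$. The variance piece already contributes $\tfrac{4L\omega}{n}\bigl(f(x^k)-f^\star+f(z^s)-f^\star\bigr)$, and Young on the mean piece adds $(1+\beta)\tfrac{2L\omega}{n}(f(x^k)-f^\star)$ together with $(1+\tfrac{1}{\beta})\tfrac{\omega}{n^2}H^k$. Matching the $H^k$ coefficient $\tfrac{3\omega}{n^2}$ forces $\beta=\tfrac12$, which makes the coefficient on $f(x^k)-f^\star$ equal to $\tfrac{7L\omega}{n}$; inflating to the symmetric form then gives $\tfrac{7L\omega}{n}$ rather than $\tfrac{6L\omega}{n}$. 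More generally $4+2(1+\beta)\le 6$ is impossible for $\beta>0$, so the ``judicious choice'' you hope for does not exist. You would still obtain a correct lemma with a slightly worse constant, but to prove the statement as written you should replace your split of $T_2$ by the direct three-term decomposition above.
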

\begin{proof}
	\begin{align*}
	&\E{\norm{g^k}_2^2}
	=
	\E{\EE{Q}{\norm{g^k}_2^2}}
	\overset{\eqref{eq:variance}}{=}
	\E{
		\norm{\EE{Q}{g^k}}_2^2
		+
		\EE{Q}{\norm{g^k - \EE{Q}{g^k}}_2^2}
	}
	\\
	&=
	\E{
		\underbrace{
			\norm{
				\frac{1}{n}\sum\limits_{i=1}^{n} g_i^k 
			}_2^2
		}_{T_1}
		+
		\underbrace{
			\EE{Q}{
				\norm{
					\frac{1}{n}\sum\limits_{i=1}^{n}
					Q\left(g_i^k - h_{i}^k\right)
					-
					\left(g_i^k - h_{i}^k\right)
				}_2^2
			}	
		}_{T_2}
	},
	\end{align*}
where the third inequality uses the definition of $g^k$ in Algorithm~\ref{alg:SVRG_DIANA}.  We can further bound $\E{T_1}$ and $\E{T_2 }$.
\begin{align*}
		  \E{T_1}
		&=
		\E{\norm{
			\frac{1}{n}
			\sum\limits_{i=1}^{n}
				g_i^k
		}_2^2}
		=
		\norm{
			\frac{1}{n}
			\sum\limits_{i=1}^{n}
			\E{g_i^k}
		}_2^2
		+
		\E{\norm{
				\frac{1}{n}
				\sum\limits_{i=1}^{n}
				\left( g_i^k - \E{g_i^k}\right)
			}_2^2}\\
		&=
		\norm{\nabla f(x^k)}_2^2
		+
		\frac{1}{n^2}
		\sum\limits_{i=1}^{n}
			\E{\norm{g_i^k - \nabla f_i(x^k)}_2^2}\\
		&\overset{\eqref{def:smoothness}}{\leq}
		2L(f(x^k) - f^\star)
		+
		\frac{1}{n^2}
		\sum\limits_{i=1}^{n}
			\E{\norm{\nabla f_{ij_i^k}(x^k) - \nabla f_{ij_i^k}(z^s) - \E{\nabla f_{ij_i^k}(x^k) - \nabla f_{ij_i^k}(z^s)}}_2^2}\\
		&\overset{\eqref{eq:variance}}{\leq}
		2L(f(x^k) - f^\star)
		+
		\frac{1}{n^2}
		\sum\limits_{i=1}^{n}
			\E{\norm{\nabla f_{ij_i^k}(x^k) - \nabla f_{ij_i^k}(z^s)}_2^2}\\
		&\overset{\text{Alg.}~\ref{alg:SVRG_DIANA}}{=}
		2L(f(x^k) - f^\star)
		+
		\frac{1}{mn^2}
		\sum\limits_{i=1}^{n}
			\sum\limits_{j=1}^{m}
				\norm{\nabla f_{ij}(x^k) - \nabla f_{ij}(z^s)}_2^2\\
		&\overset{\eqref{eq:sum}}{\leq}
		2L(f(x^k) - f^\star)
		+
		\frac{2}{mn^2}
		\sum\limits_{i=1}^{n}
			\sum\limits_{j=1}^{m}
			    \left(
				\norm{\nabla f_{ij}(x^k) - \nabla f_{ij}(x^\star)}_2^2
				+
				\norm{\nabla f_{ij}(z^s) - \nabla f_{ij}(x^\star)}_2^2
				\right)\\
		&\overset{\eqref{def:smoothness}}{\leq}
		\left(2L + \frac{4L}{n}\right)
		(f(x^k) - f^\star  + f(z^s) - f^\star)
		\\
		\E{T_2}
		&\overset{\eqref{eq:indep}}{=}
		\frac{1}{n^2}
		\E{
			\sum\limits_{i=1}^{n}
			\EE{Q}{
				\norm{
					Q\left( g_i^k - h_{i}^k\right)
					-
					\left(g_i^k - h_{i}^k\right)
				}_2^2
			}
		}\\
		&
		\overset{\eqref{eq:variance}}{\leq}
		\frac{\omega}{n^2}
		\sum\limits_{i=1}^{n} 
		\E{\norm{ g_i^k - h_{i}^k}_2^2}\\
		&
		\overset{\eqref{eq:sum}+\text{Alg.}~\ref{alg:SVRG_DIANA}}{\leq}
		\frac{3\omega}{n^2}
		\sum\limits_{i=1}^{n} 
		\E{
			\norm{\nabla f_{ij_i^k}(x^k) - \nabla f_{ij_i^k}(x^\star)}_2^2
			+
			\norm{\nabla f_{ij_i^k}(z^s) - \nabla f_{ij_i^k}(x^\star) - (\nabla f_{i}(z^s) - \nabla f_{i}(x^\star))}_2^2} \\
			& \quad +			
			\E{\norm{h_{i}^k - \nabla f_{i}(x^\star)}_2^2
		}\\
		&
		\overset{\eqref{eq:variance}}{\leq}
		\frac{3\omega}{n^2}
		\sum\limits_{i=1}^{n} 
		\E{
			\norm{\nabla f_{ij_i^k}(x^k) - \nabla f_{ij_i^k}(x^\star)}_2^2
			+
			\norm{\nabla f_{ij_i^k}(z^s) - \nabla f_{ij_i^k}(x^\star)}_2^2}
			 +			
			\E{\norm{h_{i}^k - \nabla f_{i}(x^\star)}_2^2
		}\\
		&\overset{\eqref{def:smoothness}}{\leq}
		\frac{6L\omega}{n} (f(x^k) - f^\star  + f(z^s) - f^\star )
		+
		\frac{3\omega}{n^2} H^k
	\end{align*}
	
	Summing up $\E{T_1}$ and $\E{T_2}$ we conclude the proof:
	\begin{align*}
		\E{\norm{g^k}_2^2} = \E{T_1 + T_2} \leq
		\left(2L + \frac{4L}{n} + \frac{6L\omega}{n} \right) (f(x^k) - f^\star  + f(z^s) - f^\star )
		+
		\frac{3\omega}{n^2} H^k.
	\end{align*}

\end{proof}

\begin{lemma}
 Let $\alpha(\omega+1) \leq 1$. We can upper bound $H^{k+1}$ in the following way 
	\begin{equation}
		\E{H^{k+1}} \leq H^k\left(1 - \alpha\right)
		+
		4L\alpha n (f(x^k) - f^\star + f(z^s) - f^\star).
		\label{lem:def_H_SVRG}
	\end{equation}
\end{lemma}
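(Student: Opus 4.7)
The proof will parallel the analogous VR-DIANA bound~\eqref{lem:up_H_VR}, with the only material change being that the SVRG-style stochastic gradient $g_i^k = \nabla f_{ij_i^k}(x^k) - \nabla f_{ij_i^k}(z^s) + \nabla f_i(z^s)$ replaces the SAGA/L-SVRG form. In outline: (i) peel off the quantization to reduce the task to controlling $\sum_i \E{\norm{g_i^k - \nabla f_i(x^\star)}_2^2}$; (ii) decompose $g_i^k - \nabla f_i(x^\star)$ into a biased ``control-variate'' part and a zero-mean correction, then apply $\norm{u+v}_2^2 \leq 2\norm{u}_2^2 + 2\norm{v}_2^2$ together with the variance-$\leq$-second-moment inequality; (iii) convert the resulting squared gradient differences into functional suboptimalities via the $L$-smooth-plus-convex cocoercivity inequality.

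For step (i), unfolding $h_i^{k+1} = h_i^k + \alpha Q(g_i^k - h_i^k)$, expanding the square, and taking expectation over the internal randomness of $Q$ only (using $\EE{Q}{Q(x)} = x$ and $\EE{Q}{\norm{Q(x)}_2^2} \leq (\omega+1)\norm{x}_2^2$ together with $\alpha(\omega+1)\leq 1$) yields
\begin{equation*}
\EE{Q}{\norm{h_i^{k+1} - \nabla f_i(x^\star)}_2^2} \leq \norm{h_i^k - \nabla f_i(x^\star)}_2^2 + 2\alpha\langle g_i^k - h_i^k,\, h_i^k - \nabla f_i(x^\star)\rangle + \alpha \norm{g_i^k - h_i^k}_2^2.
\end{equation*}
The algebraic identity $2\langle a,b\rangle + \norm{a}_2^2 = \norm{a+b}_2^2 - \norm{b}_2^2$ with $a = g_i^k - h_i^k$ and $b = h_i^k - \nabla f_i(x^\star)$ then collapses the last two terms to $\alpha\bigl(\norm{g_i^k - \nabla f_i(x^\star)}_2^2 - \norm{h_i^k - \nabla f_i(x^\star)}_2^2\bigr)$. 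Taking the outstanding expectation over $j_i^k$ and summing over $i$ produces $\E{H^{k+1}} \leq (1-\alpha)H^k + \alpha \sum_i \E{\norm{g_i^k - \nabla f_i(x^\star)}_2^2}$.

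For steps (ii)--(iii), the crux is the decomposition
\begin{equation*}
g_i^k - \nabla f_i(x^\star) = \bigl(\nabla f_{ij_i^k}(x^k) - \nabla f_{ij_i^k}(x^\star)\bigr) - \bigl[\nabla f_{ij_i^k}(z^s) - \nabla f_{ij_i^k}(x^\star) - (\nabla f_i(z^s) - \nabla f_i(x^\star))\bigr],
\end{equation*}
whose bracketed term has zero conditional mean over $j_i^k$. Squaring, using $\norm{u+v}_2^2 \leq 2\norm{u}_2^2 + 2\norm{v}_2^2$, and bounding the bracket's variance by its second moment gives $\E{\norm{g_i^k - \nabla f_i(x^\star)}_2^2} \leq \tfrac{2}{m}\sum_j \norm{\nabla f_{ij}(x^k) - \nabla f_{ij}(x^\star)}_2^2 + \tfrac{2}{m}\sum_j \norm{\nabla f_{ij}(z^s) - \nabla f_{ij}(x^\star)}_2^2$. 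Summing over $i$ and invoking the standard cocoercivity bound $\norm{\nabla f_{ij}(x) - \nabla f_{ij}(x^\star)}_2^2 \leq 2L\bigl(f_{ij}(x) - f_{ij}(x^\star) - \langle \nabla f_{ij}(x^\star), x-x^\star\rangle\bigr)$---the sole place where convexity of each $f_{ij}$ from Assumption~\ref{assumption:VRdianasc} is needed---turns each block into $2Ln(f(x)-f^\star)$ after the linear cross-terms cancel via $\nabla f(x^\star)=0$. Substituting at $x = x^k$ and $x = z^s$ delivers the stated $4L\alpha n$ constant. No single step is delicate, but the choice of decomposition above is what makes the constants tight: a naive variance identity applied directly to $g_i^k$ introduces an extra $\norm{\nabla f_i(x^k) - \nabla f_i(x^\star)}_2^2$ term and yields the worse constant $6Ln$ on $f(x^k)-f^\star$, so taking the symmetric $2$-$2$ split is what aligns the constants with the lemma statement.
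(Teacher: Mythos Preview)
Your proof is correct and follows essentially the same route as the paper's. Step~(i) matches the paper verbatim: peel off the quantization using $\alpha(\omega+1)\leq 1$, then collapse via the polarization identity to obtain $\E{H^{k+1}} \leq (1-\alpha)H^k + \alpha\sum_i \E{\norm{g_i^k-\nabla f_i(x^\star)}_2^2}$. The paper's proof is extremely terse at the final step---it jumps straight to $4L\alpha n(f(x^k)-f^\star+f(z^s)-f^\star)$ citing only $L$-smoothness---whereas you spell out the decomposition and the cocoercivity argument that justify it; your side remark about the asymmetric $6Ln$ constant from the naive bias--variance split is also accurate and explains why the symmetric decomposition is the right choice to match the stated constant.
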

\begin{proof}
	\begin{align*}
		\E{H^{k+1}}&=
		\sum\limits_{i=1}^n \E{\norm{h_{i}^{k+1} - h_{i}^{k} + h_{i}^{k} - \nabla f_{i}(x^\star)}_2^2}
		\\
		&=
		\sum\limits_{i=1}^n
		\E{
			\norm{h_{i}^k - \nabla f_{i}(x^\star)}_2^2
			+
			2\dotprod{h_{i}^{k+1} - h_{i}^k}{h_{i}^k - \nabla f_{ij}(x^\star)}
			+
			\norm{h_{i}^{k+1} - h_{i}^k}_2^2
		}\\
		&=
		H^k + 
		\E{\sum\limits_{i=1}^n
		\left(
			2\dotprod{\EE{Q}{h_{i}^{k+1} - h_{i}^k}}{h_{i}^k - \nabla f_{i}(x^\star)}
			+
			\EE{Q}{\norm{h_{i}^{k+1} - h_{i}^k}_2^2}
		\right)}
	\end{align*}
	No we calculate expectations:
	\begin{align*}
		\EE{Q}{h_{i}^{k+1} - h_{i}^k}
		= \alpha
		\EE{Q}{\hat{\Delta}_i^k}
		=
		\alpha
		(g_i^k - h_{i}^k)
	\end{align*}
	\begin{align*}
		\EE{Q}{\norm{h_{i}^{k+1} - h_{i}^k}_2^2}
		&=
		\alpha^2
		\EE{Q}{\norm{\hat{\Delta}_i^k}_2^2}
		=
		\alpha^2
		(\omega + 1) \norm{g_i^k - h_{i}^k}_2^2\\
		&\leq
		\alpha \norm{g_i^k - h_{i}^k}_2^2
	\end{align*}
	Finally we obtain
	\begin{align*}
		\E{H^{k+1}}
		&=
		H^k
		+
		\E{\sum\limits_{i=1}^n
		\left(
			2\alpha\dotprod{g_i^k - h_{i}^k}{h_{i}^k - \nabla f_{i}(x^\star)}
			+
			\alpha\norm{g_i^k - h_{i}^k}_2^2
		\right)}\\
		&=
		H^k
		+
		\frac{\alpha}{m}
		\E{\sum\limits_{i=1}^n\sum\limits_{j=1}^m
		\dotprod{g_i^k - h_{i}^k}{g_i^k + h_{i}^k - 2\nabla f_{i}(x^\star)}}\\
		&=
		H^k
		+
		\E{\frac{\alpha}{m}
		\sum\limits_{i=1}^n\sum\limits_{j=1}^m
		\left(
			\norm{g_i^k - \nabla f_{i}(x^\star) }_2^2
			-
			\norm{h_{i}^k - \nabla f_{i}(x^\star) }_2^2
		\right)}\\
		&\overset{\eqref{def:smoothness}}{\leq}
		H^k\left(1 - \alpha\right)
		+
		4L\alpha n (f(x^k) - f^\star + f(z^s) - f^\star),
	\end{align*}
	which concludes the proof.
\end{proof}

\begin{proof}[Proof of \cref{thm:SVRG-DIANA}]
	\begin{align}
		\E{\norm{x^{k+1} - x^\star}^2_2 + b \gamma^2 H^{k+1}} 
		&= 
		\norm{x^{k} - x^\star}^2_2 
		+ 
		2\gamma\dotprod{x^{k} - x^\star}{\E{g^k}} 
		+
		\gamma^2 \E{\norm{g^k}_2^2}
		+
		b \gamma^2\E{ H^{k+1}} \notag\\
		&=
		\norm{x^{k} - x^\star}^2_2 
		+ 
		2\gamma \dotprod{x^{k} - x^\star}{\nabla f(x^k)}
		+
		\gamma^2 \E{\norm{g^k}_2^2}
		+
		b \gamma^2\E{ H^{k+1}}\notag \\	
		&\overset{\eqref{lem:def_g_SVRG}+\eqref{lem:def_H_SVRG}+\eqref{def:strongconvex}}{\leq}
		(1-\mu\gamma)\norm{x^{k} - x^\star}^2_2 
		+ 
		\left(
		L\frac{6\omega}{n} \gamma^2
		 + 
		 \left(2L + \frac{4L}{n}\right)\gamma^2
		 +
		 4b \gamma^2 L\alpha n
		 - 
		 2\gamma
		\right)
		(f(x^k)- f^\star) \notag\\
		&\quad +
		\left(
		L\frac{6\omega}{n} \gamma^2
		 + 
		 \left(2L + \frac{4L}{n}\right)\gamma^2
		 +
		 4b \gamma^2 L\alpha n
		\right)
		(f(z^s)- f^\star) \notag\\
		&\quad +
		b \gamma^2 H^k
		\left(
		1
		-
		\alpha
		+
		\frac{3\omega}{n^2b}
		\right)		
		\label{dasdmasndlka}
	\end{align}
	Let $c = L\frac{6\omega}{n} \gamma^2
		 + 
		  \left(2L + \frac{4L}{n}\right)\gamma^2
		 +
		 4b \gamma^2 L\alpha n$, $\theta = \min\{\mu\gamma, \alpha
		-
		\frac{3\omega}{n^2b}\}$, $p_r = \frac{(1-\theta)^{l-1-r}}{\sum_{t=0}^{l-1}(1-\theta)^{l-1-t}}$ for $r = 0,1,\dots, l-1$, and assume that $b$ is picked such that  $\alpha
		-
		\frac{3\omega}{n^2b} > 0$ . We can apply previous inequality recursively for $ k = (s+1)l, (s+1)l-1,\dots, sl+1$, which implies
		\begin{align}
		&\E{\norm{x^{(s+1)l} - x^\star}^2_2 + b \gamma^2 \bar{H}^{s+1} + \frac{1-(1-\theta)^l}{\theta}(2\gamma - c) (f(z^{s+1})- f^\star)} \notag\\
		&\quad \leq	
		(1-\theta)^l\norm{z^s - x^\star}^2_2 
		+ 
		\frac{1-(1-\theta)^l}{\theta}c
		(f(z^s)- f^\star) 
		+
		b \gamma^2 \bar{H}^s
		\left(
		1
		-
		\theta
		\right)^l \label{conv_SVRG}\\
		&\quad \leq	
		\frac{2}{\mu}(1-\theta)^l(f(z^s)- f^\star) 
		+ 
		\frac{1-(1-\theta)^l}{\theta}c
		(f(z^s)- f^\star) 
		+
		b \gamma^2 \bar{H}^s
		\left(
		1
		-
		\theta
		\right)^l	. \notag		
	\end{align}
	Choosing $\bar{b} = \frac{b}{l(2\gamma-c)}$ we got
	\begin{align*}
		\E{\psi^{k+1}}
		 \leq	
		\frac{(1-\theta)^l}{1-(1-\theta)^l}\frac{2\theta + (1-(1-\theta)^l)c\mu}{\mu (2\gamma-c)}(f(z^s)- f^\star) 
		+ 
		\bar{b} \gamma^2 \bar{H}^s
		\left(
		1
		-
		\theta
		\right)^l
	\end{align*}
	which concludes the proof.
\end{proof}
\subsection{Convex case}

Let us look at the convergence under weak convexity assumption, thus $\mu = 0$.

\begin{theorem}
\label{thm:conv_SVRG-DIANA}
Let $p_r = 1/l$ for $r = 0,1,\dots, l-1$ and $\alpha \leq \frac{1}{\omega + 1}$.
Under Assumptions~\ref{assumption:VRdiana} and \ref{assumption:VRdianac}, output   $x^a \sim_{u.a.r.} \{x^0 ,x^1,\dots, x^{k-1}\}$ of Algorithm~\ref{alg:SVRG_DIANA} satisfies
\begin{equation}
\E{f(x^a) - f^\star} \leq \frac{\norm{x_0 - x^\star}_2^2 + lc(f(x_0)-f^\star)) + b \gamma^2H^0}{2k(\gamma-c)}, 
\end{equation} 
where $c = L\gamma^2\left(\frac{6\omega}{n}
		 + 
		 2 + \frac{1}{n}
		 +
		 4b\alpha n\right)$
		 and $k$ is number of iterations, which is multiple of $l$, the inner loop size.
\end{theorem}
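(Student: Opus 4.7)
The strategy is to reuse the one-step descent inequality \eqref{dasdmasndlka} from the proof of Theorem~\ref{thm:SVRG-DIANA}, specialize it to $\mu = 0$, and then average the resulting recursion rather than contract it geometrically as in the strongly convex case. First I would choose $b$ large enough that $1 - \alpha + \tfrac{3\omega}{n^2 b} \leq 1$ (any $b \geq 3\omega/(n^2 \alpha)$ suffices), so that the $H^k$ coefficient on the right of \eqref{dasdmasndlka} does not exceed $1$. Defining the Lyapunov function $V^k := \|x^k - x^\star\|_2^2 + b\gamma^2 H^k$ and substituting $\mu = 0$ into \eqref{dasdmasndlka} gives
\begin{equation*}
\E{V^{k+1}} \leq V^k - (2\gamma - c)\,(f(x^k) - f^\star) + c\,(f(z^{s(k)}) - f^\star),
\end{equation*}
where $s(k)$ denotes the epoch index active at iteration $k$ and $c$ is as in the statement. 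Taking total expectations, rearranging to isolate $(2\gamma-c)(f(x^k)-f^\star)$, telescoping from $k=0$ to $K-1$, and dropping $\E{V^K}\geq 0$ on the right yields
\begin{equation*}
(2\gamma - c)\sum_{k=0}^{K-1}\E{f(x^k) - f^\star} \leq V^0 + c\sum_{k=0}^{K-1}\E{f(z^{s(k)}) - f^\star}.
\end{equation*}

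The crucial structural step is converting the reference-point sum on the right into an iterate sum. With $p_r = 1/l$, each $z^s$ (for $s \geq 1$) is the plain average of the iterates of the previous epoch, so convexity of $f$ and Jensen's inequality give
\begin{equation*}
l\,(f(z^{s}) - f^\star) \leq \sum_{r=0}^{l-1}\bigl(f(x^{(s-1)l+r}) - f^\star\bigr).
\end{equation*}
Because each $z^{s(k)}$ is reused for exactly $l$ consecutive iterations, summing over all completed epochs and treating the first epoch separately (where $z^0 = x_0$ appears $l$ times) gives
\begin{equation*}
\sum_{k=0}^{K-1}\E{f(z^{s(k)}) - f^\star} \leq l\,(f(x_0) - f^\star) + \sum_{k=0}^{K-1}\E{f(x^k) - f^\star}.
\end{equation*}

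Substituting into the telescoped bound, moving the $c\sum\E{f(x^k)-f^\star}$ term to the left, and using $V^0 = \|x_0-x^\star\|_2^2 + b\gamma^2 H^0$ leaves $2(\gamma-c)\sum_{k=0}^{K-1}\E{f(x^k)-f^\star} \leq V^0 + cl(f(x_0)-f^\star)$. Dividing by $2K(\gamma-c)$ and identifying $\E{f(x^a) - f^\star}$ with the uniform average over $\{x^0,\dots,x^{K-1}\}$ yields the stated bound. I expect the main obstacle to be the epoch bookkeeping, i.e.\ precisely matching which $z^s$ is used at which iteration (and in particular the special handling of the first epoch where $z^0 = x_0$ contributes the additive $lc(f(x_0)-f^\star)$ term), together with verifying that the implicit stepsize condition $\gamma > c$ in the denominator is simultaneously compatible with $\alpha \leq 1/(\omega+1)$ and with the lower bound on $b$ required to control the $H^k$ coefficient.
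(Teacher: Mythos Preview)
Your proposal is correct and follows essentially the same route as the paper: both start from \eqref{dasdmasndlka} with $\mu=0$, require $b$ large enough that $\alpha-\tfrac{3\omega}{n^2b}\ge 0$, and exploit convexity (Jensen with $p_r=1/l$) to relate $l(f(z^{s})-f^\star)$ to the sum of $f(x^j)-f^\star$ over the previous epoch. The only organizational difference is that the paper absorbs the reference-point term into an epoch-level Lyapunov $P^s=\E{\|x^{sl}-x^\star\|_2^2+lc(f(z^s)-f^\star)+b\gamma^2\bar H^s}$ and telescopes $P^s-P^{s+1}$, whereas you telescope the per-iteration Lyapunov $V^k$ and do the epoch bookkeeping for $\sum_k f(z^{s(k)})$ explicitly at the end; both arrive at the identical bound.
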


\begin{corollary}
    Let $\gamma = \frac{1}{L\sqrt{m}\left(2+\frac{4}{n} + 18\frac{\omega}{n}\right)}$, $b = \frac{3\omega(\omega +1)}{ n^2}$, $l = m$ and $\alpha = \frac{1}{\omega + 1}$.
	To achieve precision $\E{f(x^a) - f^\star} \leq\varepsilon$ SVRG-DIANA needs
	$
	\cO\left(
	\frac{\left(1+\frac{\omega}{n}\right)\sqrt{m} + \frac{\omega}{\sqrt{m}}}{\epsilon}
	\right)
	$
	iterations.
\end{corollary}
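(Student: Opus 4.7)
The plan is to start from the one-step recurrence derived in the proof of Theorem~\ref{thm:SVRG-DIANA}, namely inequality \eqref{dasdmasndlka}, specialize it to $\mu = 0$, and choose $b$ large enough that the coefficient $1 - \alpha + \frac{3\omega}{n^2 b}$ in front of $H^k$ is at most $1$. This requires $b \geq \frac{3\omega}{n^2\alpha}$, which is satisfied by the corollary's choice $b = \frac{3\omega(\omega+1)}{n^2}$ in combination with $\alpha \leq \frac{1}{\omega+1}$. With this choice the one-step bound reduces to
\begin{align*}
\E{\norm{x^{k+1} - x^\star}_2^2 + b\gamma^2 H^{k+1}}
&\leq \norm{x^{k} - x^\star}_2^2 + b\gamma^2 H^{k} \\
&\quad + (c-2\gamma)(f(x^k) - f^\star) + c(f(z^{s(k)}) - f^\star),
\end{align*}
where $s(k)$ denotes the outer-loop epoch index active at iteration $k$.

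Next I telescope this bound over $k = 0, 1, \ldots, K-1$, where $K$ is a positive integer multiple of $l$. The distance and $H$-terms telescope, the nonnegative left-hand side at $k=K$ can be discarded, and rearranging yields
\begin{equation*}
(2\gamma - c)\sum_{k=0}^{K-1}(f(x^k) - f^\star) \leq \norm{x^0 - x^\star}_2^2 + b\gamma^2 H^0 + c\sum_{k=0}^{K-1}(f(z^{s(k)}) - f^\star).
\end{equation*}
The key step is to convert the $z^s$-sum on the right back into a sum over iterates $x^k$. The choice $p_r = 1/l$ makes $z^s$ the uniform average of the iterates $x^{(s-1)l},\ldots,x^{sl-1}$ for $s \geq 1$, so by Jensen's inequality and convexity of $f$ one has $f(z^s) - f^\star \leq \tfrac{1}{l}\sum_{r=0}^{l-1}(f(x^{(s-1)l+r}) - f^\star)$. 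Since each $z^s$ is used during exactly $l$ consecutive iterations, and $z^0 = x^0$ accounts for the first epoch that has no predecessor average, the following inequality is obtained
\begin{equation*}
\sum_{k=0}^{K-1}(f(z^{s(k)}) - f^\star) \;\leq\; l(f(x^0) - f^\star) + \sum_{k=0}^{K-1}(f(x^k) - f^\star).
\end{equation*}

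Substituting this back and moving $c \sum_{k=0}^{K-1}(f(x^k)-f^\star)$ to the left produces
\begin{equation*}
2(\gamma - c)\sum_{k=0}^{K-1}(f(x^k) - f^\star) \;\leq\; \norm{x^0 - x^\star}_2^2 + b\gamma^2 H^0 + lc(f(x^0) - f^\star).
\end{equation*}
Taking full expectation, dividing by $2K(\gamma - c)$ (which is positive by the assumed parameter regime), and using the fact that $x^a$ is drawn uniformly from $\{x^0,\ldots,x^{K-1}\}$, so that $\E{f(x^a) - f^\star} = \tfrac{1}{K}\sum_{k=0}^{K-1}\E{f(x^k) - f^\star}$, yields the claimed bound. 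The main obstacle is purely bookkeeping: one must correctly account for the shift by an epoch length $l$ induced by the reference point (which is the source of the additive $lc(f(x_0) - f^\star)$ term) and verify that after absorbing the $z^s$-sum the effective prefactor becomes $2(\gamma - c)$ rather than $(2\gamma - c)$, since the $z^s$-terms contribute an extra copy of $c$ that must be paid for on the left-hand side.
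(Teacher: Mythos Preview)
Your proposal is correct and follows essentially the same route as the paper's proof of Theorem~\ref{thm:conv_SVRG-DIANA}: both start from \eqref{dasdmasndlka} with $\mu=0$, use $b \geq 3\omega/(n^2\alpha)$ to cap the $H^k$-coefficient at $1$, telescope, and invoke convexity together with $p_r=1/l$ to convert the $f(z^s)$-terms back into sums over iterates, arriving at the prefactor $2(\gamma-c)$ and the numerator $\|x_0-x^\star\|_2^2 + lc(f(x_0)-f^\star) + b\gamma^2 H^0$. The only cosmetic difference is that the paper organizes the telescoping epoch-by-epoch via the potential $P^s = \E{\|x^{sl}-x^\star\|_2^2 + lc(f(z^s)-f^\star) + b\gamma^2 \bar H^s}$ while you telescope all iterations at once and then handle the one-epoch shift; substituting the stated parameter choices then yields the claimed $\cO$ complexity.
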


\begin{proof}[Proof of \cref{thm:conv_SVRG-DIANA}]
Using \eqref{dasdmasndlka} assuming that $b$ is picked such that  $\alpha
		-
		\frac{3\omega}{n^2b} \geq 0$, we obtain
\begin{align}
&\E{\norm{x^{k+1} - x^\star}^2_2 + b \gamma^2 H^{k+1}}  \notag\\
		&\quad \leq	
		\norm{x^{k} - x^\star}^2_2 
		+ 
		\left(
		c
		 - 
		 2\gamma
		\right)
		(f(x^k)- f^\star) 
		+
		c
		(f(z^s)- f^\star) 
		 +
		b \gamma^2 H^k		
\end{align}
Taking $P^s = \E{\norm{x^{sl} - x^\star}^2_2 
		+ 
		lc
		(f(z^s)- f^\star)
		+
		b \gamma^2 \bar{H}^{s}}$, full expectation, and using 
		\begin{align*}
		\E{l(f(z^{s+1})- f^\star)} = \sum_{j = sl}^{(s+1)l-1}(f(x^j)- f^\star),
		\end{align*}
		  we get
\begin{align*}
 (2\gamma - 2c)\sum_{j = sl}^{(s+1)l-1}(f(x^j)- f^\star) \leq P^s - P^{s+1},
\end{align*}
which can be summed over all epochs and one obtains
\begin{align*}
\E{(f(x^a)- f^\star)} \leq \frac{P^0}{2k(\gamma - c)},
\end{align*}
which concludes the proof.
\end{proof}

\subsection{Non-convex case}

\begin{theorem}\label{thm:SVRG-DIANA-non-convex}
    Consider Algorithm~\ref{alg:SVRG_DIANA} with $\omega$-quantization $Q$,
	and stepsize $\alpha \leq \frac{1}{\omega + 1}$.
We consider the following Lyapunov function
\begin{equation*}
	R^k =f(x^k) + c^k Z^k + d^k F^k,
\end{equation*}
where
\begin{equation*}
	Z^k = \norm{x^k - z^s}_2^2\,
\end{equation*}
and
\begin{equation*}
	F^k =
	 \frac{1}{n}\sum\limits_{i=1}^{n} 
		\norm{\nabla f_{i}(x^k) - h_i^k}_2^2\,,
\end{equation*}
and 
\begin{equation*}
	c^k =
	 c^{k+1}\left( 1 + \gamma p + \frac{\omega+1}{n} L^2 \gamma^2 \right) + d^{k+1} \left(\alpha L^2 +  \left(1+\frac{2}{\alpha}\right)\frac{\omega+1}{n} L^4 \gamma^2\right)+  \frac{\omega+1}{n} \frac{\gamma^2L^3}{2} \,,
\end{equation*}
and
\begin{equation*}
	d^k =
	   d^{k+1}\left( 1 - \frac{\alpha}{2} +  \left(1+\frac{2}{\alpha}\right)\frac{\omega}{n}L^2 \gamma^2\right) + c^{k+1}\frac{\omega}{n} \gamma^2 + \frac{\omega}{n} \frac{\gamma^2L}{2}\,.
\end{equation*}
Then	
under Assumption~\ref{assumption:VRdiana}
	\begin{equation*}
		\E{R^{k+1}} \leq
		R^k - \Gamma^k \norm{\nabla f(x^k)}^2_2\, ,
	\end{equation*}
	where 
	\begin{equation*}
		 \Gamma^k = 
		\gamma - \frac{\gamma^2L}{2}  - c^{k+1} \left(\gamma^2 + \frac{\gamma}{p}\right) - d^{k+1} \left(1+\frac{2}{\alpha}\right) L^2 \gamma^2\, .
	\end{equation*}
Taking $x^a \sim_{u.a.r.} \{x^0,\dots, x^{l-1}\}$ of Algorithm~\ref{alg:SVRG_DIANA} one obtains
	\begin{equation}
	\label{up:delta_SVRG}
		\E{\norm{\nabla f(x^a)}^2_2} \leq
		\frac{R^0 - R^l}{k\Delta},
	\end{equation}
where $\Delta = \min_{t \in [k]} \Gamma^t > 0$ .
\end{theorem}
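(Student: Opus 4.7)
The plan is to mirror the proof of Theorem~\ref{thm:VR-DIANA-non-convex} for VR-DIANA, but with the finite-sum/SAGA table quantity $W^k$ replaced by the scalar-reference quantity $Z^k = \norm{x^k - z^s}_2^2$. The structure of the recursion for $c^k, d^k$ changes only mildly: the $-\frac{1}{m}$ term in the VR-DIANA recursion for $c^k$ disappears, since $Z^k$ has no built-in averaging decay between reference-point refreshes (those refreshes happen every $l$ outer steps and are not reflected in the single-step recursion).

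First I would record the three per-iteration lemmas. (i) From $L$-smoothness and unbiasedness of $g^k$:
\begin{equation*}
\E{f(x^{k+1})} \leq f(x^k) - \gamma \norm{\nabla f(x^k)}_2^2 + \tfrac{L\gamma^2}{2}\E{\norm{g^k}_2^2}.
\end{equation*}
(ii) A variant of Lemma~\ref{lem:up_g_VR_nc} but using the SVRG control variate: decompose $\EE{Q}{\norm{g^k}^2}$ into $\norm{\EE{Q}{g^k}}^2 + \EE{Q}{\norm{g^k-\EE{Q}{g^k}}^2}$, bound the quantization noise by $\frac{\omega}{n^2}\sum_i \norm{g_i^k - h_i^k}^2$ using \eqref{def:omega} and independence across workers, then use the SVRG variance identity $\E{\norm{g_i^k - \nabla f_i(x^k)}^2} \le \E{\norm{\nabla f_{ij_i^k}(x^k)-\nabla f_{ij_i^k}(z^s)}^2} \le L^2 Z^k$ (by $L$-smoothness of each $f_{ij}$). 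This yields
\begin{equation*}
\E{\norm{g^k}_2^2} \leq \norm{\nabla f(x^k)}_2^2 + \tfrac{\omega+1}{n} L^2 Z^k + \tfrac{\omega}{n} F^k.
\end{equation*}
(iii) For $Z^{k+1}$, expand $\norm{x^{k+1}-z^s}^2 = \norm{x^{k+1}-x^k}^2 + \norm{x^k-z^s}^2 + 2\lin{x^{k+1}-x^k}{x^k-z^s}$, take expectations so that the cross term becomes $-2\gamma\lin{\nabla f(x^k)}{x^k-z^s}$, and apply Young's inequality with parameter $p$:
\begin{equation*}
\E{Z^{k+1}} \leq \E{\norm{x^{k+1}-x^k}_2^2} + (1+\gamma p)Z^k + \tfrac{\gamma}{p}\norm{\nabla f(x^k)}_2^2.
\end{equation*}
(iv) For $F^{k+1}$, follow Lemma~\ref{lem:up_F_VR_nc} verbatim, expanding $\nabla f_i(x^{k+1}) - h_i^{k+1} = (\nabla f_i(x^{k+1})-\nabla f_i(x^k)) + (1-\alpha)(\nabla f_i(x^k)-h_i^k) - \alpha(Q(g_i^k-h_i^k)-(g_i^k-h_i^k))$; the quantization moment bound with $\alpha(\omega+1)\le 1$ and a $\tau=\alpha/2$ Young's split gives
\begin{equation*}
\E{F^{k+1}} \leq \bigl(1+\tfrac{2}{\alpha}\bigr) L^2 \E{\norm{x^{k+1}-x^k}_2^2} + (1-\tfrac{\alpha}{2}) F^k + \alpha L^2 Z^k,
\end{equation*}
where the bound on $\E{\norm{g_i^k - h_i^k}_2^2}$ uses the same SVRG-variance identity as above and contributes the $\alpha L^2 Z^k$ and $F^k$ terms.

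Next I would assemble $\E{R^{k+1}} = \E{f(x^{k+1})} + c^{k+1}\E{Z^{k+1}} + d^{k+1}\E{F^{k+1}}$, substitute (i)--(iv), and use $\E{\norm{x^{k+1}-x^k}_2^2} = \gamma^2 \E{\norm{g^k}_2^2}$ bounded via (ii). Collecting terms yields
\begin{equation*}
\E{R^{k+1}} \leq f(x^k) - \Gamma^k \norm{\nabla f(x^k)}_2^2 + A\, Z^k + B\, F^k,
\end{equation*}
with $\Gamma^k = \gamma - \tfrac{\gamma^2 L}{2} - c^{k+1}(\gamma^2 + \tfrac{\gamma}{p}) - d^{k+1}(1+\tfrac{2}{\alpha})L^2\gamma^2$, and where the coefficients $A, B$ match precisely the definitions of $c^k, d^k$ given in the theorem statement. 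Thus $\E{R^{k+1}} \leq R^k - \Gamma^k \norm{\nabla f(x^k)}_2^2$, as claimed.

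Finally I take full expectation, telescope over one epoch $k=0,\dots,l-1$, and bound the resulting sum from below by $k\Delta\, \min_{t} \norm{\nabla f(x^t)}_2^2$; picking $x^a$ uniformly from $\{x^0,\dots,x^{l-1}\}$ converts the minimum into an expectation and gives \eqref{up:delta_SVRG}. The main obstacle will be the bookkeeping in the coefficient-matching step: one must carefully track which terms in the bound on $\E{\norm{g^k}_2^2}$ go into the $\norm{\nabla f(x^k)}^2$ slot versus into the $Z^k$ and $F^k$ slots, and check that the factor $\frac{\omega+1}{n}L^2\gamma^2$ from the $g^k$ bound lands in the $c^k$ recursion while the $\frac{\omega}{n}\gamma^2$ factor lands in the $d^k$ recursion. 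Strict positivity of $\Gamma^k$ (needed for the final inequality) is exactly the condition the reader must verify from the chosen step-size $\gamma$, analogously to the end of the proof of Theorem~\ref{thm:VR-DIANAnc}.
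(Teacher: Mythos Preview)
Your proposal is correct and follows essentially the same approach as the paper's proof: the paper establishes the identical four per-iteration lemmas (for $f(x^{k+1})$, $Z^{k+1}$, $F^{k+1}$, and $\E{\norm{g^k}_2^2}$) and assembles them into $R^{k+1}$ with exactly the coefficient-matching you describe. One small slip: the decomposition you write for $\nabla f_i(x^{k+1}) - h_i^{k+1}$ is off (it should be $(\nabla f_i(x^{k+1})-\nabla f_i(x^k)) + (\nabla f_i(x^k)-h_i^k) - \alpha Q(g_i^k-h_i^k)$, without the $(1-\alpha)$ factor), but since you explicitly invoke Lemma~\ref{lem:up_F_VR_nc} verbatim and state the correct resulting bound, this does not affect the argument.
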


\begin{theorem}\label{thm:SVRG-DIANAnc}
Let Assumption~\ref{assumption:VRdiana} hold. Moreover, let   $\gamma = \frac{1}{10L\left( 1 + \frac{\omega}{n} \right)^{1/2} (m^{2/3} + \omega + 1)}$, $l = m$, $p_{l-1} = 1$, $p_r = 0$ for $r = 0,1,\hdots, l-2$, and $\alpha = \frac{1}{\omega+1}$, then  a randomly chosen iterate $x^a \sim_{u.a.r.} \{x^0,x^1,\dots, x^{k-1}\}$ of Algorithm~\ref{alg:SVRG_DIANA} satisfies
\begin{align*}
\E{\norm{\nabla f(x^a)}_2^2} \leq 
\frac{40(f(x^0) - f^\star)L\left(1 +  \frac{\omega}{n} \right)^{1/2}(m^{2/3} + \omega + 1)}{k}, 
\end{align*}
where $k$ denotes the number of iterations, which is multiple of $m$.
\end{theorem}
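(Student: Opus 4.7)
The plan is to mirror the proof of Theorem~\ref{thm:VR-DIANAnc} (the non-convex VR-DIANA rate) while using Theorem~\ref{thm:SVRG-DIANA-non-convex} as the per-step descent inequality. Starting from $\E{R^{k+1}} \leq R^k - \Gamma^k\|\nabla f(x^k)\|_2^2$, the task reduces to (i) selecting the backward sequences $(c^k,d^k)$ so that $\Gamma^k \geq \Delta := \tfrac{1}{40L(1+\omega/n)^{1/2}(m^{2/3}+\omega+1)}$ uniformly, (ii) bounding $R^0$ by a constant multiple of $f(x^0)-f^\star$ via the initialization $z^0 = x^0$ (so $Z^0 = 0$) and $h_i^0 = \nabla f_i(x^0)$ (so $F^0 = 0$), and (iii) handling the epoch boundaries so that summing over $k/m$ epochs telescopes cleanly. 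Averaging over a uniformly random iterate then delivers the advertised rate.

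For step (i), the recursion for $(c^k,d^k)$ can be written in matrix form $y^k = Ay^{k+1}+b$, and the matrix $A$ here differs from its VR-DIANA counterpart (inside the proof of Theorem~\ref{thm:VR-DIANAnc}) only in its $(1,1)$ entry, where the contraction factor $(1-1/m)$ is replaced by $1$; this is because the reference point $z^s$ is frozen inside an epoch rather than refreshed in expectation at every step. The loss of this contraction rules out a single global fixed point, so I would impose $c^l = d^l = 0$ at the end of each epoch ($l = m$) and solve backward within it. The key estimate is that the compound growth $(1+\gamma p+\tfrac{\omega+1}{n}L^2\gamma^2)^l$ over one epoch remains $O(1)$: taking $p = L(1+\omega/n)^{1/2}(m^{2/3}+\omega+1)^{-1/2}$ (the same choice as in the L-SVRG analysis), one computes $l\gamma p = \tfrac{m}{10(m^{2/3}+\omega+1)^{3/2}} = O(1)$ whenever $\omega \leq m^{2/3}$. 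The monotone-envelope matrix $\hat A$ and the sequence argument from the proof of Theorem~\ref{thm:VR-DIANAnc} then transfer verbatim and give $c^k \leq \tfrac{L}{2(m^{2/3}+\omega+1)^{1/2}}$ and $d^k \leq \tfrac{1}{2L(m^{2/3}+\omega+1)^{1/2}}$; plugging these into the expression for $\Gamma^k$ from Theorem~\ref{thm:SVRG-DIANA-non-convex} reproduces $\Gamma^k \geq \Delta$ by identical arithmetic.

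The hardest step is (iii). Telescoping the descent within epoch $s$ gives
\[
\sum_{t=sl}^{(s+1)l-1}\Gamma^t\|\nabla f(x^t)\|_2^2 \leq f(x^{sl}) + c^{sl}Z^{sl} + d^{sl}F^{sl} - \E{f(x^{(s+1)l})},
\]
since $c^{(s+1)l} = d^{(s+1)l} = 0$. For $s=0$ the initialization makes the boundary $R^0 = f(x^0)$. For $s\ge 1$, the choice $p_{l-1} = 1$ forces $z^s = x^{sl-1}$ and hence $Z^{sl} = \gamma^2\|g^{sl-1}\|_2^2$ is generally nonzero, so the correction $c^{sl}Z^{sl}+d^{sl}F^{sl}$ must be absorbed into the preceding epoch's gradient-norm summation. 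This is done using the SVRG-DIANA analogue of Lemma~\eqref{lem:up_g_VR_nc}, which yields $\E{\|g^k\|_2^2} \leq 2\|\nabla f(x^k)\|_2^2 + O\!\left(\tfrac{\omega+1}{n}L^2 Z^k+\tfrac{\omega}{n}F^k\right)$. Since $c^{sl}\gamma^2 L^2 = O((m^{2/3}+\omega+1)^{-3/2})$ and $d^{sl} = O((m^{2/3}+\omega+1)^{-1/2}L^{-1})$, these prefactors are far smaller than the per-iteration descent $\Delta$, so the resulting $\|\nabla f(x^{sl-1})\|_2^2$ contribution can be absorbed into half of $\Gamma^{sl-1}\|\nabla f(x^{sl-1})\|_2^2$ from the previous epoch, while the $Z$ and $F$ tails are dominated by the bounds from step (i). Summing the resulting epoch inequalities telescopes $\sum_s [f(x^{sl}) - \E{f(x^{(s+1)l})}]$ down to $f(x^0) - f^\star$, and dividing by $k\Delta$ with $x^a\sim_{\rm u.a.r.}\{x^0,\dots,x^{k-1}\}$ produces the claimed bound $\E{\|\nabla f(x^a)\|_2^2} \leq 40(f(x^0)-f^\star)L(1+\omega/n)^{1/2}(m^{2/3}+\omega+1)/k$.
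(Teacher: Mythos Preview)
Your steps (i) and (ii) track the paper's proof exactly: the matrix recursion $y^k = Ay^{k+1}+b$ with the $(1,1)$ entry $1+\gamma p + \tfrac{\omega+1}{n}L^2\gamma^2$ (in place of the VR-DIANA entry $1-\tfrac{1}{m}+\dots$), the choice $l=m$ and $p = L(1+\omega/n)^{1/2}(m^{2/3}+\omega+1)^{-1/2}$, and the bound $c^k\leq \tfrac{L}{2(m^{2/3}+\omega+1)^{1/2}}$, $d^k\leq \tfrac{1}{2L(m^{2/3}+\omega+1)^{1/2}}$ via $(1+\tfrac{11}{100m})^m\leq e$ are precisely what the paper does.

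The issue is your step (iii). You correctly flag that resetting $c^{(s+1)l}=d^{(s+1)l}=0$ leaves a boundary surplus $c^{sl}Z^{sl}+d^{sl}F^{sl}$, but your absorption argument does not close. Applying the $\|g^k\|^2$ bound handles $c^{sl}Z^{sl}=c^{sl}\gamma^2\|g^{sl-1}\|^2$ fine, and even the $\|\nabla f(x^{sl-1})\|^2$ part of $d^{sl}F^{sl}$ (coming through $(1+\tfrac{2}{\alpha})L^2\gamma^2\|g^{sl-1}\|^2$ in the $F$-recursion) is $O(\Delta)$. But the $F$-recursion also leaves the residual $d^{sl}(1-\tfrac{\alpha}{2})F^{sl-1}$, and here $d^{sl}(1-\tfrac{\alpha}{2})$ is the \emph{same order} as $d^{sl-1}$ from the previous epoch; it is not a small perturbation and cannot be ``dominated by the bounds from step (i).'' Indeed $d^{sl}/\Delta = \Theta\!\big((1+\omega/n)^{1/2}(m^{2/3}+\omega+1)^{1/2}\big)\gg 1$, so you cannot treat $d^{sl}$ as small against $\Delta$ unless it is hitting a quantity already controlled to be $O(\|\nabla f\|^2)$---which $F^{sl-1}$ is not.

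The cleaner route---and the one the paper's phrase ``using the fact that $p_{l-1}=1$'' is pointing at---is to observe that with $z^s=x^{sl-1}$ one has $Z^{sl}\big|_{z^s}=\|x^{sl}-x^{sl-1}\|_2^2$, which is exactly the $\E{\|x^{k+1}-x^k\|_2^2}$ term appearing on the right of the $Z$-recursion lemma. Hence the lemma (and therefore the one-step Lyapunov descent of Theorem~\ref{thm:SVRG-DIANA-non-convex}) continues to hold \emph{across} the epoch boundary, with the only change that at step $k=sl-1$ the backward $c$-recursion loses its $(1+\gamma p)$ growth factor. This lets you run a single global $(c^k,d^k)$ sequence in which $c^k$ is automatically nearly reset at each boundary while $d^k$ stays contractive, so the descent telescopes over all $k$ with no separate boundary correction. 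This closes the argument without your absorption step.
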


\begin{corollary}
	To achieve precision $\E{\norm{\nabla f(x^a)}_2^2} \leq\varepsilon$ SVRG-DIANA needs
	$
	\cO\left(
	\left(1 + \frac{\omega}{n} \right)^{1/2} \frac{ m^{2/3} + \omega}{\varepsilon}
	\right)
	$
	iterations.
\end{corollary}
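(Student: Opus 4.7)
The plan is to invoke Theorem~\ref{thm:SVRG-DIANA-non-convex}, which already isolates the Lyapunov descent of SVRG-DIANA into a clean one-step inequality $\E{R^{k+1}} \leq R^k - \Gamma^k \norm{\nabla f(x^k)}_2^2$ with $\Gamma^k$ determined by two backward sequences $(c^k, d^k)$, and then to bound those sequences under the specific parameter choice. The overall argument runs in close parallel to the proof of Theorem~\ref{thm:VR-DIANAnc}: the step size $\gamma$, the Young-inequality parameter $p$, and the final lower bound on $\Gamma^k$ are dimensionally identical, so the only new work is the analysis of the backward recursion over one inner epoch of length $l = m$.

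First I would substitute $\gamma = \tfrac{1}{10L(1+\omega/n)^{1/2}(m^{2/3}+\omega+1)}$, $\alpha = \tfrac{1}{\omega+1}$, and the Young parameter $p = \tfrac{L(1+\omega/n)^{1/2}}{(m^{2/3}+\omega+1)^{1/2}}$ into Theorem~\ref{thm:SVRG-DIANA-non-convex}. Next I would rewrite the backward recursion for $(c^k, d^k)$ as $y^{k} = A\, y^{k+1} + b$ on one epoch, with the boundary condition $y^{m} = 0$ coming from the fact that $Z^{sm} = 0$ when we reindex at the start of an epoch (since $p_{l-1}=1$ causes $z^{s+1} = x^{(s+1)m-1}$ and we initialize cleanly with $z^0 = x^0$). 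I would then bound the entries of $A$ in terms of $m$ and $\omega$, so that $A$ is dominated by a $2\times 2$ matrix whose largest eigenvalue is at most $1 + \tfrac{C}{m}$; iterating $m$ times thus contributes only a constant blow-up $e^{C}$. Combining this with $\|b\|$ of order $(1+\omega/n)\gamma^2 L \cdot (L^2, 1)^\top$ would give, by the same sequence-lemma style argument used in the proof of Theorem~\ref{thm:VR-DIANAnc},
\[
c^k \leq \frac{L}{2(m^{2/3}+\omega+1)^{1/2}}, \qquad d^k \leq \frac{1}{2L(m^{2/3}+\omega+1)^{1/2}}.
\]

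With these uniform bounds in hand, Step~3 is a direct arithmetic verification that $\Gamma^k \geq \tfrac{1}{40L(1+\omega/n)^{1/2}(m^{2/3}+\omega+1)}$, identical in form to the calculation at the end of the proof of Theorem~\ref{thm:VR-DIANAnc}. Finally I would telescope Theorem~\ref{thm:SVRG-DIANA-non-convex} across $k$ iterations (a multiple of $m$), noting that with $z^0 = x^0$ and $h_i^0 = \nabla f_i(x^0)$ we have $Z^0 = F^0 = 0$ and hence $R^0 = f(x^0)$, while $R^k \geq f^\star$. Dividing $R^0 - f^\star$ by $k \cdot \Delta$ with $\Delta \geq \tfrac{1}{40L(1+\omega/n)^{1/2}(m^{2/3}+\omega+1)}$ produces the stated bound.

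The main obstacle is Step~2. In the VR-DIANA recursion the $c$-component enjoys a contractive factor $1 - \tfrac{1}{m}$ (from the $1/m$-probability update of $w_{ij}^k$), which is absent here because $z^s$ is constant inside an epoch; my top-left entry of $A$ reads $1 + \gamma p + \tfrac{\omega+1}{n}L^2\gamma^2$ rather than $1 - \tfrac{1}{m} + \gamma p + \ldots$ One must therefore replace the contraction-based bound by the observation that the epoch length is exactly $m$, so $(1 + \gamma p)^m$ is $O(1)$ for the chosen $p$, and that the cross-term $d^{k+1}\bigl(\alpha L^2 + (1+\tfrac{2}{\alpha})\tfrac{\omega+1}{n}L^4\gamma^2\bigr)$ accumulating over $m$ steps is still controlled. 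A secondary subtlety is to carefully reconcile the per-epoch boundary conditions $c^{m} = d^{m} = 0$ with the global telescoping, using the fact that $R^{sm}$ at the end of an epoch reduces to $f(x^{sm})$ (since the coefficients vanish there), so the bookkeeping across epochs is consistent.
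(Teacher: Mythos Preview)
Your proposal is correct and follows essentially the same route as the paper. Strictly speaking, the corollary is an immediate consequence of Theorem~\ref{thm:SVRG-DIANAnc} (solve the bound there for $k$), and what you have outlined is really a proof of that theorem; the paper does exactly this: it writes the backward recursion $y^t = A y^{t+1} + b$ with $c^l = d^l = 0$, substitutes the same $\gamma$, $p = L(1+\omega/n)^{1/2}/(m^{2/3}+\omega+1)^{1/2}$, $\alpha = 1/(\omega+1)$, $l=m$, bounds $A$ entrywise by a matrix with top-left entry $1 + \tfrac{11}{100m}$ (your non-contractive observation), invokes the sequence Lemma~\ref{lem:sequence} using that one eigenvalue is at most $1+1/m$ so $(1+1/m)^m \le e$ over the epoch, obtains $c^k \le \tfrac{L}{2(m^{2/3}+\omega+1)^{1/2}}$ and $d^k \le \tfrac{1}{2L(m^{2/3}+\omega+1)^{1/2}}$, and then checks $\Gamma^k \ge \tfrac{1}{40L(1+\omega/n)^{1/2}(m^{2/3}+\omega+1)}$. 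Two minor differences worth noting: (i) the paper does not assume $h_i^0 = \nabla f_i(x^0)$; it simply uses $R^0 - R^k$ and the fact that $c^l=d^l=0$ makes the Lyapunov function collapse to $f$ at epoch endpoints, so your extra initialization hypothesis is unnecessary; (ii) your claim that $Z^{sm}=0$ at later epoch starts is not quite right since $z^{s+1}=x^{(s+1)m-1}$, but this is irrelevant because the epoch-boundary bookkeeping is handled through the vanishing coefficients $c^l,d^l$, not through $Z$ itself.
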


\begin{lemma}  We can upper bound $Z^{k+1}$ in the following way 
	\begin{equation}
		\E{Z^{k+1}}
		\leq
		\E{\norm{x^{k+1} - x^k}_2^2} + \left(1 + \gamma p\right)Z^k +\frac{\gamma}{p}\norm{\nabla f(x^k)}_2^2.
		\label{lem:up_W_VR_nc_SVRG}
	\end{equation}
	
\end{lemma}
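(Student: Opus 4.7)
The plan is to mimic the proof of the analogous bound for the VR-DIANA quantity $W^k$ (equation \eqref{lem:up_W_VR_nc}), but exploit the simpler fact that in SVRG-DIANA the reference point $z^s$ is \emph{frozen} throughout the inner loop, i.e.\ $z^{s}$ does not change from iteration $k$ to iteration $k+1$ (unless $k+1$ is a synchronization step, which is handled separately when invoking the bound). Under this freezing, I only need to control how $x^{k+1}$ drifts away from the fixed vector $z^s$.

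First I would write the algebraic identity
\begin{align*}
\norm{x^{k+1}-z^s}_2^2
 &= \norm{(x^{k+1}-x^k)+(x^k-z^s)}_2^2\\
 &= \norm{x^{k+1}-x^k}_2^2 + 2\dotprod{x^{k+1}-x^k}{x^k-z^s} + \norm{x^k-z^s}_2^2.
\end{align*}
Taking conditional expectation and using $\E{x^{k+1}-x^k}=-\gamma\E{g^k}=-\gamma\nabla f(x^k)$ (the unbiasedness of $g^k$ already established for Algorithm~\ref{alg:SVRG_DIANA}) gives
\begin{equation*}
\E{Z^{k+1}} = \E{\norm{x^{k+1}-x^k}_2^2} - 2\gamma\dotprod{\nabla f(x^k)}{x^k-z^s} + Z^k.
\end{equation*}

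Next I would bound the cross term by Young's inequality with parameter $p>0$:
\begin{equation*}
-2\gamma\dotprod{\nabla f(x^k)}{x^k-z^s} \leq \frac{\gamma}{p}\norm{\nabla f(x^k)}_2^2 + \gamma p\,\norm{x^k-z^s}_2^2.
\end{equation*}
Combining these two displays yields exactly the claimed inequality.

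There is no real obstacle; the only subtlety is noting that the recursion only needs to hold \emph{within} an epoch (since $z^s$ is constant there), which is precisely how the bound is consumed by Theorem~\ref{thm:SVRG-DIANA-non-convex}. The argument is essentially the same as for $W^k$ in VR-DIANA, but strictly simpler because there is no randomized update of the reference point to average over—hence no $(1-1/m)$ contraction factor appears, and the coefficient in front of $Z^k$ is merely $(1+\gamma p)$.
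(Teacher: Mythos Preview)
Your proposal is correct and follows essentially the same argument as the paper: expand $\norm{x^{k+1}-z^s}_2^2$ by inserting $x^k$, take conditional expectation so that the cross term becomes $-2\gamma\dotprod{\nabla f(x^k)}{x^k-z^s}$ via unbiasedness of $g^k$, and then bound this cross term with Young's inequality with parameter $p$. Your remark that the reference point $z^s$ is frozen within an epoch (hence no $(1-1/m)$ factor as in the $W^k$ bound) is exactly the simplification the paper exploits.
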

\begin{proof}
	\begin{align*}
		\E{Z^{k+1}}
		&=
		\E{\norm{x^{k+1} - z^s}_2^2} = \E{\norm{x^{k+1} - x^k + x^k - z^s}_2^2}\\
		&=
		 \E{\norm{x^{k+1} - x^k}_2^2} + \norm{x^{k} - z^s}_2^2
		 + 2\dotprod{\E{x^{k+1} -x^k}}{x^k - w_{ij}^{k}}
		\\
		&\leq
		\E{\norm{x^{k+1} - x^k}_2^2} + \norm{x^k - z^s}_2^2 + 2\gamma\left(\frac{1}{2p}\norm{\nabla f(x^k)}_2^2 + \frac{p}{2}\norm{x^k - z^s}_2^2\right)
		\\
		&=
		\E{\norm{x^{k+1} - x^k}_2^2} + \left(1 + \gamma p\right)Z^k +\frac{\gamma}{p}\norm{\nabla f(x^k)}_2^2,
	\end{align*}
	where the inequality uses Cauchy-Schwarz and Young inequalities with $p>0$.
\end{proof}

\begin{lemma} We can upper bound quantity 
$\frac{1}{n}
\sum\limits_{i=1}^{n}
\E{\norm{g_i^k - h_i^k}_2^2}$
in the following way
	\begin{equation}
		\frac{1}{n}
		\sum\limits_{i=1}^{n}
		\E{\norm{g_i^k - h_i^k}_2^2}
		\leq 
		F^k + L^2 Z^k.
		\label{lem:up_g-h_VR_nc_SVRG}
	\end{equation}
\end{lemma}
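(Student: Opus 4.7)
My plan is to mirror the argument used for the analogous L-SVRG/SAGA lemma (Lemma~\ref{lem:up_g-h_VR_nc}), with the only substantive change being that the control variate is built from the reference point $z^s$ rather than from a per-component table $w_{ij}^k$, which ultimately replaces $W^k$ by $Z^k$.

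First, I would apply the bias-variance decomposition~\eqref{eq:variance} to each worker term, writing
\begin{equation*}
\E{\norm{g_i^k - h_i^k}_2^2} = \norm{\E{g_i^k - h_i^k}}_2^2 + \E{\norm{g_i^k - \E{g_i^k}}_2^2}.
\end{equation*}
Because $g_i^k$ is an unbiased estimator of $\nabla f_i(x^k)$ (as already established for SVRG-DIANA earlier in this appendix), the first term equals $\norm{\nabla f_i(x^k) - h_i^k}_2^2$, whose average over $i$ is exactly $F^k$.

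Next, for the variance term, I would substitute the SVRG form $g_i^k = \nabla f_{ij_i^k}(x^k) - \nabla f_{ij_i^k}(z^s) + \nabla f_i(z^s)$ and note that $\nabla f_i(z^s)$ is deterministic given the conditioning, so the zero-mean part is $\nabla f_{ij_i^k}(x^k) - \nabla f_{ij_i^k}(z^s) - \E{\nabla f_{ij_i^k}(x^k) - \nabla f_{ij_i^k}(z^s)}$. Using $\E{\norm{X-\E{X}}_2^2} \leq \E{\norm{X}_2^2}$ (which is~\eqref{eq:variance} applied with $y=0$ in the opposite direction), this is at most $\E{\norm{\nabla f_{ij_i^k}(x^k) - \nabla f_{ij_i^k}(z^s)}_2^2} = \frac{1}{m}\sum_{j=1}^m \norm{\nabla f_{ij}(x^k) - \nabla f_{ij}(z^s)}_2^2$ since $j_i^k$ is drawn uniformly from $[m]$.

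Finally, $L$-smoothness of each $f_{ij}$ gives $\norm{\nabla f_{ij}(x^k) - \nabla f_{ij}(z^s)}_2^2 \leq L^2 \norm{x^k - z^s}_2^2 = L^2 Z^k$, and summing over $j$ and averaging over $i$ preserves this bound. Adding the two pieces yields $\frac{1}{n}\sum_{i=1}^n \E{\norm{g_i^k - h_i^k}_2^2} \leq F^k + L^2 Z^k$, as claimed. I do not anticipate any real obstacle here: the only subtlety is recognizing that $Z^k$ plays exactly the role that $W^k$ played in the L-SVRG/SAGA analysis, so that the same chain of bias-variance plus smoothness goes through verbatim with $z^s$ replacing the per-component $w_{ij}^k$.
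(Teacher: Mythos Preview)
Your proposal is correct and follows essentially the same argument as the paper: bias--variance decomposition~\eqref{eq:variance} to split into $F^k$ plus the variance of $g_i^k$, then drop the mean in the variance term, expand the uniform sampling over $j_i^k$, and apply $L$-smoothness of each $f_{ij}$ to obtain $L^2 Z^k$. The only cosmetic difference is that the paper writes out the intermediate centered expression explicitly before bounding it, whereas you state the inequality $\E{\norm{X-\E{X}}_2^2}\leq\E{\norm{X}_2^2}$ directly.
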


\begin{proof}
\begin{align*}
\frac{1}{n}
		\sum\limits_{i=1}^{n}
		\E{\norm{g_i^k - h_i^k}_2^2}
		&=
		\frac{1}{n}
		\sum\limits_{i=1}^{n}
		    \left(
			\norm{\E{g_i^k - h_i^k}}_2^2
			+
			\E{\norm{g_i^k - h_i^k - \E{g_i^k - h_i^k}}_2^2}
			\right)\\
		&=
		\frac{1}{n}
		\sum\limits_{i=1}^{n}
		    \left(
			\norm{\nabla f_i(x^k) - h_i^k}_2^2
			+
			\E{\norm{g_i^k - \nabla f_i(x^k)}_2^2}
			\right)\\
		&=
		F^k +
		\frac{1}{n}
		\sum\limits_{i=1}^{n}
			\E{\norm{\nabla f_{ij_i^k}(x^k) - \nabla f_{ij_i^k}(z^s) - \E{\nabla f_{ij_i^k}(x^k) - \nabla f_{ij_i^k}(z^s)}}_2^2}
		\\
		&\overset{\eqref{eq:variance}}{\leq}
		F^k +
		\frac{1}{n}
		\sum\limits_{i=1}^{n}
			 \E{\norm{\nabla f_{ij_i^k}(x^k) - \nabla f_{ij_i^k}(z^s)}_2^2}
			 \\
		&=
		F^k +
		\frac{1}{n}
		\sum\limits_{i=1}^{n}
		\frac{1}{m}
		\sum\limits_{i=1}^{m}
			 \E{\norm{\nabla f_{ij}(x^k) - \nabla f_{ij}(z^s)}_2^2}
			 \\
		&\overset{\eqref{def:smoothness}}{\leq}
		F^k + L^2 Z^k.
	\end{align*}
\end{proof}
Equipped with this lemma, we are ready to prove a recurrence inequality for $F^k$:
\begin{lemma} Let $\alpha (\omega + 1)\leq 1$. We can upper bound $F^{k+1}$ in the following way
	\begin{equation}
		\E{F^{k+1}}
		\leq
		\left(1+\frac{2}{\alpha}\right)L^2\E{\norm{x^{k+1} - x^k}_2^2} + 
	\left(1-\frac{\alpha}{2}\right) F^k + \alpha L^2 Z^k
		\label{lem:up_F_VR_nc_SVRG}
	\end{equation}
\end{lemma}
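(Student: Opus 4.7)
The statement to prove is essentially the SVRG version of the earlier lemma \eqref{lem:up_F_VR_nc}, with $Z^k = \|x^k - z^s\|_2^2$ replacing $W^k$. The proof strategy should mirror the VR-DIANA calculation almost verbatim, since the only place where the SAGA/SVRG difference enters is in bounding the ``gap'' $\frac{1}{n}\sum_i \E\|g_i^k - h_i^k\|_2^2$, which has already been handled by the SVRG-specific companion lemma \eqref{lem:up_g-h_VR_nc_SVRG}.

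Concretely, I would start from the update rule $h_i^{k+1} = h_i^k + \alpha Q(g_i^k - h_i^k)$ and write
\begin{equation*}
\nabla f_i(x^{k+1}) - h_i^{k+1} = \bigl(\nabla f_i(x^{k+1}) - \nabla f_i(x^k)\bigr) + \bigl(\nabla f_i(x^k) - h_i^k\bigr) - \alpha Q(g_i^k - h_i^k).
\end{equation*}
Squaring, averaging over $i$, and using Young's inequality with a free parameter $\tau > 0$ on the $L$-smoothness term will produce a coefficient $(1 + (1-\alpha)/\tau) L^2$ on $\E\|x^{k+1}-x^k\|_2^2$ and $(1 + (1-\alpha)\tau)$ on $F^k$, plus cross-terms involving $Q$. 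The quantization's unbiasedness makes the inner product $\langle \nabla f_i(x^k) - h_i^k, \E Q(g_i^k - h_i^k)\rangle$ collapse to $\langle \nabla f_i(x^k) - h_i^k, \nabla f_i(x^k) - h_i^k \rangle = \|\nabla f_i(x^k) - h_i^k\|_2^2$, contributing the $-2\alpha F^k$ term, while the variance bound $\E\|Q(\cdot)\|_2^2 \leq (\omega+1)\|\cdot\|_2^2$ combined with $\alpha(\omega+1) \leq 1$ turns $\alpha^2(\omega+1)\|g_i^k - h_i^k\|_2^2$ into $\alpha \|g_i^k - h_i^k\|_2^2$.

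After these standard manipulations, what remains on the right-hand side is a term of the form $\alpha \cdot \frac{1}{n}\sum_i \E\|g_i^k - h_i^k\|_2^2$. Here is where the SVRG-specific piece enters: I would invoke \eqref{lem:up_g-h_VR_nc_SVRG} to upper bound this by $\alpha(F^k + L^2 Z^k)$, which is precisely what delivers the $\alpha L^2 Z^k$ term in the statement. Combining everything gives a bound of the shape $(1 + (1-\alpha)/\tau) L^2 \E\|x^{k+1}-x^k\|_2^2 + (1 + \tau - \alpha) F^k + \alpha L^2 Z^k$.

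Finally, choosing $\tau = \alpha/2$ makes $1 + (1-\alpha)/\tau \leq 1 + 2/\alpha$ and $1 + \tau - \alpha = 1 - \alpha/2$, matching the claimed coefficients exactly. No step here is really an obstacle---the argument is a direct transcription of the VR-DIANA proof of \eqref{lem:up_F_VR_nc}, with the one substantive change being the substitution of the SVRG variance lemma \eqref{lem:up_g-h_VR_nc_SVRG} for its SAGA counterpart \eqref{lem:up_g-h_VR_nc}. The only mildly delicate point is keeping careful track of which cross-terms survive after taking conditional expectation over the quantization versus over the random index $j_i^k$, but this is the same bookkeeping already performed in the earlier lemma.
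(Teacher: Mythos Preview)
Your proposal is correct and matches the paper's proof essentially line for line: the paper also decomposes $\nabla f_i(x^{k+1})-h_i^{k+1}$ into the three pieces you describe, applies Young's inequality with a free parameter $\tau$, uses unbiasedness of $Q$ to collapse the cross term to $-2\alpha F^k$ and the bound $\alpha^2(\omega+1)\leq \alpha$ on the quantization variance, then invokes \eqref{lem:up_g-h_VR_nc_SVRG} and sets $\tau=\alpha/2$. Your identification of \eqref{lem:up_g-h_VR_nc_SVRG} as the single SVRG-specific ingredient replacing \eqref{lem:up_g-h_VR_nc} is exactly the point of the paper's argument.
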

\begin{proof}
	\begin{align*}
		\E{F^{k+1}}
		&=
		\frac{1}{n}
		\sum\limits_{i=1}^{n}
				\E{\norm{\nabla f_{i}(x^{k+1}) - h_i^{k+1}}_2^2}\\
		&=
		\frac{1}{n}
		\sum\limits_{i=1}^{n}
				\E{\norm{\nabla f_{i}(x^{k+1}) - \nabla f_{i}(x^k) + \nabla f_{i}(x^k) - h_i^k - \alpha Q\left( g_i^k - h_i^k\right)}_2^2}\\
		& = 
		\frac{1}{n}
		\sum\limits_{i=1}^{n}
		\left(
				 \E{\norm{\nabla f_{i}(x^{k+1}) - \nabla f_{i}(x^k)}_2^2} + \E{\norm{\nabla f_{i}(x^k) - h_i^k - \alpha Q\left( g_i^k - h_i^k\right)}_2^2}
		\right)\\
		& \quad +  (1-\alpha)\frac{1}{n}
		\sum\limits_{i=1}^{n}
		\dotprod{\nabla f_{i}(x^{k+1}) - \nabla f_{i}(x^k)}{ \nabla f_{i}(x^k) - h_i^k } \\
		&\overset{\eqref{def:smoothness}}{\leq}
		\frac{1}{n}
		\sum\limits_{i=1}^{n}
		\left(
				 \left(1 + \frac{1-\alpha}{\tau}\right)L^2\E{\norm{x^{k+1} - x^k}_2^2} + 
		 (1+(1-\alpha)\tau)\norm{\nabla f_{i}(x^k) - h_i^k}_2^2 +  \alpha^2\E{\norm{ Q\left( g_i^k - h_i^k\right)}_2^2}\right) \\
		&\quad - 2 \frac{\alpha}{n} \sum\limits_{i=1}^{n}\dotprod{\nabla f_{i}(x^k) - h_i^k}{\E{Q\left( g_i^k - h_i^k\right)}}
		\\
	&\overset{\eqref{def:omega}}{\leq}
	 \left(1 + \frac{1-\alpha}{\tau}\right)L^2\E{\norm{x^{k+1} - x^k}_2^2} + 
	\frac{1}{n}
		\sum\limits_{i=1}^{n}
		\left( (1+ (1-\alpha)\tau)
		\norm{\nabla f_{i}(x^k) - h_i^k}_2^2 + \alpha^2(\omega+1)\E{\norm{g_i^k - h_i^k}_2^2}\right) \\
		&\quad - 2 \frac{\alpha}{n}\sum\limits_{i=1}^{n}\dotprod{\nabla f_{i}(x^k) - h_i^k}{\nabla f_{i}(x^k) - h_i^k}\\
	&\leq
	 \left(1 + \frac{1-\alpha}{\tau}\right)L^2\E{\norm{x^{k+1} - x^k}_2^2} + 
	(1+(1-\alpha)\tau-2\alpha) F^k
	 + \alpha\frac{1}{n}
		\sum\limits_{i=1}^{n}\E{\norm{g_i^k - h_i^k}_2^2} 
		\\	
	&\overset{\eqref{lem:up_g-h_VR_nc_SVRG}}{\leq}
	 \left(1 + \frac{1-\alpha}{\tau}\right)L^2\E{\norm{x^{k+1} - x^k}_2^2} + 
	(1+\tau-\alpha) F^k + \alpha L^2 Z^k,
	\end{align*}
	where the second equality uses definition of $h_i^{k+1}$ in Algorithm~\ref{alg:SVRG_DIANA} and the first inequality follows from Cauchy  inequality and holds for any $\tau > 0$. 
	
	Taking $\tau = \alpha/2$, we obtain desired inequality.
\end{proof}

\begin{lemma}
We can upper bound the second moment of the $g^k$ in the following way
	\begin{equation}
		\E{\norm{g^k}_2^2}
		\leq
		\frac{\omega}{n}F^k + \frac{\omega+1}{n} L^2 Z^k + \norm{\nabla f(x^k)}_2^2
		.
		\label{lem:up_g_VR_nc_SVRG}
	\end{equation}
\end{lemma}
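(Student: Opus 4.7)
The proof plan mirrors the approach used for the analogous VR-DIANA bound (equation~\eqref{lem:up_g_VR_nc}), with the single reference point $z^s$ taking the role that the gradient table $\{w_{ij}^k\}$ played there. The cleaner structure of $Z^k = \norm{x^k - z^s}_2^2$ (a single snapshot rather than a table average) actually makes the derivation slightly simpler.

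First, I would use the tower property together with the variance decomposition~\eqref{eq:variance} applied to the quantization randomness, conditional on $j_i^k$ and the past. This splits $\E{\norm{g^k}_2^2}$ into $\E{T_1} + \E{T_2}$, where $T_1 = \norm{\EE{Q}{g^k}}_2^2$ and $T_2 = \EE{Q}{\norm{g^k - \EE{Q}{g^k}}_2^2}$. Because $Q$ is unbiased, $\EE{Q}{g^k} = \frac{1}{n}\sum_i g_i^k$, giving immediately $T_1 = \norm{\frac{1}{n}\sum_i g_i^k}_2^2$. For $T_2$, independence of the quantizations across workers (equation~\eqref{eq:indep}) together with the $\omega$-quantization bound~\eqref{def:omega} yields $T_2 \leq \frac{\omega}{n^2}\sum_i \norm{g_i^k - h_i^k}_2^2$.

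Next, I would take the full conditional expectation and invoke Lemma~\eqref{lem:up_g-h_VR_nc_SVRG} directly on $\E{T_2}$, producing $\E{T_2} \leq \frac{\omega}{n}(F^k + L^2 Z^k)$. For $\E{T_1}$, a second application of~\eqref{eq:variance}---this time to the sampling randomness $j_i^k$, using unbiasedness $\E{g_i^k} = \nabla f_i(x^k)$ and independence across workers---gives
\begin{equation*}
\E{T_1} = \norm{\nabla f(x^k)}_2^2 + \frac{1}{n^2}\sum_{i=1}^n \E{\norm{g_i^k - \nabla f_i(x^k)}_2^2}.
\end{equation*}
Plugging in the definition of $g_i^k$ from Algorithm~\ref{alg:SVRG_DIANA}, applying~\eqref{eq:variance} once more to drop the mean-correction term, and then using $L$-smoothness of each $f_{ij}$ yields
\begin{equation*}
\E{\norm{g_i^k - \nabla f_i(x^k)}_2^2} \leq \frac{1}{m}\sum_{j=1}^m \norm{\nabla f_{ij}(x^k) - \nabla f_{ij}(z^s)}_2^2 \leq L^2 Z^k.
\end{equation*}
Thus $\E{T_1} \leq \norm{\nabla f(x^k)}_2^2 + \frac{L^2}{n} Z^k$.

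Summing the two bounds gives $\E{\norm{g^k}_2^2} \leq \norm{\nabla f(x^k)}_2^2 + \frac{\omega}{n}F^k + \frac{\omega+1}{n}L^2 Z^k$, which is the claim. The only mild subtlety is making sure that the two applications of~\eqref{eq:variance} (one over $Q$, one over $j_i^k$) are handled in the correct conditional order, but no additional inequalities beyond smoothness, the quantization variance bound, and independence across workers are needed; so I do not anticipate any serious obstacle.
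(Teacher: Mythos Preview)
Your proposal is correct and follows essentially the same approach as the paper: the same $T_1/T_2$ split via variance decomposition over the quantization, the same use of independence and the $\omega$-quantization bound for $T_2$ followed by Lemma~\eqref{lem:up_g-h_VR_nc_SVRG}, and the same second variance decomposition over the sampling $j_i^k$ together with $L$-smoothness for $T_1$. There is no meaningful difference between your argument and the paper's.
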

\begin{proof}
	\begin{align*}
		\EE{Q}{\norm{g^k}_2^2} &\overset{\eqref{eq:variance}}{=}
		\underbrace{
			\norm{\EE{Q}{g^k}}_2^2
		}_{T_1}
		+
		\underbrace{
			\EE{Q}{\norm{g^k - \EE{Q}{g^k}}_2^2}
		}_{T_2}.
	\end{align*}
	We can use the definition of $g^k$ in order to obtain 
	\begin{align*}
		T_1
		&= 
		\norm{
			\frac{1}{n}
			\sum\limits_{i=1}^{n}
				\EE{Q}{Q(g_i^k - h_i^k) + h_i^k}
		}_2^2
		=
		\norm{
			\frac{1}{n}
			\sum\limits_{i=1}^{n}
				g_i^k
		}_2^2
	\end{align*}
	and 
	\begin{align*}
		T_2
		&=
		\EE{Q}{
			\norm{
				\frac{1}{n}
				\sum\limits_{i=1}^{n}
					Q(g_i^k - h_i^k) - (g_i^k - h_i^k)
			}_2^2
		}\\
		&\overset{\eqref{eq:sum}}{=}
		\frac{1}{n^2}
		\sum\limits_{i=1}^{n}
			\EE{Q}{
				\norm{Q(g_i^k - h_i^k) - (g_i^k - h_i^k)}_2^2
			} \\ 
		&\overset{\eqref{def:Q}}{\leq}
		\frac{\omega}{n^2}
		\sum\limits_{i=1}^{n}
			\norm{g_i^k - h_i^k}_2^2.
	\end{align*}
	Now we calculate full expectations conditioned on previous iteration:
	\begin{align*}
		\E{T_2}
		&=
		\frac{\omega}{n^2}
		\sum\limits_{i=1}^{n}
			\E{\norm{g_i^k - h_i^k}_2^2} \\
		&\overset{\eqref{lem:up_g-h_VR_nc_SVRG}}{\leq}
		\frac{\omega}{n}F^k + \frac{\omega}{n}L^2 Z^k.
	\end{align*}
	As for $T_1$, we have
	\begin{align*}
		\E{T_1}
		&=
		\E{\norm{
			\frac{1}{n}
			\sum\limits_{i=1}^{n}
				g_i^k
		}_2^2}
		=
		\norm{
			\frac{1}{n}
			\sum\limits_{i=1}^{n}
			\E{g_i^k}
		}_2^2
		+
		\E{\norm{
				\frac{1}{n}
				\sum\limits_{i=1}^{n}
				g_i^k - \E{g_i^k}
			}_2^2}\\
		&=
		\norm{\nabla f(x^k)}_2^2
		+
		\frac{1}{n^2}
		\sum\limits_{i=1}^{n}
			\E{\norm{g_i^k - \nabla f_i(x^k)}_2^2}\\
		&=
		\norm{\nabla f(x^k)}_2^2
		+
		\frac{1}{n^2}
		\sum\limits_{i=1}^{n}
			\E{\norm{\nabla f_{ij_i^k}(x^k) - \nabla f_{ij_i^k}(z^s) - \E{\nabla f_{ij_i^k}(x^k) - \nabla f_{ij_i^k}(z^s)}}_2^2}\\
		&\overset{\eqref{eq:variance}}{\leq}
		\norm{\nabla f(x^k)}_2^2
		+
		\frac{1}{n^2}
		\sum\limits_{i=1}^{n}
			\E{\norm{\nabla f_{ij_i^k}(x^k) - \nabla f_{ij_i^k}(w_{ij_i^k}^k)}_2^2}\\
		&\overset{\text{Alg.}~\ref{alg:SVRG_DIANA}}{=}
		\norm{\nabla f(x^k)}_2^2
		+
		\frac{1}{mn^2}
		\sum\limits_{i=1}^{n}
			\sum\limits_{j=1}^{m}
				\norm{\nabla f_{ij}(x^k) - \nabla f_{ij}(z^s)}_2^2\\
		&\overset{\eqref{def:smoothness}}{\leq}
		\norm{\nabla f(x^k)}_2^2
		+
		\frac{1}{n}L^2 Z^k.
	\end{align*}
	Now, summing $\E{T_1}$ and $\E{T_2}$ we get
	\begin{align*}
		\E{\norm{g^k}_2^2}
		&=
		\E{T_1 + T_2}
		\leq
		\frac{\omega}{n}F^k + \frac{\omega+1}{n} L^2 Z^k + \norm{\nabla f(x^k)}_2^2,
	\end{align*}
	which concludes the proof.
\end{proof}

\begin{proof}[Proof of \cref{thm:SVRG-DIANA-non-convex}]
Using $L$-smoothness one gets
	\begin{align}
		\E{f(x^{k+1})} &\leq f(x^k) + \dotprod{\nabla f(x^k)}{\E{x^{k+1}-x^k}} + \frac{L}{2}\E{\norm{x^{k+1} - x^k}_2^2} \notag \\
		& = f(x^k) - \gamma\norm{\nabla f(x^k)}_2^2 + \frac{L\gamma^2}{2}\E{\norm{g^k}_2^2},
		\label{lem:nc_smooth_SVRG}
	\end{align}
where we use the definition of $x^{k+1}$ in Algorithm~\ref{alg:SVRG_DIANA}.

By combining definition of $\E{R^{k+1}}$ with \eqref{lem:up_W_VR_nc_SVRG},\eqref{lem:up_F_VR_nc_SVRG},\eqref{lem:nc_smooth_SVRG} one obtains
\begin{align*}
\E{R^{k+1}} &\leq  f(x^k) + \dotprod{\nabla f(x^k)}{\E{x^{k+1}-x^k}} + \frac{L}{2}\E{\norm{x^{k+1} - x^k}_2^2} \\
&\quad + c^{k+1}\left( \E{\norm{x^{k+1} - x^k}_2^2} + \left(1 + \gamma p\right)Z^k +\frac{\gamma}{p}\norm{\nabla f(x^k)}_2^2\right) \\
&\quad + d^{k+1}\left(  \left(1+\frac{2}{\alpha}\right)L^2\E{\norm{x^{k+1} - x^k}_2^2} + \left(1-\frac{\alpha}{2}\right) F^k +\alpha L^2 Z^k\right) \\
&=
f(x^k) - \gamma\norm{\nabla f(x^k)}_2^2 + \left(\frac{\gamma^2L}{2} + c^{k+1} \gamma^2 + d^{k+1}  \left(1+\frac{2}{\alpha}\right)L^2 \gamma^2\right)\E{\norm{g^k}_2^2} \\
&\quad + c^{k+1}\left( \left(1 - \frac{1}{m} + \gamma p\right)Z^k +\frac{\gamma}{p}\norm{\nabla f(x^k)}_2^2\right) \\
&\quad + d^{k+1}\left(\left(1-\frac{\alpha}{2}\right)F^k + \left(1+\frac{2}{\alpha}\right)\alpha L^2 Z^k\right) \\
&\overset{\eqref{lem:up_g_VR_nc_SVRG}}{\leq}
f(x^k) - \left(\gamma - \frac{\gamma^2L}{2}  - c^{k+1} \gamma^2 - d^{k+1} \left(1+\frac{2}{\alpha}\right) L^2 \gamma^2 - c^{k+1}\frac{\gamma}{p}\right)\norm{\nabla f(x^k)}_2^2\\ 
&\quad + \left( c^{k+1}\left( 1 - \frac{1}{m} + \gamma p\right) + d^{k+1} \alpha L^2 +  \frac{\omega+1}{n}  L^2\left(\frac{\gamma^2L}{2}  + c^{k+1} \gamma^2 + d^{k+1}  \left(1+\frac{2}{\alpha}\right)L^2 \gamma^2\right) \right) Z^k\\
&\quad + \left( d^{k+1}\left(1-\frac{\alpha}{2}\right) + \frac{\omega}{n} \left(\frac{\gamma^2L}{2}  + c^{k+1} \gamma^2 + d^{k+1}  \left(1+\frac{2}{\alpha}\right)L^2 \gamma^2\right) \right) F^k\\
&=
f(x^k) - \left(\gamma - \frac{\gamma^2L}{2}  - c^{k+1} \left(\gamma^2 + \frac{\gamma}{p}\right) - d^{k+1} \left(1+\frac{2}{\alpha}\right) L^2 \gamma^2\right)\norm{\nabla f(x^k)}_2^2\\ 
&\quad + \left( c^{k+1}\left( 1 - \frac{1}{m} + \gamma p + \frac{\omega+1}{n} L^2 \gamma^2 \right) + d^{k+1} \left(\alpha L^2 +  \left(1+\frac{2}{\alpha}\right)\frac{\omega+1}{n} L^4 \gamma^2\right)+  \frac{\omega+1}{n} \frac{\gamma^2L^3}{2} \right) Z^k\\
&\quad + \left( d^{k+1}\left( 1 - \frac{\alpha}{2} +  \left(1+\frac{2}{\alpha}\right)\frac{\omega}{n}L^2 \gamma^2\right) + c^{k+1}\frac{\omega}{n} \gamma^2 + \frac{\omega}{n} \frac{\gamma^2L}{2} \right) F^k \\
&= R^k - \Gamma^k \norm{\nabla f(x^k)}_2^2.
\end{align*}
Applying the full expectation and telescoping the equation, one gets desired inequality.
\end{proof}
We can proceed to the proof of Theorem~\ref{thm:SVRG-DIANAnc}.
\begin{proof}[Proof of \cref{thm:SVRG-DIANAnc}]
Recursion for $c^t, d^t$ can be written in a form
\begin{align*}
y^t = Ay^{t+1} + b,
\end{align*}
where 
\begin{align*}
A &= \begin{bmatrix}
 1 + \gamma p + \frac{\omega+1}{n} L^2 \gamma^2 &\alpha L^2 +  \left(1+\frac{2}{\alpha}\right)\frac{\omega+1}{n} L^4 \gamma^2\\
\frac{\omega}{n} \gamma^2  & 1 - \frac{\alpha}{2} +  \left(1+\frac{2}{\alpha}\right)\frac{\omega}{n}L^2 \gamma^2
\end{bmatrix} , \\
y^t &=  \begin{bmatrix}
 c^t \\
 d^t
\end{bmatrix} , \\
b &=  \begin{bmatrix}
 \frac{\omega+1}{n} \frac{\gamma^2L^3}{2} \\
 \frac{\omega}{n} \frac{\gamma^2L}{2}
\end{bmatrix} .
\end{align*}

Choosing $\gamma = \frac{1}{10L\left( 1 + \frac{\omega}{n} \right)^{1/2} (m^{2/3} + \omega + 1)}$, $p =  \frac{L\left(1+ \frac{\omega}{n} \right)^{1/2}}{ (m^{2/3} + \omega + 1)^{1/2}}$, $l = m$, and $\alpha = \frac{1}{\omega+1}$, where  $c^l = d^l = 0$ we can upper bound each element of matrix $A$ and construct its upper bound $\hat{A}$, where 
\begin{align*}
\hat{A} &= \begin{bmatrix}
 1 + \frac{11}{100m}  & L^2 \frac{103}{100(\omega+1)} \\
 \frac{1}{100L^2m} & 1 -  \frac{47}{100(\omega+1)}
 \end{bmatrix},\qquad
 \hat{b} = \left(1+\frac{\omega}{n}\right)\frac{\gamma^2 L}{2}\begin{bmatrix}
 L^2 \\
 1
 \end{bmatrix}.
\end{align*}
The same as for Proof of Theorem~\ref{thm:VR-DIANAnc}, due to structure of $\hat{A}$ and $\hat{b}$ we can work with matrices 
\begin{align*}
\tilde{A} &= \begin{bmatrix}
 1 + \frac{11}{100m}  & \frac{103}{100(\omega+1)} \\
 \frac{1}{100m} & 1 -  \frac{47}{100(\omega+1)}
 \end{bmatrix},\qquad
 \tilde{b} = \left(1+\frac{\omega}{n}\right)\frac{\gamma^2 L}{2}\begin{bmatrix}
 1 \\
 1
 \end{bmatrix},
\end{align*}
where it holds $\tilde{A}^k \tilde{b} = (y_1, y_2)^\top \implies \hat{A}^k \hat{b} = (L^2y_1, y_2)^\top$, thus we can work with $\tilde{A}$ which is independent of $L$.
In the sense of Lemma~\ref{lem:sequence}, we have that eigenvalues of $\tilde{A}$ are less than $1-\frac{1}{3(\omega + 1)}, 1+\frac{1}{m}$, respectively, and $\abs{x} \leq \frac{2}{\min\{m, \omega + 1\}}$, thus for $k \leq l-1$
\begin{align*}
c^k &\leq 20\max\{m, \omega+1\}\left(\left(1+\frac{1}{m}\right)^m -1 \right)\left(1+\frac{\omega}{n}\right) \frac{\gamma^2L^3}{2} \\
&=  20(e - 1)\max\{m, \omega+1\}\left(1+\frac{\omega}{n}\right) \frac{L^3}{200\left(1 + \frac{\omega}{n}\right) L^2 (m^{2/3} + \omega + 1)^2} \\
&\leq \frac{L}{2(m^{2/3} + \omega + 1)^{1/2}}.
\end{align*}
By the same reasoning
\begin{align*}
d^k \leq \frac{1}{2L(m^{2/3} + \omega + 1)^{1/2}}\, .
\end{align*}
This implies 
\begin{align*}
\Gamma^k & = \gamma - \frac{\gamma^2L}{2}  - c^{k+1} \left(\gamma^2 + \frac{\gamma}{p}\right) - d^{k+1}  \left(1 + \frac{2}{\alpha} \right) L^2 \gamma^2 \\
& \geq \gamma - \frac{\gamma^2L}{2}  -   \frac{L}{2(m^{2/3} + \omega + 1)^{1/2}}\left(\left(2 + \frac{2}{\alpha} \right)\gamma^2 + \frac{\gamma}{p}\right) \\
&\geq  \frac{1}{10L\left( 1 + \frac{\omega}{n} \right)^{1/2} (m^{2/3} + \omega + 1)} -  \frac{1}{200L \left(1 + \frac{\omega}{n}\right)(m^{2/3} + \omega + 1)^2 } \\
& \quad  -  \frac{1}{2L(m^{2/3} + \omega + 1)^{1/2}}\left(\frac{4(\omega + 1)}{100\left(1 + \frac{\omega}{n} \right) (m^{2/3} + \omega + 1)^2} + \frac{1}{10\left(1 + \frac{\omega}{n} \right) (m^{2/3} + \omega + 1)^{1/2}}\right) \\
& \geq \frac{1}{40L\left(1 +  \frac{\omega}{n} \right)^{1/2}(m^{2/3} + \omega + 1)},
\end{align*}
which guarantees $\Delta \geq \frac{1}{40L\left(1 +  \frac{\omega}{n} \right)^{1/2}(m^{2/3} + \omega + 1)}$ for $k = 0, 1, \hdots, l-1$. Using iterates $k = cl, cl+1, \hdots, (c+1)l-1$, where $c$ is any positive integer, one can obtain the same bound of $\Gamma^k$ for arbitrary $k$.  Plugging this uniform lower bound on $\Delta$ into \eqref{up:delta_SVRG} for all iterates and using the fact that $p_{l-1} = 1$ and all other $p_r$'s are zeros, one obtains
\begin{align*}
\E{\norm{\nabla f(x^a)}_2^2} \leq 
\frac{40(f(x^0) - f^\star)L\left(1 +  \frac{\omega}{n} \right)^{1/2}(m^{2/3}+ \omega + 1)}{m}.
\end{align*}
where $x^a \sim_{u.a.r.} \{x^0, x^1, \dots, x^{k-1}\}$, which concludes the proof.
\end{proof}

\section{Technical Lemma}

\begin{lemma}
\label{lem:sequence}
Let $A$ be a $2 \times 2$ matrix of which all entries are non-negative and $y^k$ be a sequence of vectors for which $y^k = Ay^{k+1} + b$ and $y^T = (0, 0)$, where $b$ is a vector with non-negative entries, and $\hat{A} = A+B$, $\hat{b} = b+ y$, where $B$ and $y$ have all entries non-negative. Then for the sequence $\hat{y}^k = \hat{A}\hat{y}^{k+1} + \hat{b}$ it always holds that 
$\hat{y}^k \geq y^k$ (coordinate-wise) for $k = 0,1, \dots, T$. Moreover, let $\hat{A}$ has positive real eigenvalues $\lambda_1 \geq \lambda_2 > 0$, thus there exists a real Schur decomposition of matrix $\hat{A}  = UTU^{\top}$, where 
\begin{align*}
T  = \begin{bmatrix}
 \lambda_1  & x\\
0 & \lambda_2
 \end{bmatrix}
\end{align*} and $x \in \R$, and $U$ is real unitary matrix, then for every element of $\hat{y}^k$ it holds
\begin{equation*}
\hat{y}^k_j \leq \left(\frac{(1 - \lambda_1^T)(1 - \lambda_2^T)}{(1 - \lambda_1)(1 - \lambda_2)}\abs{x} + \frac{(1 - \lambda_1^T)}{(1 - \lambda_1)} + \frac{(1 - \lambda_2^T)}{(1 - \lambda_2)}\right)(b_1 + b_2)
\end{equation*}
for $k = 0,1,2, \hdots, T-1$.
\end{lemma}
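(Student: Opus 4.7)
The componentwise monotonicity $\hat y^k \geq y^k$ follows from backward induction on $k$ starting from the natural boundary condition $\hat y^T = y^T = 0$: assuming $\hat y^{k+1} \geq y^{k+1}$ componentwise, I obtain
\[\hat y^k = (A+B)\hat y^{k+1} + (b+y) \geq A\hat y^{k+1} + b \geq A y^{k+1} + b = y^k,\]
using the non-negativity of $B$, $y$, and $\hat y^{k+1}$ in the first inequality and the non-negativity of the entries of $A$ together with the inductive hypothesis in the second. This is the easy half.

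For the quantitative bound I would unroll the recursion to obtain the closed form $\hat y^k = \sum_{i=0}^{T-k-1}\hat A^i\hat b$. Since $\hat A$ and $\hat b$ have non-negative entries, every summand is non-negative, so $\hat y^k \leq \hat y^0 = \sum_{i=0}^{T-1}\hat A^i\hat b$ componentwise, and it suffices to bound $\hat y^0$. Substituting the Schur decomposition yields $\hat y^0 = UMU^\top\hat b$ with $M := \sum_{i=0}^{T-1}T^i$. A direct induction on $i$ (using $T^{i+1}=T\,T^i$) gives $(T^i)_{11}=\lambda_1^i$, $(T^i)_{22}=\lambda_2^i$, and $(T^i)_{12}=x\sum_{j=0}^{i-1}\lambda_1^{i-1-j}\lambda_2^{j}$, so $M$ is upper triangular with diagonal entries $M_{jj}=(1-\lambda_j^T)/(1-\lambda_j)$ and off-diagonal entry $M_{12}=x\sum_{k,j\geq 0,\ k+j\leq T-2}\lambda_1^k\lambda_2^{j}$.

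The crux of the argument is the bound on $|M_{12}|$. Since $\lambda_1,\lambda_2>0$, I would enlarge the triangular index set $\{k+j\leq T-2\}$ to the full square $\{0\leq k,j\leq T-1\}$, which adds only non-negative terms, yielding
\[|M_{12}| \;\leq\; |x|\cdot\Bigl(\sum_{k=0}^{T-1}\lambda_1^k\Bigr)\Bigl(\sum_{j=0}^{T-1}\lambda_2^j\Bigr) \;=\; |x|\,M_{11}M_{22}.\]
To conclude, set $v:=U^\top\hat b$; since every entry of the orthogonal matrix $U$ has absolute value at most one and $\hat b\geq 0$, I have $|v_i|\leq \hat b_1+\hat b_2$. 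Writing $\hat y^0 = U(Mv)$ and applying the triangle inequality coordinate-wise gives $\hat y^0_j \leq |(Mv)_1|+|(Mv)_2| \leq (M_{11}+|M_{12}|+M_{22})(\hat b_1+\hat b_2)$, which combined with the estimate for $|M_{12}|$ is exactly the advertised inequality (up to the minor cosmetic point that the statement writes $b_1+b_2$; in the paper's applications $y=0$ so the two coincide).

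The main obstacle is really just the triangular-versus-square estimate used to bound $|M_{12}|$: it is coarse in general but essentially tight in the regime the paper uses (eigenvalues close to $1$ and $T$ large), and it crucially requires positivity of $\lambda_1,\lambda_2$ so that no cancellation occurs when the summation domain is enlarged. Everything else is routine linear algebra---unrolling the linear recursion, reducing to an upper-triangular sum via the Schur decomposition, and exploiting $|U_{ij}|\leq 1$ for the orthogonal factor.
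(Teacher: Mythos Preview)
Your argument is correct and follows essentially the same route as the paper: unroll the recursion, pass to the Schur form $\hat A = UTU^\top$, compute the entries of $T^i$ explicitly, and bound the off-diagonal triangular sum $\sum_{k+j\le T-2}\lambda_1^k\lambda_2^j$ by the full product $\bigl(\sum_k\lambda_1^k\bigr)\bigl(\sum_j\lambda_2^j\bigr)$. The only cosmetic difference is that the paper works with the operator-norm chain $|\hat y_j|\le\|\hat y\|_2\le\sum_i\|T^i\|_2\|\hat b\|_2$ and the Frobenius bound $\|T^i\|_2\le\|T^i\|_F$, whereas you sum first to form $M=\sum_i T^i$ and control everything entrywise via $|U_{rs}|\le 1$; both paths land on the same inequality (and your observation that the statement's $b_1+b_2$ should read $\hat b_1+\hat b_2$ matches the paper's own proof).
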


\begin{proof}
From $y^k = Ay^{k+1} + b$ and $y^T = (0, 0)$ one can obtain
\begin{align*}
y^{T-k} &= A^{k-1}b + A^{k-2}b + \dots + b \\
\hat{y}^{T-k} &= (A+B)^{k-1}(b+y) + (A+B)^{k-2}(b+y) + \dots + (b+y) .
\end{align*}
From these equalities it is trivial to see that $\hat{y}^k \leq y^k$, because $\hat{y}^k$ contains at least all the elements of $y^k$ and every element is non-negative.

For the second part of the claim, we have for every element of  $\hat{y}^{T-k}$
\begin{align*}
\hat{y}^{T-k}_j &\leq \norm{\hat{y}^{T-k}}_2 = \norm{\hat{A}^{k-1}\hat{b} + \hat{A}^{k-2}\hat{b} + \dots + \hat{b}}_2 \\
&\leq \norm{\hat{A}^{k-1}\hat{b}}_2 + \norm{\hat{A}^{k-2}\hat{b}}_2 + \dots + \norm{\hat{b}}_2 \\
& =  \norm{UT^{k-1}U^{\top}\hat{b}}_2 + \norm{UT^{k-2}U^{\top}\hat{b}}_2 + \dots + \norm{\hat{b}}_2 \\
& \leq \left(\norm{T^{k-1}}_2 + \norm{T^{k-2}}_2 + \dots + \norm{I}\right)\norm{\hat{b}}_2 \\
& \leq  \left(\frac{(1 - \lambda_1^T)(1 - \lambda_2^T)}{(1 - \lambda_1)(1 - \lambda_2)}\abs{x} + \frac{(1 - \lambda_1^T)}{(1 - \lambda_1)} + \frac{(1 - \lambda_2^T)}{(1 - \lambda_2)}\right)(\hat{b}_1 + \hat{b}_2),
\end{align*}
where the last inequality follows from fact that $\norm{A}_2 \leq \norm{A}_F \leq \abs{a_{11}} + \abs{a_{12}} + \abs{a_{21}} + \abs{a_{22}}$ and 
\begin{align*}
T^k  = \begin{bmatrix}
 \lambda_1^k  & x \sum_{i=0}^k \lambda_1^i  \lambda_2^{k-i}\\
0 & \lambda_2^k
 \end{bmatrix},
\end{align*}
which concludes the proof.
\end{proof}

\end{document}